\DeclareFontFamily{OT1}{rsfs}{}
\DeclareFontShape{OT1}{rsfs}{n}{it}{<-> rsfs10}{}
\DeclareMathAlphabet{\mathscr}{OT1}{rsfs}{n}{it}
\begin{document}
\theoremstyle{plain}

\newtheorem{theorem}{Theorem}[section]
\newtheorem{thm}[equation]{Theorem}
\newtheorem{prop}[equation]{Proposition}
\newtheorem{cor}[equation]{Corollary}
\newtheorem{conj}[equation]{Conjecture}
\newtheorem{lemma}[equation]{Lemma}
\newtheorem{definition}[equation]{Definition}
\newtheorem{question}[equation]{Question}

%%    DEFINITIONS, Remark, EXAMPLES, NOTATIONS
\theoremstyle{definition}
\newtheorem{conjecture}[theorem]{Conjecture}

\newtheorem{example}[equation]{Example}
\numberwithin{equation}{section}

\newtheorem{remark}[equation]{Remark}

\newcommand{\Hecke}{\mathcal{H}}
\newcommand{\Liea}{\mathfrak{a}}
\newcommand{\Cmg}{C_{\mathrm{mg}}}
\newcommand{\Cinftyumg}{C^{\infty}_{\mathrm{umg}}}
\newcommand{\Cfd}{C_{\mathrm{fd}}}
\newcommand{\Cinftyfd}{C^{\infty}_{\mathrm{ufd}}}
\newcommand{\sspace}{\Gamma \backslash G}
\newcommand{\PP}{\mathcal{P}}
\newcommand{\bfP}{\mathbf{P}}
\newcommand{\bfQ}{\mathbf{Q}}
\newcommand{\Siegel}{\mathfrak{S}}
\newcommand{\g}{\mathfrak{g}}
\newcommand{\A}{\mathbb{A}}
\newcommand{\Q}{\mathbb{Q}}
\newcommand{\Gm}{\mathbb{G}_m}
\newcommand{\Nm}{\mathbb{N}m}
\newcommand{\ii}{\mathfrak{i}}
\newcommand{\II}{\mathfrak{I}}

\newcommand{\kk}{\mathfrak{k}}
\newcommand{\nn}{\mathfrak{n}}
\newcommand{\tF}{\tilde{F}}
\newcommand{\p}{\mathfrak{p}}
\newcommand{\m}{\mathfrak{m}}
\newcommand{\bb}{\mathfrak{b}}
\newcommand{\Ad}{{\rm Ad}\,}
\newcommand{\ttt}{\mathfrak{t}}
\newcommand{\frakt}{\mathfrak{t}}
\newcommand{\U}{\mathcal{U}}
\newcommand{\Z}{\mathbb{Z}}
\newcommand{\bfG}{\mathbf{G}}
\newcommand{\bfT}{\mathbf{T}}
\newcommand{\R}{\mathbb{R}}
\newcommand{\ST}{\mathbb{S}}
\newcommand{\h}{\mathfrak{h}}
\newcommand{\bC}{\mathbb{C}}
\newcommand{\C}{\mathbb{C}}
\newcommand{\F}{\mathbb{F}}
\newcommand{\N}{\mathbb{N}}
\newcommand{\qH}{\mathbb {H}}
\newcommand{\temp}{{\rm temp}}
\newcommand{\Hom}{{\rm Hom}}
\newcommand{\Aut}{{\rm Aut}}
\newcommand{\rk}{{\rm rk}}
\newcommand{\Ext}{{\rm Ext}}
\newcommand{\End}{{\rm End}\,}
\newcommand{\Ind}{{\rm Ind}}
\newcommand{\ind}{{\rm ind}}
\newcommand{\Irr}{{\rm Irr}}
\def\circG{{\,^\circ G}}
\def\M{{\rm M}}
\def\diag{{\rm diag}}
\def\Ad{{\rm Ad}}
\def\As{{\rm As}}
\def\wG{{\widehat G}}
\def\G{{\rm G}}
\def\SL{{\rm SL}}
\def\PSL{{\rm PSL}}
\def\GSp{{\rm GSp}}
\def\PGSp{{\rm PGSp}}
\def\Sp{{\rm Sp}}
\def\St{{\rm St}}
\def\GU{{\rm GU}}
\def\SU{{\rm SU}}
\def\U{{\rm U}}
\def\GO{{\rm GO}}
\def\GL{{\rm GL}}
\def\PGL{{\rm PGL}}
\def\GSO{{\rm GSO}}
\def\GSpin{{\rm GSpin}}
\def\GSp{{\rm GSp}}

\def\Gal{{\rm Gal}}
\def\SO{{\rm SO}}
\def\O{{\rm  O}}
\def\Sym{{\rm Sym}}
\def\sym{{\rm sym}}
\def\St{{\rm St}}
\def\Sp{{\rm Sp}}
\def\tr{{\rm tr\,}}
\def\ad{{\rm ad\, }}
\def\Ad{{\rm Ad\, }}
\def\rank{{\rm rank\,}}

\subjclass{Primary 11F70; Secondary 22E55}

\title{Twisted GGP Problems and Conjectures, GGP IV}

\author{Wee Teck Gan, Benedict H. Gross and Dipendra Prasad}
\thanks{WTG is partially supported by an 
MOE Tier 1 grant R-146-000-320-114.
  DP thanks  Science and Engineering research board of the
  Department of Science and Technology, India for its support
through the JC Bose
National Fellowship of the Govt. of India, project number JBR/2020/000006.
The paper was finalized when the two of us, WTG and DP, were
at the Erwin Schr\"odinger Institute, Vienna in April 2022. We thank ESI for
its excellent program which brought us together there.}

\address{W.T.G.: National University of Singapore,
Singapore 119076.}
\email{matgwt@nus.edu.sg}
\address{B.H.G: Department of Mathematics, University of California San Diego, La Jolla, 92093}\email{gross@math.harvard.edu}
\address{D.P.: Indian Institute of Technology Bombay, Powai, Mumbai-400076} 

\email{prasad.dipendra@gmail.com}
\keywords{Branching laws, real and $p$-adic groups, Classical groups, Unitary groups, Weil representation,
  GGP conjectures, period integrals, skew-Hermitian spaces, Langlands parameters, tempered representations}
\maketitle

{\hfill \today}
\begin{abstract}{In \cite{GGP1}, we considered a family of restriction problems for classical groups (over local and global fields) and proposed precise answers to these problems using the local and global Langlands correspondence.
These restriction problems were formulated in terms of a pair $W \subset V$
of orthogonal, Hermitian, symplectic, or skew-Hermitian spaces. 
In this paper, we consider a twisted variant of these conjectures in one particular case -- that of a pair of skew-Hermitian spaces $W = V$.}
  \end{abstract}

\tableofcontents

\section{\bf Introduction}
In \cite{GGP1}, we considered a family of restriction problems for classical groups (over local and global fields) and proposed precise answers to these problems using the local and global Langlands correspondence.
These restriction problems were formulated in terms of a pair $W \subset V$
of orthogonal, Hermitian, symplectic, or skew-Hermitian spaces. 
In this paper, we consider a twisted variant of these conjectures in one particular case -- that of a pair of skew-Hermitian spaces $W = V$.

\vskip 5pt

Let $F$ be a nonarchimedean local field and let $E$ be a separable quadratic algebra over $F$ with $\sigma \in \Gal(E/F)$,  the nontrivial element of the Galois group. Let $V$ be a non-degenerate skew-Hermitian space of dimension $n$ over $E$, with pairing $\langle v,w \rangle$. We may choose an orthogonal basis $\{v_1,v_2,\ldots, v_n\}$ of $V$ over $E$ and define the determinant
$$\det(V) = \prod_i \langle v_i,v_i\rangle,$$
in which, each term $\langle v_i,v_i \rangle$ lies in $E_0^{\times}$, where $E_0$ is the $F$-subspace of $E$
consisting of elements of trace $0$ and $E_0^{\times} = E_0 \smallsetminus \{ 0\}$. Since the product of two elements in $E_0^{\times}$ lies in $F^{\times}$, the determinant lies in $E_0^{\times}$ when $n$ is odd and in $F^{\times}$ when $n$ is even. Both $E_0^{\times}$ and  $F^{\times}$
are principal homogeneous spaces for the group $F^{\times}$,
and the orbit spaces $E_0^{\times}/NE^{\times}$ and $F^{\times}/NE^{\times}$
have cardinality $2$ if $E$ is a field, and have cardinality 1 otherwise. The determinant, as an element of one of  these orbit spaces
$E_0^{\times}/NE^{\times}$ or $F^{\times}/NE^{\times}$, 
is independent of the choice of an orthogonal basis, and gives a complete isomorphism invariant of the skew-Hermitian space $V$ over $E$.
\vskip 5pt

The isometry group $\U(V)$ has associated to it the  Weil representation $\omega_{V,\psi, \mu}$
(see \cite[Pg. 47-50]{GGP1}). If $E$ is a field, then
this complex representation of $\U(V)$ depends on a nontrivial additive character $\psi$ of $F$ and a conjugate-symplectic character $\mu$ of $E^{\times}$ (i.e.,  the  restriction of $\mu$ to $F^{\times}$ is the quadratic character $\omega_{E/F}$ associated to $E/F$ by the local class field theory, so that $\omega_{E/F}: F^{\times}/ N(E^{\times}) \cong \{ \pm 1 \}$). For an irreducible representation $\pi_1 \otimes \pi_2$ of $\U(V) \times \U(V)$ with a generic L-parameter, 
we had considered the problem of determining 
\[ \dim  \Hom_{\U(V)}(\pi_1 \otimes \pi_2,  \omega_{V, \psi, \mu}), \]
in \cite{GGP1}.
It is known by the work \cite{S} that this dimension is $0$ or $1$, and the conjecture in \cite{GGP1} (proved in \cite{GI})  determines precisely when this dimension is equal to $1$. 

If $E = F \times F$, $\U(V) \cong \GL_n(F)$, the  Weil representation $\omega_{V,\psi, \mu}$ could be taken to be ${\mathcal S}(F^n)$ with the natural action of $\GL_n(F)$ on it, 
and the resulting Hom space $\Hom_{\GL_n(F)}(\pi_1 \otimes \pi_2, {\mathcal S}(F^n) )$ is the one which intervenes in the local Rankin-Selberg integral for $\GL_n(F) \times \GL_n(F)$. 
\vskip 5pt

Here is the simplest twisted variant of the above question that we consider in this paper.
Instead of considering $\U(V)$ as a subgroup of $\U(V)(F \times F) = \U(V) \times \U(V)$, we consider it as a subgroup of $\U(V)(E) \cong \GL_n(E)$.  %Let $K$ be a quadratic \'etale $F$-algebra. Then for $V = V^{\pm}$, we may consider the skew-Hermitian space
%$V_K = V \otimes_F K$ over  $K\otimes_F E$, so that we have a natural inclusion
%\[   \U(V) \subset \U(V_K). \]
For an irreducible generic representation $\Pi$ of $\GL_n(E)$,  we consider the problem of determining
\[  \dim \Hom_{\U(V)}(\Pi,  \omega_{V, \psi, \mu}). \]
We conjecture that this dimension is equal to $1$ for a unique (up to isomorphism) skew-Hermitian space $V$ of dimension $n$ over $E$, whose determinant is related to a local epsilon factor that we describe now.

Let $M$ be the Langlands parameter of $\Pi$, thus $M$ is an $n$-dimensional representation of the Weil-Deligne group $WD_E$ of $E$. Associated to $M$, let
${}^\sigma \!M ^\vee$ be the conjugate-dual representation of $WD_E$, so that $ M \otimes {}^\sigma \!M ^\vee$ is a conjugate-orthogonal representation of  $WD_E$ of dimension $n^2$. Since $\mu|_{F^\times} = \omega_{E/F}$,  $\mu$
is a conjugate-symplectic character of $E^{\times}$,
and hence $M \otimes {}^\sigma \!M ^\vee \otimes \mu^{-1}$ is a conjugate-symplectic representation of $WD_E$.
In this paper, we conjecture that the skew-Hermitian space $V$ for which $\Hom_{\U(V)}(\Pi,  \omega_{V, \psi, \mu}) \not = 0$
is determined by the identity
$$\mu(\det(V)) = \epsilon(1/2, M \otimes {}^\sigma \!M ^\vee \otimes \mu^{-1},
\psi_E) \cdot \det(M)(-1)^n \cdot \omega_{E/F}(-1)^{n(n-1)/2},$$
where $\psi_E$ is the additive character of $E$ obtained by composing $\psi$ with the trace
from $E$ to $F$. For the other skew-Hermitian
space
$V'$ of rank $n$ over $E$, we conjecture that:
\[  \Hom_{\U(V')}(\Pi,  \omega_{V', \psi, \mu}) = 0. \]
\vskip 5pt

We note that when $n$ is even so that $\det(V) \in F^\times$, $\mu(\det(V)) = \omega_{E/F} (\det(V))= \pm1$, $\mu(\det(V)) = +1$ if and only if the group $\U(V)$ is quasi-split. When $n$ is odd, the group $\U(V)$ is quasi-split for both of the skew-Hermitian spaces, and $\mu(\det(V))$ is a square root of $\omega_{E/F}(-1)$. Likewise, the local root number
$\epsilon(1/2, M \otimes {}^\sigma \!M ^\vee \otimes \mu^{-1}, \psi_E)$
is equal to $\pm 1$ when $n$ is even and is a square root of $\omega_{E/F}(-1)$ if $n$ is odd. 

\vskip 5pt

A related problem that has been studied in the literature is the determination of 
\[  \dim \Hom_{\U(V)}( \Pi, \C).  \]
The third author has proposed precise conjectures about this dimension \cite{P2}.  
%In particular, not all generic L-packets will support  such nonzero Hom spaces; only those which are obtained by base change from a related group $\U(V)^{op}$ over $F$ will intervene in this period problem.   
Here, we have replaced the trivial representation of $\U(V)$ by a Weil representation, which lies in a one-parameter family (indexed by the characters of $E^1$) of the next smallest representations of $\U(V)$.  In retrospect, this appears quite natural and is simpler than this related problem considered in \cite{P2}. It is also simpler than our original conjecture in the skew-Hermitian case, where we considered $\U(V)$ as a subgroup of $\U(V)(F \times F) = \U(V) \times \U(V)$, whereas
$\U(V)(E)=\GL_n(E)$, a simpler group, in particular, the $L$-packets for $\GL_n(E)$ are singletons.
Note also that for $\Hom_{\U(V)}(\Pi,  \omega_{V, \psi, \mu})$,
we consider $\epsilon$ and $L$-function at $1/2$ of $ M \otimes {}^\sigma \!M ^\vee \otimes \mu^{-1}$
whereas for $\Hom_{\U(V)}( \Pi, \C)$,  one considers the pole at $s=1$ of $ M \otimes {}^\sigma \!M ^\vee$.  
\vskip 5pt
 
The astute reader can no doubt guess by now the general twisted variant of the GGP conjecture we have in mind. 
Beyond the case of $\U(V)$ as a subgroup of $\U(V)(E)$ and $\U(V)(F \times F)$,  we could choose a {\bf different} quadratic extension $K$ of $F$ and consider $\U(V)$ as a subgroup of $\U(V)(K)$, which is the isometry group of the skew-Hermitian space $V \otimes_E L$, with $L = E \otimes K$. Indeed, one could consider an arbitrary pair of \'etale quadratic $F$-algebras $(E,K)$ and formulate a corresponding branching problem. The various possibilities are given in the following table.
\vskip 10pt

 \begin{center}
\begin{tabular}{|c|c|c|c|}
\hline 
$E \backslash K$   &$F \times F$  &  $E$   &  field   \\
\hline 
& & &  \\
$F \times F$ &   Rankin-Selberg  &  Rankin-Selberg    &   Asai  \\
& & & \\
\hline 
& & & \\
field  & GGP &    $\U(V) \subset \GL(V)$  &  $\U(V) \subset \U(V_K)$  \\
& & & \\
\hline
\end{tabular}
\end{center}
\vskip 10pt

\begin{remark}
  We remark that in the case when $E=F \times F$, and $K$ is a separable quadratic extension of $F$
  (corresponding to the first row of the above table), we would be asserting that for any irreducible admissible generic representation $\pi$ of $\GL_n(K)$, and for $\omega$ the Weil representation of $\GL_n(F)$ realized on the Schwartz space ${\mathcal S}(F^n)$, we have,
  \[ \Hom_{\GL_n(F)}[ \pi \otimes \omega, \C] = \C. \]
  The assertion on dimension of the Hom space being $\leq 1$ is part of Theorem B of \cite{S}, and that it is nonzero is the conclusion of the Rankin-Selberg theory. 
  \end{remark}

  The last case in the table above, when $E \ne K$ are two distinct quadratic fields,  is the most complex and will be discussed in \S \ref{S:local-general}.
To provide some evidence for our conjectures, we will prove them  when $n= \dim V \leq 2$ (see \S \ref{S:lowrank} and \S \ref{S:evi}), as well as for unitary principal series representations for general $n$ (see \S \ref{S:UPS}, \S \ref{S:UPS2} and \S \ref{S:UPS3}, especially Corollary \ref{C:ps} and  Theorem \ref{T:ups2}). 
Indeed, when $E = K$, we reduce the conjecture for tempered representations to the case of essentially discrete series representations of $\GL(V)$ (in Corollary \ref{C:temp-to-ds}), and further to the case of supercuspidal representations under a certain hypothesis (in Theorem \ref{sc-to-ds}). In particular, this allows us to prove the conjecture for the Steinberg representation (in Corollary \ref{C:Steinberg}).  As a supplementary result, we show the vanishing of the corresponding higher Ext groups $\Ext^i$ ($i \geq 1$) for tempered representations (in Theorem \ref{Chen}). 
\vskip 5pt

We will also consider the twisted period problems over global fields. As in the GGP conjectures, one  expects that the nonvanishing of the global period integral here too is equivalent to the nonvanishing of a corresponding central L-value, in the absence of local obstructions. For example, when 
$E = K$, the relevant central L-value is $L(1/2, M \times {}^\sigma \!M ^\vee \times \mu^{-1})$.  One can also formulate a refined conjecture in the style of Ichino-Ikeda, which gives a precise formula relating the global period integral to the product of the above central L-value and certain canonical local period integrals. In the global context, it is interesting to note that when $E \ne K$, all possible local scenarios given in the above table will arise. Hence, one of our goals in this paper is to give a uniform formulation of the local conjectures which can be specialized to all the local scenarios in the table.
\vskip 5pt

With the twisted GGP problems and conjectures formulated, one can ask if all the previous work that has been done for the GGP conjectures can be adapted to this twisted setting. 
These include Waldspurger's and Beuzart-Plessis's integral formulae for the branching multiplicity and comparison of  Jacquet-Rallis relative trace formulae, which in the skew-Hermitian case is due to Y.F. Liu \cite{L} and H. Xue \cite{X1,X2}.  To this end, we remark that                    an integral formula for the branching multiplicity is being developed in
the
thesis work of Nhat Hoang Le (a student of the first author), whereas a                                  relative trace formula approach
is being pursued in the thesis work of Danielle Wang (a student of Wei Zhang at MIT).

The work which we needed to do in this paper with the Mackey theory allowed us to deal with certain non-tempered representations too leading us naturally to the non-tempered analogue of the GGP conjectures as in \cite{GGP3}  in  \S \ref{ggp3} in the twisted setting.
 In considering this twisted case, we realized that our original conjectures for non-tempered representations, where we introduced the concept of relevant parameters, needed to be clarified in some cases. This is also done in  \S \ref{ggp3}.

\vskip 10pt

 \section{\bf When $E = K$ is a field}  \label{S:KisE}
In this section, we consider the simpler case $E = K$, which was briefly discussed in the introduction. We shall formulate our conjectures more formally here, in both the local and global setting.
\vskip 5pt

\subsection{\bf Local case.}
We assume first that $F$ is a local field and $E/F$ is a separable quadratic field extension. We will let $E_0$ denote the $F$-subspace of trace $0$ elements in $E$ and let $E_1 \subset E^{\times}$ denote the subgroup of norm 1 elements. Fix a nontrivial additive character $\psi$ of $F$ and let $\sigma \in {\rm Gal}(E/F)$ be the nontrivial automorphism of $E/F$.
\vskip 5pt

For a skew-Hermitian space $V$ over $E$ of dimension $n$, we recall that 
\[  \det(V) \in  \begin{cases}
F^{\times}/ N_{E/F}(E^{\times}), \text{ if $n$ is even;} \\
E_0^{\times}/N_{E/F}(E^{\times}), \text{ if $n$ is odd.} \end{cases}\]
 If $F$ is nonarchimedean, there are precisely two skew-Hermitian spaces of dimension $n$, distinguished by their determinants.  When $F$ is archimedean, there are many more skew-Hermitian spaces, distinguished by their signatures.  
 \vskip 5pt
 
 Without loss of generality, we may assume that all these skew-Hermitian
 spaces (of a given dimension) have the same underlying vector
 space  $V$ over $E$, equipped with non-isomorphic skew-Hermitian
 forms. Thus the unitary groups
 $\U(V) \subset \GL(V)=\Aut_E(V) = \GL_n(E)$
 are  all subgroups of a fixed ambient group $\GL_n(E)$.
\vskip 5pt

For each skew-Hermitian space $V$ over $E$ and a conjugate-symplectic character $\mu$ of $E^{\times}$, we have the associated Weil representation $\omega_{V, \psi, \mu}$ of $\U(V)$. 
Now for an irreducible representation $\Pi$ of $\GL(V) \cong \GL_n(E)$, we consider the Hom space
\[  \Hom_{\U(V)}(\Pi, \omega_{V, \psi, \mu}). \]
Here is our main local conjecture in this case.
\vskip 5pt

\begin{conj}  \label{conj-local}
(i) For any $\Pi \in \Irr(\GL(V))$, 
\[  \dim  \Hom_{\U(V)}(\Pi, \omega_{V, \psi, \mu}) \leq 1.  \]
\vskip 5pt

(ii) If  $\Pi \in \Irr(\GL(V))$ is generic, then
\[   \sum_V  \dim \Hom_{\U(V)}(\Pi, \omega_{V,\psi,\mu})  = 1.  \]
where the sum is over the equivalence classes of skew-Hermitian structures on  $V$.
\vskip 5pt

(iii) For generic $\Pi \in \Irr(\GL(V))$, the unique skew-Hermitian space $V$ which gives a nonzero contribution to the above sum satisfies:
\[  \mu(\det(V)) = \epsilon( 1/2,  \Pi \times {}^\sigma\Pi ^\vee \times \mu^{-1}, \psi_E) \cdot \omega_{\Pi}(-1)^n \cdot \omega_{E/F}(-1)^{n(n-1)/2},\]
where ${}^\sigma \Pi^{\vee}$ is the conjugate-dual representation of $\Pi$ and $\omega_{\Pi}$ is the central character of $\Pi$.
\end{conj}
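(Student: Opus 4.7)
The plan is to attack the three parts of the conjecture in sequence, using reductions via Mackey theory together with global (trace-formula or theta) input for the residual supercuspidal case.

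For the multiplicity-one statement (i), I would establish $(\GL(V), \U(V))$ as a twisted Gelfand pair with respect to $\omega_{V,\psi,\mu}$. Concretely, one studies the space of $\U(V)$-invariant distributions on $\GL(V)$ valued in $\omega_{V,\psi,\mu}^\vee$, and shows that every such distribution is invariant under a natural anti-involution (for instance, $g \mapsto {}^\sigma\! g^{-1}$ or transpose), using Bernstein's localization principle and a careful orbit analysis. Since $\U(V)$ is the fixed-point subgroup of the Galois involution on $\GL(V) = \GL_n(E)$, the structure of the symmetric space $\GL_n(E)/\U(V)$ is well understood, and the proof should run parallel to Sun's theorem \cite{S} for the $\U(V) \subset \U(V) \times \U(V)$ setting and to the Aizenbud--Gourevitch--Rallis--Schiffmann argument in the $\GL$ case.

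For (ii) and (iii), the strategy is a two-step local reduction followed by a global argument for the irreducible supercuspidal base case. Using Mackey theory for the restriction of a parabolically induced representation $\Pi = \Ind_P^{\GL_n(E)} \tau$ to $\U(V)$, the $\U(V) \backslash \GL_n(E)/P$ orbits can be parameterized by non-degenerate Hermitian subspaces of the standard flag variety, and each orbit's contribution factors through a branching problem for a smaller twisted pair together with a Hermitian Rankin--Selberg type period. The open orbit always contributes, while closed-orbit contributions can be controlled by a Jacquet-module argument. This inductively reduces the tempered case to essentially discrete series, exactly as in Corollary \ref{C:temp-to-ds}, and further to supercuspidals as in Theorem \ref{sc-to-ds}. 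Simultaneously, the analogous epsilon-factor identity in (iii) is checked to be compatible with parabolic induction using the standard multiplicativity of $\epsilon$ and the transformation of $\mu(\det V)$ under orthogonal direct sums of skew-Hermitian spaces, so that the inductive step matches on both sides.

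The epsilon dichotomy in the remaining supercuspidal case is the main obstacle, and is where I expect the deepest work to be needed. The natural route, mirroring what Beuzart-Plessis carried out for the Hermitian GGP conjecture, is a relative trace formula comparison in the spirit of Jacquet--Rallis, adapted to the present twisted setting. One compares a $\U(V)$-period relative trace formula on $\GL_n(E)$ (weighted by the Weil representation matrix coefficient) against a Rankin--Selberg type relative trace formula on the general linear side, matches orbital integrals through a smooth transfer and a fundamental lemma, and reads off the dichotomy from the spectral decomposition. This is precisely the program being developed in the thesis work of Danielle Wang mentioned in the introduction. As base cases, the low-rank verifications ($n \leq 2$) of \S \ref{S:lowrank} and \S \ref{S:evi}, and the full verification for unitary principal series of \S \ref{S:UPS}--\S \ref{S:UPS3}, furnish the necessary anchoring identities, while the Steinberg case (Corollary \ref{C:Steinberg}) and the Ext-vanishing of Theorem \ref{Chen} offer rigidity that should propagate through the RTF comparison. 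An alternative local approach via see-saw dualities and theta lifts between $(\GL_n(E), \U(V))$ and a dual reductive pair involving a metaplectic group may handle specific classes of $\Pi$, but the combined local--global trace-formula method seems indispensable for the general supercuspidal case.
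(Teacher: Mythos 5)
This statement is a conjecture, and the paper does not prove it in full; it only supplies partial evidence (the cases $\dim V \leq 2$, unitary principal series, the Steinberg representation, and conditional reductions). Your proposal is likewise a research program rather than a proof, and you should be clear-eyed that it does not close the conjecture. Where your plan overlaps with what the paper actually carries out, it matches: the Mackey-theoretic analysis of $\U(V)$-orbits on $\GL(V)/P$, the vanishing of non-open-orbit contributions for tempered inducing data via a central-character/Jacquet-module argument (Theorem \ref{ps1}), the resulting reduction of the tempered case to discrete series (Corollary \ref{C:temp-to-ds}), and the compatibility of the epsilon-factor recipe in (iii) with parabolic induction via multiplicativity of $\epsilon$ and additivity of $\det(V)$ (Corollary \ref{ds}). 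Your low-rank and principal-series anchors are exactly the paper's §\ref{S:lowrank} and Corollary \ref{C:ps}.

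The genuine gaps are the following. First, for (i) you only assert that a twisted Gelfand-pair argument ``should run parallel'' to Sun's theorem and to Aizenbud--Gourevitch--Rallis--Schiffmann; no such argument is given here or in the paper, and the relevant symmetric-space/distribution analysis for $\GL_n(E)/\U(V)$ twisted by the Weil representation is not a formal consequence of the existing multiplicity-one theorems. Second, your passage from discrete series to supercuspidals understates the conditionality of Theorem \ref{sc-to-ds}: that reduction requires the vanishing of $\Hom_{\U(V+V)}[\Sp(\pi,2),\omega]$ and $\Hom_{\U(V+W)}[\Sp(\pi,2),\omega]$, which is itself an instance of the nontempered Conjecture \ref{nontemperedggp} and is established in the paper only when $\dim V = 1$ (whence only the Steinberg case of Corollary \ref{C:Steinberg} is unconditional). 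Third, the supercuspidal epsilon dichotomy --- the actual heart of (ii) and (iii) --- is deferred to a relative-trace-formula comparison that exists only as work in progress; the transfer, fundamental lemma, and spectral identification needed there are substantial open problems, not steps one can currently cite. So the proposal is a reasonable roadmap consistent with the paper's partial results, but it does not constitute a proof of the conjecture.
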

As noted in the introduction, the ratio of the two sides of (iii) is a priori $\pm 1$.
When $F$ is nonarchimedean,   the condition (iii) in the conjecture uniquely determines the summand with nonzero contribution to the sum in (ii).
When $F = \R$ and $E = \C$,   one needs to be more specific about the $V$ which gives nonzero contribution. We shall consider this archimedean case in greater detail in \S \ref{S:arch}.  Note that if we define the discriminant of $V$ by 
\[  {\rm disc}(V) = (-1)^{n(n-1)/2} \cdot \det(V), \]
then the formula in (iii) can be 
expressed more succinctly as
\[  \mu({\rm disc}(V))   =  \epsilon( 1/2,  \Pi \times {}^\sigma \Pi^\vee \times \mu^{-1}, \psi_E) \cdot \omega_{\Pi}(-1)^n,\] 
taking note of the fact that $\mu(-1) = \omega_{E/F}(-1)$.
We shall provide some evidence for this conjecture in the next two sections, verifying it for $\dim V \leq 2$ and for unitary principal series representations of $\GL(V)$ for $V$ of arbitrary dimension over $E$.

\vskip 5pt

In the above formulation, the conjecture does not require  the local Langlands correspondence, as the local root number in (iii) can be interpreted as the Rankin-Selberg local root number defined by Jacquet-Piatetski-Shapiro-Shalika \cite{JPSS}.

Let $M$ denote the Langlands parameter of $\Pi$, so that $M$ is an $n$-dimensional representation of the Weil-Deligne group $WD_E$ of $E$ with $\det(M)$ corresponding to the central character $\omega_{\Pi}$ under the local class field theory. We have noted in the introduction that $M \otimes {}^\sigma \!M ^\vee \otimes \mu^{-1}$ is a conjugate-symplectic representation of $WD_E$. 
Then Conjecture \ref{conj-local}(iii)
can be written as:
\[  \mu(\det(V)) = \epsilon( 1/2, M \otimes {}^\sigma \!M ^\vee \otimes \mu^{-1}, \psi_E) \cdot \det(M)(-1)^n \cdot \omega_{E/F}(-1)^{n(n-1)/2}. \]
Note that, for $e \in E_0^{\times}$,
\[  \det(M \otimes {}^\sigma \!M ^\vee)(e)  = \det(M)(e)^n / \det(M)(e^{\sigma})^n =  \det(M)(-1)^n,  \]
and
\[   \omega_{E/F}(-1) = \omega_{K/F}(e^2) = (e^2, e^2) \quad \text{(Hilbert symbol)}. \]
Hence the above identity can be expressed as (for $E=K$),
\[  \mu(\det(V)) = \epsilon( 1/2, M \otimes {}^\sigma \!M ^\vee \otimes \mu^{-1}, \psi_E) \cdot \det(M \otimes {}^\sigma \!M ^\vee)(e) \cdot \omega_{K/F}(e^2)^{n(n-1)/2}, \]
and it is this last statement that generalizes well when we deal with the general case
(where $E \ne K$) later.

\vskip 10pt

\subsection{\bf Global case}  \label{SS:global}
Consider now the case when $E/ F$ is a quadratic extension of global fields with adele rings $\A_E$ and $\A_F$.  Fix a nontrivial additive character $\psi$ of $F \backslash \A_F$.
We shall consider all skew-Hermitian structures on  a vector space $V$ of dimension $n$ over $E$.
\vskip 5pt

Let $\Pi \cong \otimes_v \Pi_v$ be a cuspidal automorphic representation of $\GL(V)(\A_F) = \GL(V\otimes_F\A_F) = \GL(V\otimes_E \A_E)$, thus $\Pi_v$
are, in particular,  generic representations for each place $v$ of $E$. For a conjugate-symplectic Hecke character $\mu$ of $\A_E^{\times}$, we may consider the automorphic Weil
representation $\omega_{V,\psi, \mu}$ of $\U(V)(\A_F)$ (see \cite{GGP1}). Now we consider the global period integral
\[  \mathcal{P}_V : \Pi\otimes \overline{\omega_{V,\psi, \mu}} \longrightarrow \C \]
defined by
\[   \mathcal{P}_V(f, \phi) = \int_{[\U(V)]}  f(g) \cdot \overline{\phi(g)} \, dg 
\quad \text{for $f \in \Pi$ and $\phi \in \omega_{V,\psi, \mu}$,} \]
where we have written $[\U(V)]$ for the adelic quotient $\U(V)(F) \backslash \U(V)(\A_F)$ with $dg$
the Tamagawa measure on it. 
\vskip 5pt

Globally, we are interested in characterizing the nonvanishing of this period integral. Our global conjecture is
the following.
\vskip 5pt

\begin{conj}  \label{conj-global}
  In the above setting, in particular for $V$ a
skew-Hermitian space over a global field $E$,
  the global period integral $\mathcal{P}_V$
  is nonzero if and only if the following two conditions hold (denoting $V_v=V \otimes F_v$):
\vskip 5pt

\begin{itemize}
\item[(a)]  For all places $v$ of $F$, $\Hom_{\U(V_v)}( \Pi_v, \omega_{V_v, \psi_v, \mu_v}) \ne 0$. 
\vskip 5pt

\item[(b)] $   L(1/2,   \Pi \times {}^\sigma\Pi^\vee \times \mu^{-1})  \ne 0.$
\end{itemize}
Further, for a cuspidal automorphic representation $\Pi$ of $\GL_n(\A_E)$,
if $   L(1/2,   \Pi \times {}^\sigma\Pi^\vee \times \mu^{-1})  \ne 0$, then there exists
a unique skew-Hermitian space  $V$ of dimension $n$ over $E$
such that the global period integral $\mathcal{P}_V$ is nonzero.
\end{conj}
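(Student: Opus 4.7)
The plan is to prove Conjecture \ref{conj-global} by adapting the strategy that proved the original global GGP conjecture: combining a refined Ichino--Ikeda type formula with the twisted Jacquet--Rallis relative trace formula whose development is mentioned in the introduction.

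\textbf{Easy direction.} Assuming $\mathcal{P}_V \ne 0$, condition (a) is immediate: by the Tensor Product Theorem and Conjecture \ref{conj-local}(i), the global Hom space factors as a restricted tensor product $\bigotimes_v \Hom_{\U(V_v)}(\Pi_v, \omega_{V_v,\psi_v,\mu_v})$, so each local factor must be nonzero. Condition (b) would be extracted from an Ichino--Ikeda style identity of the form
\[ |\mathcal{P}_V(f,\phi)|^2 \;=\; \frac{L(1/2, \Pi \times {}^\sigma\Pi^\vee \times \mu^{-1})}{L^*(1,\Pi,\mathrm{Ad})} \cdot \prod_v \mathcal{I}_v(f_v,\phi_v), \]
where $\mathcal{I}_v$ is the natural local period integral along $\U(V_v)$, normalized so that it is nonzero exactly when the local Hom space is. Such an identity is the direct analogue of the formulas of Y.F.~Liu \cite{L} and H.~Xue \cite{X1,X2} in the untwisted skew-Hermitian case, and would be proved here through the integral formulae being developed in the thesis of Nhat Hoang Le.

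\textbf{Hard direction.} Given (a) and (b), the task is to produce data $(f,\phi)$ with $\mathcal{P}_V(f,\phi) \ne 0$. This is the main obstacle. The natural approach is a twisted Jacquet--Rallis relative trace formula, comparing the period expansion of $\mathcal{P}_V$ on $\U(V) \subset \GL(V)(F) = \GL_n(E)$ (with the matrix coefficient of $\omega_{V,\psi,\mu}$ as weight, rather than the trivial weight of the untwisted case) against a Kuznetsov-type formula on $\GL_n(\A_E) \times \GL_n(\A_E)$ whose spectral side is governed by $L(1/2,\Pi \times {}^\sigma\Pi^\vee \times \mu^{-1})$. One would need to identify the orbit matching, establish the smooth transfer, and prove the fundamental lemma in this twisted skew-Hermitian setting. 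This is precisely the program being pursued in the thesis of Danielle Wang, and represents the principal analytic hurdle.

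\textbf{Existence and uniqueness of $V$.} Assume $L(1/2,\Pi \times {}^\sigma\Pi^\vee \times \mu^{-1}) \ne 0$. By Conjecture \ref{conj-local}(ii) at each place, there is a unique local skew-Hermitian space $V_v^0$ with nonzero local Hom space, and by (iii) its invariant is
\[ \mu_v(\det(V_v^0)) \;=\; \epsilon(1/2, \Pi_v \times {}^\sigma\Pi_v^\vee \times \mu_v^{-1}, \psi_{E,v}) \cdot \omega_{\Pi_v}(-1)^n \cdot \omega_{E_v/F_v}(-1)^{n(n-1)/2}. \]
Taking the product over all $v$ and applying the global product formula for local root numbers yields
\[ \prod_v \mu_v(\det(V_v^0)) \;=\; \epsilon(1/2, \Pi \times {}^\sigma\Pi^\vee \times \mu^{-1}). \]
Because $\Pi \times {}^\sigma\Pi^\vee \times \mu^{-1}$ is a conjugate-symplectic self-dual parameter, the functional equation together with the hypothesis $L(1/2) \ne 0$ forces this global root number to equal $+1$. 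By the Hasse principle for skew-Hermitian forms over $E$, the vanishing of this product is exactly the obstruction to glueing the collection $(V_v^0)_v$ into a global skew-Hermitian space $V/E$, which therefore exists and is uniquely determined by its local invariants. Feeding this $V$ into the hard direction produces the required nonvanishing of $\mathcal{P}_V$, while the local uniqueness from Conjecture \ref{conj-local}(ii) rules out any other $V'$.
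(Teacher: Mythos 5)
You should first note that the statement you are proving is a \emph{conjecture}: the paper offers no proof of the biconditional, and the only argument it supplies is the discussion immediately following Conjecture \ref{conj-global} in \S \ref{SS:global}, which establishes just the consistency of the final assertion. Namely, a collection of local spaces $\{V_v\}$ is coherent if and only if $\prod_v \mu_v(\det V_v)=1$; by Conjecture \ref{conj-local}(iii) this product equals $\epsilon(1/2, \Pi\times{}^\sigma\Pi^\vee\times\mu^{-1})$; and $L(1/2)\ne 0$ forces this global root number to be $+1$, so the distinguished local spaces glue to a unique global $V$, whose period is then nonzero \emph{by the first part of the conjecture itself}. Your third paragraph reproduces exactly this argument (modulo a slip of wording: the obstruction to coherence is the product being $-1$, not its ``vanishing''), and it is correct conditional on Conjecture \ref{conj-local}(ii)--(iii), which is how the paper presents it.

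The gap is that the rest of your proposal is a programme, not a proof. Your ``easy direction'' extracts (b) from an Ichino--Ikeda identity that is itself only conjectural in this setting (it is Conjecture \ref{conj-refined-global}, resting on the unproven local Conjecture \ref{conj-integral}; even the nonvanishing and unramified computation of the local integrals $\mathcal{I}_v$ are listed in \S \ref{SS:global-refined} as things ``one would like to show,'' and convergence is only asserted for tempered $\Pi_v$). Your ``hard direction'' invokes a twisted Jacquet--Rallis relative trace formula for which, as the introduction states, the orbit matching, smooth transfer and fundamental lemma have not been established --- this is precisely the thesis work in progress that the paper cites. So as a blind proof attempt the proposal does not close the argument: every implication with real content is delegated to an unproven black box. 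If the intent was to outline the expected route to the conjecture, the outline is the standard and reasonable one; but nothing beyond the coherence argument already in the paper is actually demonstrated.
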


\vskip 5pt

Observe that if we are given a collection of local skew-Hermitian spaces $\{V_v\}$ for all places  $v$ of $F$
(of a fixed dimension $n \geq 1$)
then 
the adelic skew-Hermitian space $\otimes_v V_v$ is coherent over $F$, i.e. the family of local
skew-Hermitian spaces $ V_v$ comes from a global  skew-Hermitian space $V$,  if and only if 
\[   \prod_v \mu_v\left ( \det(V_v) \right) = 1. \]

Therefore, given a cuspidal automorphic representation $\Pi$ of $\GL_n(\A_E)$,
if the local skew-Hermitian spaces $\{V_v\}$ are the ones for which
$\Hom_{\U(V_v)}( \Pi_v, \omega_{V_v, \psi_v, \mu_v}) \ne 0$ for all places $v$ of $F$, then
part (iii) of our local Conjecture \ref{conj-local} implies that this collection
of local skew-Hermitian spaces $\{V_v\}$ is coherent over $F$ if and only if 
\[  \epsilon(1/2,  \Pi \times {}^\sigma\Pi^\vee \times \mu^{-1}) = 1. \]

Therefore, given a cuspidal automorphic representation $\Pi$ of $\GL_n(\A_E)$ for which the
global period integral on $[U(V)]$ is nonzero (hence
$\Hom_{\U(V_v)}( \Pi_v, \omega_{V_v, \psi_v, \mu_v}) \ne 0$ for all places $v$ of $F$), then
$\epsilon(1/2,  \Pi \times {}^\sigma\Pi^\vee \times \mu^{-1}) = 1$.
Thus a necessary condition for the nonvanishing of  $L(1/2, \Pi \times {}^\sigma\Pi^\vee \times \mu^{-1})$
is satisfied if the global period integral on $[U(V)]$ is nonzero. Conversely,
given a cuspidal automorphic representation $\Pi$ of $\GL_n(\A_E)$ for which $L(1/2, \Pi \times {}^\sigma\Pi^\vee \times \mu^{-1}) \not = 0$,  hence $\epsilon(1/2,  \Pi \times {}^\sigma\Pi^\vee \times \mu^{-1}) = 1$, we have a global
skew-Hermitian space $V$, unique up to isomorphism,   for which Conjecture \ref{conj-global} implies
nonvanishing of period integral on $[\U(V)]$.

\vskip 10pt

\subsection{\bf A refined global conjecture}  \label{SS:global-refined}
Not surprisingly, one expects to be able to refine the above global conjecture to a precise formula relating the the global period integral to the central L-value.
 \vskip 5pt

 For  $\Pi \cong \otimes_v \Pi_v$,
 a cuspidal automorphic representation of $\GL(V)(\A_F) = \GL(V\otimes_F\A_F) = \GL(V\otimes_E \A_E)$, and 
 $\omega_{V, \psi, \mu} \cong \otimes_v  \omega_{V_v, \psi_v, \mu_v}$, the Weil representation of $\U(V)(\A_F)$, $f_v,f'_v \in \Pi_v$ and $\phi_v,\phi'_v \in
 \omega_{V_v, \psi_v, \mu_v}$,  
 we may consider the following integral of matrix coefficients for each place $v$ of $F$:
\begin{equation} \label{E:local-integral}
 \mathcal{I}_v ( f_v, f'_v, \phi_v, \phi'_v) := \int_{\U(V)(F_v)} \langle   g_v \cdot f_v, f'_v \rangle  \cdot \overline{ \langle g_v \cdot \phi_v, \phi'_v\rangle} \,  \, dg_v. \end{equation}
As in \cite{X2}, it is not hard to see that if $\Pi_v$ is tempered, this integral is absolutely convergent, so that it defines a $\U(V_v) \times \U(V_v)$-equivariant linear functional
\[  \mathcal{I}_v :  \Pi_v \otimes \overline{\Pi}_v \otimes \overline{\omega_{V_v, \psi_v, \mu_v}} \otimes \omega_{V_v, \psi_v,\mu_v} \longrightarrow \C. \]
Now one would like to:
\vskip 5pt

\begin{itemize}
\item show that $\mathcal{I}_v$ is nonzero if and only if $\Hom_{\U(V)(F_v)}( \Pi_v , \omega_{V_v, \psi_v,\mu_v}) \ne 0$;

\item compute this integral at almost all places $v$ of $F$  where every data involved is unramified. 
\end{itemize}
Without having done this work, we may nonetheless venture a guess here, in analogy with the original GGP case \cite{X2}. 
\vskip 5pt
\begin{conj}  \label{conj-integral}
Suppose that 
\vskip 5pt
\begin{itemize}
\item $E_v/F_v$ is an unramified quadratic extension of residue characteristic not $2$ and $\psi_v$ has conductor $\mathcal{O}_{F_v}$;
\item $\mu_v$ is unramified; 
\item $V_v$ contains a self-dual lattice $\Lambda_v$ whose stabilizer in $\U(V_v)$ is a hyperspecial  maximal compact subgroup $K_v$, contained in $\tilde{K}_v =  \GL(\Lambda_v) \subset \GL(V_v)$;
\item $dg_v$ is the Haar measure on $\U(V_v)$ which gives $K_v$ volume $1$;
\item $\Pi_v$ is $\tilde{K}_v$-unramified and $f_v = f'_v$ is a $\tilde{K}_v$-spherical  vector of norm 1;
\item $\phi_v = \phi'_v$ is a $K_v$-spherical vector of norm 1
  in the Weil representation $\omega_{V_v, \psi_v, \mu_v}$.
\end{itemize}
Then
\[   \mathcal{I}_v ( f_v, f'_v, \phi_v, \phi'_v)  = \frac{L(1, M_{\GL(V_v)}^{\vee})}{L(1, M^{\vee}_{\U(V_v)})} \cdot  \frac{ L(1/2, \Pi_v \times {}^\sigma\Pi^\vee_v \times \mu_v^{-1})}{L(1, \Pi_v, {\rm Ad})}, \]
where
\[  L(1, M_{\GL(V_v)}^{\vee}) = \prod_{k=1}^n \zeta_{E_v}(k) \quad \text{and} \quad   L(1, M^{\vee}_{\U(V_v)}) = \prod_{k=1}^n  L(k, \omega_{E_v/F_v}^k) \]
are the values at $s=1$ of the L-functions of the dual motives of $\GL(V)$ and $\U(V)$ respectively. (One may observe that the expression for $\mathcal{I}_v ( f_v, f'_v, \phi_v, \phi'_v)$ given above implies, in particular,
that it is nonzero.)
\end{conj}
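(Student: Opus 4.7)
The plan is to follow the template of the Ichino--Ikeda conjecture in the original GGP setting, and in particular the local unramified matrix coefficient computations of Y.F.~Liu \cite{L} and H.~Xue \cite{X2} for the skew-Hermitian GGP. Both factors in the integrand are bi-$K_v$-invariant: $\Phi_v(g) := \langle g f_v, f_v \rangle$ because $f_v$ is fixed by $\tilde K_v \supset K_v$, and $\Theta_v(g) := \langle \omega(g)\phi_v, \phi_v \rangle$ because $\phi_v$ is $K_v$-fixed. The Cartan decomposition $\U(V_v) = \bigsqcup_{\lambda \in \Lambda^+} K_v a_\lambda K_v$ then reduces the problem to evaluating the sum
\[ \mathcal{I}_v(f_v,f_v,\phi_v,\phi_v) = \sum_{\lambda \in \Lambda^+} \mathrm{vol}(K_v a_\lambda K_v) \cdot \Phi_v(a_\lambda) \cdot \overline{\Theta_v(a_\lambda)}. \]

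I would compute the two matrix coefficients separately. For the Weil representation one works in a mixed Schr\"odinger model as in \cite[pp.~47--50]{GGP1}: the spherical vector is a characteristic function of a self-dual lattice in a chosen Lagrangian, the splitting of $\U(V_v) \hookrightarrow \mathrm{Mp}(V_v)$ over $K_v$ is explicit because $\mu_v$ is unramified, and one expects $\Theta_v(a_\lambda)$ to admit a closed-form expression, supported on a shifted cone in $\Lambda^+$ and depending on $\lambda$ only through $\mu_v$ and a power of the residue cardinality. For the $\GL(V_v)$-side, the restriction of $f_v$ to $\U(V_v)$ is a $K_v$-spherical vector of the restricted representation $\Pi_v|_{\U(V_v)}$, and $\Phi_v(a_\lambda)$ is given by Macdonald's formula for the zonal spherical function on $\GL_n(E_v)$ evaluated at the cocharacters of the $\U(V_v)$-torus, expressed in terms of the Satake parameters $\{\alpha_1, \ldots, \alpha_n\}$ of $\Pi_v$.

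The hard part, and the main obstacle, is the combinatorial matching of the resulting series with the displayed ratio of L-factors. The volumes $\mathrm{vol}(K_v a_\lambda K_v)$ should produce the motivic ratio $L(1, M_{\GL(V_v)}^{\vee})/L(1, M^{\vee}_{\U(V_v)})$ through the local factorisation of the Tamagawa measures, while the summed product $\Phi_v(a_\lambda) \cdot \overline{\Theta_v(a_\lambda)}$ should unfold, via a Casselman--Shalika style manipulation, to
\[ \frac{L(1/2, \Pi_v \times {}^\sigma\Pi_v^\vee \times \mu_v^{-1})}{L(1, \Pi_v, \Ad)}. \]
Careful bookkeeping of the Satake parameters $\{\alpha_i\}$ and their conjugate duals $\{\alpha_i^{-\sigma}\}$, together with the unramified twist by $\mu_v$, will be needed. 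A conceptually cleaner alternative is to invoke a see-saw argument for the dual pair $(\U(V_v) \times \U(V_v),\, \U(V_v \oplus V_v^-))$ and recast $\mathcal{I}_v$ as a local doubling zeta integral for $\Pi_v$ on the $\U(V_v)$-diagonal of $\GL(V_v)$; the unramified evaluation of Piatetski-Shapiro--Rallis type doubling integrals would then yield the claim directly, with the motivic ratio emerging intrinsically from the normalisation of the intertwining operators in the doubling setup.
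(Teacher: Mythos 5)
You should first note that the paper does not prove this statement at all: it is labelled a conjecture, and the authors say explicitly just before it that ``without having done this work, we may nonetheless venture a guess here, in analogy with the original GGP case \cite{X2}.'' So there is no proof in the paper to compare against, and the task is really whether your proposal constitutes a proof on its own. It does not. What you have written is a sensible research plan, consistent with how the analogous unramified computations are carried out by Liu and Xue in the untwisted skew-Hermitian GGP setting, but every decisive step is deferred: the closed form for $\Theta_v(a_\lambda)$ is something you ``expect,'' the volume factors ``should produce'' the motivic ratio, and the series ``should unfold'' to the Rankin--Selberg over adjoint ratio. You acknowledge yourself that the combinatorial matching is ``the main obstacle,'' which is precisely the content of the conjecture.

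Two specific points deserve flagging. First, the reduction of $\Phi_v(a_\lambda)$ to ``Macdonald's formula evaluated at the cocharacters of the $\U(V_v)$-torus'' hides a genuine difficulty: the $\U(V_v)$-Cartan cell $K_v a_\lambda K_v$ sits inside a $\GL_n(E_v)$-Cartan cell $\tilde K_v a_{\tilde\lambda} \tilde K_v$ in a way one must make explicit (including how $\mathrm{vol}(K_v a_\lambda K_v)$ compares with the intersection of the two cells), and it is exactly this interplay between the two Cartan decompositions that produces the ratio $L(1, M_{\GL(V_v)}^{\vee})/L(1, M^{\vee}_{\U(V_v)})$; asserting that it ``should'' emerge from Tamagawa-measure factorisation is not an argument. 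Second, the proposed see-saw/doubling shortcut is doubtful as stated: the doubling method computes matrix-coefficient integrals of representations of the classical group itself, whereas here $\Phi_v$ is the restriction to $\U(V_v)$ of a matrix coefficient of a representation of the larger group $\GL(V_v)$, so $\mathcal{I}_v$ is not literally a doubling zeta integral for any representation of $\U(V_v\oplus V_v^-)$ (the rank-one case in \S\ref{SS:global-rank1} works only because there $\GL(V)=E^\times$ and $\chi|_{E_1}$ is itself a representation of $\U(V)$). In short: the approach is reasonable and is the one the authors implicitly have in mind, but as written it is a statement of intent rather than a proof, and the statement remains open.
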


Given this, it is natural to define a normalized local period integral:
\begin{equation} \label{E:local-integral2}
 \mathcal{I}_v^{\#} = \frac{L(1, M^{\vee}_{\U(V_v)})}{L(1, M_{\GL(V_v)}^{\vee})} \cdot  \frac{L(1, \Pi_v, {\rm Ad})}{L(1/2, \Pi_v \times {}^\sigma\!\Pi^\vee_v \times \mu_v^{-1})} \cdot \mathcal{I}_v. \end{equation}
We also note that if $E_v = F_v \times F_v$, the analog of the above conjecture holds, and has already been considered in the original
formulation of the refined GGP conjecture for skew-Hermitian spaces in \cite{GGP1}. 
\vskip 5pt

Coming back to the global setting, for each of the groups $\GL(V)$ or $\U(V)$, we will fix  a decomposition of the Tamagawa measures $dg = \prod_v dg_v$, so that for almost all $v$, the  local Haar measures $dg_v$ give a hyperspecial maximal compact subgroup volume $1$. We will also fix  a decomposition of the global Petersson inner product as a product of local pairings:
\begin{equation} \label{E:peter}  \langle- , - \rangle_{{\rm Pet}} = \prod_v \langle-, - \rangle_v, \end{equation}
and use these $dg_v$ and $\langle-,-\rangle_v$  in the definition of the local period integrals $\mathcal{I}_v$ introduced above.
We can now state:
\vskip 5pt

\begin{conj}  \label{conj-refined-global}

Given a (tempered) cuspidal automorphic representation $\Pi$ of $\GL(V)$, 
\[  \mathcal{P} \otimes \overline{\mathcal{P}}=  \frac{ L(1/2, \Pi \times {}^\sigma\Pi^\vee \times \mu^{-1})}{L(1, M_{\U(V)}^{\vee})} \cdot 
\left( \frac{L(s, M_{\GL(V)}^{\vee})}{L(s, \Pi, {\rm Ad})} \right)|_{s=1} \cdot \prod_v \mathcal{I}_v^{\#} .\]
as linear functionals on
$\Pi \otimes \overline{\Pi} \otimes \overline{\omega_{V,\psi, \mu}} \otimes  \omega_{V, \psi, \mu}$.  
 \end{conj}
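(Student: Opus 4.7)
The plan is to follow the Ichino--Ikeda paradigm, as adapted to the skew-Hermitian GGP case by Y.F. Liu \cite{L} and H. Xue \cite{X1, X2}. Starting from the product of periods
\[ \mathcal{P}(f, \phi) \cdot \overline{\mathcal{P}(f', \phi')} = \int_{[\U(V)] \times [\U(V)]} f(g_1) \overline{f'(g_2)} \, \overline{\phi(g_1)} \, \phi'(g_2) \, dg_1 \, dg_2, \]
the diagonal substitution $g_1 = g_2 h$ formally rewrites the right-hand side as ${\rm vol}([\U(V)])$ times
\[ \int_{\U(V)(\A_F)} \langle h \cdot f, f' \rangle_{{\rm Pet}} \cdot \overline{\langle h \cdot \phi, \phi' \rangle} \, dh. \]
Assuming $\Pi$ tempered and using the factorization \eqref{E:peter}, this last integral factors formally as $\prod_v \mathcal{I}_v(f_v, f'_v, \phi_v, \phi'_v)$, reducing Conjecture \ref{conj-refined-global} to identifying the proportionality constant as a ratio of L-values. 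This reduction requires a genuine regularization (e.g. truncation of the fundamental domain, as in Arthur's work), since the diagonal substitution is not literally legal when $f, f'$ are cuspidal.

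Next I would prove the local unramified identity of Conjecture \ref{conj-integral} at almost all places. Using the Cartan decomposition of $\U(V_v)$ and the Macdonald--Casselman--Shalika formula for the spherical function of $\Pi_v$, together with an explicit description of the $K_v$-spherical vector of $\omega_{V_v, \psi_v, \mu_v}$ as essentially the characteristic function of the self-dual lattice $\Lambda_v \subset V_v$, one expands $\mathcal{I}_v$ as a sum indexed by dominant cocharacters of a maximal split torus of $\U(V_v)$. The resulting sum should evaluate, via Gindikin--Karpelevich-type manipulations, to the predicted ratio of local L-factors, mirroring Xue's computation \cite{X2} in the standard GGP skew-Hermitian setting.

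To make the global identity rigorous, one would then compare a pair of relative trace formulas: a Jacquet--Rallis-type RTF for the twisted period of $\U(V) \subset \GL(V \otimes_F \A_F)$ against the Weil representation, and a Rankin--Selberg-type RTF on $\GL_n \times \GL_n$ over $E$ which manufactures the central L-value $L(1/2, \Pi \times {}^\sigma \Pi^\vee \times \mu^{-1})$. Smooth matching of orbital integrals and a fundamental lemma at unramified places, followed by a cuspidal spectral decomposition, would then extract Conjecture \ref{conj-refined-global} for individual $\Pi$, with the remaining ramified places absorbed into the normalized factors $\mathcal{I}_v^{\#}$.

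The main obstacle will be the establishment of the RTF comparison in this twisted skew-Hermitian setting, in particular the smooth matching of orbital integrals and the fundamental lemma, which is precisely the subject of the ongoing thesis work of Danielle Wang mentioned in the introduction; this is also where the integral formula being developed by Nhat Hoang Le would feed in, providing control of the local multiplicities on the spectral side. A secondary but nontrivial point is the convergence of $\mathcal{I}_v$ beyond the tempered case: Conjecture \ref{conj-refined-global} sidesteps this by restricting to tempered $\Pi$, but extensions along the lines of \S \ref{ggp3} would require a more delicate regularization of the matrix-coefficient integrals.
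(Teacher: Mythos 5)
The statement you are proving is a conjecture, and neither you nor the paper actually proves it: what you have written is a research program whose two load-bearing steps --- the unramified local identity of Conjecture \ref{conj-integral} and the comparison of relative trace formulae with smooth matching and a fundamental lemma --- are exactly the open problems, as you yourself concede in the final paragraph. There is also a point in your first step that should be stated more carefully: the formal unfolding does not ``reduce the conjecture to identifying a proportionality constant,'' because the unregularized product $\prod_v \mathcal{I}_v$ cannot literally equal $\mathcal{P}\otimes\overline{\mathcal{P}}$. At almost all places $\mathcal{I}_v$ is (conjecturally) the local Euler factor of $L(1/2,\Pi_v\times{}^\sigma\Pi_v^\vee\times\mu_v^{-1})$ divided by $L(1,\Pi_v,{\rm Ad})$ times motivic factors, and the Euler product for the numerator at the center $s=1/2$ does not converge; the entire content of Conjecture \ref{conj-refined-global} is the correct regularized form of that heuristic, so the heuristic cannot be an ingredient of its proof.

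Where the paper does supply evidence, it takes a different and unconditional route from yours: in \S\ref{SS:global-rank1} the case $\dim_E V=1$ is verified by recognizing $\mathcal{P}$ as a global theta lift for the dual pair $\U(V)\times\U(W)$ with $W=\langle 1\rangle$, and then invoking the Rallis inner product formula (\cite{Y}, \cite[Thm.\ A.4.2]{X2}) to produce the central $L$-value and the product of normalized doubling integrals, yielding (\ref{E:refinedfirst}); for $\dim_E V=2$ the conjecture is expected to reduce to Ichino's triple product formula \cite{I}. The theta/doubling route buys complete, closed-form identities in low rank (because the period is itself a theta lift there), while your RTF route is the one that could plausibly treat general $n$ --- but it is currently a roadmap, not a proof, and should be presented as such.
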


Here, note that $L(s, M_{\GL(V)}^{\vee})$ and $L(s, \Pi, {\rm Ad})$ both have a simple pole at $s=1$, so that their ratio is holomorphic and nonzero at $s=1$.

\vskip 15pt
\subsection{\bf Finite fields}
We conclude this section by highlighting the restriction problem for skew-Hermitian spaces over a finite field $F = \mathbb{F}_q$. In the finite field setting, only the case $E = K$ can occur. In this setting, a naive first guess is that for any irreducible generic representation $\Pi$ of $\GL_n(\mathbb{F}_{q^2})$, 
\[  \dim \Hom_{\U_n(\mathbb{F}_q)} (\Pi, \omega)  =1  \]
where $\omega$ is the Weil representation of $\Sp_{2n}(\mathbb{F}_q)$, restricted to the subgroup $\U_n(\mathbb{F}_q)$. However, an examination of the case $n=1$  shows that  this cannot literally be the case because $\dim  \omega = q$ but $\U_1(\mathbb{F}_q)$ has $q+1$ characters. Indeed, the unique nontrivial quadratic character of $\U_1(\mathbb{F}_q)$ is missing from  $\omega$.  Moreover, experience with the usual GGP problem over finite fields shows that the above branching multiplicity can be larger than 1. 
Nonetheless, the naive expectation should be generically true  for cuspidal Deligne-Lusztig representations and it is an interesting question to quantify the extent of its failure. 
\vskip 5pt

Over finite fields, we can also consider this restriction problem for symplectic groups.  For any irreducible generic representation $\Pi$ of $\Sp_{2n}(\mathbb{F}_{q^2})$, one would thus like to determine
\[  \dim \Hom_{\Sp_{2n}(\mathbb{F}_q)}(\Pi, \omega) .  \]
  It is curious that since the two fold cover of $\Sp_{2n}(E)$ splits over $\Sp_{2n}(F)$, there is no
analogous problem for non-archimedean local fields. Perhaps, one could go to four fold cover
of $\Sp_{2n}(E)$ (if the 4th roots of unity are there in $E$) to study the analogous branching problem?
 \vskip 5pt
 
 A first study of these branching problems over finite fields has been conducted by Nhat Hoang Le. 
\vskip 15pt

\section{\bf Evidence in Low Rank}  \label{S:lowrank}

In this section, we will provide some evidence towards Conjecture \ref{conj-local}
when $n = \dim V \leq 2$.  \vskip 10pt

\subsection{\bf Rank one case}  \label{SS:rk1}
We begin by examining the case when $\dim V  =1$, so that 
$\GL(V) = E^{\times} \supset \U(V) = E_1$, where $E_1$ denotes the subgroup of norm one elements.  Given a character $\chi$ of $E^{\times}$, we are thus interested in understanding $\Hom_{E_1}( \chi, \omega_{V,\psi, \mu})$.  This is addressed by a theorem of
Moen \cite{Mo} and Rogawski \cite{R}:
\vskip 5pt

\begin{thm}  \label{T:moen}
If $\chi$ is a character of $E^{\times}$, then
\[ \dim \Hom_{E_1}( \chi, \omega_{V,\psi, \mu}) \leq 1 \]
and equality holds if and only if
\[  \mu(\det(V)) = \chi(-1) \cdot  \epsilon(1/2, \chi^{\sigma}/\chi \cdot \mu^{-1}, \psi_E). \]  
\end{thm}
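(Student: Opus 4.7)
The plan is to reduce the statement to the theta correspondence for the dual pair $(\U(W), \U(V))$, where $W$ is the one-dimensional Hermitian space over $E$. Since $W$ is rank one, $\U(W) = E_1$ as well, so both members of the dual pair are the compact abelian group $E_1$. Fix a pair of splitting characters $(\chi_W, \chi_V)$ with $\chi_V = \mu$, and let $\omega$ denote the associated Weil representation of $\U(W) \times \U(V)$; its restriction to the second factor recovers $\omega_{V,\psi,\mu}$. Since $\U(W)$ is compact abelian, one may decompose
\[
  \omega \;\cong\; \bigoplus_{\lambda \in \widehat{E_1}} \lambda \boxtimes \Theta_V(\lambda),
\]
where each $\Theta_V(\lambda)$ is a representation of $\U(V)$. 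Howe duality for this rank-one dual pair, which here amounts to saying that the character $(\lambda_1,\lambda_2) \mapsto \dim \Hom_{E_1 \times E_1}(\lambda_1 \boxtimes \lambda_2, \omega)$ takes values in $\{0,1\}$, immediately gives the multiplicity-one bound $\dim \Hom_{E_1}(\chi, \omega_{V,\psi,\mu}) \leq 1$.

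Next I would identify $\Theta_V(\lambda)$ explicitly. By Hilbert 90, every character $\lambda$ of $E_1$ may be written as $\tilde\lambda^\sigma/\tilde\lambda$ for some extension $\tilde\lambda$ of $\lambda$ to $E^\times$, unique up to a twist by $\omega_{E/F}\circ N_{E/F}$. A direct calculation in a Schr\"odinger model of $\omega$ (say on $\mathcal{S}(E)$) expresses $\Theta_V(\lambda)$ as an explicit character of $\U(V) = E_1$ depending on $\tilde\lambda$, $\mu$, and $\chi_W$; imposing $\Theta_V(\lambda) = \chi|_{E_1}$ then pins down $\tilde\lambda$ in terms of $\chi$. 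At this point, the remaining question is for which of the two skew-Hermitian spaces $V, V'$ the lift $\Theta_V(\lambda)$ is actually nonzero.

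The final step is the dichotomy: for a fixed $\lambda$, exactly one of the two skew-Hermitian spaces has $\Theta_V(\lambda) \neq 0$, and the choice is controlled by the sign of the local Tate epsilon factor $\epsilon(1/2, \tilde\lambda^\sigma/\tilde\lambda \cdot \mu^{-1}, \psi_E)$, which lies in $\{\pm 1\}$ because the character is conjugate-symplectic. Substituting $\tilde\lambda = \chi$ and accounting for a sign of $\chi(-1)$ coming from the behavior of the epsilon factor under conjugate duality yields
\[
  \mu(\det V) \;=\; \chi(-1)\cdot\epsilon\bigl(1/2,\ \chi^\sigma/\chi\cdot \mu^{-1},\ \psi_E\bigr).
\]

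I expect the main obstacle to be nailing down all the normalizations so that the $\pm 1$ signs line up correctly. Three sensitive ingredients must be tracked: the Weil index implicit in the choice of $\psi$, the auxiliary splitting character $\chi_W$ (whose choice shifts the identification of $\lambda$ with $\chi$), and the $\chi(-1)$ prefactor coming from the substitution $x \mapsto -x$ in the epsilon factor. The cleanest route is to compare Weil's explicit formula for $\omega$ in the Schr\"odinger model with the Tate local integral representation of the epsilon factor, precisely as carried out in the original arguments of Moen \cite{Mo} and Rogawski \cite{R}.
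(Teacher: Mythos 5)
The paper offers no proof of this statement: it is quoted as a theorem of Moen \cite{Mo} and Rogawski \cite{R}, and your outline --- realizing $\omega_{V,\psi,\mu}$ through the dual pair $\U(W)\times \U(V)$ with $\dim_E W=1$, using multiplicity one for that pair, and deciding between the two skew-Hermitian spaces by epsilon dichotomy --- is precisely the argument of those references (and of the theta-dichotomy treatments \cite{HKS, GI}, which the paper itself invokes in its rank-two verification). So the approach matches, with the caveat that your proposal is an outline whose computational core (the explicit model calculation and its matching with Tate's local epsilon factor) is deferred back to \cite{Mo} and \cite{R} rather than carried out. One slip worth correcting: if $\tilde\lambda$ extends a character $\lambda$ of $E_1$ to $E^{\times}$, then $\tilde\lambda^{\sigma}/\tilde\lambda$ restricts on $E_1$ to $\lambda^{-2}$, not to $\lambda$ (since $u^{\sigma}=u^{-1}$ for $u \in E_1$); the identification you want is the base-change map sending $\lambda$ to the conjugate-orthogonal character $x \mapsto \lambda(x/x^{\sigma})$ of $E^{\times}$, well defined by Hilbert 90, and the ambiguity in extending $\lambda$ consists of the characters of $E^{\times}$ trivial on $E_1$ (i.e.\ those factoring through $N_{E/F}$), not a twist by $\omega_{E/F}\circ N_{E/F}$, which is the trivial character. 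This does not derail your argument, since the character is ultimately pinned down by the model computation, but it should be stated correctly.
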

This is precisely what Conjecture \ref{conj-local} asserts in the case $\dim V =1$.

\vskip 10pt

\subsection{\bf Rank two case}  \label{SS:rk2}
Suppose now that $\dim V = 2$.  Skew-Hermitian spaces of rank 2 can be described using quaternion $F$-algebras, as we have exploited in \cite{GGP2}. More precisely, 
for a quaternion $F$-algebra $B$, fix an $F$-algebra embedding $i: E \hookrightarrow B$ and write $B = E \oplus E \cdot x$ where $x$ is an element of $B$ such that $x e x^{-1} = e^{\sigma}$. Thus $B$ is a 2-dimensional $E$-vector space (by left multiplication), and we may identify $\GL_E(B)$ with $\GL_2(E)$ with respect to the basis $\{1, x\}$. 
\vskip 5pt

Now fix a trace 0 element $\delta \in E^{\times}$ and set
\[  \langle b_1, b_2 \rangle =  \delta \cdot (\text{projection of $b_1 \cdot \bar{b}_2$ onto $E$)}  \]
Then  $\langle-, -\rangle$ is a skew-Hermitian form on $B$; we shall denote this skew-Hermitian space by $V_B$. The isomorphism class of $V_B$ is independent of $x,\delta$, and $V_B$ is split if and only if $B$ is split. 
\vskip 5pt

The unitary similitude group $\GU(V_B) \subset \GL(V_B) = \GL_2(E)$  can be described by the isomorphism
\[ \iota:  (B^{\times} \times E^{\times}) / \Delta F^{\times} \stackrel{\cong} {\longrightarrow} \GU(V_B) \subset \GL(V_B)\]
 given by  sending $(b,e) \in B^{\times} \times E^{\times}$ to the element of $\GL(V_B)$ whose action on $B$ is:
  \[  (b,e):  y \mapsto e \cdot y  \cdot b^{-1}. \]
The similitude character is:
\[  {\rm sim}(b, e) = N_{E/F}(e) \cdot N_B(b)^{-1}. \]
Hence the unitary group is
\[  \U(V_B) \cong \{ (b, e) \in  (B^{\times} \times E^{\times}) / \Delta F^{\times} = \GU(V_B): N_{E/F}(e) = N_B(b) \} \]
This is contained in the subgroup
\[   \GU(V_B)^+ \cong  \{ (b,e) \in  (B^{\times} \times E^{\times}) / \Delta F^{\times} =\GU(V_B): N_B(b) \in N_{E/F}(E^{\times}) \} \]
which has index 2 in $\GU(V_B)$. Moreover, if $Z = E^{\times}$ denotes the center of $\GL(V_B)$, then 
\[  \GU(V_B)^+ = Z \cdot \U(V_B). \]
Thus, when working with irreducible representations of $\U(V_B)$, there is no difference in working with $\GU(V_B)^+$ instead. 
\vskip 5pt

Let us explicate the Weil representation of $\U(V_B)$ in this framework. The Weil representation $\omega_{\psi,\mu, B}$ is reducible but admits a central character decomposition:
\[  \omega_{\psi, \mu, B} = \bigoplus_{\lambda}  \omega_{\psi,\mu, B}[\lambda] \]
where the sum runs over the characters of $Z(\U(V_B))= E_1$ and  each summand is irreducible or $0$. 
We can describe $\omega_{\psi,\mu, B}[\lambda]$ in terms of the description of $\U(V_B)$ given above. More precisely, suppose that $\lambda = \chi|_{E_1}$ for a  character $\chi$ of $E^{\times}$. Consider the L-parameter 
\[  N = \Ind_{W_E}^{W_F} (\mu \cdot \chi^{-1})  \quad \text{ of $\GL_2(F)$,} \]
and let $\Sigma_{B,N}$ be the associated representation of $B^{\times}$.  This gives a representation 
\[  \Sigma_{B,N} \boxtimes \chi  \quad \text{ of $B^{\times}\times E^{\times}$}  \]
which is trivial on $\Delta F^{\times}$, i.e. a representation of $\GU(V_B)$. This representation of $\GU(V_B)$ decomposes into the sum of two irreducible summands when restricted to $\GU(V_B)^+$. One of these summands is the representation $\omega_{\psi,\mu, B} [\chi|_{E_1}]$ whereas the other is $\omega_{\psi',\mu, B} [\chi|_{E_1}]$, with $\psi'$ in a different $N(E^{\times})$-orbit as $\psi$. 
\vskip 5pt

Now suppose that $\Pi$ is an irreducible generic representation of $\U(V_B \otimes_F E) \cong  \GL(V_B)$ with L-parameter $M$.
The embedding $\U(V_B) \hookrightarrow \U(V_B \otimes_F E)$ is the natural embedding $\U(V_B) \subset \GL(V_B)$.  Pulling $\Pi$ back via $\iota$, and with $\chi := \omega_{\Pi}$, we see that
\begin{align}  
  \Hom_{\U(V_B)}(\Pi, \omega_{\psi,\mu, B})
  &= \Hom_{\U(V_B)}(\Pi, \omega_{\psi,\mu, B}[\chi|_{E_1}])  \notag \\
 &\cong \Hom_{(B^{\times})^+}( \iota^*(\Pi), \omega_{\psi,\mu, B}[\chi|_{E_1}])  \notag \\
 &\cong \Hom_{B^{\times}}( \iota^*(\Pi), \Sigma_{B,N}).  \notag 
 \end{align}
 \vskip 5pt
 
 Now it is important to note that the embedding $\iota: B \hookrightarrow \GL(V_B) = \GL_2(E)$ is not the natural embedding $B^{\times} \hookrightarrow  (B \otimes_F E)^{\times} \cong \GL_2(E)$, but rather differs from it by the outer automorphism $b \mapsto \bar{b}^{-1}$. 
 Indeed, $\iota$ is the inverse on the central $F^{\times}$.
  Taking this into account, we see that the last Hom space above is the space   
  \[     \Hom_{B^{\times}}(\Pi^{\vee} \otimes \Sigma_{B,N}^{\vee}, \C)  \]
 of twisted trilinear forms, where $B^{\times} \hookrightarrow (B \otimes_F E)^{\times}  \cong \GL_2(E)$, with the last isomorphism induced by an $E$-algebra isomorphism $B \otimes_F E  \cong \M_2(E)$. 
 
\vskip 5pt

By a result of the third author \cite{P1}, one has 
 \[   \dim  \Hom_{B^{\times}}(\Pi^{\vee} \otimes \Sigma_{B,N}^{\vee}, \C)   \leq 1  \]
 with equality if and only if
\[     \epsilon(1/2, {\rm As}^+(M^{\vee}) \otimes N^{\vee}, \psi_E) \cdot \omega_{E/F}(-1) = \mu(\det(V_B)), \]
 where $As^+$ is the Asai lift of $M$ from $E$ to $F$. We refer the reader to \S \ref{SS:Asai} below for the definition and properties of $As^+$. 
 Now let us explicate the local root number:
 \begin{align}
  \epsilon(1/2, {\rm As}^+(M^{\vee}) \otimes N^{\vee}, \psi_E) &= \epsilon(1/2, {\rm As}^+(M^{\vee}) \otimes \Ind_E^F (\mu^{-1} \cdot \chi),\psi) \notag \\
  &= \epsilon(1/2, \Ind_E^F ( M^{\vee} \otimes {}^\sigma \!M ^\vee \otimes \mu^{-1} \otimes \chi), \psi) \notag \\
  &=  \epsilon(1/2, \Ind_E^F ( M \otimes {}^\sigma \!M ^\vee \otimes \mu^{-1}),\psi) \notag \\  
& = \epsilon(1/2, M \otimes {}^\sigma \!M ^\vee \otimes \mu^{-1}, \psi_E), \notag
 \end{align}
 where in the second last equality, we have noted that $\chi = \omega_{\Pi} = \det M$, so that $M^{\vee} \otimes \chi \cong M$ (since $\dim M = 2$), and in the last equality, we have
used the fact that epsilon factors are inductive in dimension zero together with the fact that  
$\dim(M \otimes {}^\sigma \!M ^\vee) =4$.

  To conclude, we have shown:

\begin{prop}
  For $\Pi$, an irreducible generic representation of $\U(V_B \otimes_F E) \cong  \GL(V_B)$,
  with L-parameter $M$, a two dimensional representation of $WD_E$,
  \[\Hom_{\U(V_B)}(\Pi, \omega_{\psi,\mu, B}) \ne 0 \iff \epsilon(1/2, M \otimes {}^\sigma \!M ^\vee \otimes \mu^{-1}, \psi_E) \cdot \omega_{E/F}(-1) = \mu(\det(V_B)). \] 
  \end{prop}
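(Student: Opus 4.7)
The plan is to reduce the problem to a trichotomy theorem for trilinear forms on $\GL_2$, using the quaternionic description of rank-$2$ skew-Hermitian spaces. The first step is to parametrize: fix a quaternion $F$-algebra $B$ with $V_B$ its associated skew-Hermitian space, and use the isomorphism
$$\iota : (B^\times \times E^\times)/\Delta F^\times \xrightarrow{\sim} \GU(V_B) \subset \GL(V_B).$$
Since $\GU(V_B)^+ = Z \cdot \U(V_B)$, working with representations of $\GU(V_B)^+$ is equivalent to working with representations of $\U(V_B)$, so we may pull back $\Pi$ and the Weil representation along $\iota$ with no loss.

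Next, the center $Z(\U(V_B)) = E_1$ acts on $\omega_{\psi,\mu,B}$, so we isolate the isotypic summand $\omega_{\psi,\mu,B}[\chi|_{E_1}]$ where $\chi = \omega_\Pi$. The key input here is the explicit description of these summands: set $N := \Ind_{W_E}^{W_F}(\mu \chi^{-1})$, a two-dimensional parameter for $\GL_2(F)$, and let $\Sigma_{B,N}$ be its Jacquet-Langlands image on $B^\times$. Then $\omega_{\psi,\mu,B}[\chi|_{E_1}]$ is one of the two irreducible summands obtained by restricting $\Sigma_{B,N} \boxtimes \chi$ from $\GU(V_B)$ to $\GU(V_B)^+$. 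Under $\iota$, the branching problem becomes
$$\Hom_{\U(V_B)}(\Pi, \omega_{\psi,\mu,B}) \cong \Hom_{B^\times}(\iota^*\Pi, \Sigma_{B,N}).$$

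The crucial subtlety is that $\iota$ restricted to $B^\times$ differs from the natural inclusion $B^\times \hookrightarrow (B \otimes_F E)^\times \cong \GL_2(E)$ by the automorphism $b \mapsto \bar b^{-1}$, which inverts the central $F^\times$. Absorbing this, the Hom space is identified with the space of twisted trilinear invariants $\Hom_{B^\times}(\Pi^\vee \otimes \Sigma_{B,N}^\vee, \C)$ where $B^\times$ now sits inside $\GL_2(E)$ in the natural way. At this point, the trichotomy theorem of Prasad \cite{P1} applies directly and yields both the multiplicity-one statement and the dichotomy criterion
$$\epsilon(1/2, \As^+(M^\vee) \otimes N^\vee, \psi_E) \cdot \omega_{E/F}(-1) = \mu(\det V_B).$$

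The remaining step is to rewrite this $\epsilon$-factor in the form stated in the proposition. Using $N^\vee = \Ind_{W_E}^{W_F}(\mu^{-1} \chi)$ and the projection formula, $\As^+(M^\vee) \otimes N^\vee$ is induced from $M^\vee \otimes {}^\sigma M^\vee \otimes \mu^{-1} \chi$ on $WD_E$. Since $\dim M = 2$ and $\chi = \det M$, we have $M^\vee \otimes \chi \cong M$, so the induced representation is $\Ind_{WD_E}^{WD_F}(M \otimes {}^\sigma M^\vee \otimes \mu^{-1})$. Inductivity of $\epsilon$-factors in degree zero applies because $\dim(M \otimes {}^\sigma M^\vee \otimes \mu^{-1}) = 4$, giving
$$\epsilon(1/2, \As^+(M^\vee) \otimes N^\vee, \psi_E) = \epsilon(1/2, M \otimes {}^\sigma M^\vee \otimes \mu^{-1}, \psi_E),$$
which concludes the proof. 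The main obstacle I anticipate is purely bookkeeping: correctly tracking the twist introduced by $\iota$ on the central $F^\times$ and verifying that the sign factor $\omega_{E/F}(-1)$ from Prasad's theorem survives unchanged through the inductivity reduction, rather than being absorbed or doubled.
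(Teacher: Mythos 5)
Your proposal is correct and follows essentially the same route as the paper: the quaternionic parametrization via $\iota$, the central-character decomposition of the Weil representation identifying the relevant summand with $\Sigma_{B,N}$ for $N = \Ind_{W_E}^{W_F}(\mu\chi^{-1})$, the reduction to Prasad's twisted trilinear form theorem after accounting for the twist $b \mapsto \bar b^{-1}$, and the same $\epsilon$-factor manipulation using $\chi = \det M$ and inductivity in degree zero. No substantive differences.
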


 This is precisely what Conjecture \ref{conj-local} says in the case $n =2$.
 \vskip 10pt

 \subsection{\bf Global conjecture: rank 1 case}  \label{SS:global-rank1}
 Finally, we can also verify the global Conjecture  \ref{conj-refined-global} when $\dim_E V = 1$. Let $\chi$ be a Hecke character of $\GL(V)(\A_E) = \A_E^{\times}$, so that we are considering the global period integral
 \[  \mathcal{P}: \C  \chi \otimes  \overline{\omega_{V,\psi,\mu}} \longrightarrow \C \]
 defined by
 \[  \mathcal{P}(\phi) = \int_{[E_1]} \chi(x) \cdot \overline{\phi(x)} \, dx. \]
 Observe that this is simply the (conjugate of the) global theta lifting of $\chi$  for the dual pair 
 \[  \U_1 \times \U_1 = \U(V) \times \U(W),\]
  evaluated at the identity element. Here, $V$ is equipped with its given skew Hermitian structure and $W$ is the rank 1 Hermitian space $\langle 1 \rangle$.
 The nonvanishing of $\mathcal{P}$ is thus equivalent to the nonvanishing of the global theta lift $\Theta_{V,W,\psi,\mu}(\chi)$ of $\chi$.  Moreover, when this global theta lift is nonzero, it is isomorphic to the representation $\chi$ of $\U(W) = E_1$.  Then we have:
 \[   
 \mathcal{P}(\phi_1) \cdot \overline{\mathcal{P}(\phi_2)}  \cdot \mu([E_1]) =  \langle \Theta(\phi_2,\chi), \Theta(\phi_1,\chi) \rangle_{\rm Pet}  \]
 where $\mu([E_1]) =2$ is the Tamagawa measure of $\U(W)$.  Now the Petersson inner product of the global theta lift on the RHS is computed by the Rallis inner product formula. This was first done by Tonghai Yang \cite{Y} and a convenient reference is \cite[Thm. A.4.2]{X2}. One has
 \[   \langle \Theta(\phi_2,\chi), \Theta(\phi_1,\chi) \rangle_{\rm Pet}  = \frac{L(1/2, \chi^{\sigma} \chi^{-1} \cdot\mu^{-1})}{L(1, \omega_{E/F})} \cdot Z^*(\phi_2, \phi_1)  \]
 where 
 \[  Z^*(\phi_2,\phi_1)=   \int^*_{\A_E^1}  \overline{\langle g \phi_1,\phi_2\rangle}\cdot \langle g \chi, \chi \rangle_{\U(V), {\rm Pet}} \, dg  \]
 is the normalized global doubling zeta integral.  Since the Tamagawa measure of $\U(V)$ is $2$, one has
 \[  \langle g \cdot \chi, \chi \rangle_{\U(V), {\rm Pet}}  =  2 \cdot \chi(g)  \]
   so that
\[ Z^*(\phi_1,\phi_2) = 2 \cdot  \int^*_{\A_E^1}  \overline{\langle g \phi_1,\phi_2\rangle}\cdot \chi(g) \, dg  = \prod_v \mathcal{I}_v^{\#}(\chi,\chi, \phi_1,\phi_2), \]
where the local factors $\mathcal{I}_v^{\#}$ are as defined in (\ref{E:local-integral}) and (\ref{E:local-integral2}).
Hence we conclude that
\begin{equation} \label{E:refinedfirst}
   \mathcal{P}(\phi_1)  \cdot \overline{\mathcal{P}(\phi_2)} = \frac{L(1/2, \chi^{\sigma} \chi^{-1} \cdot\mu^{-1})}{L(1, \omega_{E/F})} \cdot \prod_v \mathcal{I}_v^{\#}(\chi,\chi, \phi_1,\phi_2). \end{equation}
This is precisely what Conjecture  \ref{conj-refined-global} says. 
\vskip 5pt

For the case when $\dim_E V = 2$, Conjecture  \ref{conj-refined-global} should reduce to Ichino's formula \cite{I} relating the (twisted) triple product period integral and the  (twisted) triple product L-value. We leave the verification of this to the interested reader. 

\vskip 15pt

\section{\bf Mackey Theory: Restriction of Principal Series} \label{S:UPS}
In this section, we apply Mackey theory to understand the branching of a principal series representation of $\GL(V)$ to the Weil representation of $\U(V)$ over a non-archimedean local field. 
\vskip 5pt

\subsection{\bf Principal series}
Let $V$ be a vector space of dimension $n$ over $E$.
 For a partition  $n = a+b,$ with $0< a \leq b \in \Z$,  let 
 \[  V = V_a  \oplus V_b \]
 with $\dim V_a = a$ and $\dim V_b = b$.  Consider the maximal parabolic subgroup
 \[  P = P_{a,b} = M \cdot N \] 
 of $\GL(V)$ stabilizing $V_a$, with Levi factor  
 \[  M= \GL(V_a) \times \GL(V_b). \]
 Let $\pi = \pi_1 \boxtimes \pi_2$ be a representation of $\GL(V_a)\times \GL(V_b)$
  and consider the (normalized) parabolically induced representation
\begin{equation} \label{E:pi}
 \pi = \pi_1 \times \pi_2 = \Ind_P^{\GL(V)} (\pi_1 \boxtimes \pi_2). \end{equation}
 These are the principal series representations we will consider.
 \vskip 5pt
 
 \subsection{\bf Skew-Hermitian structures}
 Recall that there are two inequivalent skew-Hermitian structures on $V$, distinguished by their determinants in $F^{\times}/NE^{\times}$ or $E_0^{\times}/ NE^{\times}$ (depending on whether $n = \dim V$ is even or odd). 
 For such a class $\delta$, we let $V_{\delta}$ denote the skew-Hermitian structure on $V$ with determinant $\delta$, so that
 $\U(V_{\delta}) \subset \GL(V)$. We will often drop $\delta$ from $V_\delta$
 when a particular skew-Hermitian structure is fixed on $V$. We also let $\rk(V)$ denote the dimension of a maximal isotropic subspace of the skew-Hermitian space $V$; this is sometimes called the Witt index of $V$.
 \vskip 5pt
 
 On the other hand, $V$ with a roman subscript, such as $V_a$, will
 denote either just a vector space over $E$ or a skew-Hermitian space over $E$ of dimension $a$.

 \vskip 5pt
 
 \subsection{\bf Mackey theory}
For a fixed skew-Hermitian space $V = V_{\delta}$,  the goal of this section is to compute
  \[  \Hom_{\U(V)}( \pi_1 \times \pi_2,  \omega_{V, \psi,\mu}), \]
  where  $\omega_{V, \psi , \mu}$ denotes the Weil representation of $\U(V)$ associated to $(\psi,\mu)$.  In fact, we will consider the
 more general 
 \[   \Ext^i_{\U(V)}( \pi_1 \times \pi_2,  \omega_{V, \psi,\mu}). \]
  This will be achieved  by using Mackey theory, which requires the determination   of the orbits of $\U(V)$ on the partial flag variety $\GL(V)/P$. In this analysis, each orbit gives rise to a certain induced representation of $\U(V)$ arising from the restriction of the inducing data to the
stabilizer of a point in the orbit. 
\vskip 5pt

Thus the representation $\pi = \pi_1 \times \pi_2$ when restricted to $\U(V)$ comes equipped with a certain
finite filtration by $\U(V)$-modules in which the open orbits contribute as submodules, and the non-open orbits contribute as
subquotients.

\subsection{\bf Orbits}
The following lemma, whose proof will be omitted,  summarizes the orbit structure of $\U(V)$ on  $\GL(V) / P$ and is a direct consequence of Witt's theorem.
\vskip 5pt

\begin{lemma} \label{orbits}
The orbits of  $\U(V)$ on $\GL(V) / P_{a,b}$ (with $0<a \leq b$)  are
represented by the isometry classes of  $a$-dimensional $E$-subspaces $X \subset V$, which are themselves
parameterized  by the following two invariants:
\begin{enumerate}
\item the dimension $d$ of the kernel     of the skew-Hermitian form on $V$ restricted to $X$,  i.e. $ d=\dim (X \cap X^\perp)$,
  $d \leq \min\{a, \rk(V)\}$, and
\vskip 5pt

  \item the nondegenerate  skew-Hermitian form
    on $X/(X \cap X^\perp)$, which can be arbitrary.
\end{enumerate}   

\vskip 5pt

In particular, with $E$ non-archimedean, one has:
\vskip 5pt
\begin{itemize}
\item For each integer 
\[  0 \leq d  \leq  \min\{a, \rk(V)\},  \] 
there are two orbits $[X]$ of $\U(V)$ on $\GL(V) / P_{a,b}$, with $\dim (X \cap X^{\perp}) = d$, unless $d=a$ (i.e. when
$X/(X \cap X^\perp) = 0$),  in which case there is only one.\vskip 5pt

\item The open orbits  correspond to $d=0$, i.e., 
the isomorphism classes of the two nondegenerate skew-Hermitian subspaces of $V$ of dimension $a$.
\vskip 5pt

\item There is a unique closed orbit  which corresponds to $ d =  \min\{a, \rk(V)\} =a$,
except when $n=2a$ and $V$
does not have an isotropic subspace of dimension $a$,  in which case there are two  closed orbits  corresponding to $d=a-1$.
\end{itemize}
  \end{lemma}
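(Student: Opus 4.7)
The plan is to identify the orbit set by combining the standard Grassmannian model of $\GL(V)/P_{a,b}$ with Witt's extension theorem. Under the natural bijection $\GL(V)/P_{a,b} \cong \mathrm{Gr}_a(V)$ (sending $gP_{a,b}$ to $gV_a$), the $\U(V)$-action is the tautological one on the Grassmannian. Witt's extension theorem for skew-Hermitian spaces then implies that two $a$-dimensional subspaces $X_1, X_2 \subset V$ lie in the same $\U(V)$-orbit if and only if they are isometric with respect to the (possibly degenerate) forms induced from $V$. Thus the orbit classification reduces to enumerating isometry classes of $a$-dimensional skew-Hermitian spaces that embed in $V$.

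Next, I would parametrize each such isometry class by the pair $(d,[Y])$ appearing in the lemma, where $d = \dim(X \cap X^\perp)$ and $Y = X/(X \cap X^\perp)$ is the nondegenerate quotient of dimension $a-d$. Choosing any complement of the radical inside $X$ shows that the pair $(d,[Y])$ recovers $X$ up to isometry. The constraint $d \leq a$ is automatic, while $d \leq \rk(V)$ holds because $X \cap X^\perp$ is a totally isotropic subspace of $V$. Conversely, given such a pair $(d,[Y])$, one realizes it inside $V$ by picking a totally isotropic subspace $L \subset V$ of dimension $d$, embedding a representative of $[Y]$ into the nondegenerate quotient $L^\perp/L$ of dimension $n-2d$, pulling back to $Y' \subset L^\perp$, and setting $X = L \oplus Y'$.

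The crux of the proof, and the main obstacle, is this last realizability step: for every admissible $(d,[Y])$ one must check that a representative of $[Y]$ actually embeds in $L^\perp/L$. In the non-archimedean setting (with $E$ a field), I would use the standard classification: nondegenerate skew-Hermitian spaces of any dimension $m \geq 1$ fall into exactly two isomorphism classes, distinguished by the determinant in $F^\times/NE^\times$ or $E_0^\times/NE^\times$. If $W$ is nondegenerate of dimension $N$ and $Y$ is nondegenerate of dimension $m < N$, then $Y$ embeds in $W$ iff one can choose a complementary class $[Y']$ of dimension $N-m$ with $\det(Y)\cdot\det(Y') = \det(W)$; since every determinant class of dimension $N-m$ is realized, such a $[Y']$ always exists. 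Applied with $W = L^\perp/L$ and $m = a-d$, this shows both isomorphism classes of $Y$ embed whenever $a-d \geq 1$, yielding two orbits per such $d$; for $d = a$ the quotient is zero and there is a single orbit.

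Finally, the open/closed analysis is immediate from this description. Openness of the orbit through $X$ is equivalent to the induced form on $X$ being nondegenerate, i.e.\ $d = 0$, which gives two open orbits. The closed orbits correspond to maximal $d$, namely $d = \min(a,\rk(V))$. If $a \leq \rk(V)$ the maximum is $a$, the quotient is trivial, and there is a unique closed orbit. If $a > \rk(V)$, the inequality $a \leq b = n-a$ forces $a = n/2$, and since the anisotropic part of a skew-Hermitian space over non-archimedean $E/F$ has dimension at most $2$ one must have $\rk(V) = a-1$; thus $d = a-1$, the nondegenerate quotient is $1$-dimensional and admits two isomorphism classes, producing two closed orbits.
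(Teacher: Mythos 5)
Your proof is correct, and it follows exactly the route the paper indicates: the paper omits the proof of Lemma \ref{orbits}, remarking only that it is "a direct consequence of Witt's theorem," and your argument is precisely that deduction, supplemented by the classification of nondegenerate skew-Hermitian spaces over a nonarchimedean local field (two classes in each dimension, distinguished by the determinant, with anisotropic kernel of dimension at most $2$) to settle realizability and the count of closed orbits.
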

\vskip 5pt

\subsection{\bf Stabilizers}  \label{SS:stabilizers}
 Let $[X]$ be an $\U(V)$-orbit in $\GL(V)/P_{a,b}$, represented by an $E$-subspace  
 $X \subset V$  of dimension $a$ with  $\dim (X \cap X^\perp) =d$.  Let us first determine the stabilizer  $S=S_X$ of $X$ in $\U(V)$.  
 \vskip 5pt
 
 Observe that $S_X$ preserves the flag
\[ 0 \subset  X \cap X^\perp \subset  X           \subset   ( X \cap X^\perp)^\perp = X+X^\perp   \subset  V,\]
and note that
\[ X/( X \cap X^\perp ) \subset  (X+X^\perp) / ( X \cap X^\perp) =: V_{n-2d}\]
are non-degenerate skew-Hermitian spaces of dimension $a-d$ and $n-2d$ respectively.
 Hence, $S_X$ is contained in the maximal parabolic subgroup $Q_d$ of $\U(V)$ stabilizing the isotropic space $X \cap X^{\perp}$. The parabolic subgroup $Q_d =M_d \cdot N_d$  can be depicted in matrix form as:
 $$ Q_d = \left ( \begin{array}{ccc} 
  \GL(X \cap X^{\perp})  & *_1  &  *_d     \\
0 & \U(V_{n-2d}) &  *_2    \\
0  & 0 & *
\end{array}
\right ) ,$$
with Levi factor
\[  M_d = \GL(X \cap X^{\perp})  \times \U(V_{n-2d}) \]
and unipotent radical $N_d$.  The center of $N_d$ is the subgroup
$Z_d$  consisting of matrices  with $*_1=*_2 =0$.

\vskip 5pt

It follows that, as a subgroup of $Q_d$,  $S_X$ has the form:
$$S_X = \left ( \begin{array}{cccc} 
  g  & *_{12}   & *_{13} & *_4     \\
0 & \U_{a-d} & 0  & *_{24}    \\
 0 & 0  & \U_{b-d} & *_{34}  \\
 0 & 0  & 0 & (g^*)^{-1} \\
\end{array}
\right ),$$
where 
\begin{itemize}
\item $g \in \GL(X \cap X^{\perp}) \cong \GL_d(E)$,
\item the entries  $*_{12}$ and $ *_{34}$ are arbitrary matrices with entries in $E$ of appropriate sizes which determines $*_{24}, *_{13}$; 
\item the entry  $*_4$ is an arbitrary skew-Hermitian matrix of size $d \times d$.  
\end{itemize}
Let us highlight certain natural subgroups or quotients of $S_X$:
\vskip 5pt 
\begin{itemize}
\item The unipotent radical $N(S_X)$ of $S_X$ consists of those matrices which have the identity matrix on each diagonal block. Observe that $N(S_X)$ is in fact the unipotent radical $N_d$ of the maximal parabolic subgroup  $Q_d$.  
\vskip 5pt

\item The center $Z(S_X)$ of $N(S_X)$ is the subgroup consisting of elements whose only nonzero entry in the upper triangular blocks is $*_4$, so that $Z(S_X) =Z_d$.
\vskip 5pt

\item The Levi factor $S_X/ N(S_X)$ is isomorphic to  
\[  \GL(X \cap X^{\perp}) \times   \U_{a-d} \times \U_{b-d}. \]
\end{itemize}
\vskip 5pt

\subsection{\bf Modules}  
In what follows, we use $\Ind$
for the usual normalized induction, and $\ind$ for the usual normalized induction with compact support, whereas we will use
$\II nd$ and $\ii nd$ for the corresponding un-normalized induction. Thus, for example,
\[ \pi = \pi_1 \times \pi_2 = \Ind_P^{\GL(V)} (\pi_1 \otimes \pi_2) = \II nd_P^{\GL(V)}   (\pi_1 \otimes \pi_2 \otimes \delta_P^{1/2}).\]

\vskip 5pt
By Mackey theory, the restriction of the principal series representation $\pi = \pi_1 \times \pi_2$ to $\U(V)$ has a finite equivariant filtration indexed by the the $\U(V)$-orbits given in Lemma \ref{orbits}. For each such $\U(V)$-orbit $[X]$, let $\pi_X$ denote the associated $\U(V)$-subquotient of $\pi$.  The following proposition determines  the representation $\pi_X$.  
\vskip 5pt

\begin{prop} \label{P:module}
For a $\U(V)$-orbit $[X]$ on $\GL(V)/P$, with $\dim X \cap X^{\perp} = d$ and stabilizer $S= S_X$, one has:
\[  \pi_X \cong    \ii nd_{S}^{\U(V)} (\pi_1 \otimes \pi_2 \otimes \delta_P^{1/2})|_{S} =  \ind_{S}^{\U(V)} (\pi_1 \otimes \pi_2 \otimes \delta_{P/S}^{1/2})|_{S},\]
where we have written $\delta_{P/S} = \delta_P \delta_S^{-1}$.
\end{prop}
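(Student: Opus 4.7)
My plan is to apply geometric Mackey theory to the restriction of $\pi = \Ind_P^{\GL(V)}(\pi_1 \otimes \pi_2) = \II nd_P^{\GL(V)}\bigl((\pi_1 \otimes \pi_2) \otimes \delta_P^{1/2}\bigr)$ from $\GL(V)$ down to $\U(V)$, realizing $\pi$ as the space of smooth sections of the $\GL(V)$-equivariant vector bundle $\mathcal{E} \to \GL(V)/P$ whose fiber at the base point is $(\pi_1 \otimes \pi_2) \otimes \delta_P^{1/2}$. By Lemma \ref{orbits}, the $\U(V)$-action on $\GL(V)/P$ has finitely many orbits; I order them so that open orbits come first and closed orbits last, with each orbit locally closed in the union of orbits not yet removed. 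This gives a finite $\U(V)$-equivariant filtration of $\pi|_{\U(V)}$ whose successive quotients are indexed by the orbits, with open orbits contributing compactly-supported sections as submodules and closed orbits contributing sections as quotients.

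For a fixed orbit $[X]$, choose a representative $x \in \GL(V)$ with $x \cdot V_a = X$, so that the $\U(V)$-stabilizer of $xP \in \GL(V)/P$ is $\U(V) \cap xPx^{-1}$, which is identified with the subgroup $S = S_X$ described in Section \ref{SS:stabilizers}. The orbit map $\U(V)/S \xrightarrow{\sim} \U(V) \cdot xP$ identifies the restriction of $\mathcal{E}$ to this orbit with the $\U(V)$-equivariant bundle on $\U(V)/S$ whose fiber at the identity coset is the representation of $S$ obtained by transporting $(\pi_1 \otimes \pi_2) \otimes \delta_P^{1/2}$ through $s \mapsto x^{-1}sx \in P$. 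The subquotient of $\pi|_{\U(V)}$ associated to this orbit is then, by the standard Mackey description for smooth representations of $\ell$-groups (\emph{cf.}\ Bernstein--Zelevinsky), the compactly-supported un-normalized smooth induction
\[ \pi_X \;=\; \ii nd_S^{\U(V)}\bigl((\pi_1 \otimes \pi_2 \otimes \delta_P^{1/2})|_S\bigr), \]
which is the first equality in the statement.

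For the second equality, I invoke the standard comparison between normalized and un-normalized smooth induction: since $\U(V)$ is reductive and hence unimodular, one has $\ind_S^{\U(V)}(\tau) = \ii nd_S^{\U(V)}\bigl(\tau \otimes \delta_S^{1/2}\bigr)$ for any smooth representation $\tau$ of $S$. Substituting $\tau = \bigl((\pi_1 \otimes \pi_2) \otimes \delta_{P/S}^{1/2}\bigr)|_S$ and using $\delta_{P/S} = \delta_P \delta_S^{-1}$ by definition converts the first displayed expression for $\pi_X$ into the second. The step I expect to require the most care, and the main potential obstacle, is verifying that conjugation by $x$ sends $S_X$ into $P$ in such a way that the Levi part of $S_X$ lands in the Levi $M = \GL(V_a) \times \GL(V_b)$ of $P$ in the expected block pattern, so that the restriction of $\pi_1 \otimes \pi_2$ is literal (with no twist beyond relabelling blocks), and that $\delta_P|_{S_X}$ is the character predicted by the block description of $S_X$ in Section \ref{SS:stabilizers}; this bookkeeping is routine but is precisely what makes the displayed identification well-defined and canonical.
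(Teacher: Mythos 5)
Your proof is correct and follows exactly the route the paper has in mind: the paper states Proposition \ref{P:module} as a direct consequence of Mackey theory (the orbit filtration of sections of the equivariant bundle on $\GL(V)/P$, with open orbits as submodules and non-open orbits as subquotients), and your write-up supplies precisely those standard details, including the correct conversion $\ind_{S}^{\U(V)}(\tau) = \ii nd_{S}^{\U(V)}(\tau \otimes \delta_{S}^{1/2})$ using unimodularity of $\U(V)$. The bookkeeping you flag at the end (transporting the fiber representation through conjugation by the orbit representative) is indeed the only point requiring care, and it is handled by the block description of $S_X$ in \S\ref{SS:stabilizers}.
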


\vskip 5pt
 
We note that the representation $\pi_1 \otimes \pi_2 \otimes \delta_{P/S}^{1/2}$ is non-trivial on the unipotent radical $N(S)$ of $S$, but it
   is trivial on the center  $Z(S)$ of $N(S)$.    \vskip 5pt

\subsection{\bf Branching for $\pi_X$}
We are now ready to consider the branching problem
\[  \Ext^i_{\U(V)}( \pi_1 \times \pi_2, \omega_{V, \psi, \mu}). \]
Since, as a $\U(V)$-module, $\pi_1 \times \pi_2$ has a finite filtration with subquotients $\pi_X$ as given in Proposition \ref{P:module}, it is natural to first consider
\[   \Ext^i_{\U(V)} (\pi_X, \omega_{V, \psi, \mu}). \]
The result of this key computation is given by the following proposition.
\vskip 5pt

 \begin{prop} \label{ps2}
   For an    orbit $[X]$ of $\U(V)$ on $\GL(V)/P$, with $\dim (X \cap X^{\perp}) =d$, corresponding stabilizer $S = S_X$ and associated $\U(V)$-module $\pi_X$, one has:
\[ \Ext^i_{\U(V)} \left( \pi_X, \omega_{V,\psi,\mu}\right)  \]
\[ {\cong}  \, \,\,
   \Ext^i_{S/N(S)}\left( \delta_{P/S}^{1/2} \cdot (\pi_1)_{d,a-d} \otimes (\pi_2)_{b-d,d},  \, \, \delta_{S}^{1/2} \cdot |{\det}_{\GL_d}|^{-1/2}\mu \cdot \omega_{V_{n-2d},\psi,\mu} \right)  \]
where we note:
\vskip 5pt
\begin{itemize}
\item $S/ N(S) \cong \GL_d(E) \times \U_{a-d} \times \U_{b-d}$;
\vskip 5pt

\item $(\pi_1)_{d,a-d}$ denotes the un-normalized Jacquet module of $\pi_1$ with respect to the
$(d,a-d)$ parabolic  subgroup in $\GL(V_a) \cong \GL_a(E)$, regarded as a representation of $\GL_d(E) \times \U_{a-d} \subset \GL_d(E) \times \GL_{a-d}(E)$ by restriction;
\vskip 5pt

\item likewise, $(\pi_2)_{b-d,d}$ is the un-normalized Jacquet module of $\pi_2$ with respect to the $(b-d,d)$-parabolic subgroup in $\GL(V_b) \cong \GL_b(E)$, regarded as a representation of $\U_{b-d} \times \GL_d(E) \subset \GL_{b-d}(E) \times \GL_d(E)$ by restriction and taking contragredient on the $\GL_d(E)$ factor.
\vskip 5pt

\item The characters $\delta_{P/S}$ and $\delta_S$ are trivial on $\U_{a-d} \times \U_{b-d}$ and are given  on $\GL_d(E)$ by:
\[  \delta_{P/S} = |\det|^d \quad \text{and} \quad  \delta_S(g)= |\det|^{n-d}. \] 
\end{itemize}
\vskip 5pt

In particular, for the two open orbits $X$ corresponding to $d=0$,
we have, 
\[ \Ext^i_{\U(V)}[  \pi_X, \omega_{V,\psi,\mu}] {\cong}  \sum_{\begin{array}{c} i=j+k\\ V= V_a\oplus V_b \end{array}} \Ext^j_{\U(V_a)} 
   [\pi_1|_{\U(V_a)}, \omega_{V_a, \psi,\mu}]  \otimes \Ext^k_{\U(V_b)}[ \pi_2|_{\U(V_b)}, \omega_{V_b,\psi,\mu}] ,\]
   where  $X = V_a$ are the isomorphism classes of non-degenerate subspaces of $V$ of dimension $a$ with orthogonal
  complement $X^{\perp} = V_b$.
\end{prop}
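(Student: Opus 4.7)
The plan is to apply Frobenius reciprocity and then reduce to the Levi $M_S = S/N(S)$ by a combination of Jacquet-module computations on both sides and standard homological techniques for smooth representations of $p$-adic groups. Since $\pi_X = \ind_S^{\U(V)}(\sigma)$ for $\sigma = (\pi_1\otimes\pi_2)\otimes\delta_{P/S}^{1/2}|_S$ and compact induction is exact, Frobenius reciprocity extends to Ext groups:
\[
\Ext^i_{\U(V)}(\pi_X,\,\omega_{V,\psi,\mu}) \;\cong\; \Ext^i_S\bigl(\sigma,\,\omega_{V,\psi,\mu}|_S\bigr).
\]

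Since $S \subset Q_d$ with $N(S) = N_d$, the mixed model of the Weil representation attached to the isotropic subspace $X \cap X^\perp$ provides an explicit Jacquet-module formula
\[
(\omega_{V,\psi,\mu})_{N_d} \;\cong\; \chi\otimes\omega_{V_{n-2d},\psi,\mu}
\]
as a representation of the Levi $\GL_d(E)\times\U(V_{n-2d})$, where $\chi$ is a character of $\GL_d(E)$ of the form $|\det|^{s}\mu$ for an appropriate exponent $s$. On the $\sigma$-side, the blocks $*_{13},*_{24},*_4$ of $N(S)$ lie in the unipotent radical $N_P$ of $P$ and act trivially on $\sigma$, while $*_{12}$ (resp.\ $*_{34}$) lies in the unipotent radical of the $(d,a-d)$-parabolic of $\GL(V_a)$ (resp.\ the $(b-d,d)$-parabolic of $\GL(V_b)$). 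Taking $N(S)$-coinvariants of $\sigma$ therefore yields
\[
\sigma_{N(S)} \;\cong\; \delta_{P/S}^{1/2}\cdot(\pi_1)_{d,a-d}\otimes(\pi_2)_{b-d,d}
\]
as a representation of $M_S = \GL_d(E)\times\U_{a-d}\times\U_{b-d}$, with the diagonal embedding $g\mapsto (g,(g^*)^{-1})$ of $\GL_d(E)$ into $M_S$ responsible for the contragredient on the $\GL_d(E)$-factor of $(\pi_2)_{b-d,d}$.

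One then reduces $\Ext^i_S(\sigma,\omega_{V,\psi,\mu}|_S)$ to an $\Ext^i$ over $M_S$ by a spectral-sequence argument: the exactness of the Jacquet functor for smooth representations of $p$-adic groups collapses the relevant Lyndon--Hochschild--Serre spectral sequence attached to $1 \to N(S) \to S \to M_S \to 1$, giving
\[
\Ext^i_S(\sigma,\omega_{V,\psi,\mu}|_S) \;\cong\; \Ext^i_{M_S}\bigl(\sigma_{N_d},\,(\omega_{V,\psi,\mu})_{N_d}\bigr).
\]
Substituting the two formulas above and using the explicit identities $\delta_{P/S}|_{\GL_d}=|\det|^d$ and $\delta_S|_{\GL_d}=|\det|^{n-d}$, one matches the $\GL_d(E)$-character $\chi$ with $|\det|^{-1/2}\mu\cdot\delta_S^{1/2}$, producing the displayed formula. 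For the open orbits $d=0$, we have $S = \U(V_a)\times\U(V_b)$ with $N(S)=1$, and $\omega_{V,\psi,\mu}|_S \cong \omega_{V_a,\psi,\mu}\otimes\omega_{V_b,\psi,\mu}$ by the product formula for Weil representations of orthogonal direct sums of skew-Hermitian spaces, so a K\"unneth-type formula for Ext over the product gives the claimed decomposition.

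The main technical obstacle is the identification of the character $\chi$ of $\GL_d(E)$ appearing in the Jacquet module of $\omega_{V,\psi,\mu}$, and the matching of its exponent of $|\det|$ with the required $|\det|^{-1/2}\mu$. This involves a careful choice of mixed model, precise tracking of the splittings of the metaplectic cover (through which the conjugate-symplectic character $\mu$ enters), and consistent bookkeeping of the modular characters $\delta_P$, $\delta_S$, and $\delta_{P/S}$. Each ingredient is standard, but synchronizing the normalization conventions throughout the computation is where most of the care is needed.
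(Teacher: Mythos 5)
Your skeleton agrees with the paper's: realize $\pi_X$ as $\ind_S^{\U(V)}\sigma$, identify $(\omega_{V,\psi,\mu})_{N_d}$ via the mixed model, compute the coinvariants of the inducing data in terms of the Jacquet modules $(\pi_1)_{d,a-d}$ and $(\pi_2)_{b-d,d}$, and finish the open orbits by K\"unneth. However, the two reduction steps that actually carry the proof are asserted rather than proved, and as stated neither is correct. First, $S$ is a closed but \emph{non-open} subgroup of $\U(V)$ (even for the open orbits on $\GL(V)/P$), so compact induction from $S$ is not left adjoint to restriction; the adjunction $\Ext^i_{\U(V)}(\ind_S^{\U(V)}\sigma,\omega)\cong\Ext^i_S(\sigma,\omega|_S)$ is the open-subgroup version and drops a modulus twist. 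The paper obtains the correct adjunction by dualizing: $(\ind_S^G\sigma)^\vee\cong\Ind_S^G\sigma^\vee$, $\Ext^i_G[U,V^\vee]\cong\Ext^i_G[V,U^\vee]$ and $\omega_{V,\psi,\mu}^\vee\cong\omega_{V,\psi^-,\mu^{-1}}$, and this is exactly where the factor $\delta_S^{1/2}$ in the target enters. You cannot recover it afterwards by ``matching $\chi$'': the mixed model gives
\[ (\omega_{V,\psi,\mu})_{N_d}\;\cong\;(\mu\cdot|\det|^{1/2})\otimes\omega_{V_{n-2d},\psi,\mu}, \]
with exponent $1/2$ independent of $n$ and $d$, whereas the required character is $\delta_S^{1/2}|\det|^{-1/2}\mu=|\det|^{(n-d-1)/2}\mu$; the exponents do not close up without the modulus coming from the adjunction.

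Second, and more seriously, the claimed collapse $\Ext^i_S(\sigma,\omega|_S)\cong\Ext^i_{M_S}(\sigma_{N(S)},(\omega)_{N(S)})$ does not follow from exactness of the Jacquet functor. The valid reduction (Lemma \ref{ext}) is $\Ext^i_H(\pi_1,\pi_2)\cong\Ext^i_{H/U}((\pi_1)_U,\pi_2)$ \emph{under the hypothesis that $U$ acts trivially on the second argument}; here $N(S)=N_d$ acts nontrivially on both $\sigma$ (through the unipotent radicals of the $\GL$-parabolics) and on $\omega_{V,\psi,\mu}$, and taking coinvariants of the two arguments separately is not legitimate (already for $i=0$, a nonzero $N(S)$-map $\sigma\to\omega$ need not induce a nonzero map on coinvariants, nor conversely). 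The paper circumvents this with a two-step argument: after dualizing so that the Weil representation sits in the first slot, it takes $Z(S)$-coinvariants of it (allowed because $Z(S)$ acts trivially on $\sigma^\vee$), uses $(\omega)_{Z_d}=(\omega)_{N_d}$ from Lemma \ref{Weil-Jac}, dualizes back, and only then takes $N(S)/Z(S)$-coinvariants of $\sigma$ (allowed because $N(S)/Z(S)$ acts trivially on the reduced Weil representation). Your argument needs this device, or an equivalent one, to be complete; for the open orbits ($d=0$, $N(S)=1$) these issues disappear and your K\"unneth step is fine.
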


\begin{proof}
  For analyzing  $\Ext^i_{\U(V)}[ \pi_X , \omega_{V,\psi, \mu}]$, we will need the following generalities on Ext groups and contragredients
   (cf. \cite{P3} for (a) below):

\vspace{2mm}
(a) For any two smooth representations $U,V$ of a $p$-adic group $G$, we have:
\[ \Ext_G^i[ U,V^\vee] \cong \Ext_G^i[V, U^\vee] .\]

\vspace{2mm}

(b) For $H$ a closed subgroup of a $p$-adic group $G$, and $U$ any smooth representation of $H$ with smooth dual $U^\vee$,
\[ [\ind_H^G U]^\vee \cong \Ind_H^G U^\vee. \]

\vspace{2mm}

(c) For  a nontrivial character $\psi: F \rightarrow \C^\times$, with the associated Weil representation   $\omega_{V,\psi, \mu}$  of $\U(V)$, we have
\[\omega_{V,  \psi, \mu}^\vee  \cong  \omega_{V, \psi^-,\mu^{-1}},\]
where $\psi^-(x) = \psi(-x)$. 

%This follows because $\omega_{V,\psi,\mu}$ is a direct sum of irreducible representations of
%$\U(V)$ parameterized by their central characters, i.e. characters of $\U_1(F)$. Now $\U_1(F)$ being compact, it has only finitely
%many characters trivial on a given compact open subgroup of it. Thus, though the Weil representation  $\omega_{V,\psi,\mu}$ of $\U(V)$ is  not of
%finite length, it is an admissible representation of $\U(V)$,
%and its contragredient either as a representation of the metaplectic group
%${\rm Mp}(V)$, or as a representation of $\U(V)$, is the same.   

We shall also use the following three lemmas whose proofs are left to the reader.
\vskip 5pt

 \vskip 5pt

\begin{lemma} \label{ext}
  Let $U \subset H$ be $p$-adic groups with $U$ a normal subgroup of $H$ which is a union of compact open subgroups. Let $\pi_1,\pi_2$ be two smooth representations of $H$ such that $U$ acts trivially on $\pi_2$. Then
  \[ \Ext^i_H[\pi_1,\pi_2] \cong \Ext^i_{H/U}[ \pi_{1, U}, \pi_2].\]
\end{lemma}

 \begin{lemma} \label{central} Let $G$ be any $p$-adic group, $Z\subset G$ a closed central subgroup. If $\pi_1$ and $\pi_2$ are two
    smooth representations of $G$  on which $Z$ operates by different characters, then
    \[ \Ext_G^i[\pi_1,\pi_2] = 0 {\rm ~~for ~~all~~} i \geq 0.\] 
  \end{lemma}

\vskip 5pt

\begin{lemma} \label{Weil-Jac}
  Let $V=V_n$ be a skew-Hermitian space over $E$ of dimension $n$, and let
  $\omega_{V,\psi,\mu}$ be a Weil representation of $\U(V)$. Let $Q_d = M_d N_d$ be
  the maximal parabolic subgroup of $\U(V)$ stabilizing a $d$-dimensional isotropic space
  (see \S \ref{SS:stabilizers}), so that $M_d \cong \GL_d(E) \times \U(V_{n-2d})$. Then for $Z_d$, the center of $N_d$, we have:
 \[  (\omega_{V,\psi,\mu})_{Z_d}  = (\omega_{V,\psi, \mu})_{N_d} \cong  (\mu  \cdot |-|^{1/2} \circ \det) \otimes\omega_{{V_{n-2d}},\psi,\mu} , \]
 as $M_d$-modules.
  \end{lemma}

 \vspace{2mm}

With these preliminaries in place,  we now compute:
\begin{alignat}{2}
    &\hskip 10pt \Ext^i_{\U(V)}[  \pi_X, \omega_{V,\psi, \mu}] &  \notag \\
  \cong  & \hskip 10pt \Ext^i_{\U(V)}[  \ind_{S}^{\U(V)}  (\pi_1 \otimes \pi_2 \otimes \delta_{P/S}^{1/2}), \omega_{V,\psi, \mu}], &\text{  (by Prop. \ref{P:module})} \notag \\
   \cong  &\hskip 10pt \Ext^i_{\U(V)}[ \omega_{V,\psi^-, \mu^{-1}}, \Ind_{S}^{\U(V)}  (\pi_1 \otimes \pi_2 \otimes \delta_{P/S}^{1/2})^\vee ],  & \text{ (by (a),(b) and (c))} \notag \\
   \cong  &\hskip 10pt \Ext^i_{S}[ \delta_{S}^{-1/2}\omega_{V,\psi^-, \mu^{-1}},
    (\pi_1 \otimes \pi_2 \otimes \delta_{P/S}^{1/2})^\vee ],  & \text{  (by Frobenius reciprocity)} \notag \\
 \cong  &\hskip 10pt \Ext^i_{S/Z(S)}[ \delta_{S}^{-1/2}(\omega_{V,\psi^-, \mu^{-1}})_{Z(S)},
    (\pi_1 \otimes \pi_2 \otimes \delta_{P/S}^{1/2})^\vee ], & \text{   (by  Lemma \ref{ext})} \notag \\
   \cong  &\hskip 10pt   \Ext^i_{S/Z(S)}[ \delta_{S}^{-1/2} \cdot \mu^{-1} \cdot  |\det|^{1/2} \cdot \omega_{{V_{n-2d}},\psi^-, \mu^{-1}},
    (\pi_1 \otimes \pi_2 \otimes \delta_{P/S}^{1/2})^\vee ]    & \text{   (by Lemma \ref{Weil-Jac})} \notag \\
     \cong &\hskip 10pt    \Ext^i_{S/Z(S)}[ \pi_1 \otimes \pi_2 \otimes \delta_{P/S}^{1/2}, \,\, \delta_{S}^{1/2} |\det|^{-1/2}\cdot \mu \cdot \omega_{V_{n-2d},\psi,\mu}]   
      & \text{ \hskip 5pt  (by (a))} \notag \\
 \cong &\hskip 10pt  \Ext^i_{S/N(S)}[ (\pi_1)_{d,a-d} \otimes (\pi_2)_{b-d,d} \otimes \delta_{P/S}^{1/2},\,\,  \delta_{S}^{1/2} |\det|^{-1/2} \cdot \mu \cdot \omega_{V_{n-2d},\psi,\mu}]
 &   \notag 
\end{alignat}
\vskip 10pt

\noindent where $ (\pi_1)_{d,a-d}$ denotes the un-normalized Jacquet module of $\pi_1$ with respect to the
$(d,a-d)$ parabolic  subgroup in $\GL_a(E)$; similarly for $(\pi_2)_{b-d,d}$.
Here we have applied Lemma \ref{ext} (taking $U = N(S)/Z(S)$)  for the last isomorphism for which 
it is important to  note that
$N(S)/Z(S)$  maps isomorphically to the product of the unipotent radicals of the $(d,a-d)$-parabolic subgroup  of $\GL_a(E)$ and the $(b-d,d)$-parabolic subgroup of $\GL_b(E)$.
 
%This observation is a consequence of the fact that the Weil representation
%of $\U(V)$, for $V= Y \oplus V_{n-2d} \oplus Y^*,$  where $Y = X\cap X^\perp$, realized on ${\mathcal S}(Y^*) \otimes \omega(V_{n-2d})$, through what is called a ``mixed model'', cf. \S2.2.7 of \cite{MVW},
%the action of $(g,g') \in \GL(Y) \times \U(V_{n-2d})$ is, for $\phi$ belonging to 
% ${\mathcal S}(Y^*) \otimes \omega(V_{n-2d})$,
%given by:
%\[( (\omega_{V,\psi^-, \mu^{-1}}) (g, g') \phi)(y) = |g|^{1/2} \cdot \mu^{-1}(\det g) \phi(g^*y^*) \omega_{{V_{n-2d}},\psi^-, \mu^{-1}}(g').\]

%Further, the action of the unipotent radical of the parabolic $P \subset \U(V)$ stabilizing $Y$ is given by the formulae as in 
%\S 2.2.7 of \cite{MVW}, from where one easily sees that the maximal quotient of the Weil representation of $\U(V)$ on which $Z(P) = Z(Q)$ operates trivially is the quotient ${\mathcal S}(Y^*) \otimes \omega(V_{n-2d}) \rightarrow  \omega(V_{n-2d})$ given by evaluation
%at the origin of functions in ${\mathcal S}(Y^*)$ which by the formulae in \S2.2.7 of \cite{MVW} proves that the
%action of $N(Q)=N(P)$ on this quotient is trivial, hence it also proves  Lemma \ref{Weil-Jac} quoted earlier.

 \vskip 10pt
 
For the final assertion in the proposition regarding the open orbits corresponding to $d=0$, it suffices to observe that for the direct sum
of nondegenerate skew-Hermitian spaces $V = V_a \oplus V_b$, we have the tensor product decomposition of their Weil representations:
\[\omega_{V,\psi,\mu}  \cong \omega_{V_a,\psi,\mu} \otimes \omega_{V_b,\psi,\mu},\]
as representations of $\U(V_a) \times  \U(V_b) \subset \U(V)$. Thus the final assertion is a direct consequence of the Kunneth theorem \cite[Theorem 3.1]{P3}, completing  the proof of  Proposition \ref{ps2}.
\end{proof}

\vskip 5pt
\subsection{\bf Branching for $\pi_1 \times \pi_2$}
We can now assemble the results of Proposition \ref{ps2} for the various $\U(V)$-orbits $[X]$ on $\GL(V)/ P$ to understand the branching problem
\[   \Ext^i_{\U(V)}( \pi_1 \times \pi_2, \omega_{V, \psi, \mu}). \]
The result is most definitive when only the open orbits have nonzero contribution. The following theorem, which is the main result of this section, 
gives a simple sufficient condition (temperedness of $\pi_1$ and $\pi_2$) for this to happen.
\vskip 10pt

\begin{thm} \label{ps1}
  Suppose that $\pi_1$ and $\pi_2$ are tempered representations of $\GL(V_a)$ and $\GL(V_b)$
  (with unitary central characters) , so that  $\pi = \pi_1 \times \pi_2$ is a tempered principal series representation
  of $\GL(V)$ for $V=V_a+ V_b$.
 If $[X]$ is a non-open orbit of $\U(V)$ on $\GL(V)/P$, then for $\pi_X$, the subquotient of $\pi$ supported on the
 orbit $[X]$, we have:
  \[\Ext^i_{\U(V)}[ \pi_X, \omega_{V,\psi,\mu}] = 0,\]
  for all $i \geq 0$.
\vskip 10pt

As a consequence, for all $i \geq 0$, one has:
   \begin{equation} \label{E:Ext1}
   \bigoplus_{\delta} \Ext^i_{\U(V_{\delta})}[ \pi, \omega_{V_{\delta},\psi, \mu}]    \end{equation}
   \[  \cong
    \bigoplus_{i=j+k} \left(\bigoplus_{\delta'} \Ext^j_{\U(V_{a,\delta'})} [ \pi_1,
     \omega_{V_{a, \delta'},\psi, \mu}] \right) \otimes
  \left(\bigoplus_{\delta''} \Ext^k_{\U(V_{b,\delta''})}[ \pi_2, \omega_{V_{b,\delta''},\psi, \mu}] \right)\]
  where the sums over $\delta$, $\delta'$ and $\delta''$ run over $F^{\times}/NE^{\times}$ or $E_0^{\times}/ NE^{\times}$ according to the parity of $n, a,b$, respectively.
  \vskip 5pt
  
   Hence, for $i=0$, one has:
     \vskip 5pt
     
     \begin{equation} \label{E:Ext1.5}   
      \Hom_{\U(V_{\delta})}[ \pi, \omega_{V_{\delta},\psi}]
   \cong \end{equation}
\[  \bigoplus_{(\delta', \delta''): V_{a,\delta'} \oplus V_{b,\delta''} \cong V_{\delta} } \Hom_{\U(V_{a,\delta'})}[ \pi_1, \omega_{V_{a,\delta'},\psi}] \otimes
  \Hom_{\U(V_{b,\delta''})} [ \pi_2, \omega_{V_{b,\delta''},\psi}] . \]
  In particular,
    
\begin{equation} \label{E:Ext2}
  \bigoplus_{\delta} \Hom_{\U(V_{\delta})}[ \pi, \omega_{V_{\delta},\psi}]
   \cong \end{equation}
\[  \left(\bigoplus_{\delta'} \Hom_{\U(V_{a,\delta'})}[ \pi_1, \omega_{V_{a,\delta'},\psi}] \right)\otimes
  \left(\bigoplus_{\delta''} \Hom_{\U(V_{b,\delta''})} [ \pi_2, \omega_{V_{b,\delta''},\psi}] \right). \]

   \end{thm}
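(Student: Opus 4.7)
The plan is to combine Proposition \ref{ps2}, which computes the orbit-wise contribution $\Ext^i_{\U(V)}[\pi_X, \omega_{V,\psi,\mu}]$ in terms of an Ext group over $S/N(S) \cong \GL_d(E) \times \U_{a-d} \times \U_{b-d}$, with a vanishing argument for all non-open orbits based on a central character mismatch. Once vanishing on non-open orbits is established, the long exact sequence attached to the $\U(V)$-equivariant Mackey filtration of $\pi_1 \times \pi_2$ collapses, reducing the Ext computation to a direct sum over the open orbits; these are then handled by the factorization of the Weil representation and the K\"unneth theorem.

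For an orbit $[X]$ with $d = \dim(X \cap X^\perp) \geq 1$, I would apply Lemma \ref{central} to the center $Z_d = Z(\GL_d(E)) \subset S/N(S)$. Since $\mu$ and $\omega_{V_{n-2d},\psi,\mu}$ are unitary, the target module $\delta_S^{1/2} \cdot |{\det}_{\GL_d}|^{-1/2} \cdot \mu \cdot \omega_{V_{n-2d},\psi,\mu}$ has central character of absolute value $|t|^{d(n-d-1)/2}$ on $tI_d$. On the source side, an irreducible subquotient of the normalized Jacquet $r_{P_{d,a-d}}(\pi_1)$ contributes an exponent of the form $|t|^{x_1}|s|^{x_2}$ on $(tI_d, sI_{a-d})$. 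Casselman's temperedness criterion, combined with the unitary central character of $\pi_1$, forces $x_1 \geq 0$ (equivalently, the real part of the exponent lies in the closed cone spanned by the positive roots of the Levi); symmetrically, the analog for $\pi_2$ at the $(b-d,d)$-parabolic yields $y_2 \leq 0$ before contragredient, so $-y_2 \geq 0$ after.

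Assembling the contributions from $(\pi_1)_{d,a-d}$, $(\pi_2)_{b-d,d}$, and the twist $\delta_{P/S}^{1/2}$, the source module has absolute value on $tI_d$ equal to $|t|^{(x_1+d(a-d)/2)+(-y_2+d(b-d)/2)+d^2/2} = |t|^{x_1-y_2+d(n-d)/2}$. The ratio to the target absolute value is $|t|^{x_1-y_2+d/2}$, a strictly positive power of $|t|$ whenever $d \geq 1$ (since $x_1 \geq 0$ and $-y_2 \geq 0$). Hence the central characters on $Z_d$ disagree and Lemma \ref{central} forces $\Ext^i_{\U(V)}[\pi_X, \omega_{V,\psi,\mu}] = 0$ for every non-open orbit.

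The main obstacle is pinning down the correct sign convention in Casselman's bound and tracking the modular characters $\delta_{P/S}$, $\delta_S$, and the Jacquet module normalizations so that the crucial inequality $x_1 - y_2 + d/2 > 0$ for $d \geq 1$ survives; everything else is standard bookkeeping and a long exact sequence argument. With the non-open orbits contributing zero, the Mackey filtration collapses to give $\Ext^i_{\U(V)}[\pi_1 \times \pi_2, \omega_{V,\psi,\mu}] = \bigoplus_{[X] \text{ open}} \Ext^i_{\U(V)}[\pi_X, \omega_{V,\psi,\mu}]$. For an open orbit corresponding to a nondegenerate decomposition $V = V_a \oplus V_b$, the stabilizer is $\U(V_a) \times \U(V_b)$ and the Weil representation factorizes as $\omega_{V_a,\psi,\mu} \otimes \omega_{V_b,\psi,\mu}$, so the K\"unneth theorem (Theorem 3.1 of \cite{P3}), together with the open-orbit part of Proposition \ref{ps2}, delivers \eqref{E:Ext1.5}. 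Finally, summing over all skew-Hermitian structures $\delta$ on the ambient space $V$ is equivalent to summing independently over the structures $(\delta', \delta'')$ on $V_a, V_b$, since each such pair determines a unique $V_\delta$ as orthogonal direct sum, yielding the factorized double sum \eqref{E:Ext1} and, specializing in degree zero, the pure Hom identity \eqref{E:Ext2}.
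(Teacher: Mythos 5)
Your proposal is correct and follows essentially the same route as the paper: reduce to the orbit-wise Ext groups via Proposition \ref{ps2}, use Casselman's criterion to show the center of $\GL_d(E)$ acts on source and target with exponents differing by a strictly positive power of $|t|$ when $d\geq 1$ (your $|t|^{x_1-y_2+d/2}$ matches the paper's $|t|^{d(\epsilon+\epsilon'+1)/2}$), invoke Lemma \ref{central}, and then collapse the Mackey filtration onto the open orbits and apply the K\"unneth theorem. The bookkeeping of $\delta_{P/S}$, $\delta_S$ and the Jacquet-module twists is consistent with the paper's computation, so there is nothing to add.
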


\vskip 5pt

 \begin{proof}  
 By Proposition \ref{ps2},  $\Ext^i_{\U(V)}[ \pi_X, \omega_{V,\psi,\mu}]$ is equal to
  \[  
   \Ext^i_{S/N(S)}[ \delta_{P/S}^{1/2} \cdot (\pi_1)_{d,a-d} \otimes (\pi_2)_{b-d,d} , \, \,  \delta_{S}^{1/2} \cdot |\det|^{-1/2} \mu\cdot \omega_{V_{n-2d},\psi,\mu}] . \]  
By Casselman's criterion, the normalized Jacquet module of a tempered representation with respect to a parabolic $Q = L \cdot U$ of $\GL_n$ is a tempered representation of the Levi factor $L$  up to multiplication by a character $\delta_{Q}^{\alpha/2}$ for some real number $\alpha \geq 0$. 
Hence, since $\pi_1$ is tempered, 
\[
    (\pi_1)_{d,a-d}  =  \delta_{P_{d,a-d}}^{(1+\alpha)/2} \times ({\rm  a ~~tempered ~~ representation~~ of~~ } \GL_d(E) \times \GL_{a-d}(E)).  \]
Moreover, for  $(g,h) \in \GL_d(E) \times \U_{a-d}(F)$, 
  \[  \delta_{P_{d,a-d}}(g,h)^{(1+\alpha)/2} =  |\det(g)|^{(a-d+\epsilon)/2 }, \quad \text{with  $\epsilon=\alpha \cdot (a-d)$.} \]
  Similarly, 
  \[     (\pi_2)_{b-d,d}  =  \delta_{P_{b-d,d}}^{(1+\alpha')/2} \times ({\rm  a ~~tempered ~~ representation ~~of~~ } \GL_{b-d}(E) \times \GL_{d}(E)) \]
  and  for $(h, (g^*) ^{-1} ) \in  \U_{b-d}(F) \times \GL_d(E)$, 
  \[ \delta_{P_{b-d,d}}(h, (g^*)^{-1})^{(1+\alpha')/2}  =  |\det(g)|^{(b-d+\epsilon')/2}, \quad \text{with $\epsilon' = \alpha' \cdot (b-d)$.}  \]
 Summarizing, we have:
\begin{enumerate}
\item the representation
  \[ A = \delta_{P/S}^{1/2} \cdot   (\pi_1)_{d,a-d} \otimes (\pi_2)_{b-d,d} \]
   is the twist of  a unitary representation of $\GL_d(E) \times \U(V_{a-d}) \times \U(V_{b-d})$   by the character
  \[  |\det |^{(a-d+\epsilon)/2}  \cdot |\det|^{(b-d+\epsilon')/2} |\det|^{d/2} = |\det|^{(n-d+\epsilon+\epsilon')/2} \]
    of $\GL_d(E)$. 
\vskip 5pt

\item the representation
\[  B= \delta_{S}^{1/2} \cdot |\det|^{-1/2} \mu\cdot \omega_{V_{n-2d},\psi,\mu} \]
is the twist of a unitary representation of $\GL_d(E) \times \U(V_{n-2d})$  by  the character 
 \[  |\det|^{(n-d)/2} \cdot  |\det |^{-1/2} = |\det|^{(n-d-1)/2} \]
of $\GL_d(E)$. 
\end{enumerate}
\vskip 5pt

\noindent Thus, when $d > 0$,  the actions of the center of $\GL_d(E)$ in 
\[  S/N(S)=\GL_d(E) \times  \U(V_{a-d}) \times \U(V_{b-d}) \]
on the two representations $A$ and $B$ are different.
Therefore, by Lemma \ref{central},
\[ \Ext^i_{S/N(S)}[A,B] = 0, \]
for all $i \geq 0$ (as long as $d\not = 0$). This completes the proof
that for a non-open $\U(V)$-orbit $[X] \subset \GL(V)/P$, the associated subquotient $\pi_X$ of the $\U(V)$-module $\pi$  satisfies
   \[\Ext^i_{\U(V)}[ \pi_X, \omega_{V,\psi}] = 0 \quad \text{for all $i \geq 0$.} \]
 \vskip 5pt
 
 As a consequence of the vanishing of $\Ext^i$, $i \geq 0$, for all non-open orbits, we deduce that 
  \[\Ext^i_{\U(V)}[ \pi, \omega_{V,\psi,\mu}] = \bigoplus_X \Ext^i_{\U(V)}[ \pi_X, \omega_{V,\psi,\mu}],\]
  where now $X$ runs over the two open orbits of $\U(V)$ on $\GL(V)/P$. Since Proposition \ref{ps2} calculates 
$\Ext^i_{\U(V)}[ \pi_X, \omega_{V,\psi}]$ for the open orbits, 
 the proof of Theorem \ref {ps1} is completed. \end{proof}

\vskip 10pt

\section{\bf Application to Conjecture \ref{conj-local}.}  \label{S:UPS2}

In this section, we deduce the implications of  Theorem  \ref{ps1}  for  Conjecture \ref{conj-local}.
Indeed, we shall see how Theorem \ref{ps1} allows us to reduce  Conjecture \ref{conj-local} for tempered representations 
to the case of discrete series representations of $\GL_n(E)$. This allows us to
prove  Conjecture \ref{conj-local} for unitary principal series induced from the Borel subgroup. We also investigate if the Mackey theory argument allows one to further
reduce  Conjecture \ref{conj-local} to the case of supercuspidal representations. As we shall see, we will fall slightly short of that,  but we  will at least be able to prove Conjecture \ref{conj-local}  for the Steinberg representation of $\GL_n(E)$. 
 
\vskip 5pt

\vskip 5pt

\subsection{\bf Inductive argument}
Let us first record the following consequence of Theorem \ref{ps1}.
\vskip 5pt

\begin{cor} \label{ds} 
Let $V = V_a \oplus V_b$ be a direct sum of nondegenerate skew-Hermitian spaces, and let $\pi_1$ and $\pi_2$ be irreducible tempered representations of $\GL(V_a)$ and $\GL(V_b)$ respectively. Then we have the following.
\vskip 5pt

\noindent (i)  If Conjecture \ref{conj-local} holds
  for $\pi_1$  and $\pi_2$,   then  Conjecture \ref{conj-local} holds for the unitary principal series representation $\pi_1 \times \pi_2$ of $\GL(V)$. 
  
  \vskip 5pt
  
  \noindent (ii) If
  \[  
  \Ext^i_{\U(V_a)}(\pi_1, \omega_{V_a, \psi,\mu}) = \Ext^i_{\U(V_b)}(\pi_2, \omega_{V_b,\psi,\mu}) = 0 \quad \text{ for all $i>0$,}  \]
  then
     \[\Ext^i_{\U(V)}(\pi_1 \times \pi_2 , \omega_{V,\psi, \mu}) = 0  \quad \text{ for all $i>0$.}\]
  \end{cor}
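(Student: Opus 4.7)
The plan is to deduce both parts of the corollary from the Künneth-type decomposition of $\Ext^i_{\U(V)}(\pi_1 \times \pi_2, \omega_{V,\psi,\mu})$ provided by Theorem \ref{ps1}. In particular, I will use the obvious refinement of (\ref{E:Ext1})--(\ref{E:Ext2}) that isolates the contribution of each skew-Hermitian structure $V_\delta$: for fixed $V_\delta$,
\[ \Ext^i_{\U(V_\delta)}(\pi_1 \times \pi_2, \omega_{V_\delta,\psi,\mu}) \cong \bigoplus_{i = j+k} \bigoplus_{\substack{(\delta',\delta'') \\ V_{a,\delta'} \oplus V_{b,\delta''} \cong V_\delta}} \Ext^j_{\U(V_{a,\delta'})}(\pi_1, \omega_{V_{a,\delta'},\psi,\mu}) \otimes \Ext^k_{\U(V_{b,\delta''})}(\pi_2, \omega_{V_{b,\delta''},\psi,\mu}). \]
Part (ii) is then immediate: if $i \geq 1$ then at least one of $j,k$ is positive, so every tensor factor on the right is zero by hypothesis, giving the desired vanishing.

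For part (i), I specialize to $i = 0$ and verify each of the three assertions of Conjecture \ref{conj-local} for $\pi := \pi_1 \times \pi_2$. By Conjecture \ref{conj-local}(ii) applied to $\pi_1$ and $\pi_2$, there exist unique skew-Hermitian spaces $V_{a,\delta'_0}$ and $V_{b,\delta''_0}$ making the respective Hom spaces nonzero, each of dimension one. Since $\det V_\delta = \det V_{a,\delta'} \cdot \det V_{b,\delta''}$, the pair $(\delta'_0,\delta''_0)$ determines a unique $\delta_0$ for which the corresponding summand of the displayed formula is nonzero, and that summand has dimension $1 \cdot 1 = 1$. This proves parts (i) and (ii) of Conjecture \ref{conj-local} for $\pi$.

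The main obstacle is part (iii): showing that this distinguished $V_{\delta_0}$ satisfies the epsilon-factor identity. Writing $M_1, M_2$ for the L-parameters of $\pi_1,\pi_2$ and $M = M_1 \oplus M_2$ for that of $\pi$, and expanding
\[ M \otimes {}^\sigma\! M^\vee = \bigoplus_{i,j \in \{1,2\}} M_i \otimes {}^\sigma\! M_j^\vee, \]
the task (after using the elementary identity $n(n-1)/2 - a(a-1)/2 - b(b-1)/2 = ab$ and multiplying the two given identities for $V_{a,\delta'_0}$ and $V_{b,\delta''_0}$) reduces to checking
\[  \epsilon(1/2, R, \psi_E) \cdot \epsilon(1/2, R', \psi_E) = \det(M_1)(-1)^b \cdot \det(M_2)(-1)^a \cdot \omega_{E/F}(-1)^{ab}, \]
where $R := M_1 \otimes {}^\sigma\! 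M_2^\vee \otimes \mu^{-1}$ and $R' := M_2 \otimes {}^\sigma\! M_1^\vee \otimes \mu^{-1}$ are the two ``cross terms''. The key observation is that $R' \cong {}^\sigma R^\vee$: this uses $\mu^\sigma = \mu^{-1}$ (a consequence of $\mu|_{F^\times} = \omega_{E/F}$) together with $\psi_E^\sigma = \psi_E$. Combined with the standard identity $\epsilon(1/2, R, \psi_E) \cdot \epsilon(1/2, R^\vee, \psi_E) = \det(R)(-1)$, the verification reduces to an explicit computation of $\det(R)(-1)$ via $\det(R) = \det(M_1)^b \cdot \det({}^\sigma\! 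M_2^\vee)^a \cdot \mu^{-ab}$, which (using that all the characters in question take values in $\{\pm 1\}$ on $-1$) unwinds into exactly the displayed product of signs.
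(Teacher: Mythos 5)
Your proposal is correct and follows essentially the same route as the paper: both parts are read off from the Künneth-type decomposition of Theorem \ref{ps1} (in particular the fixed-$\delta$ refinement (\ref{E:Ext1.5})), and part (iii) of Conjecture \ref{conj-local} is verified by multiplying the two assumed identities and handling the cross terms $M_1\otimes{}^\sigma\!M_2^\vee\otimes\mu^{-1}$ and $M_2\otimes{}^\sigma\!M_1^\vee\otimes\mu^{-1}$ via $\epsilon(1/2,\Pi+{}^\sigma\Pi^\vee,\psi_E)=\det\Pi(-1)$ together with $\mu^\sigma=\mu^{-1}$ and $\mu(-1)=\omega_{E/F}(-1)$. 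Your write-up merely makes explicit the determinant-of-the-cross-term computation that the paper leaves to the reader.
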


\begin{proof}
The vanishing statement in (ii) follows from Theorem \ref{ps1}, especially (\ref{E:Ext1}). 
Likewise,  (\ref{E:Ext2}) imply that Conjecture   \ref{conj-local}(i) and (ii)   for $\pi_1 \times \pi_2$ follows from the corresponding statements for $\pi_!$ and $\pi_2$. 
Thus it remains to verify that   the unique 
  skew-Hermitian space $V_{\delta}$ for which $\Hom_{\U(V_{\delta})}(\pi_1 \times \pi_2 , \omega_{V_{\delta}, \psi, \mu})$ is nonzero
  is as predicted by  Conjecture \ref{conj-local}(iii).
  \vskip 5pt
  
  Assume then that 
  \[ \Hom_{\U(V)}(\pi_1 \times \pi_2 , \omega_{V, \psi, \mu}) \ne 0. \]
 By  (\ref{E:Ext1.5}),
 \[  \Hom_{\U(V_a)}(\pi_1 , \omega_{V_a, \psi, \mu}) \ne 0 \quad \text{and} \quad \Hom_{\U(V_b)}(\pi_2 , \omega_{V_b, \psi, \mu}) \ne 0 \]
 for a unique pair of skew-Hermitian spaces $V_a$ and $V_b$ satisfying $V_a \oplus V_b \cong V$. 
 As we have assumed that  Conjecture \ref{conj-local}(iii) holds for $\pi_1$ and $\pi_2$, we have: 
\[  \mu(\det(V_a)) = \epsilon( 1/2,  \pi_1 \times {}^\sigma\pi_1 ^\vee \times \mu^{-1}, \psi_E) \cdot \omega_{\pi_1}(-1)^a \cdot \omega_{E/F}(-1)^{a(a-1)/2},\]  
and 
  \[  \mu(\det(V_b)) = \epsilon( 1/2,  \pi_2 \times {}^\sigma\pi_2 ^\vee \times \mu^{-1}, \psi_E) \cdot \omega_{\pi_2}(-1)^b \cdot \omega_{E/F}(-1)^{b(b-1)/2}.\] 
  This implies that for $\pi= \pi_1 \times \pi_2$,
\[  \mu(\det(V)) = \epsilon( 1/2,  \pi \times {}^\sigma\pi ^\vee \times \mu^{-1}, \psi_E) \cdot \omega_{\pi}(-1)^n \cdot \omega_{E/F}(-1)^{n(n-1)/2},\]  
 using the facts that $\mu ={}^\sigma\mu ^{-1}$, $\mu(-1)= \omega_{E/F}(-1)$ and  
\[ \epsilon (\Pi + {}^\sigma\Pi ^\vee, \psi_E) = \det \Pi(-1)\]
for any representation $\Pi$ of $\GL_m(E)$.
This completes the proof of the corollary.\end {proof}

\vskip 10pt
\subsection{\bf Reduction to discrete series case}
 Corollary \ref{ds} allows one to reduce Conjecture \ref{conj-local} for tempered representations to the case of discrete series representations.
\vskip 5pt
\begin{cor}  \label{C:temp-to-ds}
If Conjecture \ref{conj-local} holds for all (unitary) discrete series representations of all $\GL(V)$, then it holds for all (unitary)  tempered representations of all $\GL(V)$.
\end{cor}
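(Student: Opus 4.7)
The plan is a straightforward induction on the number of discrete series factors appearing in the Langlands/tempered classification. Recall that for $\GL_n(E)$ every irreducible tempered representation $\Pi$ arises as an irreducible parabolically induced representation
\[
\Pi \cong \pi_1 \times \pi_2 \times \cdots \times \pi_k,
\]
where each $\pi_i$ is an irreducible (unitary) discrete series representation of $\GL_{n_i}(E)$, with $n_1+\cdots+n_k = n$, and any parabolic induction of discrete series on $\GL_n(E)$ remains irreducible. This reduction is the key structural fact that we shall exploit; the only thing we need from the classification is the representability of $\Pi$ in this form.

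With this in hand, I would argue by induction on $k$. The base case $k=1$ is immediate since $\Pi$ is then a discrete series representation of $\GL(V)$ and Conjecture \ref{conj-local} holds by the hypothesis of the corollary. For the inductive step, write
\[
\Pi \cong \bigl(\pi_1 \times \cdots \times \pi_{k-1}\bigr) \times \pi_k,
\]
and set $\tau := \pi_1 \times \cdots \times \pi_{k-1}$, an irreducible tempered representation of $\GL(V_a)$ for an appropriate decomposition $V = V_a \oplus V_b$ with $\dim V_b = n_k$. By the inductive hypothesis, Conjecture \ref{conj-local} holds for $\tau$, and by assumption it holds for the discrete series representation $\pi_k$ of $\GL(V_b)$. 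Both $\tau$ and $\pi_k$ are irreducible tempered with unitary central characters, so Corollary \ref{ds}(i) applies directly and yields Conjecture \ref{conj-local} for $\tau \times \pi_k = \Pi$.

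In more detail, Corollary \ref{ds}(i) transfers the three parts of Conjecture \ref{conj-local} from $(\tau, \pi_k)$ to $\Pi$: the multiplicity-at-most-one statement (i), the summed multiplicity-one statement (ii), and the epsilon factor identification of the distinguished skew-Hermitian space in (iii). The latter follows because the formula in (iii) is multiplicative under Rankin-Selberg products --- this is exactly what was checked at the end of the proof of Corollary \ref{ds}, using $\mu = {}^{\sigma}\mu^{-1}$, $\mu(-1) = \omega_{E/F}(-1)$, and $\epsilon(\Pi \oplus {}^{\sigma}\Pi^{\vee}, \psi_E) = \det\Pi(-1)$.

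There is essentially no serious obstacle: the argument is a clean induction whose content is entirely packaged in Corollary \ref{ds}. The only mild points to verify are (a) the tempered classification for $\GL_n(E)$ in the form stated above, and (b) that irreducibility of parabolic induction from discrete series on $\GL_n$ ensures $\tau$ in the inductive step is itself irreducible and tempered, so Corollary \ref{ds}(i) is applicable at each stage. Both are standard.
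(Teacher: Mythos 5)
Your proposal is correct and follows essentially the same route as the paper: the paper's proof is the one-line observation that any tempered non-discrete-series representation of $\GL(V)$ is irreducibly and unitarily induced from discrete series on a Levi, combined with Corollary \ref{ds}. You merely make explicit the induction on the number of discrete series factors that is needed to iterate Corollary \ref{ds} (which is stated only for maximal parabolics), and this is exactly what the paper intends.
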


\vskip 5pt
\begin{proof}
This follows from Corollary \ref{ds} and the fact that any tempered non-discrete-series representation of $\GL(V)$ is irreducibly and unitarily induced from a discrete series representation of a proper parabolic subgroup. 
\end{proof}
\vskip 10pt

\subsection{\bf Borel-Principal series}
In addition, by applying Corollary \ref{ds} inductively, we deduce:
\vskip 5pt

\begin{cor}  \label{C:ps}
  Conjecture \ref{conj-local} holds for all irreducible unitary principal series representations  of $\GL(V)$
  induced from a unitary character of a Borel subgroup.
  \end{cor}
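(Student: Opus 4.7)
The plan is to establish Conjecture \ref{conj-local} for $\Pi = \chi_1 \times \cdots \times \chi_n$, an irreducible unitary principal series of $\GL_n(E)$ with each $\chi_i$ a unitary character of $E^\times$, by iteratively reducing to the rank-one case handled by Theorem \ref{T:moen}.

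First, I would iterate Theorem \ref{ps1} to peel off the characters one at a time. Writing $\Pi = \chi_1 \times \sigma$ with $\sigma = \chi_2 \times \cdots \times \chi_n$, note that $\sigma$ is tempered with unitary central character (as unitary parabolic induction of tempered data), though possibly reducible. Since the identity (\ref{E:Ext2}) of Theorem \ref{ps1} requires only temperedness with unitary central character of its inducing data -- and not irreducibility -- it applies at each stage. Iterating $n-1$ times produces
\[ \bigoplus_{\delta} \Hom_{\U(V_\delta)}(\Pi, \omega_{V_\delta,\psi,\mu}) \;\cong\; \bigotimes_{i=1}^n \left( \bigoplus_{\delta_i} \Hom_{E_1}(\chi_i, \omega_{V_{1,\delta_i},\psi,\mu}) \right). \]

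Next, Theorem \ref{T:moen} guarantees that each factor on the right is exactly one-dimensional, realized by a unique class $\delta_i$. Hence the left-hand side is one-dimensional, supported on the single class $\delta$ determined by $V_\delta \cong V_{1,\delta_1} \oplus \cdots \oplus V_{1,\delta_n}$. As $\Pi$ is irreducible, this settles parts (i) and (ii) of Conjecture \ref{conj-local} and pins down the unique skew-Hermitian space supporting a nonzero branching.

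Finally, I would verify the epsilon-factor identity (iii) for this distinguished $\delta$. The Langlands parameter of $\Pi$ is $M = \bigoplus_i \chi_i$, and $\det(V_\delta) = \prod_i \det(V_{1,\delta_i})$, so the identity to establish is
\[ \prod_{i=1}^n \mu(\det(V_{1,\delta_i})) \;=\; \epsilon\!\left(\tfrac{1}{2},\, M \otimes {}^\sigma\! M^\vee \otimes \mu^{-1},\, \psi_E\right) \cdot \det(M)(-1)^n \cdot \omega_{E/F}(-1)^{n(n-1)/2}. \]
The left-hand side is a product of rank-one Moen--Rogawski values. On the right, the decomposition $M \otimes {}^\sigma\! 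M^\vee \otimes \mu^{-1} = \bigoplus_{i,j} \chi_i \chi_j^{-\sigma} \mu^{-1}$ together with multiplicativity of epsilon factors splits the right-hand side into diagonal contributions ($i = j$) matching the individual Moen--Rogawski identities, and off-diagonal contributions (pairs $\{(i,j),(j,i)\}$ with $i \neq j$) which, using $\mu^\sigma = \mu^{-1}$, collapse to the sign $\omega_{E/F}(-1)^{n(n-1)/2}$.

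The main obstacle is this final sign bookkeeping: verifying that the off-diagonal pairs of characters contribute precisely the predicted quadratic-residue factor. This is essentially the multiplicative identity already verified for two factors in the proof of Corollary \ref{ds}(i), now extended to $n$ factors by induction. Everything else is a formal consequence of iterating Theorem \ref{ps1} and invoking Moen--Rogawski at each leaf.
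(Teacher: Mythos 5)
Your proposal is correct and follows essentially the same route as the paper: the paper's own proof is the induction via Corollary \ref{ds} (i.e.\ iterating Theorem \ref{ps1}) with Theorem \ref{T:moen} as the base case, and the paper's ``alternative proof'' carries out exactly your final epsilon-factor bookkeeping, showing each off-diagonal pair $\{(i,j),(j,i)\}$ contributes $\chi_i(-1)\chi_j(-1)\omega_{E/F}(-1)$, so the off-diagonal product is $\det(M)(-1)^{n-1}\omega_{E/F}(-1)^{n(n-1)/2}$ rather than just $\omega_{E/F}(-1)^{n(n-1)/2}$ — a harmless imprecision in your sketch, since the extra $\det(M)(-1)^{n-1}$ combines with the diagonal $\chi_i(-1)$ factors to give the predicted $\det(M)(-1)^n$.
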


\begin{proof}
Using Corollary \ref{ds}, this follows by induction on $\dim V$. The base case, with $\dim V = 1$, is 
  Theorem \ref{T:moen} due to Moen and Rogawski.  
  \end{proof}
  \vskip 5pt
  
  \subsection{\bf An alternative proof} 
  We now give another proof of Corollary \ref{C:ps}  as it  brings out an interesting structure of the open orbits of $\U(V)$ on $\GL(V)/B$.
\vskip 5pt

  Suppose that
\[    \Pi = \Ind_B^{\GL(V)} (\chi_1 \otimes  \cdots \otimes \chi_n)  \quad \text{(normalized induction),}\]
so that its L-parameter is 
\[  M = \bigoplus_i \chi_i . \]
On restriction to $\U(V)$, Theorem \ref{ps1} inductively implies that only the open $\U(V)$-orbits on the flag variety $\GL(V)/B$ will contribute
to the Hom space $\Hom_{\U(V)}[ \Pi, \omega_{V,\psi}]$.  Moreover, using Lemma \ref{orbits} inductively, 
the open orbits can be described as follows.  Given an ordered collection  
\[   \mathcal{L} = \{L_1,\cdots , L_n \} \]
 of  nondegenerate
orthogonal lines in $V$, the $\U(V)$-orbit of the flag
\[  \mathfrak{F}_{\mathcal{L}}: L_1 \subset L_1 \oplus L_2 \subset \cdots  \]
is an open orbit, and the stabilizer of $\mathfrak{F}_{\mathcal{L}}$ in $\U(V)$ is the subgroup 
\[  \U(\mathcal{L}):= \prod_i \U(L_i). \]
Moreover, all open orbits are given by such an ordered collection $\{ L_i \}$ of isomorphism classes of
(nondegenerate) skew-Hermitian $E$-spaces of dimension 1, subject to the condition that $\oplus_i L_i \cong V$ as skew-Hermitian $E$-spaces; we say that such an $\mathcal{L}$ is $V$-relevant. There are thus $2^{n-1}$ open orbits, indexed by $V$-relevant $\mathcal{L}$'s. This can also be gleaned from a Galois cohomological argument: having fixed an open orbit over $F$ with stabilizer $\U(\mathcal{L})$ in $\U(V)$ and noting that there is exactly one open orbit over $\overline{F}$, the number of open $\U(V)$-orbits is given by:
\[  {\rm Ker}\left( H^1(F,  \U(\mathcal{L})) \rightarrow H^1(F, \U(V))\right) = {\rm Ker} \left( (F^{\times}/ N(E^{\times}))^n \rightarrow F^{\times}/ N(E^{\times}) \right). \] 
\vskip 5pt

Hence, by an inductive application of Theorem \ref{ps1},
we have
\[  \Hom_{\U(V)}(\Pi, \omega_{V,\psi,\mu}) \cong \bigoplus_{\mathcal{L}}   \Hom_{\U(V)} (\ind_{\U(\mathcal{L})}^{\U(V)} ( \boxtimes_i \chi_i ), \omega_{V,\psi,\mu}) \]    
where the sum runs over $V$-relevant $\mathcal{L}$'s.
By Frobenius reciprocity,  and the fact that
\[  \omega_{V,\psi,  \mu} |_{\U(\mathcal{L})} \cong  \bigotimes_i \omega_{L_i,\psi, \mu}, \]
one deduces that
\[  \Hom_{\U(V)}(\Pi, \omega_{V,\psi,\mu}) \cong  \bigoplus_{\mathcal{L}} \bigotimes_i  \Hom_{\U(L_i)}( \chi_i,   \omega_{L_i,\psi, \mu}).  \]
Now by Theorem \ref{T:moen} (the theorem of Moen and Rogawski),  
\[  \Hom_{\U(L_i)}( \chi_i,   \omega_{L_i,\psi, \mu}) \ne 0  \Longleftrightarrow \epsilon(1/2, \chi_i/ \chi_i^{\sigma} \cdot \mu^{-1}, \psi_E) \cdot \chi_i(-1) = \mu(\det(L_i)). \]
Hence, at most one term in the sum over $\mathcal{L}$ has nonzero contribution, and this unique $\mathcal{L}$ exists if and only if
\[  \mu(\det(V)) = \prod_i   \epsilon(1/2, \chi_i/ \chi_i^{\sigma} \cdot \mu^{-1}, \psi_E) \cdot \chi_i(-1)  \]
To prove Conjecture \ref{conj-local}, we explicate:
\begin{align}
 &\epsilon(1/2, M \otimes {}^\sigma \!M ^\vee \cdot \mu^{-1}, \psi_E) \notag \\
  = &\prod_i \epsilon(1/2, \chi_i/\chi_i^{\sigma} \cdot \mu^{-1},\psi_E) \cdot  \prod_{i<j} 
\epsilon\left( 1/2, (\chi_i/ \chi_j^{\sigma} + \chi_j/ \chi_i^{\sigma}) \cdot \mu^{-1}, \psi_E \right).  \notag 
\end{align}
For $i < j$, observe that
\begin{eqnarray*}  \epsilon\left( 1/2, (\chi_i/ \chi_j^{\sigma} + \chi_j/ \chi_i^{\sigma}) \cdot \mu^{-1}, \psi_E \right)  & = & \epsilon ( 1/2, \chi_i/ \chi_j^{\sigma}\cdot \mu^{-1}, \psi_E)  \epsilon ( 1/2, \chi^\sigma_j/ \chi_i\cdot \mu^{-1}, \psi_E) \\
  & =& \chi_i(-1) \cdot \chi_j(-1) \cdot \omega_{E/F}(-1), 
  \end{eqnarray*}
where we have used the following standard properties of the epsilon factor:
\begin{enumerate}
\item $\epsilon(1/2, W, \psi_E)  \cdot \epsilon(1/2, W^\vee, \psi_E)  = \det(W)(-1)$,
\item  $\epsilon(1/2, W, \psi_E) = \epsilon(1/2, W^\sigma, \psi_E)$.
  \end{enumerate}

  It follows that:
  \[  \prod_{i<j}
  \epsilon\left( 1/2, (\chi_i/ \chi_j^{\sigma} + \chi_j/ \chi_i^{\sigma}) \cdot \mu^{-1}, \psi_E \right)   = \det(M)(-1)^{n-1} \cdot  \omega_{E/F}(-1)^{n(n-1)/2}. \]
Putting everything together, we see that $\Hom_{\U(V)}(\Pi, \omega_{V,\psi,\mu})   \ne 0 $ if and only if
\[ \mu(\det(V)) =  \epsilon(1/2, M \otimes {}^\sigma \!M ^\vee \cdot \mu^{-1}, \psi_E) \cdot \det(M)(-1)^n \cdot  \omega_{E/F}(-1)^{n(n-1)/2}, \]
as desired. 

\vskip 5pt
\vskip 5pt

\subsection{\bf Reduction to supercuspidal case}  \label{SS:red-sc}
We have seen 
that  Conjecture \ref{conj-local} for tempered representations can be reduced to the case of discrete series representations by a Mackey theory argument. 
In the rest of the section, we investigate if the same argument can be used to reduce  Conjecture \ref{conj-local} for discrete series representations to the case of supercuspidal representations. It will turn out that this can be done under a certain hypothesis. While we cannot prove this hypothesis in general, it can be shown in some situations. This will allow us to prove  Conjecture \ref{conj-local} for the Steinberg representation, for example. 
 \vskip 5pt
 
 Let us first set up some notations and formulate the relevant hypothesis. Suppose that $\pi$ is a supercuspidal representation (with unitary central character) of $\GL(V) = \GL_m(E)$. 
 The parabolically induced representation 
  \[  \pi |\det|^{\frac{n-1}{2}} \times \pi|\det|^{\frac{n-3}{2}} \times ....\times \pi|\det|^{-\frac{n-1}{2}} \]
  of $\GL(V^{\oplus n}) \cong \GL_{mn}(E)$ is a standard module and thus has a unique irreducible quotient $\Sp(\pi,n)$, which is often called a Speh representation and is nontempered (if $n>1$).  
  This parabolically induced representation also has a unique irreducible submodule $\St(\pi,n)$;  this is the ``generalized Steinberg'' representation, which is a discrete series representation. All the irreducible (unitary) discrete series representations of general linear groups are of the form $\St(\pi, n)$. The supercuspidal ones are precisely those with $n=1$. 
  \vskip 5pt
   If
  \[ \phi_\pi: W_E\rightarrow \GL_m(\C) \]
  is the L-parameter of $\pi$, then the L-parameter  of
  $\St(\pi,n)$ is the representation $\phi_{\pi} \otimes \Sym^{n-1}(\C^2)$ of the Weil-Deligne group
  $WD_E= W_E \times \SL_2(\C)$. We also write  $[n]=\Sym^{n-1}(\C^2)$ for the unique irreducible
  $n$-dimensional representation
  of $\SL_2(\C)$, and write $\phi_{\pi}[n]$  for the L-parameter  of $\St(\pi,n)$.
\vskip 5pt

To deal with the generalised Steinberg representations, we will need to make
an assumption.
In this,  $V,W$ are  the two isomorphism calasses of skew-Hermitian spaces over $E$ of dimension $m$. 
Then we make the following assumption:

\[({\bf Assumption}) \hspace{1cm}  \begin{cases} \Hom_{\U(V+ V )}[\Sp(\pi,2),  \omega_{V+V,\psi, \mu}]  =  0, \\
  \Hom_{\U(V+ W )}[\Sp(\pi,2),  \omega_{V+W,\psi, \mu}]  =  0. \end{cases}\]
\vskip 5pt

\noindent We remark that this assumption is a case of the nontempered twisted GGP conjecture formulated below in  Conjecture \ref{nontemperedggp}.
\vskip 5pt

With this assumption formulated, our result is:
\vskip 5pt

\begin{thm} \label{sc-to-ds}
Let $\pi$ be a supercuspidal representation of $\GL(V)$ with $\dim V = m$. If $\pi$ satisfies 
    Conjecture \ref{conj-local} and the above (Assumption), then   Conjecture \ref{conj-local} holds for  the discrete series representations $\St(\pi, n)$ for all $n \geq 1$. 
\vskip 5pt

More precisely:

\vskip 5pt

(a) Suppose that $V$ and $W$ are the two isomorphism classes of skew-Hermitian spaces over $E$ of dimension $m$ and 
\begin{eqnarray*} \Hom_{\U(V)}(\pi,  \omega_{V,\psi, \mu}) & \cong &  \C, \\
\Hom_{\U(W)}(\pi,  \omega_{W,\psi, \mu}) & = & 0. \end{eqnarray*}
Then, under (Assumption),  one has:  
\begin{eqnarray*} \Hom_{\U(V^n)}(\St(\pi,n),  \omega_{V^n,\psi, \mu}) & \cong & \C, \\
\Hom_{\U(W+V^{n-1})}(\St(\pi,n),  \omega_{W+V^{n-1},\psi, \mu}) & = & 0 \end{eqnarray*}
for the two isomorphism classes of skew-Hermitian spaces $V^n, W+V^{n-1}$ 
of dimension $mn$ over $E$, 
\vskip 5pt

(b) If
\[  \mu(\det(V)) \stackrel{(1)}{=} \epsilon( 1/2,  \phi_{\pi} \times {}^\sigma\phi_{\pi} ^\vee \times \mu^{-1}, \psi_E) \cdot \omega_{\pi}(-1)^m \cdot \omega_{E/F}(-1)^{m(m-1)/2},\]
  then for the skew-Hermitian space $V^{ n}$,
 \begin{eqnarray*}  \mu(\det(V^{ n})) & = & \mu(\det(V)^n) \\
   & \stackrel{(2)}{=}& \epsilon( 1/2,  \phi_{\pi}[n] \times {}^\sigma\phi_{\pi} ^\vee[n] \times \mu^{-1}, \psi_E) \cdot \omega_{\pi}(-1)^{nm} \cdot \omega_{E/F}(-1)^{mn(mn-1)/2}.
\end{eqnarray*}
\end{thm}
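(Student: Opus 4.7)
The plan is to deduce the conjecture for $\St(\pi,n)$ from the hypothesis on $\pi$ by embedding $\St(\pi,n)$ into the standard module
\[ I(\pi,n) := \pi|\det|^{(n-1)/2} \times \pi|\det|^{(n-3)/2} \times \cdots \times \pi|\det|^{-(n-1)/2}, \]
computing the branching of $I(\pi,n)$ to $\U(V')$ via Mackey theory in the style of Section \ref{S:UPS}, and then passing to the submodule $\St(\pi,n)$ using (Assumption).

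For part (b), I would use the $\SL_2(\C)$-decomposition $[n] \otimes [n] \cong \bigoplus_{k=0}^{n-1}[2k+1]$ to rewrite the epsilon factor in (2) as a product over $k$ of factors $\epsilon(1/2, \rho \otimes [2k+1], \psi_E)$, where $\rho := \phi_\pi \otimes {}^\sigma\!\phi_\pi^\vee \otimes \mu^{-1}$ is conjugate-symplectic. Standard manipulations --- multiplicativity of the Steinberg factor $[2k+1]$ together with the cancellation of shifted factors coming from self-duality of $\rho$ --- collapse this product into $\epsilon(1/2, \rho, \psi_E)^n$ times determinant and $\omega_{E/F}(-1)$ corrections. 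The integer identity $\binom{mn}{2} = n\binom{m}{2} + \binom{n}{2}m^2$ then reconciles (1) with (2), the extra $\binom{n}{2}m^2$ factor accounting for the ``cross terms'' from the tensor decomposition.

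For part (a), I would first apply the Mackey theory of Section \ref{S:UPS} to the parabolic $P \subset \GL_{mn}(E)$ with Levi $\GL_m^n$ to compute $\Hom_{\U(V')}(I(\pi,n), \omega_{V',\psi,\mu})$, generalizing Proposition \ref{ps2}. The orbits of $\U(V')$ on $\GL(V')/P$ are parametrized by iterated flags; on a non-open orbit the stabilizer contains a $\GL_d(E)$-block with $d > 0$ inside one of the $\GL_m$-factors. When $0 < d < m$ the relevant Jacquet module $(\pi|\det|^c)_{d,m-d}$ vanishes by supercuspidality of $\pi$, while the case $d = m$ is killed by the central-character mismatch argument of Theorem \ref{ps1} applied to the center of the corresponding $\GL_m$-block. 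Hence only open orbits contribute, and these correspond to ordered decompositions $V' = V_1 \oplus \cdots \oplus V_n$ into nondegenerate $m$-dimensional subspaces. Since $\det(\U(V_i)) \subset E^1$, the twists $|\det|^{c_i}$ act trivially on $\U(V_i)$, and Frobenius reciprocity gives
\[ \Hom_{\U(V')}(I(\pi,n), \omega_{V',\psi,\mu}) \cong \bigoplus_{(V_1,\ldots,V_n):\,\oplus V_i \cong V'} \bigotimes_{i=1}^n \Hom_{\U(V_i)}(\pi, \omega_{V_i,\psi,\mu}). \]
By the hypothesis on $\pi$, each tensor factor is nonzero iff $V_i \cong V$, yielding $\Hom_{\U(V^n)}(I(\pi,n), \omega) \cong \C$ (from the unique all-$V$ decomposition) and $\Hom_{\U(W+V^{n-1})}(I(\pi,n), \omega) = 0$.

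Finally, to pass from $I(\pi,n)$ to its unique irreducible submodule $\St(\pi,n)$, I would use the exact sequence $0 \to \St(\pi,n) \to I(\pi,n) \to Q \to 0$. For $n = 2$, $Q = \Sp(\pi,2)$ and (Assumption) gives $\Hom(Q, \omega) = 0$ for both $V'$; combined with the preceding computation and multiplicity one from \cite{S}, the long exact sequence yields (a). For general $n$, the Jordan-H\"older constituents of $Q$ other than $\St(\pi,n)$ each involve some Speh-type factor, and an induction on $n$ iteratively reduces them to (Assumption) via an analogous Mackey analysis. The main obstacle is the vanishing at $V' = W+V^{n-1}$: since $\Hom(I,\omega)$ is already zero here, the long exact sequence cannot give the vanishing of $\Hom(\St,\omega)$ directly, and one must instead control $\Ext^1(Q, \omega_{W+V^{n-1}})$. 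This requires extending the Mackey vanishing of Theorem \ref{ps1} to higher $\Ext^i$ (in the spirit of Theorem \ref{Chen}) and iterating carefully through the Jordan-H\"older filtration of $Q$.
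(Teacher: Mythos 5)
Your part (b) is essentially the paper's own argument: Clebsch--Gordan for $[n]\otimes[n]$, the identity $\epsilon(\lambda\otimes[2k+1])=\epsilon(\lambda)^{2k+1}$ for conjugate-selfdual $\lambda$ (the $\det(-F,\lambda^{I})$-term drops out because its exponent $2k$ is even), and the binomial identity $\binom{mn}{2}=n\binom{m}{2}+\binom{n}{2}m^2$; this is fine.

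Part (a) is where you diverge and where there is a genuine gap. You work with the full standard module $I(\pi,n)$ (Levi $\GL_m^n$), in which $\St(\pi,n)$ is the unique irreducible \emph{submodule}. Applying $\Hom(-,\omega)$ to $0\to\St(\pi,n)\to I(\pi,n)\to Q\to0$ then puts the object of interest on the wrong side: $\Hom(\St(\pi,n),\omega)$ is an extension of a subspace of $\Ext^1(Q,\omega)$ by $\Hom(I,\omega)/\Hom(Q,\omega)$. Two problems follow. First, for the vanishing at $W+V^{n-1}$ you need $\Ext^1(Q,\omega)=0$; but once $\Ext^1(I,\omega)=0$, the long exact sequence identifies $\Ext^1(Q,\omega)$ with the cokernel of $\Hom(I,\omega)\to\Hom(\St(\pi,n),\omega)$, which is exactly the unknown --- so ``extending the Mackey vanishing to higher $\Ext$'' is circular, and an independent handle on $Q$ is required. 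Second, $Q$ has $2^{n-1}-1$ Jordan--H\"older constituents (the Langlands quotients attached to partitions of the Zelevinsky segment \cite{Ze}), and these are not of the form ``$\Sp(\pi,2)$ times something tempered''; (Assumption) only concerns $\Sp(\pi,2)$ in dimension $2m$, so your proposed induction does not reach them. Indeed, already for $n=2$ the open-orbit computation of $\Hom(I(\pi,2),\omega_{V+V})\cong\C$ cannot by itself decide whether that line is carried by $\St(\pi,2)$ or by $\Sp(\pi,2)$ --- which is precisely why (Assumption) is an assumption rather than a consequence of orbit analysis.

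The paper avoids all of this by inducing in only two steps and arranging for $\St(\pi,n)$ to be a \emph{quotient}: it uses $0\to K_n\to\nu^{-(n-1)/2}\pi\times\nu^{1/2}\St(\pi,n-1)\to\St(\pi,n)\to0$ with $K_n$ irreducible, so that left-exactness of $\Hom(-,\omega)$ gives $\Hom(\St(\pi,n),\omega)=\ker\bigl(\Hom(I,\omega)\to\Hom(K_n,\omega)\bigr)$ and no $\Ext^1$ ever enters; the vanishing at $W+V^{n-1}$ is then immediate from $\Hom(I,\omega)=0$. The single extra constituent $K_n$ is handled by exhibiting it as a quotient of $\nu^{-(n-2)/2}\Sp(\pi,2)\times\nu\St(\pi,n-2)$, whose open-orbit contributions are exactly the two Hom-spaces appearing in (Assumption) and whose non-open orbits die by the central-character argument. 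To repair your proof, replace the full standard module by this two-step induction (and its image under the outer automorphism, as in the proof of Theorem \ref{Chen}, if you also want the higher $\Ext$ statements).
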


 \vskip 5pt    

\begin{proof}
The first assertion of the Theorem (concerning the truth of Conjecture \ref{conj-local}) is an immediate consequence of statements (a) and (b). We shall prove these two statements in turn, starting with the simpler (b).

\vskip 5pt

\noindent {\bf Proof of Theorem \ref{sc-to-ds}(b):}
Recall  from \cite{T} that for an irreducible representation  $\lambda \otimes [n]$ of $WD_E=W_E \times \SL_2(\C)$, one has
  $$  \epsilon( \lambda \otimes [n]) = \epsilon(\lambda)^n \cdot \det (-F, \lambda^I)^{n-1}, $$
  where $\lambda^I$ denotes the subspace of $\lambda$ fixed by the inertia group $I$  
  and $F$ denotes the Frobenius element of $W_E/I$.
 \vskip 5pt
 
 On the other hand, by the  Clebsch-Gordon theorem,
  \[[n] \otimes [n] = [2n-1] \oplus [2n-3] \oplus \cdots \oplus  [1].\]
  In particular, only odd integers $(2d+1)$  appear in this decomposition.  It is easy to see  that
  in the expression 
  \[  \epsilon( \lambda \otimes [2d+1]) = \epsilon(\lambda)^{2d+1} \cdot \det (-F, \lambda^I)^{2d},  \]
the factor $\det (-F, \lambda^I)^{2d}$  is trivial for $\lambda$  a conjugate selfdual representation of $W_E$. Hence we find that 
\[   \epsilon( \lambda \otimes [2d+1]) = \epsilon(\lambda)^{2d+1}  \]
 for $\lambda$ a conjugate selfdual representation
 of $W_E$. These considerations applied to the conjugate selfdual representation $\lambda$ of $W_E$ associated to
 $  \pi \times {}^\sigma\pi ^\vee \times \mu^{-1}$ allows one to prove the identity
$(2)$ from the identity $(1)$; we leave the simple and pleasant computation to the reader.

 \vskip 5pt

 \noindent {\bf Proof of Theorem \ref{sc-to-ds}(a):}
 The proof of (a) depends on some intermediate results contained in the following series of lemmas.

  \vskip 5pt

\begin{lemma} \label{two-ps}
Let $\pi$ be a unitary supercuspidal representation of $\GL(V) \cong \GL_m(E)$. 
  \vskip 5pt
  
  \noindent (i)   One has a short exact sequence $\GL_{mn}(E)$-representations:
  \[ 0 \rightarrow K_n \rightarrow
  \nu^{-(n-1)/2}\pi \times \nu^{1/2} \St(\pi, n-1) \rightarrow \St(\pi, n) \rightarrow 0, \]   
  with $K_n$ an irreducible representation of $\GL_{mn}(E)$.
  \vskip 5pt
  
  \noindent (ii) The irreducible representation $K_n$ of $\GL_{mn}(E)$ arising in the exact sequence above,  sits in the following
  short exact sequence:
  
    \[ 0 \rightarrow L_{n-1} \rightarrow
  \nu^{-(n-2)/2}\Sp(\pi,2) \times \nu \St(\pi,n-2)
  \rightarrow K_n \rightarrow 0. \]

\end{lemma}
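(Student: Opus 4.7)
Both statements are Jordan-H\"older computations for parabolic inductions on $\GL_{mn}(E)$, and my plan is to deduce them from Bernstein-Zelevinsky's classification of irreducible admissible representations via multi-segments. Fix the supercuspidal $\pi$ throughout; for each segment $\Delta = [a,b]$ (by which I mean the multiset $\{\pi\nu^a,\pi\nu^{a+1},\ldots,\pi\nu^b\}$) I write $\St(\Delta)$ for the attached essentially discrete series and $\langle \mathfrak{m}\rangle$ for the Zelevinsky irreducible attached to a multi-segment $\mathfrak{m}$. The key input is the Bernstein-Zelevinsky length-two exact sequence for linked $\Delta \prec \Delta'$:
\[
 0 \to \langle \Delta,\Delta'\rangle \to \St(\Delta) \times \St(\Delta') \to \St(\Delta \cup \Delta') \times \St(\Delta \cap \Delta') \to 0.
\]

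For part (i), I apply this input with $\Delta_1 = \{-(n-1)/2\}$ and $\Delta_2 = [-(n-3)/2,(n-1)/2]$: these are linked, $\Delta_1 \prec \Delta_2$, $\Delta_1 \cap \Delta_2 = \emptyset$, and $\Delta_1 \cup \Delta_2 = [-(n-1)/2,(n-1)/2]$ is precisely the segment underlying $\St(\pi,n)$. Since $\St(\Delta_1) = \pi\nu^{-(n-1)/2}$ and $\St(\Delta_2) = \nu^{1/2}\St(\pi,n-1)$, the Bernstein-Zelevinsky sequence immediately yields (i), with $K_n = \langle \Delta_1,\Delta_2\rangle$ irreducible by construction.

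For part (ii), the plan is to further decompose $\Delta_2 = \Delta_2^\ast \cup \Delta_3^\ast$ with $\Delta_2^\ast = \{-(n-3)/2\}$ and $\Delta_3^\ast = [-(n-5)/2,(n-1)/2]$, and to work inside the triple induction $\Pi := \St(\Delta_1)\times\St(\Delta_2^\ast)\times\St(\Delta_3^\ast)$. Grouping the leftmost two factors via the $n=2$ case of part (i) realises $\nu^{-(n-2)/2}\Sp(\pi,2) = \langle \Delta_1,\Delta_2^\ast\rangle$ as the submodule of $\St(\Delta_1)\times\St(\Delta_2^\ast)$; by exactness of parabolic induction, the representation $\nu^{-(n-2)/2}\Sp(\pi,2)\times\nu\St(\pi,n-2)$ is then a subrepresentation of $\Pi$. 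Grouping the rightmost two factors produces a surjection $\Pi \twoheadrightarrow \St(\Delta_1)\times\nu^{1/2}\St(\pi,n-1)$, which by part (i) composes to a surjection $\Pi \twoheadrightarrow \St(\pi,n)$. Because $\St(\pi,n)$ is generic and the Whittaker model is unique in any parabolic induction from a generic Levi representation (here $\pi\nu^{-(n-1)/2}\otimes\pi\nu^{-(n-3)/2}\otimes\nu\St(\pi,n-2)$), the representation $\St(\pi,n)$ occurs in $\Pi$ with multiplicity exactly one. Hence the two filtrations of $\Pi$ coming from the two groupings share a common submodule $\widetilde{B}\subset \Pi$ with $\Pi/\widetilde{B} = \St(\pi,n)$; a diagram chase inside $\widetilde B$, combined with the observation that $K_n$ and the irreducible $E_0 = \langle [-(n-1)/2,-(n-3)/2],\Delta_3^\ast\rangle$ (the Bernstein-Zelevinsky sub from the other grouping) have distinct Zelevinsky data, forces the natural composite $\nu^{-(n-2)/2}\Sp(\pi,2)\times\nu\St(\pi,n-2) \hookrightarrow \widetilde B \twoheadrightarrow K_n$ to be nonzero, hence surjective since $K_n$ is irreducible.

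The main obstacle is verifying that the kernel $L_{n-1}$ is irreducible, equivalent to $\nu^{-(n-2)/2}\Sp(\pi,2)\times\nu\St(\pi,n-2)$ having exactly two Jordan-H\"older constituents. This does not follow directly from Bernstein-Zelevinsky's linked-segment dichotomy, because the first factor is itself a Zelevinsky irreducible rather than an essentially discrete series. My plan to resolve this is a Grothendieck-group calculation: enumerate the Jordan-H\"older constituents of $\Pi$ via Zelevinsky's algorithm (the multi-segments obtained from $\{\Delta_1,\Delta_2^\ast,\Delta_3^\ast\}$ by successive merges of linked pairs), which yields exactly four irreducibles $\{\St(\pi,n),K_n,E_0,L_{n-1}\}$ where $L_{n-1} = \langle \Delta_1,\Delta_2^\ast,\Delta_3^\ast\rangle$; then subtracting off the Jordan-H\"older content of the complementary quotient $\nu^{-(n-2)/2}\St(\pi,2)\times\nu\St(\pi,n-2)$ (itself of length two by another direct application of Bernstein-Zelevinsky) leaves exactly $\{K_n,L_{n-1}\}$ as the Jordan-H\"older constituents of $\nu^{-(n-2)/2}\Sp(\pi,2)\times\nu\St(\pi,n-2)$, completing the proof of the length-two claim in (ii).
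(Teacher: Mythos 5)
Your part (i) is the same argument as the paper's (Zelevinsky's length-two result for a product of two linked segments), but your part (ii) takes a genuinely different route. The paper reduces the existence of a surjection $\nu^{-(n-2)/2}\Sp(\pi,2)\times\nu\St(\pi,n-2)\twoheadrightarrow K_n$ to a single Jacquet-module computation: by Bernstein's second adjointness, a nonzero map out of this induced representation into the irreducible $K_n$ exists as soon as the Jacquet module of $K_n$ along the \emph{opposite} parabolic contains $\nu^{-(n-2)/2}\Sp(\pi,2)\otimes\nu\St(\pi,n-2)$ as a submodule, which is checked from the known Jacquet modules of $K_n$. You instead embed everything into the triple product $\Pi=\St(\Delta_1)\times\St(\Delta_2^\ast)\times\St(\Delta_3^\ast)$, use the two ways of bracketing it, and locate $K_n$ by a Grothendieck-group comparison plus multiplicity one of the generic constituent. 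Your chase is valid: $A\subseteq\widetilde B$ because $\St(\pi,n)$ has multiplicity one in $\Pi$ and already occurs in $\Pi/A$, and $A\not\subseteq\ker(\Pi\to Q)$ because otherwise $[K_n]\le[E_0]$ in the Grothendieck group, impossible for distinct irreducibles. The paper's route is shorter and local to $K_n$; yours avoids second adjointness entirely and yields the extra information that $[A]=[K_n]+[L_{n-1}]$ with $L_{n-1}$ irreducible.

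Two caveats. First, the irreducibility of $L_{n-1}$ — which you single out as the main obstacle — is not actually asserted in the lemma: $L_{n-1}$ is simply defined as the kernel, and only the surjectivity onto $K_n$ is used later (to deduce $\Hom(K_n,\omega)=0$ from the vanishing for the induced representation). So the one place where you lean on an unproved standard fact (that $\Pi$ has length exactly four, i.e.\ that all Zelevinsky multiplicities for this three-segment configuration are one) is not needed for the statement. Second, your notation $\langle\Delta,\Delta'\rangle$ for the submodule of $\St(\Delta)\times\St(\Delta')$ conflates the Zelevinsky irreducible attached to a multi-segment with its Aubert--Zelevinsky dual (the Langlands-quotient-type constituent); the combinatorics of "distinct multi-segments give distinct irreducibles" is unaffected, but the labels should be the Langlands ones throughout.
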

\begin{proof}
(i) The fact that the discrete series representation $\St(\pi, n)$
of $\GL_{mn}(E)$  appears as a quotient of the principal series $\nu^{-(n-1)/2}\pi \times \nu^{1/2} \St(\pi, n-1)$
is clear, since  $\St(\pi, n)$ is a quotient of the principal series representation
\[ \nu^{-(n-1)/2}\pi \times \nu^{-(n-3)/2} \pi \times \cdots \times \nu^{(n-1)/2}\pi .\]
It is well-known from Zelevinski \cite{Ze}  that the principal series $\nu^{-(n-1)/2}\pi \times \nu^{1/2} \St(\pi, n-1)$ has length
2, so that $K_n$ is irreducible and (i) is proved.
\vskip 5pt

\noindent (ii) Since  $K_n$ is irreducible,  it suffices to prove that there is a nonzero $\GL_{mn}(E)$-equivariant homomorphism from the principal series $\nu^{-(n-2)/2}\Sp(\pi,2) \times \nu \St(\pi,n-2)$ to $K_n$.
By the second adjointness theorem, this boils down to proving that the normalized Jacquet functor of $K_n$ for the opposite parabolic
of the maximal standard parabolic
with Levi $\GL_{2m}(E) \times  \GL_{(n-2)m}(E)$ contains the irreducible representation  $\nu^{-(n-2)/2}\Sp(\pi,2) \otimes \nu \St(\pi,n-2)$
of $\GL_{2n}(E) \times \GL_{(m-2)n}(E)$ as a submodule. We leave this simple calculation  to the reader.
  \end{proof}
\vskip 5pt

Next, we apply Proposition \ref{ps2} to the two principal series representations appearing in Lemma  \ref{two-ps}.
We do not perform the explicit calculation here, but simply summarize the results:

\begin{lemma} \label{para}
  Let $P$ denote the maximal parabolic subgroup of $\GL(V^n)$ from which the  principal series representation considered
  below is induced. Then we have:
\vskip 5pt

\begin{itemize}
\item[(i)]   For any  non-open $\U(V^{n})$-orbit $[X] \subset \GL(V^{ n})/P$, the associated subquotient $\pi_X$ of the $\U(V^{n})$-module
$\nu^{-(n-2)/2}\Sp(\pi,2)  \times \nu \St(\pi,n-2)$,   satisfies
   \[\Ext^i_{\U(V^{ n})}[ \pi_X, \omega_{V^{ n},\psi}] = 0,\]
   for all $i \geq 0$. If (Assumption) holds for $\pi$, then the above result holds for the open orbits as well, and therefore:
   \[\Hom_{\U(V^{ n})}[\nu^{-(n-2)/2}\Sp(\pi,2)  \times \nu \St(\pi,n-2), \omega_{V^{ n},\psi}] = 0.\]
\vskip 10pt

\item[(ii)]   For any  non-open $\U(V^{n})$-orbit $[X] \subset \GL(V^{ n})/P$, the associated subquotient $\pi_X$ of the $\U(V^{n})$-module
$\nu^{-(n-1)/2}\pi \times \nu^{1/2} \St(\pi, n-1)$,   satisfies
   \[\Ext^i_{\U(V^n)}[ \pi_X, \omega_{V^{ n},\psi}] = 0,\]
   for all $i \geq 0$.

   \vskip 10pt

\item[(iii)]   For any  non-open $\U(V^{n})$-orbit $[X] \subset \GL(V^{ n})/P$, the associated subquotient $\pi_X$ of the $\U(V^{n})$-module
$ \nu^{1/2} \St(\pi, n-1) \times \nu^{-(n-1)/2}\pi $,   satisfies
   \[\Ext^i_{\U(V^n)}[ \pi_X, \omega_{V^{ n},\psi}] = 0,\]
   for all $i \geq 0$.

  \end{itemize}
\end{lemma}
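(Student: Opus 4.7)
The plan is to apply Proposition \ref{ps2} to each of the three induced representations and then, for each non-open orbit, establish the vanishing of $\Ext^i$ via the central-character argument used in the proof of Theorem \ref{ps1}. The essential new ingredient is that the representations are no longer tempered, but the required central-character inequality can still be verified explicitly using the Langlands data of $\St(\pi,k)$ and $\Sp(\pi,2)$.

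A key preliminary observation is that, since $\pi$ is supercuspidal on $\GL_m(E)$, the cuspidal support of every $\pi_j$ appearing in (i)--(iii) is built from twists of $\pi$. By the geometric lemma, the Jacquet module $(\pi_j)_{d, \cdot}$ vanishes unless $d$ is a nonnegative integer multiple of $m$. Combined with Lemma \ref{orbits} and the constraint $d \leq a$, the non-open orbits (those with $d>0$) reduce to $d = m$ in (ii) and (iii), and to $d \in \{m, 2m\}$ in (i) (the latter value only for $n$ large enough). For each such orbit, Proposition \ref{ps2} identifies $\Ext^i_{\U(V^{n})}[\pi_X, \omega_{V^n,\psi,\mu}]$ with $\Ext^i_{S/N(S)}[A, B]$ for explicit $A$ and $B$, and I examine the characters by which the center of $\GL_d(E) \subset S/N(S)$ acts on each side. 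Each such character has the form $|\cdot|^{s}\cdot(\text{unitary})$ with $s$ a rational exponent depending on $m, n, d$, assembled from: the normalized Jacquet module of $\St(\pi,k)$ with respect to the $((k-\ell)m, \ell m)$ parabolic, equal to $\nu^{-\ell/2}\St(\pi, k-\ell) \otimes \nu^{(k-\ell)/2}\St(\pi, \ell)$, and the analogous data for $\Sp(\pi, 2)$; the $\delta_P^{1/2}$-shift converting normalized to unnormalized Jacquet modules; and the $\delta_{P/S}^{1/2}$, $\delta_S^{1/2}|\det|^{-1/2}$ factors of Proposition \ref{ps2}. A direct tabulation shows $s_A \ne s_B$ in each case, and Lemma \ref{central} then yields $\Ext^i_{S/N(S)}[A, B] = 0$ for all $i \geq 0$.

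For the open orbits appearing in (i), I invoke the K\"unneth-type decomposition of Theorem \ref{ps1}, which expresses the open-orbit contribution to $\Hom_{\U(V^n)}[\nu^{-(n-2)/2}\Sp(\pi,2) \times \nu\St(\pi,n-2),\, \omega_{V^n,\psi,\mu}]$ as a sum, over orthogonal decompositions $V^n = V'_{2m} \oplus V'_{(n-2)m}$, of products of local Hom spaces. Since $|\det u| = 1$ for $u$ in any unitary group, the twists $\nu^{-(n-2)/2}$ and $\nu$ become trivial on the unitary factors, and every summand contains the factor $\Hom_{\U(V'_{2m})}[\Sp(\pi,2), \omega_{V'_{2m},\psi,\mu}]$. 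This factor vanishes by (Assumption), which establishes the strong vanishing statement in (i).

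The main obstacle is the bookkeeping in the central-character computation for (i), where two non-open values of $d$ must be handled and both $\pi_1$ and $\pi_2$ contribute nontrivial Jacquet strata. However, the resulting $s_A - s_B$ turns out in each case to be a linear polynomial in $n$ with a nonzero integer leading coefficient, so the inequality $s_A \ne s_B$ holds for every integer value of $n$ in the range under consideration, and the central-character argument goes through uniformly across all orbits, cases and parameters.
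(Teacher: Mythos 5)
Your proposal follows essentially the same route the paper intends: the paper itself gives no proof of Lemma \ref{para} beyond the remark that one applies Proposition \ref{ps2} and "does not perform the explicit calculation," and your outline — supercuspidality of $\pi$ forcing $d \in m\Z$, the central-character comparison via Lemma \ref{central} on the non-open orbits, and the K\"unneth decomposition plus (Assumption) for the open orbits in (i) — is exactly the intended argument, with more detail than the paper supplies. One inaccuracy in your final paragraph: carrying out the tabulation, the difference $s_A - s_B$ is not "a linear polynomial in $n$ with nonzero integer leading coefficient" but a nonzero constant independent of $n$ (for instance $-1/2$ for the $d=m$ orbit in (ii), and $-3/2$ for the $d=m$ and $d=2m$ orbits in (i)); moreover the inference "nonzero integer leading coefficient, hence nonzero at every integer $n$" would not be valid even if the difference were linear in $n$. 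Since the difference is in fact a nonzero constant, the conclusion stands and the argument goes through.
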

\vskip 5pt

With the above two lemmas at hand, let us now return to the proof of Theorem  \ref{sc-to-ds}(a).  By Lemma \ref{two-ps}(ii), combined with Lemma \ref{para}(i), we deduce that:
\[ \Hom_{\U(V^{ n})}[K_n,\omega_{V^{ n},\psi}] = 0.\]
Therefore by  Lemma \ref{two-ps}(i),
\[ \Hom_{\U(V^{ n})}(\St(\pi, n) , \omega_{V^{ n},\psi}) 
\cong \Hom_{\U(V^{ n})}(\nu^{-(n-1)/2}\pi \times \nu^{1/2} \St(\pi, n-1) , \omega_{V^{ n},\psi}).\]

\noindent Furthermore,  from  Lemma \ref{para}(ii),
\[ \Hom_{\U(V^{ n})}(\nu^{-(n-1)/2}\pi \times \nu^{1/2} \St(\pi, n-1) , \omega_{V^{ n},\psi})\]
is contributed by the submodule of the principal series representation
\[ \nu^{-(n-1)/2}\pi \times \nu^{1/2} \St(\pi, n-1) \]
 supported on the open orbits.
\vskip 5pt

Observe that the open orbits of the action of
$\U(V^n)$ on $\GL_{mn}(E)/P_{m,m(n-1)}$ are parametrized by the isomorphism classes of the skew-Hermitian subspaces of $V^n$ of dimension  $m=\dim (V)$. Thus, there are exactly two orbits,   represented by the skew-Hermitian spaces $V$ and $W$, with stabilizer in $\U(V^n)$ being $\U(V) \times \U(V^{n-1})$ and $\U(W) \times \U(W+V^{n-2})$. Therefore,
\[ \Hom_{\U(V^{ n})}(\St(\pi, n) , \omega_{V^{ n},\psi}) \cong  \Hom_{\U(V^{ n})}(\nu^{-(n-1)/2}\pi \times \nu^{1/2} \St(\pi, n-1) , \omega_{V^{ n},\psi})\ \]
 is the sum $A + B$ of the contributions coming from these two open orbits orbits, with 
  \begin{eqnarray*}
  A & = &   \Hom_{\U(V)}(\pi,  \omega_{V,\psi}) \otimes
  \Hom_{\U(V^{n-1})}(\St(\pi, n-1) , \omega_{V^{(n-1)},\psi}), \\
  B & = &  \Hom_{\U(W)}(\pi,  \omega_{W,\psi}) \otimes
  \Hom_{\U(W+V^{n-2})}(\St(\pi, n-1) , \omega_{W+V^{(n-2)},\psi}).
 \end{eqnarray*}
  Now as $\Hom_{\U(W)}(\pi,  \omega_{W,\psi})=0$, we conclude that $B = 0$. This completes the proof of part (a) of Theorem
  \ref{sc-to-ds},  and hence the proof of Theorem \ref{sc-to-ds} is completed.
\end{proof}
\vskip 5pt

\subsection{\bf The Steinberg representation}
At the moment, we do not know how to prove the (Assumption) that 
\[ \Hom_{\U(V^{ 2})}[\Sp(\pi,2),  \omega_{V^{ 2},\psi}] = \Hom_{\U(V+W)}[\Sp(\pi,2),  \omega_{V+W,\psi}] = 0, \]
except when $m=\dim V = \dim W =1$, i.e., when $\pi$ is a character of $E^\times$.  In this case, the (Assumption)  is equivalent to saying  that the Weil representation of $\U(2)$
does not contain any one dimensional representation of $\U(2)$ where $\U(2)$ is either of the two
unitary groups in two variables. The next lemma establishes this.
\vskip 5pt

\begin{lemma} \label{character}
 For  $V$ be a skew-Hermitian space over $E$, a nonarchimedean local field,  of dimension $d \geq 2$,   one has
  \[ \Hom_{\U(V)}[ \chi,  \omega_{V,\psi, \mu}] = 0\]
  for any 1-dimensional character $\chi$ of $\U(V)$.
\end{lemma}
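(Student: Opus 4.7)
The strategy is to show $\omega_{V,\psi,\mu}^{\SU(V)} = 0$: since for $\dim V \geq 2$ the derived subgroup of $\U(V)$ is $\SU(V)$ and any character of $\U(V)$ factors as $\lambda \circ \det$ for some character $\lambda$ of $E^1$, any embedded character would yield a nonzero $\SU(V)$-invariant vector in $\omega_V$. The proof then splits on whether $V$ contains an isotropic vector; over a nonarchimedean $F$, anisotropic skew-Hermitian spaces have dimension at most $2$, so the isotropic case covers all but the unique anisotropic $V$ of dimension $2$.

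When $V$ has Witt index $\geq 1$, I would choose a Witt decomposition $V = X \oplus V_0 \oplus Y$ with $X, Y$ dual isotropic lines and $V_0$ anisotropic, and note that $t \in F^\times$ acting by $t$ on $X$, identity on $V_0$, and $t^{-\sigma} = t^{-1}$ on $Y$ has determinant $t/t^\sigma = 1$, giving an embedding $F^\times \hookrightarrow \SU(V)$. In the mixed Schr\"odinger model $\mathcal{S}(X \otimes W_0) \otimes \omega_{V_0,\psi,\mu}$ adapted to this polarization, this $F^\times$ acts by the scaled dilation $(\omega(t)\phi)(x) = \mu(t)|t|_F^{1/2}\phi(t^{-1}x)$ on the first tensor factor and trivially on the second. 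For a vector $\sum_i \phi_i \otimes v_i$ (with the $v_i$ linearly independent) to be $F^\times$-invariant, each $\phi_i \in \mathcal{S}(X \otimes W_0)$ must satisfy this invariance; letting $|t|_F \to \infty$ and using local constancy of Schwartz functions to obtain $\phi_i(t^{-1}x) = \phi_i(0)$ eventually, the equation forces $\phi_i(x) = \mu(t)|t|_F^{1/2}\phi_i(0)$ for all $x$ in compact sets and all large $|t|_F$, and since the right-hand side varies with $t$ while the left-hand side does not, $\phi_i \equiv 0$. Hence $\omega_V^{F^\times} = 0$.

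The remaining case of $V$ anisotropic of dimension $2$ is subtler because $\U(V)$ is now compact and the dilation argument no longer applies. Here I would use the description of Section \ref{SS:rk2}: $V = V_B$ for the quaternion division algebra $B/F$ containing $E$, and each $E^1$-isotypic summand $\omega[\lambda]$ of $\omega_{V_B,\psi,\mu}$ is realized as an irreducible constituent of $\Sigma_{B,N} \boxtimes \chi$, with $\chi$ a lift of $\lambda$ to $E^\times$ and $\Sigma_{B,N}$ the Jacquet-Langlands transfer of $N = \Ind_{W_E}^{W_F}(\mu\chi^{-1})$. An embedded character of $\U(V_B)$ would compel $\Sigma_{B,N}$ to be $1$-dimensional, hence of the form $\eta \circ N_B$ for a character $\eta$ of $F^\times$; but such $\Sigma_{B,N}$ correspond under Jacquet-Langlands to twists of the Steinberg representation of $\GL_2(F)$, whose Langlands parameters are indecomposable Weil-Deligne representations with nontrivial monodromy, whereas $N$ is a pure $W_F$-representation with no monodromy component. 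The main obstacle I anticipate lies precisely in this anisotropic subcase, where a careful bookkeeping of the restriction of $\Sigma_{B,N} \boxtimes \chi$ from $\GU(V_B)$ to $\GU(V_B)^+$ and thence to $\U(V_B)$ is needed to translate the monodromy dichotomy cleanly into the non-existence of character embeddings in $\omega[\lambda]$.
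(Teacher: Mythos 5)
Your argument is correct in outline but follows a genuinely different (and longer) route than the paper's for most of the cases. The paper disposes of the reduction in one line: writing $V = V_1 \oplus V_2$ with $V_2$ nondegenerate of dimension $2$, the factorization $\omega_{V,\psi,\mu} \cong \omega_{V_1,\psi,\mu} \otimes \omega_{V_2,\psi,\mu}$ as $\U(V_1)\times\U(V_2)$-modules shows that an embedded character of $\U(V)$ would force an embedded character of $\U(V_2)$ in $\omega_{V_2,\psi,\mu}$, so everything reduces to $d=2$; there, \emph{both} rank-two spaces (split and anisotropic) are handled at once by the quaternionic description of \S 3.2, since every isotypic summand $\omega_{\psi,\mu,B}[\lambda]$ has a dihedral parameter $N = \Ind_{W_E}^{W_F}(\mu\chi^{-1})$ and hence cannot be one-dimensional. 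Your Schr\"odinger-model dilation argument for isotropic $V$ is a valid substitute (indeed it is essentially the $d>0$ central-character mismatch underlying Lemma \ref{Weil-Jac} and Theorem \ref{ps1}), and it has the virtue of being self-contained, but it is not needed once one has the tensor factorization.

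The one place where your write-up has a real soft spot is exactly where you anticipate it: the claim that an embedded character forces $\Sigma_{B,N}$ to be $1$-dimensional. A naive dimension count points the other way: $(\Sigma_{B,N}\boxtimes\chi)|_{\GU(V_B)^+}$ splits into \emph{two} irreducible summands of equal dimension, one of which is $\omega_{\psi,\mu,B}[\lambda]$, so "$\omega[\lambda]$ is a character" would a priori only give $\dim\Sigma_{B,N} = 2$, and your Steinberg/monodromy dichotomy says nothing about $2$-dimensional representations of $B^{\times}$. The missing ingredient is that $[B^{\times},B^{\times}]$ is the kernel of the reduced norm, which is contained in $(B^{\times})^{+}$; hence every character of $(B^{\times})^{+}$ factors through $N_B$ and extends to $B^{\times}$, so an irreducible representation of $B^{\times}$ whose restriction to $(B^{\times})^{+}$ contains a character is itself a character $\eta \circ N_B$. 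With that observation your monodromy argument closes the case (alternatively: $\eta\circ N_B \otimes (\omega_{E/F}\circ N_B) \ne \eta \circ N_B$, so a one-dimensional $\Sigma_{B,N}$ could not restrict reducibly to $(B^{\times})^{+}$ in the first place). So the proposal is sound, but this lemma on characters of $(B^{\times})^{+}$ should be stated explicitly rather than left to "bookkeeping."
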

  
\begin{proof} If $V = V_1\oplus V_2$, a direct sum of skew-Hermitian spaces, one knows that as representations
  of $\U(V_1) \times \U(V_2) \subset \U(V)$,
  \[\omega_{V,\psi, \mu}=  \omega_{V_1,\psi,\mu} \otimes  \omega_{V_2,\psi,\mu}.\]
  Therefore, the proof of the Lemma reduces to the case of $d=2$.
  \vskip 5pt
  
  When $d =2$, we have seen in the discussion in \S \ref{SS:rk2} that $\omega_{V, \psi,\mu}$ is a direct sum of irreducible summands (with different central characters), each of which has dihedral L-parameters. Hence, $\omega_{V, \psi,\mu}$  does not contain 1-dimensional characters of $\U(V)$. (This uses  the requirement for $E$ to be  a nonarchimedean local field!)
    \end{proof}

\vskip 5pt
As a consequence, we obtain:
\vskip 5pt

\begin{cor}  \label{C:Steinberg}
The Steinberg representation $\St$ of $\GL(V)$ satisfies Conjecture \ref{conj-local}.
\end{cor}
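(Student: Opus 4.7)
The plan is to derive Corollary \ref{C:Steinberg} as a direct consequence of Theorem \ref{sc-to-ds} applied with $m = \dim V = 1$ and $\pi$ the trivial character of $E^\times = \GL_1(E)$, so that the associated $\St(\pi,n)$ is the Steinberg representation of $\GL_n(E)$. To invoke Theorem \ref{sc-to-ds} I must verify its two hypotheses for this particular $\pi$: that Conjecture \ref{conj-local} holds for $\pi$, and that the (Assumption) stated just before Theorem \ref{sc-to-ds} is valid for $\pi$.

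For the first hypothesis, since $\pi$ is a character of $E^\times$ viewed as a representation of $\GL(V)$ with $\dim V = 1$, Conjecture \ref{conj-local} is precisely the theorem of Moen and Rogawski (Theorem \ref{T:moen}) that was already used as the base case in Section \ref{SS:rk1}. For the second hypothesis, I need to show that for either of the two-dimensional skew-Hermitian spaces $V_2 \in \{V+V,\, V+W\}$ over $E$,
\[  \Hom_{\U(V_2)}\bigl[\Sp(\pi,2),\, \omega_{V_2,\psi,\mu}\bigr] = 0. \]
The Speh representation $\Sp(\pi,2)$ is by definition the unique irreducible Langlands quotient of the principal series $\pi|\!\det\!|^{1/2} \times \pi|\!\det\!|^{-1/2}$ of $\GL_2(E)$; since $\pi$ itself is one-dimensional, this Langlands quotient is simply the one-dimensional character $\pi \circ \det$ of $\GL_2(E)$. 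Its restriction to $\U(V_2)$ is therefore a one-dimensional character of a two-dimensional unitary group, and Lemma \ref{character} (whose hypothesis $d \geq 2$ is exactly met here) asserts that no such character occurs in $\omega_{V_2,\psi,\mu}$. Both Hom spaces in the (Assumption) therefore vanish.

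With both hypotheses verified, Theorem \ref{sc-to-ds} immediately yields Conjecture \ref{conj-local} for $\St(\pi,n)$ for every $n \geq 1$, which is the statement of Corollary \ref{C:Steinberg}. There is no serious obstacle: the argument is essentially an assembly of earlier ingredients. The only slightly non-routine observation is the identification of $\Sp(\pi,2)$ with a one-dimensional character when $\pi$ is itself one-dimensional, and this is a standard feature of the Zelevinsky classification for $\GL_2$.
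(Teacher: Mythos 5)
Your proposal is correct and follows exactly the paper's route: the paper also deduces the corollary by feeding a character $\pi$ of $E^\times$ (the $m=1$ case) into Theorem \ref{sc-to-ds}, verifying the rank-one conjecture via Theorem \ref{T:moen} and the (Assumption) via Lemma \ref{character}, after observing that $\Sp(\pi,2)$ is then a one-dimensional character of $\GL_2(E)$. No discrepancies to report.
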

\vskip 10pt

\subsection{\bf Ext vanishing}
The results of this section also prove the following theorem on the vanishing of Ext groups for tempered representations.
\vskip 5pt

\begin{thm} \label{Chen}
Let $F$ be a nonarchimedean local field and $E$ a separable quadratic algebra over $F$. Let $V$ be a skew-Hermitian space over $E$, with corresponding unitary group $\U(V) \subset \GL(V)$. For  any irreducible tempered representation $\Pi$ of $\GL(V)$  and any Weil representation  $\omega_{V, \psi, \mu}$  of $\U(V)$,
\[   \Ext^i_{\U(V)}( \Pi, \omega_{V, \psi, \mu}) = 0 {\rm ~~~for ~~~all~~~} i \geq 1. \]
\end{thm}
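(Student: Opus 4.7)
The proof plan follows the same double-inductive structure used in the previous two sections for Conjecture~\ref{conj-local}. First, Corollary~\ref{ds}(ii) shows that Ext-vanishing for $\pi_1 \times \pi_2$ follows from Ext-vanishing for its tempered inducing data $\pi_1$ and $\pi_2$. Since every irreducible tempered representation of $\GL(V)$ is either essentially a discrete series or irreducibly and unitarily parabolically induced from discrete series on a proper Levi subgroup, induction on $\dim V$ reduces the theorem to the case where $\Pi = \St(\pi, r)$ is an essentially discrete series representation of $\GL_{mr}(E)$ attached to a supercuspidal $\pi$ of $\GL_m(E)$. We then induct on $r$.

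For the base case $r = 1$, the representation $\Pi = \pi$ is supercuspidal, and we realize it as $\pi \cong \ind_J^{\GL(V)} \sigma$ for a compact-mod-center open subgroup $J = Z \cdot K$ (Bushnell-Kutzko). Since $\U(V) \cap Z = E^1$ is compact, Mackey's restriction formula yields a decomposition
\[ \pi|_{\U(V)} \cong \bigoplus_{x \in J \backslash \GL(V)/\U(V)} \ind_{J_x}^{\U(V)} \sigma^x, \qquad J_x := x^{-1}Jx \cap \U(V), \]
in which each $J_x$ is a compact open subgroup of $\U(V)$. Smooth modules over a compact profinite group are semisimple, and compact induction from a compact open subgroup preserves projectivity, so $\pi|_{\U(V)}$ is projective in the category of smooth $\U(V)$-modules, giving $\Ext^i_{\U(V)}(\pi, \omega) = 0$ for all $i \geq 1$.

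For the inductive step $r > 1$, apply the first short exact sequence of Lemma~\ref{two-ps}:
\[ 0 \to K_r \to \nu^{-(r-1)/2}\pi \times \nu^{1/2}\St(\pi, r-1) \to \St(\pi, r) \to 0. \]
The associated long exact sequence reduces the problem to Ext-vanishing for the middle principal series (in positive degrees) and for $K_r$ (in all degrees). For the principal series, Proposition~\ref{ps2} combined with Lemma~\ref{para}(ii) kills all Ext contributions from non-open orbits, while the two open orbits yield a Kunneth-type sum of terms
\[ \Ext^j_{\U(V_a)}(\pi, \omega_{V_a}) \otimes \Ext^k_{\U(V_b)}(\St(\pi, r-1), \omega_{V_b}), \]
using that the twists by $\nu^s$ restrict trivially to any unitary subgroup. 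These vanish in positive degrees by the base case and the induction hypothesis.

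The main obstacle is Ext-vanishing for $K_r$, whose analysis via the second short exact sequence of Lemma~\ref{two-ps} involves the Speh representation $\Sp(\pi, 2)$: the non-open-orbit contributions die by Lemma~\ref{para}(i), but the open-orbit contributions require separate control of $\Ext^*_{\U(V^2)}(\Sp(\pi,2), \omega_{V^2})$ in \emph{all} degrees (including $\Ext^0$, which is the content of the (Assumption) of \S\ref{SS:red-sc}). The natural approach is to invoke the short exact sequence
\[ 0 \to \St(\pi, 2) \to \nu^{-1/2}\pi \times \nu^{1/2}\pi \to \Sp(\pi, 2) \to 0, \]
treating the middle principal series by a Casselman-type analysis on non-open orbits in the spirit of Lemma~\ref{para}(ii), and the discrete series factor via the $r = 2$ case of the main induction. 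Carefully sequencing the induction --- handling $\St(\pi, s)$ for $s \leq 2$ first, then $\Sp(\pi, 2)$, and finally $\St(\pi, r)$ for $r \geq 3$ --- is required to avoid circular reasoning; this bookkeeping, together with the central-character calculation on non-open orbits for the non-unitarily-twisted principal series, is the essential subtlety of the proof.
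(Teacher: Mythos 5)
Your reduction to discrete series via Corollary~\ref{ds} and your base case are essentially the paper's: the paper simply observes that a supercuspidal $\pi$ is projective upon restriction to any subgroup $H$ meeting the center of $\GL_m(E)$ in a compact group, which is what your Bushnell--Kutzko/Mackey elaboration establishes. The divergence, and the gap, is in the inductive step. From the long exact sequence attached to $0 \to K_r \to \nu^{-(r-1)/2}\pi \times \nu^{1/2}\St(\pi,r-1) \to \St(\pi,r) \to 0$, obtaining $\Ext^1_{\U(V)}(\St(\pi,r),\omega)=0$ forces you to control $\Hom_{\U(V)}(K_r,\omega)$, and (as you note) the route to $K_r$ through Lemma~\ref{two-ps}(ii) runs into $\Hom_{\U(V^2)}(\Sp(\pi,2),\omega)$ --- exactly the (Assumption) of \S\ref{SS:red-sc}, which the paper states it cannot prove except for $m=1$. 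Your proposed fix, feeding $\Sp(\pi,2)$ into the sequence $0\to\St(\pi,2)\to\nu^{-1/2}\pi\times\nu^{1/2}\pi\to\Sp(\pi,2)\to 0$ and killing orbits by central-character arguments, cannot close this: the central-character (Casselman) mechanism only kills the \emph{non-open} orbits, while the open-orbit contribution to $\Hom_{\U(V^2)}(\nu^{-1/2}\pi\times\nu^{1/2}\pi,\omega)$ is $\Hom_{\U(V)}(\pi,\omega)\otimes\Hom_{\U(V)}(\pi,\omega)$ (the twists $\nu^{\pm 1/2}$ die on the unitary subgroups), which is nonzero for the right choice of spaces. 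Deciding whether that nonzero functional factors through the Speh quotient or detects only the Steinberg submodule is precisely the content of the (Assumption), so your argument is circular or, at best, conditional.

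The paper avoids $K_r$ and $\Sp(\pi,2)$ entirely by a dimension-shifting trick. From the first exact sequence one gets $\Ext^{i+1}(\St(\pi,n),\omega)\cong\Ext^i(K_n,\omega)$ for $i\ge 1$. Then one applies the pinned outer automorphism of $\GL_{mn}(E)$ and Gelfand--Kazhdan (so that $\phi$ acts as contragredient on irreducibles) to produce a \emph{second} exact sequence $0\to\St(\pi,n)\to\nu^{1/2}\St(\pi,n-1)\times\nu^{-(n-1)/2}\pi\to K_n\to 0$, with the roles of sub and quotient reversed; its middle term again has vanishing $\Ext^{\ge 1}$ by Lemma~\ref{para}(iii), the induction hypothesis and Kunneth, giving $\Ext^i(\St(\pi,n),\omega)\cong\Ext^{i+1}(K_n,\omega)$ for $i\ge 1$. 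Combining the two yields $\Ext^i(\St(\pi,n),\omega)\cong\Ext^{i+2}(\St(\pi,n),\omega)$ for all $i\ge 1$, and since $\Ext^i_G$ vanishes for $i$ exceeding the $F$-rank of $G$, all these groups are zero. No value of $\Hom(K_n,\omega)$ or of any Ext group of $\Sp(\pi,2)$ is ever needed. You should replace your analysis of $K_r$ with this periodicity argument.
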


\begin{proof}\footnote{The authors thank Rui Chen of Zhejiang university for his help with this proof; Rui Chen has used similar ideas --- dimension shifting as here, cf. \cite{Ch},
  to prove theorems about vanishing of Ext groups in many situations involving the GGP branching.}
  As any irreducible tempered representation of $\GL(V)$ is parabolically induced from an irreducible
  discrete series representation of a Levi subgroup, by Corollary \ref{ds}, it suffices to prove this theorem for
  the discrete series representations $\Pi= \St(\pi,n)$ of $\GL_{mn}(E)$
  where $\pi$ is a cuspidal representation of $\GL_m(E)$. We prove this by an induction on the integer $n$. 
  \vskip 5pt

  The base case $n=1$ is  clear since a supercuspidal representation $\pi$ of $\GL_m(E)$  is a projective representation when restricted to any subgroup $H \subset \GL_m(E)$ for which the intersection of $H$ with the center of $\GL_m(E)$ is compact. In particular,  this applies to $H=\U_m(E)$.
 \vskip 5pt
 
 For the inductive step, let us assume that the theorem holds good for   $\Pi= \St(\pi,n-1)$, so that our goal is to prove it for  $\Pi= \St(\pi,n)$.
Recall the exact sequence from Lemma \ref{two-ps}:
  \begin{equation} \label{E:tag1}  
  0 \rightarrow K_n \rightarrow
  \nu^{-(n-1)/2}\pi \times \nu^{1/2} \St(\pi, n-1) \rightarrow \St(\pi, n) \rightarrow 0, \end{equation}   
  with $K_n$ an irreducible representation of $\GL_{mn}(E)$.
Observe  that:
  \begin{equation} \label{E:tag1.5}
    \Ext^i_{\U(V)}(  \nu^{-(n-1)/2}\pi \times \nu^{1/2} \St(\pi, n-1) , \omega_{V, \psi, \mu}) = 0 {\rm ~~~for ~~~all~~~} i \geq 1. \end{equation}
  Indeed,  by Lemma \ref{para}(ii), one knows the vanishing of $\Ext^i, i \geq 0$ for the subquotient of the principal series representation $\nu^{-(n-1)/2}\pi \times \nu^{1/2} \St(\pi, n-1)$  of $\GL_{mn}(E)$ supported on a non-open orbit. For the open orbits,
  the vanishing  of $\Ext^i, i \geq 1$ is consequence  of the induction hypothesis and the Kunneth theorem.
\vskip 5pt

  Equipped with this vanishing of   $\Ext^i_{\U(V)}(  \nu^{-(n-1)/2}\pi \times \nu^{1/2} \St(\pi, n-1) , \omega_{V, \psi, \mu})$  for all $i \geq 1, $ the usual long exact sequence of Ext groups associated to the short exact sequence of modules in (\ref{E:tag1}) gives us isomorphisms:

  \begin{equation} \label{E:tag2}  \Ext^{i+1}_{\U(V)}( \St(\pi, n) , \omega_{V, \psi, \mu}) \cong  \Ext^i_{\U(V)}( K_n , \omega_{V, \psi, \mu}) {\rm ~~~for ~~~all~~~} i \geq 1. 
  \end{equation}
\vskip 5pt

  Next,  we use the pinned outer automorphism $\phi$ on $\GL_{mn}(E)$ which is a conjugate of the automorphism $g \rightarrow {}^tg^{-1}$ by a (longest) 
Weyl group element. The outer automorphism $\phi$ 
  takes standard parabolic subgroups to standard parabolic subgroups, and in particular takes the parabolic
  $P_{m,m(n-1)}$  to $P_{m(n-1),m}$. Moreover, for an element  $(g_1,g_2)\in \GL_m(E) \times \GL_{m(n-1)}(E)$ in the Levi subgroup  $\GL_m(E) \times \GL_{m(n-1)}(E)$ of   $P_{m,m(n-1)}$, one has
  \[  \phi(g_1, g_2)  =  ({}^tg_2^{-1}, {}^tg_1^{-1}) \in \GL_{m(n-1)}(E) \times \GL_m(E). \]
\vskip 5pt

  Applying $\phi$ to  the exact sequence (\ref{E:tag1}) above,
    we obtain:
  \[ 0 \rightarrow K^\phi_n \rightarrow
     [\nu^{-(n-1)/2}\pi \times \nu^{1/2} \St(\pi, n-1)]^\phi \rightarrow \St(\pi, n)^\phi \rightarrow 0. \]
\noindent
 By transport of structure, 
    \[  [\nu^{-(n-1)/2}\pi \times \nu^{1/2} \St(\pi, n-1)]^\phi \cong   [\nu^{1/2}\St(\pi, n-1)]^\phi  \times [\nu^{-(n-1)/2}\pi]^\phi.\]
Now by a well-known theorem of Gelfand-Kazhdan, the action of $\phi$ on any irreducible
   representation  of $\GL_{mn}(E)$ is just the contragredient. Thus, we obtain the exact sequence:
\[ 0 \rightarrow K^\vee_n \rightarrow
   \nu^{-1/2} \St(\pi, n-1)^\vee \times (\nu^{(n-1)/2}\pi^\vee)
   \rightarrow \St(\pi, n)^\vee \rightarrow 0. \]
\vskip 5pt

 Taking the contragredient of this exact sequence, we get:

\begin{equation} \label{E:tag3}
0 \rightarrow \St(\pi,n)  \rightarrow
   \nu^{1/2}\St(\pi, n-1) \times (\nu^{-(n-1)/2}\pi)
   \rightarrow K_n \rightarrow 0. 
   \end{equation}
   Once again, we have:
\begin{equation} \label{E:tag3.5}
   \Ext^i_{\U(V)}(  \nu^{1/2} \St(\pi, n-1) \times \nu^{-(n-1)/2}\pi, \omega_{V, \psi, \mu}) = 0 {\rm ~~~for ~~~all~~~} i \geq 1. \end{equation}
As for (\ref{E:tag1.5}), this  follows by   Lemma \ref{para}(iii), which gives the vanishing of $\Ext^i$ ($i \geq 0$) for the subquotient of the principal series $\nu^{1/2} \St(\pi, n-1) \times \nu^{-(n-1)/2}\pi$  supported on a non-open orbit, and the vanishing  of $\Ext^i$ ($i \geq 1$) for the open orbits is a consequence of the induction hypothesis and the Kunneth theorem. 
\vskip 5pt

  Equipped with this vanishing of   $\Ext^i_{\U(V)}(
 \nu^{1/2} \St(\pi, n-1) \times \nu^{-(n-1)/2}\pi, \omega_{V, \psi, \mu})$  for all $i \geq 1, $ the usual long exact sequence of Ext groups associated to the short exact sequence of modules in (\ref{E:tag3}) gives us isomorphisms:

 \begin{equation} \label{E:tag4}
    \Ext^{i}_{\U(V)}( \St(\pi, n) , \omega_{V, \psi, \mu}) \cong  \Ext^{i+1}_{\U(V)}( K_n , \omega_{V, \psi, \mu}) {\rm ~~~for ~~~all~~~} i \geq 1. 
    \end{equation}
Using the isomorphisms in (\ref{E:tag2}) and (\ref{E:tag4}), we get:
  \[   \Ext^{i}_{\U(V)}( \St(\pi, n) , \omega_{V, \psi, \mu}) \cong    \Ext^{i+2}_{\U(V)}( \St(\pi, n) , \omega_{V, \psi, \mu}) {\rm ~~~for ~~~all~~~} i \geq 1.\]

Now for any two smooth representations $\pi_1,\pi_2$ of a reductive $p$-adic group $G(F)$, one has
\[  \Ext^i_G[\pi_1,\pi_2]=0  \quad \text{ for any $i> $ the $F$-rank of $G$.} \]
Hence, we deduce by (\ref{E:tag4}) that  
  \[   \Ext^{i}_{\U(V)}( \St(\pi, n) , \omega_{V, \psi, \mu}) = 0 {\rm ~~~for ~~~all~~~} i \geq 1,\]
  completing the proof of the theorem.
\end{proof}

\section{\bf Archimedean Case}  \label{S:arch}
In this section, we consider the archimedean case, so that $\GL(V) = \GL_n(\C)$. As mentioned before, the local conjecture does not determine the unique skew-Hermitian space $V$ which has nonzero contribution.  In this section, we shall explain how the conjecture can be refined in the archimedean case to give a definitive answer. 
\vskip 5pt

Recall that Hermitian forms over $\C$ are classified by their signatures $(p, q)$. Since skew-Hermitian forms can be obtained from Hermitian ones by scaling-by $i$, we shall likewise say that a skew-Hermitian space has signature $(p,q)$ if it has $p$ many $i$'s and $q$ many $(-i)$'s as its eigenvalues. We will denote the corresponding space as $V_{p,q}$ and its isometry group as $\U(V_{p,q}) = \U_{p,q}$. In particular, in rank $1$, the two skew-Hermitian forms are classified by their determinant, which is $i$ or $-i$. 

\vskip 5pt

An irreducible generic representation $\Pi$ of $\GL_n(\C)$ is an irreducible  principal series representation:
\[    \Pi = \Ind_{B(\C)}^{\GL_n(\C)} (\chi_1 \otimes \cdots \otimes \chi_n)  \quad \text{(normalized induction)}\]
where the $\chi_j$'s are characters of $\C^{\times}$. We may write $\chi_j$ as:
\[  \chi_j(z)  = |z|^{r_j} \cdot \left( \overline{z}/ z \right)^{k_j/2}  \]
where $k_j \in \Z$.   
\vskip 5pt

As in the previous section, we may consider the restriction of the representation $\Pi$ of $\GL_n(\C)$
to a subgroup $\U(V_{p,q}) = \U_{p,q} \subset \GL_n(\C)$ by Mackey theory.
The open $\U(V_{p,q})$-orbits on the flag variety  $\GL_n(\C)/B$ 
are associated,  as in the $p$-adic case, to the ordered collection
of orthogonal (nondegenerate) skew-Hermitian lines $\mathcal{L} = \{ L_1,\cdots ,L_n\}$,
with $\oplus_j L_j \cong V_{p,q}$ as skew-Hermitian spaces.
This means that $p$ of the lines $L_i$'s have determinant  $i$ and the rest have determinant $-i$;
we shall call such $\mathcal{L}$'s to be $V_{p,q}$-relevant.
In particular, the number of open $\U(V_{p,q})$-orbits is $\binom{n}{p}$.
\vskip 5pt

If we assume that the analog of Theorem \ref{ps1} holds in the archimedean case, then the proof of Corollary \ref{C:ps} gives:
\[  \Hom_{\U(V)}(\Pi, \omega_{V, \psi,\mu}) \cong  \bigoplus_{\mathcal{L} }  \bigotimes_j  \Hom_{\U(L_j)}( \chi_j,   \omega_{L_j,\psi, \mu}),  \tag{*}\]
where the sum is taken  over those $\mathcal{L}$ which are $V_{p,q}$-relevant.
For each $i$, one may apply Theorem \ref{T:moen} \cite{Mo, R}:
\[ \Hom_{\U(L_j)}( \chi_j,   \omega_{L_j,\psi, \mu}) \ne 0 \Longleftrightarrow \epsilon(1/2, \chi_j/ \overline{\chi_j} \cdot \mu^{-1}, \psi_E) \cdot \chi_j(-1) = \mu(\det(L_j)),  \]
which shows that at most one $\mathcal{L}$ can have a nonzero contribution to the sum in (*).
Now let us explicate this local root number condition. 
\vskip 5pt

The conjugate-symplectic character $\mu$ of $\C^{\times}$ has the form
\[ \mu(z)  =\left( \frac{\bar{z}}{z} \right)^{\alpha} \quad \text{  with $\alpha \in \frac{1}{2} \Z \setminus \Z$.} \]
Observe that
\[  \mu(i) =  i^{-2 \alpha}. \]
Then writing $\chi$ in place of  $\chi_j$ for simplicity,
\[  \chi/ \overline{\chi} \cdot \mu^{-1} : z \mapsto  \left( \frac{\bar{z}}{z} \right)^{k  - \alpha}. \]
Hence,  if  $\psi$ is the additive character of $\R$ given by
\[   \psi(x)  =   e^{2 \pi i x}, \]
then by \cite[3.2.5]{T} (see also \cite[Prop. 2.1]{GGP2})
\[  \epsilon(1/2, \chi/ \overline{\chi} \cdot \mu^{-1}, \psi({\rm Tr})) =   
  {\rm sign}(k - \alpha) \cdot  i^{2k - 2 \alpha}  = {\rm sign}(k - \alpha) \cdot (-1)^k \cdot i^{-2 \alpha}. \]
 Hence, we conclude that
 \begin{align}
   \Hom_{\U(L_j)}( \chi_j,   \omega_{L_j, \psi, \mu}) \ne 0 &\Longleftrightarrow   \mu(\det(L_j)) = {\rm sign}(k_j - \alpha) \cdot i^{-2 \alpha}, \notag \\
   &\Longleftrightarrow \det(L_j)  = {\rm sign}(k_j - \alpha) \cdot i. \notag 
   \end{align}
 For this to hold with $\mathcal{L}$ being $V_{p,q}$-relevant, we need
 \[  \#\{ j: k_j > \alpha\} = p \quad \text{and} \quad \# \{j: k_j < \alpha   \} = q = n-p. \]
  \vskip 5pt
  
  Hence, our refinement of Conjecture \ref{conj-local} in the archimedean case is:
  \vskip 5pt
  
  \begin{conj}
  Assume that $E/F = \C / \R$. Let
  \[    \Pi = \Ind_{B(\C)}^{\GL_n(\C)} (\chi_1 \otimes  \cdots \otimes \chi_n)  \]
  be an irreducible generic principal series representation of $\GL_n(\C)$ with
  \[ \chi_j(z)  = |z|^{r_j} \cdot \left( \overline{z}/ z \right)^{k_j/2}, \quad  k_j \in \Z, \]
  and let
  \[  \mu(z)  =\left( \frac{\bar{z}}{z} \right)^{\alpha} \quad \text{  with $\alpha \in \frac{1}{2} \Z \setminus \Z$.} \]
  Then for $\psi(x) = e^{2 \pi ix}$,
\[  \Hom_{\U(V_{p,q})}(\Pi, \omega_{V_{p,q},\psi,\mu}) \ne 0 \]
if and only if
\[   \#\{ j: k_j > \alpha\} = p \quad \text{and} \quad \# \{j : k_j< \alpha   \} = q = n-p. \]
\end{conj}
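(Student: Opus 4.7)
My plan is to follow the same Mackey-theoretic strategy that worked in the non-archimedean case. Working with the principal series $\Pi = \Ind_{B(\C)}^{\GL_n(\C)}(\chi_1 \otimes \cdots \otimes \chi_n)$, the first task is to establish the archimedean analog of Theorem \ref{ps1}: only the open $\U(V_{p,q})$-orbits on $\GL_n(\C)/B$ contribute to $\Hom_{\U(V_{p,q})}(\Pi, \omega_{V,\psi,\mu})$. The orbit analysis in Lemma \ref{orbits} transports essentially verbatim, the only change being that skew-Hermitian spaces of a fixed dimension over $\C$ are classified by signature $(p,q)$ rather than by a single $\pm 1$-invariant. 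Inductively unwinding the flag variety, the open orbits correspond to ordered decompositions $V_{p,q} \cong \bigoplus_j L_j$ into pairwise orthogonal nondegenerate skew-Hermitian lines; equivalently, to $V_{p,q}$-relevant ordered collections $\mathcal{L} = \{L_1,\ldots,L_n\}$.

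Second, I would prove the vanishing of $\Hom$ on the subquotients supported on non-open orbits, following the proof of Proposition \ref{ps2} and Theorem \ref{ps1}. Working in the Casselman--Wallach category of smooth moderate-growth representations, the stabilizer of a point in an orbit with $d = \dim(X \cap X^\perp) > 0$ contains a copy of $\GL_d(\C)$ in its Levi, and the central character of $\GL_d(\C)$ acts by genuinely different characters on the two sides: on the one hand, via the normalized Jacquet modules (n-homology) of the $\chi_j$'s together with $\delta_{P/S}^{1/2}$; on the other, via the twisted Jacquet module of $\omega_{V,\psi,\mu}$ along the unipotent radical of the maximal parabolic of $\U(V)$ stabilizing an isotropic subspace of dimension $d$ (the analog of Lemma \ref{Weil-Jac}). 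The comparison of exponents done after Proposition \ref{ps2} is purely formal and yields the same mismatch, giving the Hom-vanishing via the archimedean analog of Lemma \ref{central}.

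Third, once the reduction to open orbits is in place, a Kunneth-type argument together with the factorization $\omega_{V,\psi,\mu}|_{\U(\mathcal{L})} \cong \bigotimes_j \omega_{L_j,\psi,\mu}$ yields
\[
\Hom_{\U(V_{p,q})}(\Pi, \omega_{V,\psi,\mu}) \cong \bigoplus_{\mathcal{L}} \bigotimes_j \Hom_{\U(L_j)}(\chi_j, \omega_{L_j,\psi,\mu}),
\]
with $\mathcal{L}$ running over $V_{p,q}$-relevant ordered collections. Each rank-one factor is governed by the archimedean Moen--Rogawski theorem (Theorem \ref{T:moen} for $F = \R$): nonvanishing forces $\mu(\det L_j) = \chi_j(-1)\cdot\epsilon(1/2, \chi_j^\sigma/\chi_j\cdot\mu^{-1}, \psi_E)$. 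Substituting the explicit formula $\epsilon(1/2, \chi_j^\sigma/\chi_j \cdot \mu^{-1}, \psi(\mathrm{Tr})) = \mathrm{sign}(k_j - \alpha)\cdot(-1)^{k_j}\cdot i^{-2\alpha}$ from \cite[3.2.5]{T} collapses the condition to $\det(L_j) = \mathrm{sign}(k_j - \alpha)\cdot i$. Since $V_{p,q}$ has $p$ lines of determinant $i$ and $q$ lines of determinant $-i$, the relevant $\mathcal{L}$ exists if and only if $\#\{j : k_j > \alpha\} = p$ and $\#\{j : k_j < \alpha\} = q$, which is exactly the claim. When it exists, the unique contributing $\mathcal{L}$ is obtained by assigning the positive/negative line to each $\chi_j$ according to the sign of $k_j - \alpha$.

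The main obstacle, in my view, is the second step: rigorously extending the $\mathfrak{n}$-homology and central-character vanishing argument of Proposition \ref{ps2} to the archimedean Casselman--Wallach setting. In particular, one must compute the $Z_d$-coinvariants of the smooth Frechet Weil representation and verify the analog of Lemma \ref{Weil-Jac}, then argue that Ext-vanishing via center-character mismatch continues to hold for smooth moderate-growth representations of the real Levi $\GL_d(\C)\times \U_{a-d}\times \U_{b-d}$. The algebra is identical to the $p$-adic case, but this functional-analytic input needs to be set up carefully. Granted this input and the underlying real-group Mackey theory for $\U(V_{p,q})$ acting on $\GL_n(\C)/B$, the conclusion follows exactly as derived informally in the excerpt.
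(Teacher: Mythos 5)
Your proposal follows exactly the derivation given in the paper: reduction to the open $\U(V_{p,q})$-orbits on $\GL_n(\C)/B$ (assuming the archimedean analogue of Theorem \ref{ps1}), identification of the open orbits with $V_{p,q}$-relevant ordered collections of skew-Hermitian lines, and then the rank-one Moen--Rogawski theorem together with the explicit epsilon-factor formula from Tate to pin down the signature condition. Note, however, that the statement is formulated as a \emph{conjecture} precisely because the input you flag as the main obstacle --- the vanishing on non-open orbits and the requisite archimedean Mackey/homological machinery --- is left unproved in the paper as well, so your argument, like the paper's, is a conditional derivation rather than a complete proof.
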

\vskip 5pt

We have essentially proved this conjecture by our open-orbit analysis above, under the hypothesis that Theorem \ref{ps1} holds in the archimedean case.
We leave the analysis of non-open orbits and the resulting extension problems to more capable hands.

\vskip 15pt

\section{\bf The Conjecture for A-Parameters} \label{ggp3}
In this section, we shall extend Conjecture \ref{conj-local}  beyond the setting of generic representations to the setting of nontempered representations of Arthur type, analogous to what we did in \cite{GGP3} for the classical GGP conjectures. We begin with a brief recollection of this nontempered  conjecture from  \cite{GGP3}.
\vskip 5pt

\subsection{\bf Nontempered GGP and Relevance} 
In \cite{GGP3}, we considered the problem of determining
\[  \dim \Hom_{\GL_n(F)} ( \pi_M, \pi_N) = \text{$0$ or $1$ }  \]
where $\pi_M$  and $\pi_N$ are  respectively  irreducible  representations of $\GL_{n+1}(F)$ and $\GL_n(F)$   of Arthur type, with associated A-parameters
\[  M_A = \bigoplus_{i=1}^k   M_i \boxtimes \Sym^{d_i-1}(\bC^2) \quad \text{and} \quad    N_A = \bigoplus_{i=1}^l   N_i \boxtimes \Sym^{e_i-1}(\bC^2). \]
Here, $M_i$ and $N_i$ are irreducible  bounded admissible representations of the Weil-Deligne group $WD_F$ and $\Sym^{d-1}(\bC^2)$ is the $d$-dimensional irreducible representation of $\SL_2(\bC)$ (the Arthur $\SL_2(\bC)$), so that $M_A$ and $N_A$ are representations of $WD_F \times \SL_2(\bC)$ of dimension $n+1$ and $n$ respectively. 
The associated A-packets are singletons, containing the irreducible unitary principal series representations:
\[  \pi_M  = \times_{i=1}^r \Sp( \pi_{M_i}, d_i) \quad \text{and} \quad \pi_N = \times_{i=1}^{l} \Sp(\pi_{N_i}, e_i), \]
where $\pi_{M_i}$ refers to the irreducible representation of the appropriate $\GL$ with L-parameter $M_i$ and $\Sp(\pi_{M_i}, d_i)$ denotes the associated Speh representation (as introduced in \S \ref{SS:red-sc}).
\vskip 5pt

\noindent{\bf Remark:} We take this opportunity to correct a misnomer in \cite[\S 5]{GGP3}. In the first paragraph of \cite[Pg. 2312]{GGP3}, the representation with A-parameter $M_i \otimes \Sym^{d_i} (\bC^2)$ was denoted by ${\rm Speh}(\pi_{M_i}, d_i)$. Though just a naming convention,  it is more customary to denote this representation by ${\rm Speh}(\pi_{M_i}, d_i+1)$. We have followed the latter convention here. 
\vskip 5pt

Now  the main conjecture in \cite{GGP3} (for the general linear groups) is that 
  \[  \dim \Hom_{\GL_n(F)} ( \pi_M, \pi_N) = 1 \]
 if and only if the pair $(M_A, N_A)$ is a {\em relevant pair of A-parameters}. This conjecture has now been proven by K.Y. Chan \cite{C}. Our goal here is to recall the key notion of ``relevance" and make a couple of remarks about it, especially in the context of classical groups.
 \vskip 5pt
 
 \begin{definition} \label{D:relevance}
 Given two A-parameters (of arbitrary dimensions) of $\GL$-groups
 \[  M_A = \bigoplus_{i=0}^d   M_i \boxtimes \Sym^{i}(\bC^2) \quad \text{and} \quad    N_A = \bigoplus_{i=0}^d   N_i \boxtimes \Sym^{i}(\bC^2), \]
we say that $(M_A, N_A)$ is a relevant pair if 
we have a decomposition of the respective representations of $WD_F$ as
\[  M_i = M_i^+ + M_i^- \quad \text{and} \quad N_i = N_i^+ + N_i^- \]
 with the property that
\[  M_i^+  = N_{i+1}^- \quad \text{for $i \geq 0$ and} \quad M_i^- = N_{i-1}^+ \quad \text{for $i \geq 1$.} \]
 \end{definition}
\noindent  This combinatorial definition has a more geometric interpretation which was discussed in \cite[\S 4]{GGP3}.
 
 \vskip 5pt
 
\subsection{\bf Relevance for classical groups}
 We  make a few remarks on the relevance condition for classical groups, clarifying \cite{GGP3}. 
 \vskip 10pt
 
 \begin{itemize}
 \item  The first point is minor but worth noting.  The typical GGP conjecture (in the context of $\GL_n \times \GL_{n+1}$ say) is formulated as the branching problem of determining
 \[  \dim \Hom_{\GL_n(F)^{\Delta}} (\pi_M \otimes \pi_N, \C), \quad \text{rather than } \quad   \dim \Hom_{\GL_n(F)} ( \pi_M, \pi_N).  \]
 When formulated in this way, the nontempered GGP conjecture would then say that
 \[ \dim \Hom_{\GL_n(F)^{\Delta}} (\pi_M \otimes \pi_N, \C) = 1 \]
 if and only if $(M_A, N_A^{\vee})$ is relevant, where $N_A^{\vee}$ is the dual representation of $N_A$.

  \vskip 5pt
  
  \item Secondly, in \cite[\S 6] {GGP3}, we formulated the nontempered GGP conjecture for the classical groups, asserting that the same ``relevance" condition plays a crucial role. 
  We take this opportunity to explicate the relevance notion here. 
  \vskip 5pt
  
  For classical groups, the branching problem concerns the determination of  
  \[  \dim \Hom_H(\pi, \nu), \]
  where $\pi$ is an irreducible representation of 
\[  G=  G_1 \times G_2   = \U_n \times \U_m \quad \text{(say),}  \]
with $n\geq m$,   
\[  H= \U_m\ltimes N  \subset G  \]
is a subgroup with unipotent radical $N$ and $\nu$ is a certain small representation of $H$. More precisely,  $\nu$ is a 1-dimensional character if $n \not \equiv m \bmod 2$; this case is referred to as the {\em Bessel  case for Hermitian spaces}.  On the other hand, the case when  $n \equiv m \bmod 2$ is referred to as the {\em Fourier-Jacobi 
  case for skew-Hermitian spaces}; in this case,   $\nu$ is a Weil representation.

\vskip 5pt

The A-parameters for classical groups are likewise finite dimensional representations of
$WD_E \times \SL_2(\bC)$, where $WD_E$ 
is the Weil-Deligne group of $E$,  with appropriate (conjugate)-duality conditions. 
Suppose we are given A-parameters 
 \[  M_A = \bigoplus_{i=0}^d  M_i \boxtimes \Sym^{i}(\bC^2), \]
\[  N_A = \bigoplus_{i=0}^d   N_i \boxtimes \Sym^{i}(\bC^2), \]
with $M_i$ and $N_i$ satisfying  appropriate (conjugate-)duality conditions. 
We can now  summarize the relevance conditions required in each case as follows.
\vskip 5pt

\begin{itemize}
\item[(i)]  Orthogonal and symplectic groups  (both Bessel and Fourier-Jacobi models): an A-parameter $M_A \boxtimes N_A$ of 
  $G_1 \times G_2$ is relevant if and only if $M_A= M_A^{\vee}$ and $N_A= N_A^{\vee}$ form a relevant pair in the sense of Definition \ref{D:relevance} for $\GL_m \times \GL_n$.
  \vskip 5pt
  
  \item[(ii)]  Hermitian case  (Bessel models): an A-parameter $M_A \boxtimes N_A$ of 
    $G_1 \times G_2$ is relevant if and only if $M_A^\vee$ and $N_A$ form a  relevant pair in the sense of \ref{D:relevance}  for $\GL_m\times \GL_n$.
\vskip 5pt

  \item[(iii)]  Skew-Hermitian case (Fourier-Jacobi model): in this case, the definition of the Weil representation $\nu$ requires an extra piece of data, namely a character 
  \[  \mu: E^\times \rightarrow \C^\times \quad \text{with  $\mu|_{F^\times} = \omega_{E/F}$.} \]
   An A-parameter $M_A \boxtimes N_A$ of 
    $G_1 \times G_2$ is relevant if and only if $\mu \cdot M_A^\vee$ and $N_A$ are relevant in the sense of \ref{D:relevance}  for
    $\GL_m \times  \GL_n$.

  \end{itemize}
\end{itemize}

\subsection{\bf Nontempered Twisted GGP}
 We shall now formulate the extension of the nontempered GGP conjecture of \cite{GGP3} to the twisted setting considered in this paper.
 Hence, with $E/F$ a quadratic extension, suppose we have a representation $\pi_M$ of $\GL(V)$ with associated A-parameter  $M_A$. 
 The  notion of relevance is not immediately obvious in this setting, as in contrast to the situations discussed above, we do not have a pair of A-parameters but only a single one. Nonetheless, we have:
 \vskip 5pt

\begin{conj}  \label{nontemperedggp}
Let $V$ be an
  $n$-dimensional $E/F$-skew-Hermitian space.
Let $\pi$ be an irreducible admissible representation of $\GL(V)$ with an A-parameter
(which   is an $n$-dimensional representation of $WD_E \times \SL_2(\bC)$)  of the form
\[  M_A = \bigoplus_{i=1}^r   M_i \boxtimes \Sym^{d_i}(\bC^2), \]
where $M_i$ is an irreducible $m_i$-dimensional tempered representation of $WD_E$. If
\[ \Hom_{\U(V)}[  \pi, \omega_{V,\psi,\mu}] \not = 0,\]
 then  $M_A$ is a sum of a tempered A-parameter (i.e. with $\SL_2(\bC)$ acting trivially) and 
 summands of the form
\[N_i \boxtimes \Sym^{d_i}(\bC^2) \oplus \mu\cdot N_i^\sigma \boxtimes \Sym^{d_i-1}(\bC^2),\]
  where $d_i \geq 1$, and the $N_i$ are tempered representations of $WD_E$ with $N^\sigma_i$ their
  conjugate under the action of $\Gal(E/F)$. Equivalently, the parameters $ M_A$ and $\mu \cdot M_A^\sigma$ should be relevant in the sense of \cite{GGP3}.
\vskip 5pt

  Conversely, if the parameters $M_A$ and $\mu \cdot M_A^\sigma$ are relevant in the sense of \cite{GGP3}, then 
\[ \Hom_{\U(V)}[  \pi, \omega_{V,\psi,\mu}] = \C\]
 for exactly one skew-Hermitian space $V$, namely the one determined as in  Conjecture \ref{conj-local}(iii).
\end{conj}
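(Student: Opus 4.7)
\medskip

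The plan is to reduce Conjecture \ref{nontemperedggp} to the tempered case via an inductive Mackey-theoretic argument, in the spirit of K.Y.~Chan's proof \cite{C} of the non-tempered GGP for general linear groups. By construction $\pi \cong \times_{i=1}^r \Sp(\pi_{M_i}, d_i+1)$; let $|M_A| := \sum_i d_i$ denote the non-tempered rank of $M_A$. I would induct on $|M_A|$. The base case $|M_A| = 0$ is the tempered statement, which is Conjecture \ref{conj-local}, reduced to the discrete-series subcase by Corollary \ref{C:temp-to-ds}. Note that if $(M_A,\,\mu\cdot M_A^\sigma)$ is relevant in the sense of Definition \ref{D:relevance}, then the tempered part of $M_A$ itself forms a self-contained relevant block, which is exactly the datum to which the tempered conjecture applies.

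For the inductive step, the main device is that each Speh factor $\Sp(\pi_M, d+1)$ fits into short exact sequences analogous to Lemma \ref{two-ps}, realizing it as a sub or quotient of a principal series involving $\pi_M$ and $\Sp(\pi_M, d)$. One such sequence is produced by Bernstein--Zelevinsky theory; an ``opposite'' version is obtained by applying the Gelfand--Kazhdan outer involution, exactly as in the proof of Theorem \ref{Chen}. Tensoring with the remaining Speh factors yields two resolutions of $\pi$ by principal series whose inducing data have strictly smaller non-tempered rank. Applying Proposition \ref{ps2} and tracing through the associated long exact sequences of Ext groups, the problem for $\pi$ reduces to: (a) the branching for representations of smaller non-tempered rank, handled by induction; and (b) the contributions from non-open $\U(V)$-orbits, which no longer vanish automatically because Casselman's temperedness criterion is unavailable.

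The non-open orbit analysis is where the relevance condition enters. By Proposition \ref{ps2}, the Ext on an orbit with parameter $d > 0$ is a pairing over $\GL_d(E)\times\U(V_{a-d})\times\U(V_{b-d})$ between the twisted Jacquet modules of the two inducing factors and the module $\delta_S^{1/2}\,|\det|^{-1/2}\mu\cdot\omega_{V_{n-2d},\psi,\mu}$. A direct computation of the Bernstein--Zelevinsky structure of the Jacquet modules of Speh representations shows that the tempered pieces on $\GL_d(E)$ which survive the central-character matching of Lemma \ref{central} are exactly those that appear simultaneously in a Jacquet module of $\pi$ and in the $\mu$-twisted $\sigma$-conjugate dual of another Jacquet module of $\pi$. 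This is precisely the relevance of $(M_A,\,\mu\cdot M_A^\sigma)$: the cascading condition $M_i^+ = N_{i+1}^-$ reflects the fact that each application of the Jacquet functor shifts the Speh index by one, while the $\sigma$-conjugation and $\mu$-twist arise because the Weil representation $\omega_{V_{n-2d},\psi,\mu}$ enters the pairing with the character $\mu$ on $\GL_d(E)$ via Lemma \ref{Weil-Jac}.

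The main obstacle will be two-fold. First, the (Assumption) in \S\ref{SS:red-sc}, namely the vanishing of $\Hom_{\U(V+V')}[\Sp(\pi,2),\,\omega_{V+V',\psi,\mu}]$, is itself the minimal non-trivial instance of Conjecture \ref{nontemperedggp} and sits at the base of any such induction. A self-contained proof at this step---perhaps via the theta correspondence exploiting the realization of $\Sp(\pi,2)$ inside a degenerate principal series and its lift to a smaller unitary group---is needed to initialize the induction. Second, for the converse direction (nonvanishing for relevant parameters) and the uniqueness of $V$, one must show that the local root number identity in Conjecture \ref{conj-local}(iii) is insensitive to the non-tempered part of $M_A$. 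The computation in the proof of Theorem \ref{sc-to-ds}(b), using $\epsilon(\lambda\otimes[2d+1])=\epsilon(\lambda)^{2d+1}$ for conjugate-selfdual $\lambda$, should extend: the $\SL_2(\bC)$-isotypic components of the conjugate-symplectic representation $M_A\otimes{}^\sigma\!M_A^\vee\otimes\mu^{-1}$ pair up so that even-weight non-tempered contributions cancel, leaving only the tempered core to determine the predicted discriminant of $V$.
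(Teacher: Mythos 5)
The statement you are addressing is a \emph{conjecture}: the paper does not prove Conjecture \ref{nontemperedggp}, and neither does your proposal. What the paper offers as support is Proposition \ref{relevance}, which verifies the conjecture for degenerate principal series $\chi_1\times\chi_2$ induced from characters of a maximal parabolic $P_{a,b}$, where the Mackey analysis can be carried out completely: the open orbits contribute nothing (by Lemma \ref{character}, since the Weil representation of $\U(V_a)$ with $a\geq 2$ contains no characters), and the central-character matching of Lemma \ref{central} on the non-open orbits forces $2d+1=n$, hence $b=a+1$ and $\chi_1=\chi_2^\sigma\cdot\mu$ --- which is exactly the relevance of $(M_A,\mu\cdot M_A^\sigma)$. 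Your outline is consistent in spirit with this motivation, but it is a strategy, not a proof, and the gaps sit precisely at the steps the paper itself flags as open.

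Concretely: (1) Your induction is initialized by the tempered case, which is itself Conjecture \ref{conj-local} and is only \emph{reduced} in the paper (to discrete series by Corollary \ref{C:temp-to-ds}, and to supercuspidals under a hypothesis); so even your base case is conditional. (2) The minimal genuinely non-tempered instance, the vanishing of $\Hom_{\U(V+V')}[\Sp(\pi,2),\omega_{V+V',\psi,\mu}]$, is the (Assumption) of \S\ref{SS:red-sc}, which the paper explicitly identifies as a case of Conjecture \ref{nontemperedggp} and can only prove when $\dim V=1$; you acknowledge this circularity but do not resolve it, and without it the induction does not start. (3) The central claim of your third paragraph --- that the surviving pieces of the Jacquet modules under the central-character matching are exactly those encoding relevance --- is asserted, not established. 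For products of Speh representations of supercuspidals this requires the full Zelevinsky/Tadi\'c description of the Jacquet modules and a careful bookkeeping of the $|\det|$-exponents against the shift $|\det|^{(n-d-1)/2}$ coming from Lemma \ref{Weil-Jac}; the paper carries this out only when the inducing data are characters, where the Jacquet modules are one-dimensional. (4) The converse direction (multiplicity exactly one for relevant parameters, supported on the unique $V$ of Conjecture \ref{conj-local}(iii)) is not addressed beyond a heuristic about $\epsilon$-factor cancellation; in the paper's worked case the multiplicity-one statement comes from reducing the closed-orbit contribution to the $n=1$ case of Moen--Rogawski, and no analogue of that reduction is supplied in your outline for general Speh data. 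In short, your proposal is a reasonable research plan aligned with how the authors arrived at the conjecture, but it does not constitute a proof, and it should not be presented as one.
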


\vskip 5pt

We leave it to the reader to verify that when $E = F \times F$ is split, so that $V = V_1 \times V_2$, the relevance condition in Conjecture \ref{nontemperedggp}
reduces to the one formulated earlier for a representation $\pi_M = \pi_1 \otimes \pi_2$ of $\GL(V) = \GL(V_1) \times \GL(V_2)$. 
\vskip 15pt

\subsection{\bf Degenerate principal series}
The reader may wonder how we are led to the above conjecture. In fact, we are led to the conjecture by considering the branching problem for degenerate principal series representations. Recall that in the previous three sections, we have appealed to Mackey theory computations to study the twisted branching problem for tempered principal series representations and generalized Steinberg representations. As much of the material there is of a general nature, it is natural to apply them to the analogous restriction problem for degenerate principal series representations. The result is given in the following proposition. Note that the degenerate principal series considered below are of Arthur type. Hence, the proposition serves as a motivation and check for Conjecture \ref{nontemperedggp}.
\vskip 10pt

\begin{prop} \label{relevance}
  Let
  \begin{itemize}
  \item   $n = a+b$,  with $0 <a \leq b \in \Z$;
  \item  $\chi_1,\chi_2: E^\times \rightarrow \C^\times$ be two unitary characters;
  \item $V = V_a \oplus V_b$ be an $n$-dimensional $E/F$-skew-Hermitian space, with $\dim V_a = a$;
  \item $P = P_{a,b}$ the maximal parabolic subgroup of $\GL(V)$ stabilizing $V_a$, with Levi factor $\GL(V_a) \times \GL(V_b)$;
  \item $\pi = \chi_1 \times \chi_2$ be the degenerate principal series representation
of $\GL(V)$ induced from the corresponding 1-dimensional character $(\chi_1 \circ \det_{V_a}) \otimes (\chi_2 \circ \det_{V_b})$ of  $P_{a,b}$.
\end{itemize}
If
  \[ \Hom_{\U(V)}[  \pi, \omega_{V,\psi,\mu}] \not = 0,\]
  then the following holds:
  \begin{itemize}
  \item[(i)]  $b=a+1$ and,
  \item[(ii)]  $\chi_1 = \chi_2^\sigma \cdot \mu$ where $\sigma$ is the Galois involution of $E/F$.
    \end{itemize}
\vskip 5pt

\noindent Conversely,   if $b=a+1$ and $\chi_1 = \chi_2^\sigma  \cdot \mu$, then
  there is exactly one skew-Hermitian structure on  $V$ such that 
\[ \Hom_{\U(V)}[  \pi, \omega_{V,\psi,\mu}]  = \C,\]
and for the other skew-Hermitian space $V'$,
\[ \Hom_{\U(V')}[  \pi, \omega_{V',\psi,\mu}]  = 0.\]
  \end{prop}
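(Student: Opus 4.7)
\medskip
The proof will proceed via the Mackey-theoretic Proposition~\ref{ps2}. By Lemma~\ref{orbits}, the $\U(V)$-orbits on $\GL(V)/P_{a,b}$ are parameterized by the invariant $d = \dim(X \cap X^\perp)$ with $0 \leq d \leq \min(a, \rk V)$, together with the isometry class of $X/(X \cap X^\perp)$. Since $\pi_1 = \chi_1 \circ \det_{V_a}$ and $\pi_2 = \chi_2 \circ \det_{V_b}$ are one-dimensional, their un-normalized Jacquet modules are again one-dimensional, and Proposition~\ref{ps2} applied to an orbit with invariant $d$ identifies
\[
\Hom_{\U(V)}(\pi_X, \omega_{V,\psi,\mu}) \cong \Hom_{\GL_d(E)}(\eta_1, \eta_2) \otimes \Hom_{\U(V_{a-d})}(\chi_1|_{E^1}, \omega_{V_{a-d},\psi,\mu}) \otimes \Hom_{\U(V_{b-d})}(\chi_2|_{E^1}, \omega_{V_{b-d},\psi,\mu})
\]
for certain characters $\eta_1, \eta_2$ of $\GL_d(E)$.

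\medskip
The key step is the explicit identification of $\eta_1$ and $\eta_2$. From the block-matrix description of $S_X$ in \S\ref{SS:stabilizers}, an element $g \in \GL_d(E) = \GL(X \cap X^\perp) \subset S_X$ acts on $V_a$ as $g \oplus I_{a-d}$ but on $V_b$ via $I_{b-d} \oplus (g^*)^{-1}$, where $g^*$ is the conjugate-transpose coming from the skew-Hermitian pairing; in particular $\det((g^*)^{-1}) = \overline{\det g}^{-1}$. This Galois twist, together with the modular characters $\delta_{P/S}^{1/2} = |\det|^{d/2}$ on the source and $\delta_S^{1/2}|\det|^{-1/2}\mu = |\det|^{(n-d-1)/2}\mu$ on the target, gives
\[ \eta_1 = (\chi_1/\chi_2^\sigma) \cdot |\cdot|^{d/2}, \qquad \eta_2 = \mu \cdot |\cdot|^{(n-d-1)/2}. \]
Equating $\eta_1 = \eta_2$ forces $\chi_1/\chi_2^\sigma = \mu \cdot |\cdot|^{(n-2d-1)/2}$; unitarity of $\chi_1, \chi_2, \mu$ then requires $n = 2d+1$ and $\chi_1 = \chi_2^\sigma\mu$. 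Combining $a+b = n = 2d+1$ with $d \leq a \leq b$ forces $a = d$ and $b = a+1$, so only the closed orbit $d = a$ can possibly contribute.

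\medskip
For the open orbits ($d = 0$), the contribution reduces to $\Hom_{\U(V_a)}(\chi_1 \circ \det, \omega_{V_a,\psi,\mu}) \otimes \Hom_{\U(V_b)}(\chi_2 \circ \det, \omega_{V_b,\psi,\mu})$. In the genuine degenerate setting one has $b \geq 2$ (and $a \geq 2$ as soon as $a > 1$), so Lemma~\ref{character} forces at least one tensor factor to vanish. Combining with the closed-orbit analysis above establishes that nonvanishing forces both (i) $b = a+1$ and (ii) $\chi_1 = \chi_2^\sigma\mu$. Conversely, assuming these two conditions, the closed orbit $d = a$ is the unique contributor, and the remaining Hom space becomes $\Hom_{\U(V_1)}(\chi_2|_{E^1}, \omega_{V_1,\psi,\mu})$ for the one-dimensional skew-Hermitian space $V_1 = X^\perp/X$. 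By the Moen--Rogawski theorem (Theorem~\ref{T:moen}) there is a unique isomorphism class of $V_1$ for which this Hom equals $\C$; since $V$ admits a Witt decomposition $V \cong H^{\oplus a} \oplus V_1$ with $H$ a hyperbolic plane whose determinant lies in $N(E^\times)$, the class $\det V \in E_0^\times/N(E^\times)$ coincides with $\det V_1$, and so $V_1$ uniquely determines the skew-Hermitian space $V$ with nonzero Hom.

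\medskip
The main subtlety throughout is the appearance of the Galois conjugate $\chi_2^\sigma$ (rather than $\chi_2^{-1}$) in the matching condition, which originates from the sesquilinear action $g \mapsto (g^*)^{-1}$ on the dual piece of the skew-Hermitian pairing and must be distinguished from a naive contragredient; once this is correctly tracked, the proof reduces to a clean orbit-by-orbit character-matching calculation using Proposition~\ref{ps2}, Lemma~\ref{character} and Theorem~\ref{T:moen}.
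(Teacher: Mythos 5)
Your proposal is correct and follows essentially the same route as the paper's own proof: Mackey theory via Proposition~\ref{ps2}, the central-character matching on $\GL_d(E)$ (Lemma~\ref{central}) to force $n=2d+1$, hence $d=a$ and $b=a+1$, Lemma~\ref{character} to kill the open orbits, and reduction of the closed orbit to the rank-one Moen--Rogawski theorem to pin down the unique $V$. Your tracking of the residual character $\chi_2|_{E^1}$ on $\U(V_1)$ and the explicit Witt-decomposition argument showing $\det V \equiv \det V_1$ are sound refinements of the same argument.
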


\begin{proof}
We shall apply the results from Mackey theory obtained in Proposition \ref{ps2}.
 Recall that the orbits for  the action of $\U(V)$ on $X= \GL_n(E)/P_{a,b}$  are given by Lemma \ref{orbits}. For an $a$-dimensional
  subspace $X \subset V$ with $\dim(X \cap X^\perp)=d$ with the corresponding subquotient 
  $\pi_X$ of  $\pi$,     Proposition  \ref{ps2} says that:
\[ \Ext^i_{\U(V)}[\pi_X, \omega_{V,\psi,\mu}]  \]
\[  {\cong} \, \,
 \Ext^i_{Q/N(Q)}[ (\pi_1)_{d,a-d} \otimes (\pi_2)_{b-d,d} \otimes \delta_{P/Q}^{1/2},  \delta_{Q}^{1/2} \cdot |\det|^{-1/2} \mu\cdot \omega_{V_{n-2d},\psi,\mu}]  \]
where $Q/N(Q)=\GL_d(E) \times \U_{a-d}\times \U_{b-d}$ and the other notations are as given there.
Applying this to $\pi_1=\chi_1$ and  $\pi_2=\chi_2$, we deduce:
 \[ \Ext^i_{\U(V)}[  \pi_X, \omega_{V,\psi,\mu}]   \]
 \[   \cong   \Ext^i_{Q/N(Q)}[ \chi_1 \cdot (\chi_2^\sigma)^{-1} |\det|^{d/2}, |\det|^{(n-d)/2} |\det|^{-1/2}\mu \cdot \omega_{V_{n-2d},\psi,\mu}].  \]
 We shall now study when this Ext group can be nonzero.
 \vskip 5pt
 
 Consider first the case when $[X]$ is an open orbit (so that $d = 0$) and $i=0$. In this case
 each of the $|\det|$ factors which
 refers to $\GL_d(E)$,       are trivial for $d=0$, hence
   it follows by Lemma \ref{character} that 
 \[  \Hom_{\U(V)}[  \pi_X, \omega_{V,\psi,\mu}]   =  \Hom_{\U(V_a)}( \chi_1,  \omega_{V_a, \psi,\mu}) \otimes \Hom_{\U(V_b)}(\chi_2, \omega_{V_b, \psi,\mu}) = 0. \]
On the other hand, when $d >  0$, it follows by Lemma \ref{central} (on matching powers of $|\det|$ for the two arguments) that a necessary condition for the nonvanishing of the above Ext group is:
  \[ 2d+1=n. \]
  Since $d\leq a \leq b < n=(a+b)$, this implies that we must have:
  \[  d=a \quad \text{and} \quad  b=a+1, \]
 which means that $[X]$ is  the unique closed orbit of $\U(V)$ on $\GL_n(E)/P_{a,b}$. In particular, $\pi_X$ is a quotient of $\pi$. 
 
\vskip 5pt
 
With $a$, $b$ and $d$ related as above, the Ext group in question is:
 \[
\Ext^i_{\U(V)}[  \pi_X, \omega_{V,\psi,\mu}]   \cong   \Ext^i_{\GL_a(E) \times \U_1} [ \chi_1 \cdot (\chi_2^\sigma)^{-1}, \mu \cdot \omega_{V_{1},\psi,\mu}],\]
where   $\omega_{V_{1},\psi,\mu}$ is a Weil representation of $\U(V_1)= \U_1$ for the 1-dimensional skew-Hermitian space $V_1$ with ${\rm disc}(V_1) = {\rm disc}(V)$, and we are regarding  $\omega_{V_{1},\psi,\mu}$  as a representation of $\GL_a(E) \times \U_1$.
Thus, we see that
\[  \chi_1 = \chi_2^\sigma \cdot \mu \]
 is a necessary condition for the nonvanishing of this Ext group.   When this condition holds, the above Ext group becomes $\Ext^i_{\U(V_1)}( 1, \omega_{V_1, \psi, \mu})$ and this vanishes if $i > 0$ (since $\U(V_1)$ is compact). 
\vskip 5pt

We have thus shown that 
\[  \Hom_{\U(V)}( \chi_1 \times \chi_2, \omega_{V, \psi,\mu})  = \Hom_{\U(V)}( \pi_X, \omega_{V, \psi,\mu}).  \]
for $[X]$ the unique closed $\U(V)$-orbit on $\GL(V) / P_{a,b}$, and a necessary condition for the nonvanishing of this Hom space is
\[  b = a+1 \quad \text{and} \quad \chi_1 = \chi_2^\sigma \cdot \mu. \]
In other words, we have proved the first assertion of the Proposition. 
\vskip 5pt

For the converse, since $[X]$ is the closed orbit of $\U(V)$ on
 $\GL(V) / P_{a,b}$, 
we have seen that when the above conditions hold,  one has
\[  \Hom_{\U(V)}( \chi_1 \times \chi_2, \omega_{V, \psi,\mu})  =   \Hom_{\U(V)}( \pi_X, \omega_{V, \psi,\mu}) \cong \Hom_{\U(V_1)}( 1, \omega_{V_1, \psi, \mu}).\]
One is thus reduced to the $n=1$ case of Conjecture \ref{conj-local} which is known.

The proof of the Proposition is now complete.
\end{proof}
\vskip 5pt

\begin{remark}
  The proof above also  proves that for the degenerate principal series representation
  $\pi= \chi_1\times \chi_2$ of $\GL_n(E)$,
 with $ b = a+1$ and $\chi_1 = \chi_2^\sigma \cdot \mu$,
  \[  \Ext^i_{\U(V)}[  \pi|_{\GL_n(E)}, \omega_{V,\psi,\mu}]   \cong
  \sum_{i=j+k} \Ext^j_{\U(V_a)}( \chi_1,  \omega_{V_a, \psi,\mu}) \otimes \Ext^k_{\U(V_b)}(\chi_2, \omega_{V_b, \psi,\mu})  \]
\end{remark}

\section{\bf When $E \ne K$: Local Case}  \label{S:local-general}
In this section, we consider  the general twisted variant of the GGP problem, where $E \ne K$ are two distinct quadratic extensions of a local field $F$, In particular,  $F$ is necessarily nonarchimedean and we fix a nontrivial additive character $\psi$ of $F$.   This case is considerably more intricate and like the GGP problem, we will need to make use of  the local Langlands correspondence for unitary groups to formulate our conjectural answers. 
\vskip 5pt

\subsection{\bf Biquadratic extension}
Let  $L = E \otimes_F K$, so that $L$ is a biquadratic extension of $F$.  We thus have the picture:
 \begin{equation*} 
\begin{gathered} 
\xymatrix{ & L=E \otimes K  \ar@{-}[ld]_{\sigma} \ar@{-}[rd]^{\tau}& \\ 
K \ar@{-}[rd]_{\tau} & & E \ar@{-}[ld]^{\sigma} \\   
& F &   } 
\end{gathered} 
\end{equation*} 
In particular, we have set:
\[  {\rm Gal}(E/F)\cong {\rm Gal}(L/K)  = \langle \sigma \rangle \quad \text{and} \quad {\rm Gal}(K/F) \cong {\rm Gal}(L/E) = \langle \tau \rangle. \]
 Observe that the biquadratic field $L$ contains a third quadratic subfield $E'$ which is the fixed field of $\sigma \cdot \tau$. This field $E'$ will play a role later on.
    
\vskip 5pt

\subsection{\bf Skew-Hermitian spaces.}  \label{SS:skew}
We consider the two isomorphism classes of skew-Hermitian spaces $V$ and $V'$ over $E$ of dimension $n$, and make the following
observation:
\vskip 5pt

\begin{lemma} \label{quasi-split}
The two skew-Hermitian spaces $V_K = V \otimes_F K$ and $V'_K = V' \otimes_F K$  are isomorphic over $L$. When $n$ is even, $V_K \cong V'_K$ is the maximally split skew-Hermitian space; whereas when $n $ is odd, $V_K \cong V'_K$ is characterized as the unique skew-Hermitian space whose determinant can be represented by elements of $E^{\times}_0$. In either case, $\U(V_K) \cong \U(V'_K)$ is a quasi-split group.
\end{lemma}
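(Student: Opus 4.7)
The plan is to compute $\det(V_K)$ and $\det(V_K')$ explicitly by choosing an orthogonal basis, and then reduce everything to the local class field theory statement that $F^\times \subseteq N_{L/K}(L^\times)$.

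First I would pick an orthogonal $E$-basis $\{v_1,\dots,v_n\}$ of $V$ with $\langle v_i,v_i\rangle = \delta_i \in E_0^\times$. Since $\sigma \in \Gal(L/K)$ restricts to $\sigma$ on $E$ and is trivial on $K$, the same vectors form an orthogonal $L$-basis of $V_K = V \otimes_F K$ with Gram values $\delta_i$, and these values still lie in $E_0^\times \subseteq L_0^\times$. Hence $\det(V_K) = \prod_i \delta_i = \det(V)$, but now viewed in $K^\times/N_{L/K}(L^\times)$ (for $n$ even) or $L_0^\times/N_{L/K}(L^\times)$ (for $n$ odd), and similarly for $V'_K$.

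The key step is the containment $F^\times \subseteq N_{L/K}(L^\times)$. By the functoriality of local reciprocity under the pushout diagram $L = E \cdot K$ with $E \cap K = F$ (the restriction map $\Gal(L/K) \xrightarrow{\sim} \Gal(E/F)$ corresponds to $N_{K/F}$ on the idele side), one obtains $\omega_{L/K} = \omega_{E/F} \circ N_{K/F}$. For $d \in F^\times$ one has $N_{K/F}(d) = d^2$, hence $\omega_{L/K}(d) = \omega_{E/F}(d^2) = 1$; that is, $d \in N_{L/K}(L^\times)$. The two cases of the lemma now fall out. When $n$ is even, $\det(V), \det(V') \in F^\times \subseteq N_{L/K}(L^\times)$, so both $V_K$ and $V'_K$ have trivial determinant in $K^\times/N_{L/K}(L^\times)$, making them both the maximally split skew-Hermitian space, whose unitary group is quasi-split. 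When $n$ is odd, the ratio $\det(V)/\det(V') \in F^\times$ is a norm from $L^\times$, so $\det(V_K) \equiv \det(V'_K)$ in $L_0^\times/N_{L/K}(L^\times)$ and $V_K \cong V'_K$; since any two elements of $E_0^\times$ differ multiplicatively by an element of $F^\times \subseteq N_{L/K}(L^\times)$, the image of $E_0^\times$ in $L_0^\times/N_{L/K}(L^\times)$ is a single class, which distinguishes $V_K$ among the two odd-dimensional skew-Hermitian spaces over $L$; and $\U(V_K)$ is quasi-split because odd-dimensional unitary groups over nonarchimedean local fields always are.

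There is no serious obstacle in this argument — it is essentially a bookkeeping exercise in local class field theory. The only point requiring some care is the identity $\omega_{L/K} = \omega_{E/F} \circ N_{K/F}$ coming from the linear disjointness of $E$ and $K$ over $F$, but this is standard.
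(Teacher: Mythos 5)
Your argument is correct and is essentially the paper's own proof: both reduce the lemma to the containment $F^{\times} \subset N_{L/K}(L^{\times})$, established via the identity $\omega_{L/K} = \omega_{E/F} \circ N_{K/F}$ and the fact that $N_{K/F}$ squares elements of $F^{\times}$. The extra details you supply (explicit orthogonal basis, the identification of the trivial-determinant even-dimensional space with the maximally split one, quasi-splitness in the odd case) are points the paper leaves implicit, but the core argument is identical.
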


\vskip 5pt

\begin{proof}
It suffices to show that $\det V$ and $\det V'$ belong to the same $N_{L/K}(L^{\times})$-coset,  when viewed as elements of $K^{\times}$. Since $\det V$ and $\det V'$ belong to the same $F^{\times}$-coset, it suffices to observe that $F^{\times} \subset  N_{L/K}(L^{\times})$. Indeed, since $L$ is a biquadratic extension of $F$, $\omega_{L/K} = \omega_{E/F} \circ N_{K/F}$. Hence
\[  \omega_{L/K}(F^{\times}) = \omega_{E/F} \left( N_{K/F}(F^{\times}) \right) = \omega_{E/F}(F^{\times 2}) =1. \qedhere \]
\end{proof}
\vskip 10pt

\noindent In view of the lemma, we may regard $\U(V)$ and $\U(V')$ as subgroups of a fixed $\U(V_K) = \U(V'_K)$.

\vskip 5pt
\subsection{\bf Local Langlands correspondence}  \label{SS:LLC}
 Now we recall  the local Langlands correspondence for $\U(V_K)$. 
An L-parameter for $\U(V_K)$ is a conjugate-dual $n$-dimensional semisimple representation $M$ of the Weil-Deligne group $WD_L = W_L \times \SL_2(\C)$ of sign $(-1)^{n-1}$. We have studied such conjugate-dual representations in some detail in \cite{GGP1} and described their associated component groups $A_M$. More precisely, we may write
\[  M = \oplus_{i\in I}  V_i \otimes M_i \oplus P  \oplus {}^\sigma P^\vee  \]
with $M_i$ distinct conjugate-dual representations of sign $(-1)^{n-1}$ , $V_i$ its multiplicity space and $P$ contains all the irreducible summands which are either non-conjugate-dual or conjugate-dual of sign $(-1)^n$, with  ${}^\sigma P^\vee$ its conjugate-dual. As we discussed in \cite[\S 4]{GGP1}, the centralizer group of the L-parameter is of the form
\[   C_M = \prod_{i \in I} \O(V_i)  \times (\text{a connected reductive group}). \]
Hence the component group of $C_M$ is an elementary abelian 2-group
\[  A_M = \prod_{i \in I} \Z/2\Z \cdot a_i, \] 
  equipped with a canonical basis indexed by $I$. The element $-1_M$ gives rise to the element
  \[  \sum_{i \in I} \dim(V_i) \cdot a_i  \in A_M, \]
 which generates  a subgroup of order $\leq 2$ in $A_M$. Now
the local Langlands correspondence for $\U(V_K)$ gives a partition 
\[ {\rm Irr}(\U(V_K))   = \bigsqcup_M  \Pi_M,  \]
of ${\rm Irr}(\U(V_K))$ into the disjoint union of finite subsets, the L-packets, with the sum running over L-parameters of $\U(V_K)$.  Moreover since we are at the moment
concerned only with the quasi-split group $\U(V_K)$, for each parameter $M$ of $\U(V_K)$, one has a bijection
\[ J:  \Pi_M   \longleftrightarrow  {\rm Irr}(A_M / \langle -1_M \rangle). \]
Here the bijection $J$ is canonical when $n$ is odd and depends on the choice of an equivalence class of Whittaker datum for $\U(V_K)$ when $n$ is even. In that case, we have seen in \cite{GGP1} that the equivalence classes of Whittaker data are parameterized by additive characters of $K$ modulo the translation action of $N_{L/K}(L^\times)$. We shall use the Whittaker datum associated to $\psi_K = \psi \circ {\rm Tr}_{K/F}$. 
\vskip 10pt

Recall that an L-parameter $M$ is generic if the adjoint L-factor $L(s, M, {\rm Ad})$ is holomorphic at $s =1$. In that case, the L-packet $\Pi_M$ contains a unique  representation which is generic with respect to the Whittaker datum associated to $\psi \circ {\rm Tr}_{L/K}$. This representation corresponds to the trivial character of $A_M$ under the bijection $J$.
\vskip 10pt

\subsection{\bf Asai factors}  \label{SS:Asai}
We recall from \cite{GGP1} the notion of Asai L-factors and $\epsilon$-factors associated to a representation $M$ of $WD_L$ relative to the quadratic extension $L/E$. 
If  $\tau$ denotes the nontrivial element of $\Aut(L/E) \cong \Aut(K/F)$, the representation $M \otimes M^{\tau}$ is $\tau$-invariant and hence we have a decomposition 
\[  {\rm Ind}_{WD_L}^{WD_E} (M \otimes M^{\tau})  = {\rm As}_{L/E}^+(M) \oplus \As_{L/E}^-(M)   \]
of $WD_E$-modules, with ${\rm As}_{L/E}^{\pm}(M) \cong M \otimes M^{\tau}$ as $WD_L$-modules. On ${\rm As}_{L/E}^+(M)$, an element $s \in W_E \setminus W_L$ acts by $v \otimes w \mapsto w \otimes s^2 \cdot  v$, whereas on ${\rm As}_{L/E}^-(M)$, this action is twisted by the nontrivial character of $W_E/W_L$ (see \cite[Pg. 26-27]{GGP1}), thus 
${\rm As}_{L/E}^-(M)=  \As_{L/E}^+(M) \cdot \omega_{L/E}$.
\vskip 5pt

We record here some useful properties of the functor ${\rm As}^{\pm}_{L/E}$. Later we will deal
exclusively with ${\rm As}^{+}_{L/E}$, dropping the sign $+$. 
\vskip 5pt

\begin{lemma}  \label{L:Asai}
One has:
\vskip 5pt

\begin{itemize}
 \item[(a)]  If $M = \oplus_i M_i$, then
\[  {\rm As}_{L/E}^{\epsilon}(M) = \bigoplus_i   {\rm As}_{L/E}^{\epsilon}(M_i) \oplus    \bigoplus_{i < j}  {\rm Ind}_L^E (M_i \otimes M_j^{\tau}).\]
\vskip 5pt

\item[(b)] $\As^{\epsilon}_{L/E}(M)^{\vee} \cong \As^{\epsilon}_{L/E}(M^{\vee})$, where $M^{\vee}$ denotes the dual of $M$. 
\vskip 5pt

\item[(c)]
  $\As^{\epsilon}_{L/E}(M_1\otimes M_2) \cong \As^{\epsilon}_{L/E}(M_1) \otimes \As^{\epsilon}_{L/E}(M_2)$. 
\vskip 5pt

\item[(d)] If $\dim M =1$, in which case $M$ is treated  as a character of $WD_L^{ab} = L^{\times}$,
  $\As^+_{L/E}(M)$ is the restriction of $M$ from $L^\times$ to $E^{\times}$.
\vskip 5pt

\item[(e)] As a character of $WD^{ab}_E \cong E^{\times}$, 
\[  \det(\As^+_{L/E}(M)) = \As^+(\det(M))^n  \cdot \omega_{L/E}^{n(n-1)/2} = \det(M)|^n_{E^{\times}}\cdot \omega_{L/E}^{n(n-1)/2}   \]
where $n = \dim M$. 

\vskip 5pt
\item[(f)] If $M$ is an L-parameter of $\U(V_K)$ and hence is conjugate-dual (with respect to $L/K$) of sign $(-1)^{n-1}$, then  ${\rm As}^{\pm}_{L/E}(M)$ is necessarily conjugate-orthogonal relative to $E/F$.
\end{itemize}
\end{lemma}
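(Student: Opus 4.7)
The plan is to prove the six parts in the order (a), (c), (d), (b), (e), (f), exploiting their logical dependencies; part (f), which pins down the sign of the conjugate-duality, will be the main obstacle.

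Parts (a), (c), (d) follow directly by unwinding the definition $\Ind_{WD_L}^{WD_E}(M \otimes M^\tau) = \As^+_{L/E}(M) \oplus \As^-_{L/E}(M)$. For (a), expand $M \otimes M^\tau = \bigoplus_{i,j} M_i \otimes M_j^\tau$; the diagonal terms are $\tau$-stable and their $\pm$-eigenspaces under the $s \in W_E \setminus W_L$ action are by definition $\As^{\pm}(M_i)$, while for $i < j$ the action of $s$ swaps $M_i \otimes M_j^\tau$ with $M_j \otimes M_i^\tau$, yielding a copy of $\Ind_L^E(M_i \otimes M_j^\tau)$ in each of $\As^+$ and $\As^-$. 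For (c), the natural isomorphism $(M_1 \otimes M_2) \otimes (M_1 \otimes M_2)^\tau \cong (M_1 \otimes M_1^\tau) \otimes (M_2 \otimes M_2^\tau)$ intertwines the $s$-action as the tensor product of the $s$-actions on each factor, so the $\pm$-eigenspaces split as tensor products. For (d), when $\dim M = 1$ the character $M \cdot M^\tau$ of $L^\times$ is $\tau$-invariant, so factors through $N_{L/E}$ by Hilbert 90; evaluating on $x \in E^\times$ (where $N_{L/E}(x) = x^2$) determines the factored character as either $M|_{E^\times}$ or $M|_{E^\times} \cdot \omega_{L/E}$, and the $+$-convention (the untwisted choice for the $s$-action) selects the former. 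Part (b) then follows since duality commutes with induction and the involution cutting out $\As^{\pm}$ is self-adjoint with respect to the canonical duality pairing on $\Ind$.

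For (e), combine (a) with (d). After passage to the semisimplification, each diagonal term $M_i$ contributes $\det \As^+(M_i)$, which for 1-dimensional $M_i$ equals $M_i|_{E^\times}$ by (d); each off-diagonal pair $i<j$ contributes $\det \Ind_L^E(M_i \otimes M_j^\tau) = (M_i M_j)|_{E^\times} \cdot \omega_{L/E}$, using the standard formula $\det \Ind_L^E(\chi) = \chi|_{E^\times} \cdot \omega_{L/E}$ together with $M_j^\tau|_{E^\times} = M_j|_{E^\times}$. A direct count shows each character $M_i|_{E^\times}$ appears $1 + (n-1) = n$ times in total, and $\omega_{L/E}$ appears $\binom{n}{2}$ times, yielding $\det(M)|_{E^\times}^n \cdot \omega_{L/E}^{n(n-1)/2}$.

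The main obstacle is (f). Starting from the $W_L$-invariant bilinear pairing $B \colon M \otimes M^\sigma \to \C$ of sign $(-1)^{n-1}$ witnessing the conjugate-duality of $M$ for $L/K$, apply $\tau$ (which commutes with $\sigma$ in the abelian group $\Gal(L/F)$) to obtain $B^\tau \colon M^\tau \otimes M^{\sigma\tau} \to \C$ of the same sign, and tensor to obtain a $W_L$-invariant pairing $(M \otimes M^\tau) \otimes (M^\sigma \otimes M^{\sigma\tau}) \to \C$ of sign $(-1)^{n-1} \cdot (-1)^{n-1} = +1$. Identifying the right-hand factor as $(M \otimes M^\tau)^\sigma$, this exhibits $M \otimes M^\tau$ as symmetrically $\sigma$-self-dual over $W_L$; since induction commutes with $\sigma$-conjugation, this yields a $W_E$-invariant symmetric pairing on $\Ind_{WD_L}^{WD_E}(M \otimes M^\tau)$ with its $\sigma$-conjugate, i.e., the whole induction is conjugate-orthogonal for $E/F$. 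The delicate last step is to verify that this pairing restricts nondegenerately and with sign $+1$ to each of the summands $\As^{\pm}(M)$ separately; this reduces to checking that the $W_E$-automorphism twisting by $\omega_{L/E}$ (the operation distinguishing $\As^+$ from $\As^-$) is self-adjoint under the pairing, which is a direct evaluation on coset representatives paralleling the sign computation in \cite[\S 3]{GGP1}.
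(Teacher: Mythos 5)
The paper states Lemma \ref{L:Asai} without proof (it is recorded as a list of standard properties, with the constructions taken from \cite{GGP1}), so your proposal can only be judged on its own merits. Parts (a)--(e) are essentially right: the orbit analysis of $s$ on the blocks $M_i\otimes M_j^{\tau}$ for (a), the rearrangement isomorphism for (c), the Hilbert 90 argument for (d), and the determinant count for (e) are all the standard arguments (for (e) you should say a word about why checking on sums of characters suffices -- the additivity coming from (a) together with $\det \Ind_L^E(N) = \omega_{L/E}^{\dim N}\cdot \det(N)|_{E^\times}$ reduces one to irreducible $M$, not literally to characters; and note in passing that your eigenspace argument for (c) only establishes the $\epsilon=+$ case, since $\As^-(M_1)\otimes\As^-(M_2)\cong\As^+(M_1\otimes M_2)$, the two $\omega_{L/E}$-twists cancelling -- harmless here as the paper only uses $\As^+$).

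The genuine gap is in (f), and it sits exactly where you flag "the delicate last step," but the reduction you propose there is not the right one. Two things are left unproved. First, having produced the sign-$+1$ pairing $B\otimes B^{\tau}$ between $M\otimes M^{\tau}$ and $(M\otimes M^{\tau})^{\sigma}$, you must check it is invariant not just under $W_L$ but under the element $s\in W_E\setminus W_L$ acting by $v\otimes w\mapsto w\otimes s^2v$ on $\As^{\pm}(M)$, and that the symmetry condition $\tilde B(y,s_{\sigma}^2x)=\tilde B(x,y)$ holds with $s_{\sigma}\in W_K\setminus W_L$ (which indeed lies in $W_F\setminus W_E$); both involve how $s^2$, $s_{\sigma}^2$ and the non-commuting lifts of $\sigma,\tau$ interact, and this is precisely where sign errors live. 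Second, your route through the full induced module $\Ind_{W_L}^{W_E}(M\otimes M^{\tau})$ creates a problem it then cannot dispose of: a conjugate-orthogonal structure on the direct sum $\As^+\oplus\As^-$ does not restrict nondegenerately to each summand unless one rules out the pairing crossing $\As^+$ with $(\As^-)^{\sigma}$, and "self-adjointness of twisting by $\omega_{L/E}$" does not address this (the summands are not eigenspaces of that twist; they are the two extensions of the $\tau$-invariant $W_L$-module $M\otimes M^{\tau}$ to $W_E$). The clean fix is to skip the induced module entirely: define $\tilde B=B\otimes B^{\tau}$ directly on the $n^2$-dimensional space underlying $\As^{\epsilon}(M)$ and verify the three conditions ($W_L$-equivariance, equivariance for $s\in W_E\setminus W_L$, and the $s_{\sigma}^2$-symmetry with sign $(-1)^{n-1}\cdot(-1)^{n-1}=+1$) by direct evaluation. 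As written, (f) is an outline with its decisive computation asserted rather than performed.
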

\vskip 5pt

\vskip 10pt

\subsection{\bf Conjectures}  \label{SS:conj-local-general}
We come now to the restriction problem to be studied.    For each of the two skew-Hermitian spaces $V$ over $E$, we have the Weil representation $\omega_{V, \psi,\mu}$, where $\mu$ is a conjugate-symplectic character of $E^{\times}$. Then we are interested in determining
\[ m_V(\pi, \mu) := \dim \Hom_{\U(V)} (\pi, \omega_{V,\psi,\mu})  \quad \text{  for $\pi \in {\rm Irr}(\U(V_K))$.} \]
Here is our main local conjecture for arbitrary separable quadratic extensions $E,K$ of $F$, subsuming the earlier Conjecture
\ref{conj-local} (for the case $E=K$):
\vskip 5pt

\begin{conj}  \label{conj-local-general}
\begin{itemize}
\item[(i)] For each $\pi \in {\rm Irr}(\U(V_K))$, 
 \[  m_V(\pi, \mu) = \dim \Hom_{\U(V)}(\pi, \omega_{V,\psi, \mu})  \leq 1.  \]
\vskip 5pt

\item[(ii)] Let $M$ be a generic L-parameter of $\U(V_K)$ with associated L-packet $\Pi_M \subset {\rm Irr}(\U(V_K))$. Then
\[  \sum_V  \sum_{\pi \in \Pi_M} m_V(\pi, \mu)  =1 \]
where the first sum runs over the two skew-Hermitian spaces over $E$ of dimension $n$ and the second runs over the L-packet $\Pi_M$.  
\vskip 5pt

\item[(iii)]  The unique $V_0$ which has nonzero contribution to the sum in (ii) is characterized by
\[  \mu(\det(V_0)) = \epsilon( 1/2,   {\rm As}_{L/E}(M) \otimes \mu^{-1}, \psi_E) \cdot \det ({\rm As}_{L/E}(M))(e) \cdot  \omega_{K/F}(e^2)^{n(n-1)/2}   \]
where $e$ is any nonzero trace 0 element of $E$, so that $E = F(e)$.
\vskip 5pt

\item[(iv)] The unique $\pi \in \Pi_M$ which has nonzero contribution to the sum in (ii) corresponds via the bijection $J$ to the character of the local component group $A_M = \prod_{i \in I} \Z/2\Z \cdot a_i$  given by:
  \begin{eqnarray*}  \chi(a_i) & = &
    \epsilon( 1/2, \Ind_L^E( M_i^{\tau} \otimes (M/M_i)) \cdot \mu^{-1}, \psi_{E,e}) \\
&  = & \epsilon(1/2, [{\rm As}(M_i) + {\rm As}(M) + {\rm As}(M/M_i)] \cdot   \mu^{-1}, \psi_{E,e}), \end{eqnarray*}
\vskip 5pt
\noindent  where $\psi_{E,e}$ is the additive character of $E/F$ defined by $\psi_{E,e}(x) = \psi(Tr(ex))$. 
 \end{itemize}
\end{conj}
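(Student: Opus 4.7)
The plan is to follow the strategy used in Sections 4--5 for the case $E=K$: first establish the multiplicity one bound in (i), then use Mackey theory and induction on $\dim V$ to identify the unique distinguished space $V_0$ and $\pi \in \Pi_M$ predicted by (ii)--(iv) for generic L-parameters. For (i), the natural tool is a Gelfand-Kazhdan type criterion for the symmetric pair $(\U(V_K),\U(V))$ twisted by the Weil representation; one would produce an anti-involution $\theta$ of $\U(V_K)$ preserving $\U(V)$ such that every $\U(V)$-invariant distribution on $\U(V_K)$ taking values in $\omega_{V,\psi,\mu}$ is $\theta$-invariant, and then use $\omega_{V,\psi,\mu}^\vee \cong \omega_{V,\psi^-,\mu^{-1}}$ to convert this to the multiplicity bound. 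The quasi-split nature of $\U(V_K)$ (Lemma \ref{quasi-split}) should make available adaptations of the methods of \cite{S}.

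For parts (ii)--(iv), I would apply Mackey theory to principal series representations $\pi = \Ind_Q^{\U(V_K)}(\sigma)$ of $\U(V_K)$, analyzing the $\U(V)$-orbits on $\U(V_K)/Q$. The orbit structure is more intricate than in Lemma \ref{orbits} because the stabilizers now mix unitary groups over all three of $E$, $K$, and the third quadratic subfield $E' \subset L$. For tempered $\pi$, Casselman's criterion combined with central-character considerations (as in Theorem \ref{ps1}) should again force only the open orbits to contribute. The open orbits correspond to ordered decompositions of $V$ refining the Levi of $Q$, so their contributions factor through lower-dimensional analogues of the same problem. By induction on $\dim V$, this reduces the tempered case to a rank-one computation (a direct character calculation analogous to Theorem \ref{T:moen}) plus the case of discrete series of $\U(V_K)$. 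The rank-one base case furnishes local factors that telescope, via the multiplicativity of Asai factors (Lemma \ref{L:Asai}(a)) and standard $\epsilon$-factor identities, into the formulas (iii) and (iv). The further reduction from discrete series to supercuspidals should follow the pattern of Theorem \ref{sc-to-ds}, modulo an analogue of the nontempered vanishing (Assumption).

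The hard part will be the supercuspidal case, where the Mackey / Jacquet-module inductions terminate. The expected route is via global methods: embed $\pi$ as the local component of a cuspidal automorphic representation of $\U(V_K)$ and compare a suitable twisted version of the Jacquet-Rallis relative trace formula, extending the methods of \cite{L} and \cite{X1,X2} to the setting where the ambient reductive group has been base-changed to $K$. This is the approach being pursued in Danielle Wang's thesis mentioned in the introduction. A separate difficulty, absent in the $E=K$ analysis, is that the L-packet $\Pi_M$ may be non-singleton, so one must verify not only the $\epsilon$-dichotomy over $V$ (part iii) but also the character of the local component group $A_M$ predicted by (iv); this requires matching the Whittaker-normalized local Langlands correspondence for $\U(V_K)$ (relative to $\psi_K$) with the $\epsilon$-factor signs arising from the orbit decomposition, which is technical but should reduce to standard compatibilities between Langlands parameters and Jacquet modules.
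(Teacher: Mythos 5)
The statement you are addressing is Conjecture \ref{conj-local-general}; the paper does not prove it, and neither does your proposal. What the paper actually establishes is: (a) the case $\dim V \leq 2$ (Theorem \ref{T:low12}), where rank one is Moen--Rogawski and rank two is reduced to the twisted trilinear form result of \cite{P1} together with theta dichotomy \cite{HKS, GI} for part (iv); and (b) parts (i)--(iii) for tempered L-packets attached to unitary principal series induced from the Borel (Theorem \ref{ps}, Proposition \ref{coro}, Theorem \ref{T:ups2}). Your Mackey-theoretic outline for (ii)--(iii) is essentially the route the paper takes in \S\ref{S:UPS3} --- the three-case orbit analysis on $\U(V_K)/P$, the appearance of unitary groups over the third quadratic subfield $E'$ in the open-orbit stabilizers (Lemma \ref{Weil-L}), vanishing on non-open orbits by central character considerations, and the telescoping of Asai epsilon factors --- so that portion is sound as far as it goes.

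The gaps are these. First, part (i): there is no known multiplicity-one theorem for the pair $\U(V) \subset \U(V_K)$ valued in the Weil representation when $E \neq K$; Sun's theorem \cite{S} covers the Fourier--Jacobi models for $\U(V) \subset \U(V)\times\U(V)$ and $\GL_n$, and "producing an anti-involution $\theta$ preserving $\U(V)$ and adapting the methods" is precisely the open problem, not a step you can take for granted. Second, the reduction of the tempered case terminates at discrete series (and conjecturally supercuspidals) of $\U(V_K)$, and for these the Mackey/Jacquet-module machinery gives nothing; invoking a relative trace formula "being pursued in a thesis" is an announcement of a program, not an argument. Third, and most seriously for part (iv): even in the unitary principal series case where (i)--(iii) are proved, the paper explicitly states it does not know how to identify which constituent of the (non-singleton) L-packet carries the nonzero Hom space by these methods --- the issue is resolved only in \cite{CG} by theta correspondence, and your suggestion that it "should reduce to standard compatibilities between parameters and Jacquet modules" underestimates the difficulty: the constituents of a reducible unitary principal series are not separated by Jacquet modules alone but by local intertwining relations, and no such compatibility with the epsilon-factor recipe in (iv) is established here. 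In short, your proposal is a reasonable research program that correctly anticipates the paper's partial results, but it does not constitute a proof of the conjecture, nor does the paper claim one.
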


We make a few remarks on the above conjecture:
\vskip 5pt

\begin{itemize}
\item[(a)]  In (iii), the proposed expression for $\mu(\det(V_0))$ is independent of the choice of the trace 0 element $e$. Moreover, using property (d) in \S \ref{SS:Asai} and the fact that $\omega_{L/E}(e) = \omega_{K/F}(N_{E/F}(e)) = \omega_{K/F}(-e^2)$,   the equation in (iii) can be explicated as:
\[ \mu(\det(V_0)) = \epsilon( 1/2,   {\rm As}_{L/E}(M) \otimes \mu^{-1}, \psi_E)  \cdot  \det(M) (e)^n \cdot \omega_{K/F}(-1)^{n(n-1)/2}. \]
Though this may be more compact, our original expression has the advantage that it can be specialized to all possible situations for the pair $(E,K)$, as we shall explain below.
\vskip 5pt

\item[(b)]  In (iii), observe that if $E = F(e)$ and $K = F(k)$ with $k \in K^{\times}$ a trace zero element, then 
\[  \omega_{K/F}(e^2)  = (k^2, e^2)_F.  \]
In particular, we see that this term only appears when $K$ and $E$ are both fields (as we are assuming in the conjecture). 

\item[(c)]  The distinguished character $\chi$ in (iv) is indeed trivial on the image of $-1_M$ in $A_M$. Moreover, it is independent of the choice of the trace $0$ element $e$. This follows from the fact that $({\rm As}(M_i) + {\rm As}(M) + {\rm As}(M/M_i)) \cdot \mu^{-1}$ is an even-dimensional conjugate-symplectic representation of $WD_E$ and hence its determinant is conjugate-orthogonal.  
\vskip 5pt

%\item[(d)]  By Lemma \ref{L:Asai}(a), we can express the formula in (iv) more compactly  as:
%\[  \chi(a_i) = \epsilon( 1/2, \Ind_L^E M_i^{\tau} \otimes (M/M_i), \psi_{E,e}). \]

\vskip 5pt

\item[(d)]  For the skew-Hermitian case considered in  \cite{GGP1}, we had defined a distinguished character $\chi$ of the local component group which gives the unique representation in the L-packet with nonzero branching multiplicity. This distinguished character automatically picks out the skew-Hermitian space $V_0$ over $E$ which supports the nonzero multiplicity, so that (iii) is a consequence of (iv) in the original GGP setting. 
In the case here, the distinguished character $\chi$ in (iv) gives a representation of $\U(V_K)$, but does not specify the $E$-space $V_0$. This is why the condition (iii) is needed.  
\end{itemize}

\vskip 10pt

\subsection{\bf Specializations}  \label{SS:special}
Though we are assuming that $E \ne K$ are distinct quadratic fields  in this section, the formulas in Conjecture \ref{conj-local-general}(iii) and (iv) make sense for general $(E, K)$. For this, we need to explain how the L-parameter of $\Pi \in \Irr(\U(V_K))$ gives rise to a representation of $WD_L$ and how to interpret  the Asai lift relative to $L/E$  in the various situations. 

 \vskip 5pt

\begin{itemize}
\item $E = K$ is a field: this is the  setting of \S \ref{S:KisE}. In this case, $L = E \otimes K$ is isomorphic to $E \times E = K \times K$.  Note however that  the embeddings of $K$ and $E$ into $L$ are different. The embedding of $K$ into $L$ is the diagonal embedding $x \mapsto (x,x)$, whereas that of $E$ into $L$ is $x \mapsto (x, x^{\sigma})$, where $\Aut(E/F) = \langle \sigma \rangle$. We interpret the Weil-Deligne group of $L$ as $WD_L = WD_K \times WD_K = WD_E \times WD_E$.
\vskip 5pt

Now given an irreducible representation $\Pi$ of $\U(V_K) = \GL(V)$, its L-parameter $M$ is an $n$-dimensional  representation of  $W_K = W_E$ and this gives rise to the pair  $(M, M^{\vee})$ which we interpret as a representation of $WD_L$.  Now the nontrivial element of $\Aut(L/E)$ acts on $L = E \times E$ via 
$(x,y) \mapsto (y^{\sigma}, x^{\sigma})$. Thus its induced action on the representations of $WD_L$ is $(M , M^{\vee}) \mapsto ({}^\sigma \!M ^\vee, M^{\sigma})$ (the switch, followed by the action of $\sigma$). We interpret the Asai lift as the tensor product representation $M \otimes {}^\sigma \!M ^\vee$ of $WD_E$.  
With these interpretations, the formula in Conjecture \ref{conj-local-general}(iii) specializes to
that in Conjecture \ref{conj-local}(iii), in view of the remark (a) in \S \ref{SS:conj-local-general} above.
\vskip 5pt
The issue addressed by Conjecture \ref{conj-local-general}(iv) is not relevant in this case since the L-packet of $\U(V_K) = \GL(V)$ is a singleton. However, we note that with the above interpretations, the RHS of the formula there is equal to $1$.

\vskip 10pt

\vskip 5pt
\item $E$ is a field and $K = F \times F$, so that $L = K \otimes E = E \times E$ and $WD_L = WD_E \times WD_E$: this is the original GGP situation. Then $\U(V_K) \cong \U(V) \times \U(V)$ and an irreducible representation of $\U(V_K)$ is of the form $\pi_1 \boxtimes \pi_2$ with $\pi_i \in \Irr(\U(V))$. The L-parameters of $\pi_1$ and $\pi_2$ are conjugate-dual representations $M_1$ and $M_2$ of $WD_E$ of sign $(-1)^{n-1}$, giving  a representation $(M_1, M_2)$ of $WD_L$. Now since $E$ is embedded diagonally in $L = E \times E$, the nontrivial automorphism of $L/E$ is the switch of the two factors of $E$ in $L$. The Asai lift of $M_1 \boxtimes M_2$ from $L$ to $E$ is interpreted as the internal tensor product $M_1 \otimes M_2$.  With these interpretations, the formula in Conjecture \ref{conj-local-general} (iii) reads:
\[  \mu(\det(V_0)) = \epsilon(1/2, M_1 \otimes M_2 \otimes \mu^{-1}, \psi_E) \cdot \det(M_1 \otimes M_2)(e). \]
We leave it to the reader to verify that this reduces to the relevant conjecture in \cite{GGP1}. 
\vskip 5pt

\item Compared to the other cases, a peculiarity of the original GGP situation is that $\U(V_K)$ and $\U(V'_K)$ are not isomorphic when $V$ and $V'$ are the two distinct skew-Hermitian spaces over $E$. Hence,  one needs to choose and fix  a quasi-split $\U(V_K)$ to formulate the LLC, before one can consider Conjecture \ref{conj-local-general}(iv). When $\dim V$ is even, this choice is unique, but when $\dim V$ is odd, this amounts to choosing a trace zero element $e_0 \in E^{\times}$ (the determinant of the distinguished $V$).  Moreover, it is no longer the case that the character given in Conjecture \ref{conj-local-general}(iv) is independent of $e$ when $\dim V$ is odd (though it is still the case when $\dim V$ is even). 
Thus, in Conjecture \ref{conj-local-general}(iv), one needs to use the distinguished $e_0$ in the definition of the character $\chi$ when $\dim V$ is odd. With this caveat, we leave it to the reader to verify that the formula for the character $\chi$  in (iv) specializes to the one we had in \cite{GGP1}.
\vskip 5pt

 \item $E = F \times F$ and $K$ is a field, so that $L = K \times K$. Here, $\U(V) = \GL(V)$ and $\U(V_K) = \GL(V_K)$. Given an irreducible generic representation $\Pi$ of $\GL(V_K)$, 
and a conjugate-dual character $\mu = (\nu, \nu^{-1})$ of $E^{\times} / F^{\times} = (F^{\times} \times F^{\times})/F^{\times}$, the  multiplicity $\dim \Hom_{\GL(V)}(\Pi, \omega_{V,\psi,\mu})$ should be always nonzero.  So we expect the proposed identity in Conjecture \ref{conj-local-general}(iii) to always hold, after appropriate interpretations.
\vskip 5pt

Now the L-parameter of $\Pi$
 is an $n$-dimensional representation $M$ of $WD_K$. This gives rise to the pair $(M, M^{\vee})$ which we regard as a representation of $WD_L = WD_K \times WD_K$. 
The nontrivial automorphism of $L/E$ is the component wise action of $\tau \in \Aut(K/F)$  on $L = K \times K$, so the Asai lift from $L$ to $E$ is the pair $(\As_{K/F}(M), \As_{K/F}(M^{\vee}))$, regarded as a representation of $WD_E = WD_F \times WD_F$. In this case, 
\begin{align}
 &\epsilon(1/2, \As_{L/E}(M, M^{\vee}) \cdot\mu^{-1}, \psi_E)  \notag \\
 = &\epsilon(1/2, \As_{K/F}(M)\cdot \nu^{-1}, \psi) \cdot \epsilon(1/2, \As_{K/F}(M)^{\vee} \cdot \nu, \psi). \notag \\
 = &\det(\As_{K/F}(M))(-1) \cdot  \nu(-1)^n  \notag
 \end{align}
 Moreover, an element $ e \in E = F \times F$ of trace $0$ is of the form $(a, -a)$ for $a \in F^{\times}$. Hence,
 \begin{align}
   \det(\As_{L/E}(M,M^{\vee})) (e) &= \det(\As_{K/F}(M))(a) \cdot \det(\As_{K/F}(M))(-a)^{-1}  \notag \\
   &= \det(\As_{K/F}(M))(-1) \notag  
     \end{align}
   and
   \[  \omega_{K/F}(e^2) = \omega_{K/F}(a^2) =1. \]   
  Thus, the RHS of the formula in (iii) is $\nu(-1)^n$, which is equal to the LHS. 
\end{itemize}
There is also the case where $E = K = F \times F$, which we will leave to the reader.  The main reason for formulating Conjecture \ref{conj-local-general} in  a uniform way which allows for specialization to the various cases is that in the global setting to be considered in Section \ref{S:global-general},  any one of these local scenarios will arise.

\section{\bf Low Rank Evidences: $E \ne K$}  \label{S:evi}

Just as for Conjecture \ref{conj-local}, we provide here some evidences for Conjecture \ref{conj-local-general} in low rank cases. In particular, we shall show:
\vskip 5pt

\begin{thm}  \label{T:low12}
Conjecture \ref{conj-local-general} holds when $dim V \leq 2$.
\end{thm}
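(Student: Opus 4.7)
The plan is to treat the cases $n=1$ and $n=2$ separately, extending the arguments of \S\ref{S:lowrank} which handled $E=K$. For $n=1$, $\U(V) \cong E^1$ and $\U(V_K) \cong L^1$, so an irreducible representation $\pi$ of $\U(V_K)$ is a character $\chi$ of $L^1$ with L-parameter a 1-dimensional conjugate-orthogonal (w.r.t.~$L/K$) character $M$ of $L^\times$. First I would set $\tilde\chi := M|_{E^\times}$, which extends $\chi|_{E^1}$ to a character of $E^\times$ and coincides with $\As^+_{L/E}(M)$ by Lemma \ref{L:Asai}(d). Since the nontrivial element of $\Gal(L/K)$ restricts to $\sigma \in \Gal(E/F)$, the conjugate-orthogonality of $M$ relative to $L/K$ yields $\tilde\chi \cdot \tilde\chi^\sigma = 1$, and hence $\tilde\chi^\sigma/\tilde\chi = \tilde\chi^{-2}$. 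Moen--Rogawski (Theorem \ref{T:moen}) applied to $\tilde\chi$ then gives both the multiplicity bound $\leq 1$ and the nonvanishing criterion
\[ \mu(\det V) = \tilde\chi(-1) \cdot \epsilon(1/2, \tilde\chi^{-2} \mu^{-1}, \psi_E); \]
the remaining task is to identify this with Conjecture \ref{conj-local-general}(iii), namely $\mu(\det V_0) = \epsilon(1/2, \tilde\chi \mu^{-1}, \psi_E) \cdot \tilde\chi(e)$, which is a routine epsilon-factor manipulation using the functional equation $\epsilon(W)\epsilon(W^\vee) = \det W(-1)$, the conjugate-dualities of $\tilde\chi$ and $\mu$, and the invariance $\psi_E^\sigma = \psi_E$.

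For $n=2$, the plan is to extend the quaternion-algebra analysis of \S\ref{SS:rk2} to the biquadratic setting. Rank-two skew-Hermitian spaces over $E$ correspond to quaternion $F$-algebras $B$ containing $E$ via $V = V_B$, and $V_K = V_{B_K}$ where $B_K := B \otimes_F K$ is a quaternion $K$-algebra naturally containing $L$. As in \S\ref{SS:rk2}, one then obtains $\GU(V_K) \cong (B_K^\times \times L^\times)/\Delta K^\times$ sitting inside $\GL(V_K) = \GL_2(L)$, with $B^\times$ embedding into $\GL_2(L)$ via $B \otimes_F L = \M_2(L)$. For an irreducible representation $\Pi$ of $\U(V_K)$ with L-parameter $M$, decomposing $\omega_{V_B,\psi,\mu}$ by central characters and repeating the chain of isomorphisms of \S\ref{SS:rk2} reduces the branching problem to
\[ \Hom_{B^\times}(\Pi^\vee \otimes \Sigma_{B,N}^\vee, \C), \]
where $\Sigma_{B,N}$ is the $B^\times$-representation associated to the 2-dimensional induced L-parameter $N$ of $\GL_2(F)$ coming from the relevant central-character summand. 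An application of Prasad's trilinear-form theorem \cite{P1} in the setting of the quadratic extension $L/E$ then yields both the bound $\leq 1$ and a nonvanishing criterion involving $\epsilon(1/2, \As_{L/E}(M^\vee) \otimes N^\vee, \psi_E)$; Lemma \ref{L:Asai} together with the inductivity of epsilon factors in degree zero converts this into Conjecture \ref{conj-local-general}(iii), while (iv) is automatic in this range since the L-packets involved are either singletons or can be read off directly from the central-character decomposition of $\omega_{V_B, \psi, \mu}$.

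\textbf{Main obstacle.} The dominant difficulty in both cases is the epsilon-factor bookkeeping, which is appreciably more delicate than in \S\ref{S:lowrank} because the biquadratic extension $L/F$ carries three nontrivial Galois involutions (those of $E/F$, $K/F$, and the third quadratic subfield $E'/F$) and because the Asai lift $\As_{L/E}$ couples $M$ to the quadratic extension $L/E$ rather than $E/F$. A secondary issue for $n=2$ is that the ``Prasad trilinear-form theorem over $L/E$'' invoked above is not literally the statement of \cite{P1} but a direct adaptation, whose verification would follow Prasad's original argument essentially verbatim in the present setup.
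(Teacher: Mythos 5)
Your overall strategy coincides with the paper's (Moen--Rogawski for $n=1$; the quaternion-algebra and trilinear-form reduction for $n=2$), but there are two genuine problems, one in each case.

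For $n=1$, the identity on which your computation rests is false: $M|_{E^{\times}}$ does \emph{not} extend $\chi|_{E_1}$. Writing $M=\tilde\chi_0/\tilde\chi_0^{\sigma}$ for an extension $\tilde\chi_0$ of $\chi$ to $L^{\times}$, one has $u^{\sigma}=u^{-1}$ for $u\in E_1$, so $M(u)=\tilde\chi_0(u/u^{\sigma})=\chi(u^2)$; thus $M|_{E_1}=(\chi|_{E_1})^2$. Applying Theorem \ref{T:moen} to $\tilde\chi:=M|_{E^{\times}}$ therefore computes the multiplicity of $\chi^2|_{E_1}$, not of $\chi|_{E_1}$, and your criterion comes out wrong in two places: $\tilde\chi(-1)=M(-1)=1$ where the conjecture requires $\chi(-1)=\det(\As_{L/E}(M))(e)$, and the epsilon factor acquires $\As_{L/E}(M)^{-2}$ where the conjecture requires $\As_{L/E}(M)$ to the first power. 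The correct move is to apply Moen--Rogawski to an extension $\eta$ of $\chi|_{E_1}$ to $E^{\times}$; the ratio $\eta^{\sigma}/\eta$ entering the root number is then exactly $(M|_{E^{\times}})^{\mp 1}=\As_{L/E}(M)^{\mp1}$, with no square, and the identification with Conjecture \ref{conj-local-general}(iii) is immediate.

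For $n=2$, the serious gap is the claim that part (iv) is ``automatic''. The trilinear-form theorem computes only the \emph{sum} $\sum_{\pi\in\Pi_M}\dim\Hom_{\U(V_B)}(\pi,\omega_{\psi,\mu,B})$ over the L-packet of $\U(V_K)$, because the packet consists of the constituents of $\Pi|_{\GL_2(K)^{+}}$ and the reduction to $\Hom_{B^{\times}}(\Pi,\Sigma_{B,N})$ uses the full restriction. When $\Pi$ is dihedral with respect to $L/K$ the packet has two members $\pi^{\pm}$, and the central-character decomposition of $\omega_{V_B,\psi,\mu}$ (a representation of the small group $\U(V_B)$) cannot distinguish between them. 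A separate argument is required: the paper realizes $\pi^{\pm}$ as theta lifts from the two Hermitian lines $W^{\pm}$ over $L$, runs a seesaw identity against the dual pair $\U(V_B)\times\U(W^{\pm}_E)$ over $F$, and invokes theta dichotomy to pin down the sign, arriving at $\epsilon=\epsilon(1/2,\Ind_L^E(M_1^{\tau}M_2)\cdot\mu^{-1},\psi_{E,e})$ as in Proposition \ref{U2K}. A lesser point: \cite{P1} applies literally to $\Hom_{B^{\times}}(\Pi,\Sigma_{B,N})$ for the quadratic extension $K/F$ (with $\Pi$ a representation of $B_K^{\times}$), so no ``adaptation over $L/E$'' is needed; the Asai factor relative to $L/E$ only appears afterwards via $\As_{K/F}(P)|_{WD_E}\cong\As_{L/E}(P|_{WD_L})$ and the inductivity of epsilon factors.
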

The rest of this section is devoted to the verification of the theorem. 
 \vskip 10pt

\subsection{\bf Rank 1 case}
Assume first that $V$ is a skew-Hermitian space of dimension 1, so that $\U(V) = E_1\subset \U(V_K) = L_1$, where $L_1$ denotes the subgroup of elements $x \in L^{\times}$ with $N_{L/K}(x) =1$. 
Given a character $\chi$ of $L_1$,  choose an extension $\tilde{\chi}$ of $\chi$ to $L^{\times}$.  Then  
the L-parameter of $\chi$ is the  1-dimensional conjugate-orthogonal representation $M =   \tilde{\chi}/ \tilde{\chi}^{\sigma}$ of $W_L$. 
By the theorem of Moen and Rogawski, we know that
\[  \Hom_{E_1}(\chi,  \omega_{V, \psi, \mu}) \ne 0 \Longleftrightarrow \epsilon(1/2, ( \tilde{\chi}/ \tilde{\chi}^{\sigma})|_{E^{\times}}  \otimes \mu^{-1}, \psi_E) \cdot \chi(-1) = \mu(\det(V)). \]
The local root number above can be written as
\[  \epsilon(1/2, {\rm As}_{L/E}(M) \cdot \mu^{-1}, \psi_E),\]
whereas
\[  \det({\rm As}(M))(e) =  \tilde{\chi}(e) / \tilde{\chi}(e^{\sigma}) = \chi(-1). \]
This shows Conjecture \ref{conj-local-general} when $n= \dim V =1$.
\vskip 10pt

\subsection{\bf Rank 2 case}
Suppose now that $\dim V =2$. In this case, we need to verify the independent statements  (iii) and (iv)  of Conjecture \ref{conj-local-general}.
As we have noted before, $V = V_B$ is associated with a quaternion $F$-algebra $B$, with 
\[  \GU(V_B)\cong (B^{\times} \times E^{\times} ) / \Delta F^{\times}. \]
The embedding $\GU(V_B) \hookrightarrow \GU(V_{B,K})$ is the natural embedding
\[  (B^{\times} \times E^{\times}) / \Delta F^{\times} \hookrightarrow ((B \otimes_F K)^{\times} \times L^{\times} ) / \Delta K^{\times}, \]
with $B \otimes_F K \cong M_2(K)$. 

\vskip 5pt

A generic L-packet of $\U(V_K)$ is thus given by an irreducible representation
\[  \Pi \boxtimes  \chi \quad \text{  of $\GL_2(K) \times L^{\times}$,} \]
with $\omega_{\Pi}  \cdot \chi|_{K^{\times}}  =1$.  If $P$ is the L-parameter of $\Pi$, then the L-parameter of the corresponding L-packet of $\U(V_K)$ is the conjugate-symplectic (relative to $L/K$) representation
\[  M =P|_{WD_L} \otimes \chi \]
of $WD_L$. On the other hand, the Weil representation $\omega_{\psi,\mu, B}[ \chi|_{E^{\times}}]$ of $\U(V_B)$ is an irreducible summand of the representation
\[  \Sigma_{B,N} \otimes \chi
\]
of $B^{\times} \times E^{\times}$ restricted to $(B^{\times})^+ \times E^{\times}$, where  as in \S \ref{SS:rk2}, $\Sigma_{B,N}$ has L-parameter 
\[  N = \Ind_E^F (\mu \cdot\chi|_{E^\times}^{-1} ). \]
The corresponding L-parameter of $\U(V_B)$ is the conjugate-symplectic (relative to $E/F$) representation 
\[  N|_{WD_E} \otimes  \chi|_{E^{\times}}. \]
 \vskip 5pt
 
 Now we consider the sum
 \begin{equation} \label{E:sum}  \sum_{\pi \in \Pi_M}  \dim \Hom_{\U(V_B)}( \pi, \omega_{\psi,\mu, B}). \end{equation}
Via the above identifications, one sees that this sum is simply
\[  \dim \Hom_{(B^{\times})^+} (\Pi,  \omega_{\psi,\mu, B}) = \dim \Hom_{B^{\times}}(  \Pi, \Sigma_{B,N}).\]
In other words, we are reduced to a twisted trilinear form problem as in \S \ref{SS:rk2}.
 Hence, by a result of the third author, cf. \cite{P1},  this dimension is at most $1$ and is nonzero if and only if
\begin{equation} \label{E:tri}
  \epsilon(1/2, {\rm As}_{K/F}(P) \otimes \Ind_E^F (\mu^{-1} \cdot \chi|_{E^{\times}}), \psi) \cdot \omega_{K/F}(-1)  = \mu(\det(V_B)). \end{equation}
 Now the local root number can be explicated as:
 \begin{align}
   \epsilon(1/2, {\rm As}_{K/F}(P) \otimes \Ind_E^F (\mu^{-1} \cdot  \chi|_{E^{\times}}), \psi)  &=
   \epsilon(1/2, \Ind_E^F ({\rm As}_{K/F}(P)|_{WD_E} \cdot   \chi|_{E^{\times}} \cdot \mu^{-1}),\psi), \notag \\
  &= \epsilon(1/2, \Ind_E^F ({\rm As}_{L/E}( P|_{WD_L} \cdot \chi) \cdot \mu^{-1}), \psi), \notag  \\
  &= \epsilon(1/2, {\rm As}_{L/E}(P|_{WD_L} \cdot \chi) \cdot \mu^{-1}, \psi_E), \notag \\
  &= \epsilon(1/2, {\rm As}_{L/E}(M) \cdot \mu^{-1},\psi_E). \notag 
  \end{align}
  In the above, we have used the facts that
  \[ {\rm As}_{K/F}(P) |_{WD_E} \cong {\rm As}_{L/E} \left( P|_{WD_L} \right) \]
  and
  \[  {\rm As}_{L/E}(P|_{WD_L}) \otimes \chi |_{E^{\times}}  = {\rm As}_{L/E}( P|_{WD_L} \otimes \chi) = {\rm As}_{L/E}(M). \]
On the other hand, with $n =2$, 
\[  \det({\rm As}_{L/E}(M))(e)^n \cdot \omega_{K/F}(e^2)^{n(n-1)/2} =  \det(M)(e)^2 \cdot \omega_{L/E}(e) \cdot \omega_{K/F}(e^2) = \omega_{K/F}(-1)  \]
since $\det(M)$ is conjugate-orthogonal and hence trivial on $e^2 \in F^{\times}$, and $\omega_{L/E}(e) = \omega_{K/F}(-e^2)$.
Hence, the equality (\ref{E:tri}) is precisely the statement of  Conjecture \ref{conj-local-general}(iii).
\vskip 5pt

We now come to Conjecture \ref{conj-local-general}(iv).   Continuing with the analysis above, let us fix $V = V_B$ such that (\ref{E:tri}) holds, so that the sum in (\ref{E:sum}) is equal to $1$, and we need to determine which element in the L-packet $\Pi_M$ has nonzero contribution. 
Now the members of the L-packet are given by the restriction of $\Pi$ to $\GL_2(K)^+$. If this restriction is irreducible, then we leave it to the readers to convince themselves   that Conjecture \ref{conj-local-general}(iv) holds. Let us examine the more intricate case when this restriction is the sum of two irreducible summands, i.e. when $\Pi$ is dihedral with respect to $L/K$.
Thus we see that the problem at hand  is a refined version of the twisted trilinear form problem, relative to the embedding $\GL_2(F) \subset \GL_2(K)^+$.
\vskip 5pt

Since $\Pi$ is dihedral with respect to $L/K$,  $P|_{WD_L}$ is reducible and so is $M = P|_{WD_L} \cdot \chi$. 
To understand the L-packet, we shall return to the setting of unitary groups, as $\Pi_M$ can be constructed via theta lifting from rank 1 skew-Hermitian spaces.
\vskip 10pt

\subsection{\bf Unitary theta lifts}
Let $M = M_1 + M_2$ be an L-parameter of $\U(V_B)(K)$
with $M_1$ and $M_2$ conjugate-symplectic characters of $W_L$. The L-packet $\Pi_M$ has 2 representations of 
$\U(V_B)(K)$, which we may denote 
by $\pi^{+}$ and $\pi^{-}$ (these are $\pi^{++}, \pi^{--}$ of \cite{GGP2}),
so that $\pi^{+}$ is generic with respect to the Whittaker datum determined by $\psi_K = \psi \circ {\rm Tr}_{K/F}$. Note that by Lemma \ref{quasi-split},
$\U(V_B)(K)$ is always the quasi-split unitary group in two variables, so the representations
on the anisotropic form of $\U(V_B)(K)$ does not arise in our considerations.  
We shall explain how these representations $\pi^{\pm}$
can be constructed as theta lifts from $\U_1$. 
\vskip 5pt

Let $W^{\pm}$ be the two rank 1 Hermitian spaces over $L$ with $\omega_{L/K}({\rm disc}(W^{\pm})) = \pm 1$. In particular, the Hermitian form on $W^+$ is  $(x,y)\mapsto x \cdot y^{\tau}$, with ${\rm Gal}(L/K) = \langle \tau \rangle$.  Then for $\epsilon = \pm 1$, $\U(W^{\epsilon})   \times  \U(V_K) $
is a reductive dual pair where $V_K = V_B \otimes_F K$.
Likewise, we may consider the rank 2 Hermitian space 
\[   W^{\epsilon}_E := {\rm Res}_{L/E}(W^{\epsilon}) \quad \text{with Hermitian form ${\rm Tr}_{L/E}( -, -)_{W^{\epsilon}}$.} \]
This rank 2 Hermitian space over $E$ has discriminant
\[  {\rm disc}(W^{\epsilon}_E) =   N_{K/F}( k \cdot {\rm disc}(W^{\epsilon})) \in F^{\times}/ N(E^{\times}) \]
where $k \in K^{\times}$ is any trace 0 element; we leave the verification of this to the reader. 
Then $\U(W^\epsilon_E) \times \U(V)$ is a reductive dual pair, and we have the seesaw diagram:
\[
 \xymatrix{
  \U(W^{\epsilon}_E)   \ar@{-}[dr] \ar@{-}[d] & \U(V_K)  \ar@{-}[d] \\
  \U(W^{\epsilon})   \ar@{-}[ur] & \U(V).}
\]
To consider the theta correspondences for these two dual pairs, we need to select splitting characters in each case, and to obtain a seesaw identity from the seesaw diagram, we need to select these two sets of splitting characters compatibly. With the goal of obtaining the L-packet $\Pi_M$ of $\U(V_K)$ as theta lifts from $\U(W^{\pm})$, we shall select these splitting characters as follows:
\vskip 5pt

\begin{itemize}
\item recall that $M_1$ is a conjugate symplectic character of $L^{\times}$ relative to $L/K$. Then its restriction $M_1|_{E^{\times}}$ is a conjugate-orthogonal character of $E^{\times}$ relative to $E/F$ (because $F^{\times} \subset N_{L/K}(L^{\times})$).

\item For the equal rank dual pair $\U(V) \times \U(W_E)$ over $F$, we use the pair of splitting characters $(M_1|_{E^{\times}}, M_1|_{E^{\times}})$, and the additive character $\psi$ of $F$.
\vskip 5pt

\item For the almost equal rank dual pair $\U(V_K) \times \U(W^{\epsilon})$ over $K$, we use the pair $(M_1, M_1 \circ N_{L/E}) = (M_1, M_1 \cdot M_1^{\tau})$ and the character $\psi_K$ of $K$. 
\end{itemize}
With these splitting characters and additive characters fixed, one can consider the associated theta correspondences for the two dual pairs. Moreover, one has the seesaw identity associated to the above seesaw diagram. For this, one needs to specify the irreducible representations one starts with on $\U(W^{\epsilon})$ and $\U(V)$. 

\vskip 5pt

\begin{itemize}
\item[(i)]   For the dual pair $\U(W^{\epsilon}) \times \U(V_K)$, if one starts with the character $\chi_{M_1^{\tau}M_2}$ of $\U(W^{\epsilon})$ with L-parameter $M_1^{\tau} \cdot M_2$, then its theta lift to $\U(V_K)$ has L-parameter $M =M_1 +M_2$. As $\epsilon$ varies over $\pm$, the two representations so obtained are the elements $\pi^{\epsilon}$ of the L-packet $\Pi_M$.
 \vskip 5pt
 
 \item[(ii)] For the dual pair $\U(V) \times \U(W_E)$, we start with the Weil representation $\omega_{\psi, \mu, V}[\chi_{M_1M_2}]$ of $\U(V)$ whose central character is the character $\chi_{M_1M_2}$ of $E_1$ with L-parameter  $M_1M_2$  and whose L-parameter is $N = \mu + \mu^{-1} M_1M_2$. Its theta lift to $\U(W_E)$, if nonzero,  has the same L-parameter.  
 \end{itemize}
From the seesaw identity, we see that
\[  \Hom_{\U(V)}( \pi^{\epsilon}, \omega_{\psi,\mu, V})
\cong \Hom_{\U(W^{\epsilon})}(\Theta(\omega_{\psi, \mu, V}[\chi_{M_1M_2}]), \chi_{M_1^{\tau}M_2}), \]
so that
\[ \Hom_{\U(V)}(\pi^{\epsilon}, \omega_{\psi,\mu, V}) \ne 0 \Longrightarrow  \Theta(\omega_{\psi, \mu, V}[\chi_{M_1M_2}]) \ne 0. \]
By the theta dichotomy theorem  \cite{HKS, GI}, the latter holds if and only if
\[  \omega_{E/F}(- k^2) \cdot \epsilon = \omega_{E/F}({\rm disc}(W_E^{\epsilon})) = \epsilon(1/2, N \cdot M_1|_{E^{\times}}^{-1}, \psi_{E,e}) \cdot \mu(\det(V)).   \]
The local root number on the RHS is equal to
\begin{align}
 &\epsilon(1/2,   {\rm As}_{L/E}(M_1)^{-1} \cdot \mu, \psi_{E,e}) \cdot \epsilon(1/2, {\rm As}_{L/E}(M_2) \cdot \mu^{-1}, \psi_{E,e}) \notag \\ 
= &\epsilon(1/2, {\rm As}_{L/E} (M_1) \cdot \mu^{-1}, \psi_{E, e}) \cdot \epsilon(1/2, {\rm As}_{L/E}(M_2) \cdot \mu^{-1}, \psi_{E,e}) \cdot \omega_{E/F}(-1) \notag
\end{align} 
On the other hand, by conjecture \ref{conj-local-general}(iii), which we have demonstrated above, we know that
\begin{align}
 \mu(\det(V)) &=  \epsilon(1/2, {\rm As}_{L/E}(M) \cdot \mu^{-1},\psi_E) \cdot \omega_{K/F}(-1) \notag \\
 &=  \epsilon(1/2, {\rm As}_{L/E}(M) \cdot \mu^{-1},\psi_{E,e}) \cdot \omega_{K/F}(e^2). \notag
 \end{align}
Assembling these together, we see that
\begin{eqnarray*} \epsilon & =&  
  \epsilon(1/2, [{\rm As}_{L/E} M_1 + {\rm As}_{L/E}(M_2) +  {\rm As}_{L/E}(M)]
  \cdot \mu^{-1}, \psi_{E, e})\cdot  \omega_{E/F}(k^2) \cdot \omega_{K/F}(e^2), \\
  &= &
\epsilon(1/2, [{\rm As}_{L/E} M_1 + {\rm As}_{L/E}(M_2) +  {\rm As}_{L/E}(M)]
\cdot \mu^{-1}, \psi_{E, e}),
\end{eqnarray*}
as predicted by Conjecture \ref{conj-local-general}(iv),  
 where for the second equality, we have used:
\[  \omega_{K/F}(e^2) = (k^2, e^2)_F = \omega_{E/F}(k^2). \]

Note that by Lemma \ref{L:Asai}(a),  the last epsilon factor  can be simplified as:
\[  \epsilon = \epsilon(1/2, \Ind_L^E (M_1^{\tau} \cdot M_2)\cdot \mu^{-1}, \psi_{E,e}).\]

We have thus completed the proof of Theorem \ref{T:low12}. For concreteness, we highlight the results obtained for the rank 2 case:
\begin{prop} \label{U2K}
Suppose we are given:
\begin{itemize}
\item a  quadratic extension $E/F$ of non-archimedean local fields; 
\item  a quaternion $F$-algebra $B$ with associated skew-Hermitian space $V_B$ of dimension 2 over $E$;
\item a quadratic field extension  $K \ne E$ with associated biquadratic field $L = E \otimes K$;
\item  an L-parameter $M = M_1 + M_2$  of $\U(V_B)(K)$, with $M_1$ and $M_2$ conjugate-symplectic characters of $W_L$, whose  L-packet $\Pi_M$ has 2 representations $\pi^{+}$ and $\pi^{-}$ of $\U(V_B)(K)$, so that $\pi^{+}$ is generic with respect to the Whittaker datum determined by $\psi_K = \psi \circ {\rm Tr}_{K/F}$.
\end{itemize}
Then one has:
\[ \Hom_{\U(V_B)}( \pi^{\epsilon}, \omega_{\psi,\mu, V_B}) \ne 0 \iff 
\begin{cases}   \mu(\det(V_B)) =
  \epsilon(1/2, {\rm As}_{L/E}(M) \cdot \mu^{-1},\psi_E) \cdot \omega_{K/F}(-1),   \\
  \epsilon  =  \epsilon(1/2, \Ind_L^E (M_1^{\tau} \cdot M_2)\cdot \mu^{-1}, \psi_{E,e}),\end{cases} \] 
  where $e \in E^{\times}_0$. 
  \end{prop}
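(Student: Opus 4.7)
The plan is to realize each member $\pi^\epsilon$ of the L-packet $\Pi_M$ as a unitary theta lift from a rank-one Hermitian space over $L$, and then exploit a seesaw identity to reduce the nonvanishing question for $\Hom_{\U(V_B)}(\pi^\epsilon, \omega_{\psi,\mu,V_B})$ to the theta dichotomy for an equal-rank lift over $E/F$. Let $W^+, W^-$ denote the two rank-one Hermitian spaces over $L$ (distinguished by $\omega_{L/K}(\text{disc}(W^\epsilon)) = \epsilon$), and set $W^\epsilon_E := \text{Res}_{L/E}(W^\epsilon)$, giving the seesaw
\begin{equation*}
\xymatrix{
\U(W^{\epsilon}_E) \ar@{-}[dr] \ar@{-}[d] & \U(V_K) \ar@{-}[d] \\
\U(W^{\epsilon}) \ar@{-}[ur] & \U(V_B).
}
\end{equation*}
The splitting data must be selected compatibly: I would take $(M_1, M_1 \cdot M_1^\tau)$ with additive character $\psi_K$ for the upper pair, and $(M_1|_{E^\times}, M_1|_{E^\times})$ with $\psi$ for the lower pair. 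The character $M_1|_{E^\times}$ is conjugate-orthogonal relative to $E/F$ since $F^\times \subset N_{L/K}(L^\times)$, so this is legitimate.

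Next I would identify the two relevant theta lifts. For the upper pair, the character $\chi_{M_1^\tau M_2}$ of $\U(W^\epsilon)$ with L-parameter $M_1^\tau M_2$ lifts to the representation $\pi^\epsilon \in \Pi_M$ with L-parameter $M_1+M_2 = M$; as $\epsilon$ varies, one exhausts $\Pi_M$. For the lower pair, the irreducible summand $\omega_{\psi,\mu,V_B}[\chi_{M_1 M_2}]$ of the Weil representation theta-lifts (if at all) to a representation of $\U(W^\epsilon_E)$ with L-parameter $N = \mu + \mu^{-1} M_1 M_2$. The seesaw identity then gives
\begin{equation*}
\Hom_{\U(V_B)}(\pi^\epsilon, \omega_{\psi,\mu,V_B}) \;\cong\; \Hom_{\U(W^\epsilon)}\bigl(\Theta(\omega_{\psi,\mu,V_B}[\chi_{M_1 M_2}]),\, \chi_{M_1^\tau M_2}\bigr),
\end{equation*}
so nonvanishing on the left is equivalent to nonvanishing of the theta lift on the right.

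Applying the theta dichotomy of Harris--Kudla--Sweet and Gan--Ichino \cite{HKS, GI}, this lift is nonzero precisely when $\omega_{E/F}(\text{disc}(W^\epsilon_E)) = \epsilon(1/2, N \cdot M_1|_{E^\times}^{-1}, \psi_{E,e}) \cdot \mu(\det V_B)$. I would compute $\text{disc}(W^\epsilon_E) = N_{K/F}(k \cdot \text{disc}(W^\epsilon))$ for a trace-zero $k \in K^\times$, and use inductivity together with $\As_{L/E}(M_i) = M_i|_{E^\times}$ to expand the right-hand root number as
\begin{equation*}
\epsilon(1/2, \As_{L/E}(M_1)\cdot \mu^{-1}, \psi_{E,e}) \cdot \epsilon(1/2, \As_{L/E}(M_2)\cdot \mu^{-1}, \psi_{E,e}) \cdot \omega_{E/F}(-1).
\end{equation*}
Substituting the formula for $\mu(\det V_B)$ supplied by Conjecture \ref{conj-local-general}(iii) (already verified in rank 2 earlier in this section) and invoking the decomposition $\As_{L/E}(M) = \As_{L/E}(M_1) + \As_{L/E}(M_2) + \Ind_L^E(M_1 M_2^\tau)$ from Lemma \ref{L:Asai}(a), the telescoping collapses the dichotomy sign to $\epsilon(1/2, \Ind_L^E(M_1^\tau M_2) \cdot \mu^{-1}, \psi_{E,e})$, as claimed.

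The main obstacle is the careful tracking of Hilbert symbols and auxiliary signs arising from two sources: the computation of $\text{disc}(W^\epsilon_E)$, and the comparison of epsilon factors defined with respect to $\psi_E$ versus $\psi_{E,e}$. The key identities needed are $\omega_{K/F}(e^2) = (k^2, e^2)_F = \omega_{E/F}(k^2)$ and $\omega_{L/E}(e) = \omega_{K/F}(-e^2)$, together with the conjugate-orthogonality of $\det(M)$ which forces $\det(M)(e^2) = 1$. Once these cancellations are carried out with care, the output of theta dichotomy matches the clean form in the proposition.
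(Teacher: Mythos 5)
Your proposal is correct and follows essentially the same route as the paper: the determinant condition is obtained (as you note) from the earlier reduction to the twisted trilinear form problem via $\GU(V_B)\cong (B^\times\times E^\times)/\Delta F^\times$ and \cite{P1}, while the sign $\epsilon$ is pinned down by exactly the seesaw between $\U(W^\epsilon)\times\U(V_K)$ and $\U(W^\epsilon_E)\times\U(V_B)$ with the same compatible splitting characters, followed by theta dichotomy \cite{HKS, GI} and the same bookkeeping of $\mathrm{disc}(W^\epsilon_E)$ and Hilbert symbols. The only point to phrase with a little more care is that the seesaw a priori gives ``$\Hom_{\U(V_B)}(\pi^\epsilon,\omega)\ne 0$ implies $\Theta(\omega_{\psi,\mu,V_B}[\chi_{M_1M_2}])\ne 0$''; the converse direction is then supplied by the multiplicity count $\sum_\epsilon \dim\Hom = 1$ from the trilinear form step, exactly as in the paper.
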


\section{\bf Unitary Principal Series: $E \ne K$}  \label{S:UPS3}
 this section, we shall study the restriction problem for unitary principal series representations  and show the analog of Corollary \ref{C:ps} in the $E \ne K$ setting. 
Recall that we have the diagram of fields and Galois automorphisms as below.

\begin{equation*}
\begin{gathered} 
\xymatrix{ & L=E \otimes K  \ar@{-}[ld]_{\sigma} \ar@{-}[rd]^{\tau}& \\ 
K \ar@{-}[rd]_{\tau} & & E \ar@{-}[ld]^{\sigma} \\   
& F &   } 
\end{gathered} 
\end{equation*}
The biquadratic field $L$ contains a third quadratic subfield $E'$ which is the fixed field of $\sigma \cdot \tau$.
Let  $V$ be a skew-Hermitian space (relative to $E/F$) of dimension $n$ over $E$  and $V_K= V \otimes_F K$, the corresponding skew-Hermitian space (relative to $L/K$) 
over $L=KE$. We also let $\tau$ denote the Galois automorphism acting on $V_K$ and $\U(V_K)$ with fixed points $V$ and $\U(V)$ respectively.
\vskip 5pt

\subsection{\bf Mackey Theory.}
We shall consider the restriction to $\U(V)$ of a parabolically induced representation from a maximal parabolic subgroup of $\U(V_K)$. The following theorem is an analog of 
Theorem \ref{ps1}.
\vskip 5pt

\begin{thm} \label{ps}
Let $V_K$ be the $n$-dimensional skew-Hermitian space relative to $L/K$ which is the base change of any $n$-dimensional skew-Hermitian space relative to $E/F$.
 Let 
 \begin{itemize}
 \item  $P  = MN$ be a maximal parabolic subgroup of $\U(V_K)$ which is the stabilizer of an $a$-dimensional isotropic subspace of $V_K$,  with Levi factor
 \[ M \cong \GL_a(L) \times \U_{n-2a}(K). \]
\item 
 \[  \pi = \pi_1 \rtimes \pi_2 = \Ind_P^{\U(V_K)} (\pi_1 \otimes \pi_2) \]
  be  a tempered principal series representation of $\U(V_K)$, with $\pi_1 \in {\rm Irr}(\GL_a(L))$ and $\pi_2 \in {\rm Irr}(\U_{n-2a}(K))$.  
  \end{itemize}
For any skew-Hermitian $V$ relative to $E/F$ such that $V \otimes_F K \cong V_K$,  let $\omega_{V,\psi, \mu}$ be a Weil representation of $\U(V)$.

\vskip 5pt

Then for all $i \geq 0$,
  
\[   \sum_{V} \Ext^i_{\U(V)}[ \pi, \omega_{V,\psi}] \]
\[ \stackrel{(1)}{ =}    \sum_{i=j+k} \left(\sum_{V'_a} \Ext^j_{\U(V'_a)}[ \pi_1, \omega_{V'_a,\psi, \mu \circ N_{L/E}}] \right) \otimes
  \left(\sum_{V_{n-2a}} \Ext^k_{\U(V_{n-2a})}[ \pi_2, \omega_{V_{n-2a},\psi, \mu}] \right), \]
    where
    \begin{itemize}
    \item the sum over $V$ runs over the two skew-Hermitian spaces relative to $E/F$ of dimension $n$;
    \item     the sum over $V'_a$ runs over the two skew-Hermitian spaces relative to $L/E'$ of dimension $a$;
    \item the sum over $V_{n-2a}$ runs over the two skew-Hermitian spaces relative to $E/F$ of dimension $n-2a$.
   \item  $\omega_{V'_a,\psi, \mu \circ N_{L/E}}$ and  $\omega_{V_{n-2a},\psi, \mu}$ denote the corresponding Weil representations of $\U(V'_a)$ and $\U(V_{n-2a})$. 
   \end{itemize}
      In particular, for $i=0$,

\[  \sum_{V} \Hom_{\U(V)}[ \pi, \omega_{V,\psi, \mu}] \]
\[  \stackrel{(2)}{ =}  \left(\sum_{V'_a} \Hom_{\U(V'_a)}[ \pi_1, \omega_{V'_a,\psi, \mu \circ N_{L/E} }] \right)\otimes
  \left(\sum_{V_{n-2a}}  \Hom_{\U(V_{n-2a})}[ \pi_2, \omega_{V_{n-2a},\psi, \mu}] \right).\]
The isomorphisms in both the equations (1) and (2) come from the  open orbits.
More precisely,
if $[X]$ is a non-open orbit of $\U(V)$ on $\U(V_K)/P$, contributing (by the Mackey theory)
  a certain representation
  $\pi_X$ of $\U(V)$ as a subquotient of  $\pi$, then
  \[\Ext^i_{\U(V)}[ \pi_X, \omega_{V,\psi, \mu}] = 0,\]
  for all $i \geq 0$.
\end{thm}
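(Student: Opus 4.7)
The plan is to follow closely the structure of the proof of Theorem~\ref{ps1}, applying Mackey theory to the embedding $\U(V) \subset \U(V_K)$. Since $\U(V) = \U(V_K)^\tau$, the $\U(V)$-orbits on the partial flag variety $\U(V_K)/P$ of isotropic $a$-dimensional $L$-subspaces of $V_K$ can be parametrized, via a Witt-type argument, by invariants measuring the interaction between an isotropic $L$-subspace $X$ and its $\tau$-image $\tau(X)$. The open orbits are those for which the pairing $\phi_X(x,y) := \langle x, \tau y\rangle$ is a non-degenerate form on $X$ (which forces $X \cap \tau(X) = 0$); using the skew-Hermitian identity $\langle y, x\rangle^\sigma = -\langle x, y\rangle$ one verifies that $\phi_X$ is $\sigma\tau$-skew-Hermitian, so $X$ acquires the structure of a skew-Hermitian space $V'_a$ relative to $L/E'$ of dimension $a$. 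On such an open orbit, the stabilizer of $X$ in $\U(V)$ is $\U(V'_a) \times \U(V_{n-2a})$, where $V_{n-2a}$ is the $\tau$-fixed part of $(X \oplus \tau(X))^\perp$.

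For each orbit $[X]$, I would compute the associated subquotient $\pi_X$ of $\pi|_{\U(V)}$ via the analogue of Proposition~\ref{P:module}, and then carry out the same sequence of manipulations as in Proposition~\ref{ps2}: Frobenius reciprocity, a computation of the Jacquet module of the Weil representation along the center of the unipotent radical of the stabilizer (the analogue of Lemma~\ref{Weil-Jac} in the present setting), and the Jacquet modules of the inducing data $\pi_1$ and $\pi_2$. This reduces $\Ext^i_{\U(V)}[\pi_X, \omega_{V,\psi,\mu}]$ to an Ext-group on the Levi factor of the stabilizer, in which the residual $\GL_d(L)$-factor (where $d$ measures the deviation from the open-orbit case) appears through explicit $|{\det}|$-twists.

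For the non-open orbits (those with $d \geq 1$), the vanishing $\Ext^i_{\U(V)}[\pi_X, \omega_{V,\psi,\mu}] = 0$ for all $i \geq 0$ will follow, exactly as in Theorem~\ref{ps1}, from Casselman's criterion combined with Lemma~\ref{central}: the temperedness of $\pi_1$ and $\pi_2$ bounds the exponents of the $|{\det}|$-twists arising on the $\GL_d(L)$-factor of the Jacquet modules, whereas the Weil-representation side contributes a definite $|{\det}|$-twist, and the two central characters on $\GL_d(L)$ fail to match whenever $d > 0$. The main book-keeping task is to check that, after normalizing the contributions of $\delta_{P/S}$, $\delta_S$, and the Jacquet twists, this mismatch is indeed strict for every $d>0$.

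For the open orbits, the crucial input is the restriction identity
\[
\omega_{V,\psi,\mu}\big|_{\U(V'_a)\times\U(V_{n-2a})} \;\cong\; \omega_{V'_a,\,\psi,\,\mu \circ N_{L/E}} \,\otimes\, \omega_{V_{n-2a},\,\psi,\,\mu},
\]
for which one must verify that $\mu \circ N_{L/E}$ is conjugate-symplectic relative to $L/E'$; this follows from the identities $\omega_{L/E'} = \omega_{E/F}\circ N_{E'/F}$ and $N_{L/E}|_{E'^\times} = N_{E'/F}$ (the latter using that $\tau|_{E'} = \sigma|_{E'}$ because $\sigma\tau$ fixes $E'$). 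Granting this restriction identity, Frobenius reciprocity and the K\"unneth theorem \cite[Thm.~3.1]{P3} yield the claimed tensor-product decomposition of Ext-groups summed over $V$, $V'_a$ and $V_{n-2a}$. I expect the main obstacle to be the precise orbit analysis together with the verification of this Weil-representation restriction identity: both steps require carefully tracking the interaction of the three quadratic subfields $E, K, E' \subset L$ with the isotropic subspace $X$ and with the splitting data $(\psi, \mu)$, and it is precisely here that the subtle appearance of the character $\mu \circ N_{L/E}$ on the $V'_a$-factor has to be pinned down.
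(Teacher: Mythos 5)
Your proposal is correct and follows essentially the same route as the paper's proof: the same orbit analysis governed by the pairing $\langle x,\tau y\rangle$ on $X$, the same identification of the open-orbit stabilizers with $\U(V'_a)\times \U(V_{n-2a})$ via the $L/E'$-skew-Hermitian structure on $X$, the same Casselman/central-character vanishing for degenerate orbits, and the same restriction identity $\omega_{V,\psi,\mu}|_{\U(V'_a)\times\U(V_{n-2a})}\cong \omega_{V'_a,\psi,\mu\circ N_{L/E}}\otimes\omega_{V_{n-2a},\psi,\mu}$ (the content of Lemma \ref{Weil-L} and Lemma \ref{disc}). The one place where the paper is finer than your sketch is that the non-open orbits split into two genuinely different cases --- $X\cap X^{\tau}\neq 0$, where the residual general linear factor is a $\GL_d(E)$ obtained by Galois descent from the $\tau$-stable space $X\cap X^{\tau}$, versus $X\cap X^{\tau}=0$ but $X\cap X^{\tau\perp}\neq 0$, where it is a general linear group over $L$ --- rather than being controlled by a single parameter $d$, though the central-character mismatch you invoke works in both cases.
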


\begin{proof}
  The proof of this theorem is almost identical to the corresponding theorem for the $E=K$ case, i.e., Theorem
  \ref{ps1}, so we will be  brief.
  It again depends on using the Mackey theory to calculate the  representation
  $\pi_X$ of $\U(V)$ as a subquotient of  $\pi$ supported on each orbit  $[X]$ of $\U(V)$ on the partial flag variety $\U(V_K)/P$.
Hence, we first  investigate the  orbits of $\U(V)$ on $\U(V_K)/P$, and their associated stabilizers in $\U(V)$.
\vskip 5pt

The partial flag variety $\U(V_K)/P$ parameterizes  $a$-dimensional isotropic $L$-subspaces $X$ of $V_K$. Since $\tau$ acts on $V_K$, we have an action $X \mapsto X^{\tau}$ of $\tau$ on $U(V_K)/P$.   For each isotropic $X$, let $P_X \subset \U(V_K)$ be the stabilizer of $X$ in $\U(V_K)$, so that $P_X = M_X N_X$ is a maximal parabolic subgroup with Levi factor 
  \[  M_X = \GL(X) \times \U(X^{\perp}/X). \]
Let   
\[  Q_X = \U(V) \cap P_X \]
 be the stabilizer of $X$ in $\U(V)$, with $N_{Q_X}$ its unipotent radical. Therefore $Q_X$ preserves the flag:
  \[ 0 \subset  X \cap X^\tau \subset  X           \subset    X^\perp \subset (  X \cap X^\tau)^\perp  \subset  V, \]
  and there is a natural map 
  \[  Q_X \rightarrow P_X \rightarrow M_X = \GL(X) \times \U(X^\perp/X).\]
   Hence, a representation $\pi_1 \boxtimes \pi_2$ of $M_X = \GL(X) \times \U(X^\perp/X)$
  gives rise by pull back to a representation of  $Q_X$,    which can then be induced to $\U(V)$  to obtain  the  representation
  $\pi_X$ of $\U(V)$ supported on the $\U(V)$-orbit of $X$.  
\vskip 5pt

After the above generalities, we now  consider  different cases according to the types of $X$.  
\vspace{4mm}

  \noindent {\bf Case 1:} $X \cap X^\tau \not = 0$.
  \vspace{4mm}
  
 The space $X \cap X^\tau$ is defined over $E$, so let $Y\subset V$ be such that  $Y \otimes_E L=  X \cap X^\tau$. The space $Y$ is
isotropic, and it is easy to see that  $Q_X$  has the following properties:

\begin{enumerate}

\item The unipotent radical $N_{Q_X}$ of $Q_X$ contains the  center $Z_Y$ of the
  unipotent radical $N_Y$ of the parabolic subgroup $P_Y \subset \U(V)$ stabilizing the isotropic subspace $Y\subset V$; observe that $Z_Y$ is the subgroup of $Q_X$ acting trivially on $Y^\perp$.
  \vskip 5pt
  
\item  The  natural   map 
  \[  Q_X \rightarrow \GL(X) \times \U(X^\perp/X)   \]
  has kernel  $Z_Y$. Consider  the composite  map
  \[  Q_X \rightarrow Q_X/Z_Y \rightarrow \GL(X) \times \U(X^{\perp}/X) \rightarrow \GL(X). \]
  Its image is contained  in  the parabolic subgroup $P_{d, a-d}$ stabilizing the subspace $X \cap X^\tau \subset X$, and contains the subgroup $\GL(Y) \subset \GL(X \cap X^{\tau})$.
Moreover, the image of $N_{Q_X}/Z_Y$ is the unipotent radical of this parabolic subgroup of $\GL(X)$.
\end{enumerate}

As in the proof of Theorem \ref{ps1},  these properties and Lemma \ref{Weil-Jac} imply that:
\begin{eqnarray*}
& & \Ext^i(\pi_X, \omega_{V,\psi}) \\
  & \cong & \Ext^i_{\U(V)}[  \ind_{Q_X}^{\U(V)}  (\pi_1 \otimes \pi_2 \otimes \delta_{P_X/Q_X}^{1/2}), \omega_{V,\psi}] \\
  & \cong & \Ext^i_{Q_X/Z_Y}[ \delta_{Q_X}^{-1/2}(\omega_{V,\psi^-})_{Z_Y},
    (\pi_1 \otimes \pi_2 \otimes \delta_{P_X/Q_X}^{1/2})^\vee ]\\
  &\cong & \Ext^i_{Q_X/N_{Q_X}}[ (\pi_1)_{d,a-d} \otimes (\pi_2) \otimes \delta_{P_X/Q_X}^{1/2}),  \delta_{Q_X}^{1/2} \cdot |\det|^{-d/2} \cdot \alpha^{-1} \cdot \omega_{n-2a,\psi}] \\
  &\cong &  \Ext^i_{Q_X/N_{Q_X}}(A,B),  
\end{eqnarray*}
where  $ (\pi_1)_{d,a-d}$ denotes the un-normalized Jacquet module of $\pi_1$ with respect to the
parabolic subgroup $P_{d,a-d}$ of  $\GL(X)\cong  \GL_a(L)$ stabilizing $X \cap X^{\perp}$.
Note also that at elements $(g,1) \in  \GL_d(L) \times \GL_{a-d}(L)$. one has
\vskip 5pt
\[ \text{ $\delta_{P/Q}(g,1) = |\det(g)|^a$ and $\delta_Q(g,1)= |\det(g)|^{n-2a}$,} \]
and 
\begin{eqnarray*}
    (\pi_1)_{d,a-d} & = & \delta_{P_{d,a-d}}^{(1+\epsilon)/2} \times {\rm tempered ~~ representation~~ of~~ } \GL_d(L) \times \GL_{a-d}(L) \\
  & = & |\det(g)|^{(d-a+\epsilon)/2 } \times {\rm tempered ~~ representation~~ of~~} \GL_d(L) \times \GL_{a-d}(L).
  \end{eqnarray*}
  Here we have used the fact that the normalized Jacquet module of
a tempered representation is a tempered representation up to multiplication by a character $\delta_{P}^{\epsilon/2}$ for some
 positive real number $\epsilon$.  
\vskip 5pt

To show the vanishing of $\Ext^i(A,B)$, we note that 
$Q_X/N_{Q_X}$ contains $\GL(Y)$ as a direct factor  and the above information allows one to see that
the center of $\GL(Y)$ acts on the two representations $A$ and $B$ via different characters. Therefore, we have shown that
  \[\Ext^i_{\U(V)}[ \pi_X, \omega_{V,\psi}] = 0 \quad \text{  for all $i \geq 0$.} \]

\vspace{4mm}
\noindent {\bf Case 2:} $X \cap X^\tau = 0$, but $Z_0= X\cap X^{\tau \perp} \not = 0$.
\vspace{4mm}

In this case,  $X+X^\tau$ is a degenerate
skew-Hermitian space defined over $E$ whose nullspace is  $Z_0+Z_0^\tau$, i.e.
\[ Z_0+Z_0^\tau = (X+X^\tau) \cap   (X+X^\tau)^\perp.\]
 Let $Z$ be the subspace of $V$ such that $Z\otimes L =Z_0+Z_0^\tau$, so that
$Z$ is an isotropic subspace of $V$.  In this case, it is easy to see that  the subgroup $Q_X$ of
$\U(V)$ preserving $X$ has the following properties:

\begin{enumerate}

\item $Q_X$ contains the unipotent radical of the parabolic subgroup of $\U(V)$ stabilizing the isotropic subspace $Z\subset V$.
\vskip 5pt

\item  The image of the natural map from $Q_X$ to $\GL(X)$
given as the composite:
  \[  Q_X \rightarrow \GL(X) \times \U(X^\perp/X) \rightarrow \GL(X) \]
   lands inside
  the parabolic subgroup defined by the subspace $Z_0= X\cap X^{\tau \perp}  \subset X$, containing the unipotent radical of this parabolic subgroup
  of $\GL(X)$,  as well as
   $\GL(X \cap X^{\tau \perp}) $.

\end{enumerate}
\vskip 5pt

\noindent  A similar analysis as in Case 1 (based on appropriate central character  analysis) allows us to conclude that:
\[\Ext^i_{\U(V)}[ \pi_X, \omega_{V,\psi}] = 0,\]
  for all $i \geq 0$.

\vspace{4mm}
\noindent {\bf Case 3:} Both $X \cap X^\tau = 0$,  and $ X\cap X^{\tau \perp} = 0$.

\vskip 5pt

 In this case,  
the $\U(V)$-orbit of $X$ is open.  Such isotropic spaces $X \subset V_K$, up to $\U(V)$-conjugacy,   are in bijective correspondence with $\U(V)$-conjugacy classes of non-degenerate subspaces $W\subset V$ of dimension $2a$,
since such a subspace $W$ has, up to $\U(W)$ conjugacy, a unique subspace $X\subset W_K$ such that  
$X \cap X^\tau = 0$  and $ X\cap X^{\tau \perp} = 0$ (the proof of this is given in Lemma \ref{Weil-L} below).
\vspace{4mm}

Now let $Q_X$ be the stabilizer of $X$ in $\U(V)$. 
 The following lemma allows us to determine this stabilizer:

 \begin{lemma} \label{Weil-L}
     Let $W$ be a  $2a$-dimensional   nondegenerate skew-Hermitian space over $E$ and let  $X \subset W \otimes_F K =W_K$ be  an isotropic subspace of $W_K$ such that  
  \[  \text{$X \cap X^\tau = 0$  and  $  X + X^\tau = W_K$,} \]
  where we recall that 
     $\Gal(K/F) = \langle \tau \rangle$.  
  Then we have:
  \vskip 5pt
  
  \begin{itemize}
  \item[(i)]  The isotropic subspace $X \subset W_K$ with the above properties is unique up to the action of $\U(W)$ on $W_K$;
  
  \item[(ii)] The stabilizer of $X$ in $\U(W)$ is isomorphic to $\U(W_X)$, 
  where $W_X$ is the skew-Hermitian space on the underlying vector space $X$ relative to $L/E'$ (for
 $E'$ the third  quadratic field contained in the biquadratic extension $L = E \otimes K$), defined by:
  \[  (x_1,x_2) = \langle x_1, \tau x_2\rangle.\]
  \vskip 5pt
  
  \item[(iii)]   The determinant of the  $2a$-dimensional   skew-Hermitian space $W$ for $E/F$ and the $a$-dimensional
  skew-Hermitian space $W_X$ for $L/E'$ are related by (as elements of $F^\times/N_{E/F}( E^\times)$):
  \[ \det (W) = N_{L/E}[ k^a \det (W_X)] = (-k^2)^a  N_{L/E} \det (W_X) ,\]
 where $k$ is any nonzero element of $K$ whose trace to $F$ is zero.
\end{itemize}
Further, the restriction of the Weil representation $\omega_{W,\psi, \mu}$ of $\U(W)$ to $U(W_X)$  is the Weil representation of $\omega_{W_X,   \psi \circ Tr_{E'/F},  \mu \circ N_{L/E}}$.
    \end{lemma}
\begin{proof}
(i) Let $X$ and $X'$ be two $L$-vector subspaces of $W_K$ satisfying the properties in the Lemma. Let 
\[  \phi: X \longrightarrow X' \]
 be a $L$-linear isomorphism of vector spaces. 
Then $\phi$ extends  uniquely to a $L$-linear automorphism (still denoted by $\phi$) of $X+X^{\tau} = W_K$ defined  by 
\[  \phi(x^\tau) = \phi(x)^\tau, \quad \text{ for $x \in X$.} \] 
Since this   satisfies $\phi \circ \tau = \tau \phi$, it follows by Galois descent that $\phi$ is defined over $E$, i.e $\phi$ is obtained by base change from an $E$-linear isomorphism 
\[ \phi_0: W \longrightarrow W. \] 
Moreover, one checks by a direct computation that  $\phi_0$ preserves the given skew-Hermitian structure on $W$ if and only if $\phi$ is compatible with the $L/E'$-skew-Hermitian structure on $X$ and $X'$ defined in (ii), i.e. $\phi$ is an isomorphism of $L/E'$-skew-Hermitian spaces:
\[  \phi: W_X \longrightarrow W_{X'}. \]
Hence, to show that there is an element of $\U(W)$ which carries $X$ to $X'$, it remains to show that $W_X$ and $W_{X'}$ are necessarily isomorphic as $L/E'$-skew-Hermitian spaces. We shall show this and hence complete the proof of (i) only after we demonstrate (iii), using Lemma \ref{disc}  below.
\vskip 5pt

\noindent (ii) Taking $X' = X$ in (i) above, one deduces that the stabilizer of $X$ in $\U(W)$ is precisely $\U(W_X)$. 

%  By Galois theory, any automorphism $\phi: X \rightarrow X$ defined over $L$ extends uniquely to an automorphism of
  %$  X + X^\tau = W_K$ defined over
 % $E$ by $\phi(x^\tau) = \phi(x)^\tau$, and  hence descends to an automorphism of $W$.
  %It is easy to see that this automorphism of $W$ preserves the given skew-Hermitian structure on $W$
  %if and only if  $\phi: X \rightarrow X$ preserves the  skew-Hermitian structure $(-,-)$ defined in statement (ii) of the Lemma on $W_X$. 
\vskip 5pt

\noindent  (iii) For the $L/E'$-skew-Hermitian space $W_X$ defined in (ii), 
there is a natural $E/F$ skew-Hermitian structure on $W_X$ obtained by
taking the same vector space as $W_X$, now treated as an $E$-vector space and denoted by $R_E(W_X)$, with the skew-Hermitian form which
is the $L/E$-trace of the skew-Hermitian form on $W_X$.
Define a map $\phi: X\rightarrow W$ by 
\[  \phi(x)=x+ x^\tau \in W \quad \text{  for $x \in X$.} \]  
  It is easy to check that $\phi$ induces an isomorphism of the $E/F$ skew-Hermitian spaces $R_E(W_X)$ and $W$.  Now we appeal to  the Lemma \ref{disc} below  to complete the proof of (iii).
  \vskip 5pt

\begin{lemma} \label{disc} With the quadratic extensions $E,K,E'$ of $F$ as before, let $\mathcal{W}$ be an  $L/E'$-skew-Hermitian space with a skew-Hermitian form $\langle -, - \rangle $.
  Let $R_{E}(\mathcal{W})$ be the same space  $\mathcal{W}$ regarded as a vector space over $E$, 
  which comes equipped with a natural $E/F$ skew-Hermitian structure $(-,-)$:  
   \[ (w_1,w_2) = \langle w_1, w_2 \rangle  + \langle w_1, w_2 \rangle^\tau.\]

\noindent Fix an element  $k \in K^{\times}$   with ${\rm Tr}_{K/F} (k) = 0$.   Then, with $a = \dim \mathcal{W}$, one has:
\[  k^a \det \mathcal{W} \in E'^\times, \]  
and
\[ N_{L/E} (k^a \det \mathcal{W}) {=} N_{E'/F } (k^a \det \mathcal{W})  = \det R_E(\mathcal{W}),\]
as elements of $F^\times/ N_{E/F}( E^\times)$.
\vskip 5pt

Moreover, if $\mathcal{W}' \ncong \mathcal{W}$ is the other $L/E'$-skew-Hermitian space space of the same dimension, then $R_E(\mathcal{W}') \ncong R_E(\mathcal{W}')$.
\end{lemma}

\vskip 5pt

\begin{proof}
  By writing $\mathcal{W}$ as
  an orthogonal sum of lines over $L$, we are reduced to prove the Lemma for a 1-dimensional  skew-Hermitian space
  for $L/E'$ which we take to be the vector space $L$ with the skew-Hermitian structure: 
  \[ \langle \ell_1, \ell_2 \rangle =  \ell_1 x\ell_2^{\sigma \tau},\]
  with $x \in  L^\times$ with $x+x^{\sigma \tau}=0$.

  This gives rise to an $E/F$ skew-Hermitian structure on $L$ by:
  \[ (\ell_1, \ell_2) = \langle \ell_1, \ell_2 \rangle  + \langle \ell_1, \ell_2 \rangle^\tau =
 \ell_1 x\ell_2^{\sigma \tau} +  \ell_1^\tau x^\tau \ell_2^{\sigma}.\]

 For this $E/F$-skew-Hermitian  space $L$, $\{1,k\}$ is a basis, for which the Gram matrix is given by
$$ A = \left ( \begin{array}{cc} 
  x+ x^\tau & -k(x-x^\tau)    \\
k(x-x^\tau) & -k^2(x+x^\tau)
\end{array}
\right ) ,$$
so that
\[ \det A = -4k^2xx^\tau.\]
  Now  since $(kx)^{\sigma \tau}= kx$, we see that $kx$ belongs to $E'^\times$, as desired.
  \vskip 5pt
  
  For the final statement,  it suffices to show that 
  \[  \det(R_E(\mathcal{W}')) \ne \det(R_E(\mathcal{W})) \in F^{\times}/ N_{E/F}(E^{\times}), \]
   or equivalently that
  \[  \omega_{E/F} (   \det(R_E(\mathcal{W}')) \cdot det(R_E(\mathcal{W}))) = -1. \]
  By the identity proved above,  this is equivalent to showing that
  \[ ( \omega_{E/F} \circ N_{E'/F} ) \left( \det(\mathcal{W}') \cdot \det(\mathcal{W}) \right) = -1. \]
  But this desired identity holds since
  \[  \omega_{E/F} \circ N_{E'/F} = \omega_{L/E'}.  \]
    This completes the proof of Lemma \ref{disc}.
\end{proof}

\vskip 5pt

As we mentioned, Lemma \ref{disc} completes the proof of (iii). The last assertion in Lemma \ref{disc} also allows us to complete the proof of (i).  Indeed, in the proof of (iii), we have shown that $R_E(W_X) \cong  W$ as $E/F$-skew-Hermitian spaces.  Hence, with $X$ and $X'$ as in the proof of (i),  we deduce that $R_E(W_X) \cong R_E(W_{X'})$. In view of the last assertion in Lemma \ref{disc}, one thus deduces that $W_X \cong W_{X'}$ as $L/E'$-skew-Hermitian spaces. 
\vskip 5pt

Finally, we  observe that the restriction of the Weil representations made in  Lemma \ref{Weil-L}  is the
precise version of the well-known assertion  that the restriction of a Weil representation of $\Sp(4n,F)$ to $\Sp(2n,E')$  takes
  a Weil representation of $\Sp(4n,F)$ to a Weil representation of $\Sp(2n,E')$.  This completes the proof of Lemma \ref{Weil-L}.
  \end{proof}

\vskip 5pt

  Applying  Lemma \ref{Weil-L},  we find 
\[  Q_X \cong \U(W_X) \times \U(W_X^{\perp}) \subset \U(V) \]
with $\dim W_X =2a$ and $\dim W_X^{\perp} = n-2a$.
We can now conclude the proof as in Theorem \ref{ps1}. 
\vskip 5pt

The proof of Theorem \ref{ps} is now complete.

   \end{proof}

The following proposition is obtained as a corollary to Theorem \ref{ps}.

\begin{prop} \label{coro}
  Let $V$ be an $n$-dimensional skew-Hermitian space relative to $E/F$,
  and $V_K=V\otimes_F K = V\otimes _EL$ its base change to an
  $n$-dimensional skew-Hermitian space relative to $L/K$.
 
 \begin{itemize}
 \item Let $V_K= X + X^\tau + W'_K $ with $X$ an isotropic subspace of $V_K$ with
   $X\cap X^\tau = 0$. Assume that both $(X+X^\tau)$ and $W'_K = W'\otimes_E L$ are defined over $E$,
   are non-degenerate skew-Hermitian spaces over $E$, and  are perpendicular to each other. Let 
  $ P  = MN$ be a maximal parabolic subgroup of $\U(V_K)$ which is the stabilizer of $X$,  with Levi factor
 \[ M \cong \GL(X) \times \U(W'_K). \]
\item  Let
 \[  \pi = \pi_1 \rtimes \pi_2 = \Ind_P^{\U(V_K)} (\pi_1 \otimes \pi_2) \]
  be  a tempered principal series representation of $\U(V_K)$, with $\pi_1 \in {\rm Irr}(\GL(X))$ and $\pi_2 \in {\rm Irr}(\U(W'_K))$.  
  \end{itemize}

 By Lemma \ref{Weil-L}, the vector space $X$ over $L$ carries a
 natural $L/E'$-skew-Hermitian structure
(where $E'$ is the quadratic extension of $F$ inside $L$ different from $E,K$), that we denote by $W$ (so $W$ as a vector space over $L$ is the same as $X$).
 If Conjecture \ref{conj-local-general}(i)-(iii) holds  for

 \begin{enumerate} \item the representation
   $\pi_1$ of $\GL(X)$ containing the unitary subgroup $\U(W)$,
   of size $a$  for the extension $L/E'$,
   \item  $\pi_2 \in {\rm Irr}(\U(W'_K))$,
\end{enumerate}
   then it holds also  for the representation
$ \pi = \pi_1 \rtimes \pi_2$ of $\U(V_K)$.  
\end{prop}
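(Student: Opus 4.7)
The plan is to combine the Mackey-theoretic decomposition of Theorem \ref{ps} with the inductive hypothesis---Conjecture \ref{conj-local-general} for the lower-rank pieces $\pi_1$ and $\pi_2$. Since $\pi_1$ is a representation of $\GL(X) = \GL_a(L)$ restricted to the unitary subgroup $\U(W)$ associated to the $L/E'$-skew-Hermitian structure on $X$, the assumption on $\pi_1$ is the ``$E=K$'' case of the conjecture, i.e.\ Conjecture \ref{conj-local}, applied to the quadratic extension $L/E'$; whereas $\pi_2$ lies in the general setting applied to $(E/F, K/F)$ in dimension $n-2a$. By Theorem \ref{ps} (equation (2)), we have
$$
\sum_V \Hom_{\U(V)}[\pi,\omega_{V,\psi,\mu}] \;\cong\; \Bigl(\sum_{V'_a} \Hom_{\U(V'_a)}[\pi_1,\omega_{V'_a,\psi,\mu\circ N_{L/E}}]\Bigr) \otimes \Bigl(\sum_{V_{n-2a}} \Hom_{\U(V_{n-2a})}[\pi_2,\omega_{V_{n-2a},\psi,\mu}]\Bigr),
$$
so that parts (i) and (ii) of the conjecture for $\pi = \pi_1 \rtimes \pi_2$ follow immediately from the corresponding assertions for $\pi_1$ and $\pi_2$.

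For part (iii), the inductive hypothesis singles out distinguished spaces $V'_a$ and $V_{n-2a}$ supporting the nonzero multiplicities for $\pi_1$ and $\pi_2$; the open-orbit analysis underlying Theorem \ref{ps}, combined with Lemma \ref{Weil-L}(iii), identifies the corresponding $V$ on the left as $V \cong R_E(V'_a) \oplus V_{n-2a}$, so that
$$
\mu(\det V) = \mu(-k^2)^a \cdot (\mu \circ N_{L/E})(\det V'_a) \cdot \mu(\det V_{n-2a}),
$$
and the inductive hypothesis expresses the latter two factors via local epsilon factors associated to the L-parameters $M_1$ of $\pi_1$ and $M_2$ of $\pi_2$. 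The L-parameter of $\pi$ is $M = M_1 \oplus M_2 \oplus {}^\sigma M_1^\vee$, and by Lemma \ref{L:Asai}(a) one expands
$$
\As_{L/E}(M) = \As_{L/E}(M_1) \oplus \As_{L/E}(M_2) \oplus \As_{L/E}({}^\sigma M_1^\vee) \oplus \Ind_L^E(M_1\otimes M_2^\tau) \oplus \Ind_L^E(M_2\otimes {}^{\sigma\tau} M_1^\vee) \oplus \Ind_L^E(M_1\otimes {}^{\sigma\tau} M_1^\vee).
$$
By inductivity of epsilon factors in degree zero and the identity $\psi_E \circ {\rm Tr}_{L/E} = \psi_{E'} \circ {\rm Tr}_{L/E'}$, the last summand contributes exactly the epsilon factor appearing in Conjecture \ref{conj-local}(iii) for $\pi_1$, while $\As_{L/E}(M_2)$ contributes the one in Conjecture \ref{conj-local-general}(iii) for $\pi_2$.

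What remains---and is the main technical core of the proof---is an epsilon-factor bookkeeping for the residual terms. The two remaining cross terms $\Ind_L^E(M_1 \otimes M_2^\tau)$ and $\Ind_L^E(M_2 \otimes {}^{\sigma\tau} M_1^\vee)$ form a conjugate-dual pair (because $M_2$ is conjugate-self-dual for $L/K$), and similarly the pair $\As_{L/E}(M_1) \oplus \As_{L/E}({}^\sigma M_1^\vee)$ is self-dual by Lemma \ref{L:Asai}(b). Their epsilon factors therefore collapse, via $\epsilon(W)\epsilon(W^\vee) = \det W(-1)$, into explicit signs depending only on $\det M_1$ and $\det M_2$. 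The task is then to show that these signs, together with the correction $\mu(-k^2)^a$ from Lemma \ref{Weil-L}(iii), the determinant signs $\det M_i(-1)$ from the inductive formulae, the factors $\omega_{L/E'}(-1)^{a(a-1)/2}$ and $\omega_{K/F}(e^2)^{(n-2a)(n-2a-1)/2}$, and the target factor $\omega_{K/F}(e^2)^{n(n-1)/2}$, all balance. This is achieved by repeated application of $\omega_{L/E}(e) = \omega_{K/F}(-e^2)$, $\omega_{K/F}(e^2) = (k^2,e^2)_F = \omega_{E/F}(k^2)$, and $\omega_{L/E'} = \omega_{E/F} \circ N_{E'/F}$; the calculation is routine but must be executed with care, and is the principal obstacle in the argument.
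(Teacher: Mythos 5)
Your proposal follows essentially the same route as the paper's proof: Theorem \ref{ps} gives parts (i)--(ii) at once, part (iii) is reduced via Lemma \ref{Weil-L}(iii) to the identity $\det V = (-k^2)^a\, N_{L/E}(\det W)\cdot \det W'$, and the epsilon factor of $\As_{L/E}(M)$ for $M = M_1 + {}^\sigma M_1^\vee + M_2$ is expanded exactly as you indicate, with the two conjugate-dual pairs collapsing to determinant signs and the term $\Ind_L^E(M_1\otimes{}^{\sigma\tau}M_1^\vee)$ carrying the $L/E'$ epsilon factor via inductivity in degree zero (which also produces a correction $\epsilon(\omega_{L/E},\psi_E)^{a^2}=\omega_{K/F}(-e^2)^a$ that you should fold into your bookkeeping). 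The sign-chasing you defer is indeed where the paper spends most of its effort, and it does close up using precisely the identities you list, so the proposal is correct in structure but incomplete until that computation is executed.
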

\begin{proof}
That the Conjecture \ref{conj-local-general}(i) holds for  the representation
$ \pi = \pi_1 \rtimes \pi_2$ of $\U(V_K)$ if and only if it does for both the
representations $\pi_1$ and $\pi_2$ is the content of our previous theorem.

We will next prove the analogous assertion on Conjecture \ref{conj-local-general}(ii). For this, let the representations of the Weil-Deligne group of $L$
associated to $\pi_1,\pi_2$ be $M_1, M_2$. Then the parameter of the representation $\pi$ of $\U(V_K)$
is $M= M_1 + {^\sigma M_1}^\vee + M_2$. We need to prove that if the  equations (1) and (2) below hold, then so does the equation (3). Here is equation (1):

\begin{align} 
& \, \mu(\det(W'))   \notag \\
=&\, \epsilon( 1/2,   {\rm As}_{L/E}(M_2) \otimes \mu^{-1}, \psi_E) \cdot \det ({\rm As}_{L/E}(M_2))(e)
  \cdot  \omega_{K/F}(e^2)^{(n-2a)(n-2a-1)/2}  \notag \\ 
  =& \, \epsilon( 1/2,   {\rm As}_{L/E}(M_2) \otimes \mu^{-1}, \psi_E) \cdot \det (M_2)(e)^{n-2a}
  \cdot  \omega_{K/F}(-1)^{(n-2a)(n-2a-1)/2} \notag  \\
  \stackrel{(1)}{=} & \,  \epsilon( 1/2,   {\rm As}_{L/E}(M_2) \otimes \mu^{-1}, \psi_E) \cdot \det (M_2)(e)^{n}
  \cdot  \omega_{K/F}(-1)^{n(n-1)/2} \cdot \omega_{K/F}(-1)^{a} \notag
\end{align}
where we have used the observation that $\det M_2$ is a character of $L^\times/K^\times$, hence is trivial on $e^2$. Here is the equation (2):

%\begin{eqnarray*}  \mu(\det(W'))  & {=} &
  %\epsilon( 1/2,   {\rm As}_{L/E}(M_2) \otimes \mu^{-1}, \psi_E) \cdot \det ({\rm As}_{L/E}(M_2))(e)
  %\cdot  \omega_{K/F}(e^2)^{(n-2a)(n-2a-1)/2} \\ 
 % & {=} &  \epsilon( 1/2,   {\rm As}_{L/E}(M_2) \otimes \mu^{-1}, \psi_E) \cdot \det (M_2)(e)^{n-2a}
  %\cdot  \omega_{K/F}(-1)^{(n-2a)(n-2a-1)/2} \\
  %& \stackrel{(1)}{=} &  \epsilon( 1/2,   {\rm As}_{L/E}(M_2) \otimes \mu^{-1}, \psi_E) \cdot \det (M_2)(e)^{n}
 % \cdot  \omega_{K/F}(-1)^{n(n-1)/2} \cdot \omega_{K/F}(-1)^{a}
%\end{eqnarray*}
 
\begin{align} 
& \, \mu(N_{L/E} \det(W_X)) \notag \\
  =&\,  \epsilon( 1/2,   M_1 \otimes {}^{\sigma \tau} \!  M_1 {}^{\vee} \otimes \mu^{-1} \circ N_{L/E}, \psi_L) \cdot \det (    M_1 )(-1)^a \cdot  \omega_{L/E'}(-1)^{a(a-1)/2} , \notag \\
\stackrel{(2)}{=} &\,  \epsilon( 1/2,   M_1 \otimes {}^{\sigma \tau} \!  M_1 ^{\vee} \otimes \mu^{-1} \circ N_{L/E}, \psi_L) \cdot \det (    M_1 )(-1)^a , \notag
\end{align}
where  $\mu^{-1} \circ N_{L/E}$ denotes the character of $L^\times$ obtained from the character $\mu^{-1}$ of $E^\times$ by composing with the norm map $N_{L/E}: L^\times \rightarrow E^\times$, and $ \psi_L$ is the character of $L$ obtained
from the character $\psi_E$ of $E$ obtained by composing with the trace map from $L$ to $E$. The  equation (3) is:

%\begin{eqnarray*} \mu(\Nm_{L/E} \det(W_X)) & {=} &
 % \epsilon( 1/2,   M_1 \otimes {}^{\sigma \tau} \!  M_1 {}^{\vee} \otimes \mu^{-1} \circ \Nm_{L/E}, \psi_L) \cdot \det (    M_1 )(-1)^a \cdot  \omega_{L/E'}(-1)^{a(a-1)/2} , \\
%&\stackrel{(2)}{=} &  \epsilon( 1/2,   M_1 \otimes {}^{\sigma \tau} \!  M_1 ^{\vee} \otimes \mu^{-1} \circ \Nm_{L/E}, \psi_L) \cdot \det (    M_1 )(-1)^a ,
%\end{eqnarray*}
 
 \begin{align} 
 & \, \mu(\det(V)) \notag \\
=& \,  \epsilon( 1/2,   {\rm As}_{L/E}(M) \otimes \mu^{-1}, \psi_E) \cdot \det ({\rm As}_{L/E}(M))(e) \cdot  \omega_{K/F}(e^2)^{n(n-1)/2} \notag  \\
=& \,  \epsilon( 1/2,   {\rm As}_{L/E}(M) \otimes \mu^{-1}, \psi_E) \cdot \det(M)(e)^{n} \omega_{L/E}(e)^{n(n-1)/2} \cdot  \omega_{K/F}(e^2)^{n(n-1)/2} \notag  \\
= & \,  \epsilon( 1/2,   {\rm As}_{L/E}(M) \otimes \mu^{-1}, \psi_E) \cdot  \det(M)(e)^{n}   \cdot  \omega_{K/F}(-1)^{n(n-1)/2} \notag \\
 \stackrel{(3)}{=} & \,
  \epsilon( 1/2,   {\rm As}_{L/E}(M) \otimes \mu^{-1}, \psi_E) \cdot  \det(M_1)(-1)^n \cdot \det(M_2)(e)^{n}   \cdot  \omega_{K/F}(-1)^{n(n-1)/2}.   \notag 
  \end{align}

%\begin{eqnarray*}  \mu(\det(V)) & {=} &
  %\epsilon( 1/2,   {\rm As}_{L/E}(M) \otimes \mu^{-1}, \psi_E) \cdot \det ({\rm As}_{L/E}(M))(e) \cdot  \omega_{K/F}(e^2)^{n(n-1)/2} \\
%& {=} &
 % \epsilon( 1/2,   {\rm As}_{L/E}(M) \otimes \mu^{-1}, \psi_E) \cdot \det(M)(e)^{n} \omega_{L/E}(e)^{n(n-1)/2} \cdot  \omega_{K/F}(e^2)^{n(n-1)/2} \\
 % & {=} &
  %\epsilon( 1/2,   {\rm As}_{L/E}(M) \otimes \mu^{-1}, \psi_E) \cdot  \det(M)(e)^{n}   \cdot  \omega_{K/F}(-1)^{n(n-1)/2}
  %\\
 % & \stackrel{(3)}{=} &
  %\epsilon( 1/2,   {\rm As}_{L/E}(M) \otimes \mu^{-1}, \psi_E) \cdot  \det(M_1)(-1)^n \cdot \det(M_2)(e)^{n}   \cdot  \omega_{K/F}(-1)^{n(n-1)/2}  .   \end{eqnarray*}
\vskip 5pt

The proof that equations (1) and (2) imply equation (3)
depends essentially on relating $\epsilon( 1/2,   {\rm As}_{L/E}(M) \otimes \mu^{-1}, \psi_E)$
to  $\epsilon( 1/2,   {\rm As}_{L/E}(M_2) \otimes \mu^{-1}, \psi_E)$
and   $\epsilon( 1/2,   M_1 \otimes {}^{\sigma} \!M_1 ^{\vee} \otimes \mu^{-1}\circ N_{L/E}, \psi_L)$,
given that  $M= M_1 + {}^{\sigma} \!M_1^\vee + M_2$ with
 ${}^{\sigma} \!M_2^\vee = M_2$. We begin with the following calculation.

\begin{eqnarray*}
  {\rm As}_{L/E}(M) & =  & {\rm As}_{L/E}(M_1) + {\rm As}_{L/E}({}^{\sigma} \!M_1 ^\vee ) +
  {\rm As}_{L/E}(M_2)  \\
  &{}  & + \Ind_L^E( M_1 \otimes M^\tau_2)
  + \Ind_L^E(  {}^{\sigma} \!M_1^\vee  \otimes M^\tau_2)
  + \Ind_L^E( M_1 \otimes {}^{\sigma \tau}\! M_1^\vee) \\
  & \stackrel{(4)}{=}  & {\rm As}_{L/E}(M_1) + {\rm As}_{L/E}({}^{\sigma} \!M_1 ^\vee ) + \Ind_L^E( M_1 \otimes M^\tau_2)
  + \Ind_L^E(  {}^{\sigma} M_1^\vee  \otimes M^\tau_2) \\
  & {} & +{\rm As}_{L/E}(M_2) + \Ind_L^E(  {}^{\sigma \tau}\! M_1^\vee  \otimes M_1)   
\end{eqnarray*}
Now, for any representation $N$ of $W_E$, one has
\[ \epsilon(N+ {}^{\sigma} \!N ^{\vee}, \psi_E) = \det(N)(-1).\]
Hence, we find (using a calculation on the determinant of the Asai representation ${\rm As}_{L/E}(M_1)  \otimes \mu^{-1}$) that
\begin{eqnarray*}
  \epsilon( [{\rm As}_{L/E}(M_1) + {\rm As}_{L/E}({}^{\sigma} \!M_1 ^\vee )] \otimes \mu^{-1}, \psi_E) & = & \det
          [{\rm As}_{L/E}(M_1)] (-1) \mu^{a^2}(-1) \\
          & = &  \det(M_1)^a(-1) \omega_{L/E}(-1)^{a(a-1)/2} \mu^{a^2}(-1) \\
& \stackrel{(5)}{=} &  \det(M_1)^a(-1) \mu^{a}(-1)  ,
\end{eqnarray*}
where in the last equality, we have used that  $\omega_{L/E}(-1)=1$ since $\omega_{L/E} = \omega_{K/F} \circ N_{E/F}$.  Similarly, using that $M_2 \cong {}^{\sigma} \!M_2 ^ \vee$,
and a calculation on the determinant of the induced representation $\Ind_L^E( M_1 \otimes M^\tau_2)$, we find that:
\begin{align}
&\, \epsilon ( [\Ind_L^E( M_1 \otimes M^\tau_2)    + \Ind_L^E(  {}^{\sigma} \!M_1^\vee  \otimes M^\tau_2)]  \otimes \mu^{-1}, \psi_E)  \notag \\
 = & \, \det [\Ind_L^E( M_1 \otimes M^\tau_2) \otimes \mu^{-1}](-1)  \notag \\
  =  & \,  \det ( M_1 \otimes M^\tau_2) (-1), \notag \\
   \stackrel{(6)}{=} &\,  \det ( M_1)(-1)^n, \notag
\end{align}
where in the second equality, we have used the facts that $\omega_{L/E}(-1)=1$, and  $\det M_2(-1) =1$.

By the inductive nature of the epsilon factors for representations of dimension 0, we have,
\begin{eqnarray*}
  \epsilon( \Ind_L^E(  {}^{\sigma \tau}\! M_1^\vee  \otimes M_1)    \otimes \mu^{-1}, \psi_E)
  & {=} & \epsilon(  {}^{\sigma \tau}\! M_1^\vee  \otimes M_1 \otimes \mu^{-1}\circ  N_{L/E} , \psi_L) \cdot \epsilon(\omega_{L/E}, \psi_E)^{a^2},\\
  & {=} & \epsilon(  {}^{\sigma \tau}\! M_1^\vee  \otimes M_1 \otimes \mu^{-1}\circ  N_{L/E} , \psi_L) \cdot \omega_{L/E}(e)^{a},\\
  & \stackrel{(7)}{=} & \epsilon(  {}^{\sigma \tau}\! M_1^\vee  \otimes M_1 \otimes \mu^{-1}\circ  N_{L/E} , \psi_L)
  \cdot \omega_{K/F}(-e^2)^{a},
\end{eqnarray*}

\noindent From equations (4), (5), (6), and (7), we see that
\begin{eqnarray*}
  \epsilon( {\rm As}_{L/E}(M) \otimes \mu^{-1}, \psi_E) & = &
\epsilon( [{\rm As}_{L/E}(M_2) + \Ind_L^E(  {}^{\sigma \tau}\! M_1^\vee  \otimes M_1)]    \otimes \mu^{-1}, \psi_E)  \\ & &  \cdot  \det(M_1)^a(-1) \mu^{a}(-1) \cdot 
\det ( M_1) (-1)^n,\\
& = & \epsilon( {\rm As}_{L/E}(M_2)\otimes \mu^{-1}, \psi_E)
\epsilon(  {}^{\sigma \tau}\! M_1^\vee  \otimes M_1 \otimes \mu^{-1}\circ  N_{L/E} , \psi_L)   \\
& {}&  \cdot  \det(M_1)^a(-1) \mu^{a}(-1) \cdot 
\det ( M_1) (-1)^n \omega_{K/F}(-e^2)^{a^2}, \\
& \stackrel{(8)}{=} & \epsilon( {\rm As}_{L/E}(M_2)\otimes \mu^{-1}, \psi_E)
\epsilon(  {}^{\sigma \tau}\! M_1^\vee  \otimes M_1 \otimes \mu^{-1}\circ  N_{L/E} , \psi_L)   \\
& {}&   \cdot \det(M_1)^{n+a}(-1) \mu^{a}(-1)  \omega_{K/F}(-e^2)^{a},
\end{eqnarray*}

\noindent By Equation (8), one sees that equations (1) and (2) imply equation (3), using the following identity from Lemma \ref{Weil-L} of elements of $F^\times/N_{E/F}( E^\times)$:
\[ (-k^2)^a \det W' \cdot N_{L/E} \det (W) = \det V,\]
as well as the relation of the characters $\omega_{K/F}$ and $\omega_{E/F}$ to the (quadratic) Hilbert symbol of $F$:
\begin{eqnarray*}
  \omega_{K/F}(x) & = & (k^2,x) \\
  \omega_{E/F}(x) & = & (e^2,x),
  \end{eqnarray*}
therefore,
\[   \omega_{K/F}(e^2)  =  (k^2,e^2) = \omega_{E/F}(k^2) .\]
\vskip 5pt

Finally, under the standard identification of the character of component groups under parabolic induction,
it is easy to see that the recipe in Conjecture \ref{conj-local-general}(iii) holds. This amounts to the identity (for $M_i = {}^{\sigma} \!M_i ^\vee$):
\[ \epsilon(\Ind_L^E(M_i^\tau \otimes [M_1 + {}^{\sigma} \!M_1^\vee]\otimes \mu^{-1}, \psi_{E,e}) = 1,\] 
which is easy to see.
\vskip 5pt

We have thus finished the proof of Proposition \ref{coro}.
 \end{proof}

\vskip 5pt

 \begin{remark}
  The arguments given here also prove that for any tempered representation $\pi$ of $\U(V_K)$ which is a direct summand of a representation  of $\U(V_K)$ parabolically induced from a unitary cuspidal representation of a Levi subgroup of $\U(V_K)$,
  \[\Ext^i_{\U(V)}[ \pi, \omega_{V,\psi}] = 0, ~~{\rm ~for ~all~} i \geq 1.\]
  This vanishing of higher Ext's is as proposed in \cite{P3}, but is not as precise as Theorem \ref{Chen}.
  \end{remark}

  \subsection{\bf $\U(V)$-orbits on the full flag variety}
  Using Theorem \ref{ps}, and  its corollary, Proposition \ref{coro}, one can inductively deduce Conjecture \ref{conj-local-general}(i)-(iii)
  for irreducible  unitary principal series representations of $\U(V_K)$ induced from a Borel subgroup.  
 However,  we shall give an alternative treatment  involving the analysis of the $\U(V)$-orbits on the  full flag variety of $\U(V_K)$, which  has a rather nice structure that
  may be of independent interest.   \vskip 5pt

  \begin{prop}  \label{P:orbits}
  Let
  \begin{itemize}
  \item   $L = E \otimes K$ be a biquadratic extension and let $E'$ be the third quadratic subfield of $L$.
\item  $V$ be a skew-Hermitian space relative to $E/F$ of dimension $n$,  
\end{itemize}
 For a skew-Hermitian space $W$ relative to $L/E'$, let $ {\rm Res}_{L/E}(W)$ be the same space $W$ regarded as a vector space over $E$ (of twice the dimension) together with the associated $E/F$-skew-Hermitian structure (obtained by taking the trace), so that
  \[  \U(W) \subset \U({\rm Res}_{L/E}(W)). \]
 Then we have the following.
  \vskip 5pt
  
  (i) If $\dim V = n  =2d$ is even, there are $2^{d-1}$ open $\U(V)$-orbits on the flag variety of $\U(V_K)$. The open orbits are parameterized by ordered  collection of lines
  \[  \mathcal{L} = \{ L_1, L_2,..., L_d \}, \]
   where each $L_i$ is a rank 1 skew-Hermitian space relative to $L/E'$,
  subject to the condition of $V$-relevance:
  \[  \det(V) = \prod_i \det( {\rm Res}_{L/E}(L_i)). \]
  The stabilizer group for the orbit corresponding to $\mathcal{L}$ is 
  \[  \U(\mathcal{L})  = \prod_i  \U(L_i) \subset \prod_i  \U( {\rm Res}_{L/E}( L_i))
  \subset  \U(V). \]
       \vskip 5pt
    
    (ii) Suppose that $\dim V = n = 2d+1$ is odd. There are $2^d$ open $\U(V)$-orbits on the flag variety of $\U(V_K)$. The open orbits are parameterized by ordered
    collection  
    \[ \mathcal{L} = \{ L_1, L_2,..., L_d ; V_0 \}, \]
where each $L_i$ is a rank 1 skew-Hermitian space relative to $L/E'$, and $V_0$ is a rank $1$ skew-Hermitian space relative to $E/F$, subject to the condition of $V$-relevance:
 \[  \det(V) = \prod_i \det( {\rm Res}_{L/E}(L_i)) \cdot \det(V_0) \]
 In particular, $V_0$ is determined by $\{ L_1,..., L_d \}$. The stabilizer group associated to $\mathcal{L}$ is
 \[  \U(\mathcal{L}) = \prod_i \U(L_i) \times \U(V_0) \subset \prod_i  \U( {\rm Res}_{L/E}( L_i)) \times \U(V_0) \subset   \U(V). \] 
  \end{prop}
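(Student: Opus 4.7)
My plan is to prove Proposition~\ref{P:orbits} by induction on $n = \dim V$, combining the Mackey-theoretic analysis carried out in Theorem~\ref{ps} with the structural description of stabilizers provided by Lemma~\ref{Weil-L}. The geometric picture guiding the proof is that an open $\U(V)$-orbit on the full flag variety $\U(V_K)/B$ should correspond to a full isotropic flag $X_1 \subset X_2 \subset \cdots \subset X_d$ in $V_K$ (with $L$-dimensions $1,2,\dots,d = \lfloor n/2\rfloor$) in ``$\tau$-general position'' with its Galois conjugate, meaning that at each stage $X_i \cap X_i^\tau = 0$ and $X_i + X_i^\tau$ descends to a nondegenerate $E/F$-skew-Hermitian subspace of $V$.

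For the inductive step I would factor the map $\U(V_K)/B \to \U(V_K)/P$, where $P \subset \U(V_K)$ is the maximal parabolic stabilizing the first isotropic line $X_1$, with Levi $\GL_1(L) \times \U(X_1^\perp/X_1)$; the fiber is the full flag variety of $\U(X_1^\perp/X_1)$, since the $\GL_1(L)$-factor is a torus. By the open-orbit analysis in the proof of Theorem~\ref{ps} (its Case 3), an open $\U(V)$-orbit on $\U(V_K)/P$ is represented by an isotropic line $X_1$ with $X_1 \cap X_1^\tau = 0$; Lemma~\ref{Weil-L} applied with $a = 1$ then endows $X_1$ with a canonical rank one $L/E'$-skew-Hermitian structure $L_1$, identifies the $\tau$-descent of $X_1 + X_1^\tau$ with $R_E(L_1)$ as $E/F$-skew-Hermitian spaces, and computes the $\U(V)$-stabilizer of $X_1$ as $\U(L_1) \times \U(V')$ where $V' := R_E(L_1)^\perp \subset V$ has dimension $n-2$. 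Moreover $X_1^\perp/X_1$ is equivariantly identified with $V' \otimes_F K$, so applying the inductive hypothesis to the $\U(V')$-action on the flag variety of $\U(V' \otimes_F K)$ completes the inductive step and gives both the parameterization and the stabilizer formula.

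Iterating yields $\mathcal{L} = \{L_1,\dots,L_d\}$ when $n = 2d$ and $\mathcal{L} = \{L_1,\dots,L_d; V_0\}$ when $n = 2d+1$ (with $V_0$ the residual one-dimensional $E/F$-skew-Hermitian space, whose unitary group's flag variety is a single point). The $V$-relevance identity on determinants is immediate from the resulting orthogonal decomposition $V = R_E(L_1) \perp \cdots \perp R_E(L_d) (\perp V_0)$ together with Lemma~\ref{disc}. For the count of open orbits with fixed stabilizer type $\U(\mathcal{L})$, the standard orbit-cohomology formula gives
\[
\#\{\text{open }\U(V)\text{-orbits}\}
\;=\;
\ker\!\bigl(H^1(F,\U(\mathcal{L})) \to H^1(F,\U(V))\bigr),
\]
which evaluates to $2^{d-1}$ in the even case and $2^d$ in the odd case, once one notes that each $H^1(F,\U(L_i)) \cong E'^\times/N_{L/E'}(L^\times)$ has order two and maps nontrivially into $H^1(F,\U(V)) \cong F^\times/N_{E/F}(E^\times)$ (and likewise for the $\U(V_0)$ factor). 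The main obstacle I anticipate is showing cleanly that distinct ordered $V$-relevant tuples $\mathcal{L}$ really produce distinct $\U(V)$-orbits, rather than merely distinct points in a common orbit: this should follow from the final assertion of Lemma~\ref{disc}, which says that the $L/E'$-isometry class of $W_X$ is recovered from the $E/F$-isometry class of $R_E(W_X)$, so that the data $\mathcal{L}$ is a genuine $\U(V)$-orbit invariant and the inductive construction gives a bijection between open orbits and $V$-relevant ordered tuples.
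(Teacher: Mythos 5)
The paper states Proposition \ref{P:orbits} without proof, but your argument is correct and is exactly the intended one: it assembles the Case 3 open-orbit analysis from the proof of Theorem \ref{ps} with Lemmas \ref{Weil-L} and \ref{disc} via the fibration $\U(V_K)/B \to \U(V_K)/P$, and your Galois-cohomology count of open orbits mirrors the paper's own sketch for the $E=K$ case in the alternative proof of Corollary \ref{C:ps}. The only point to tighten is that the openness condition on the line $X_1$ in the inductive step is \emph{both} $X_1 \cap X_1^{\tau} = 0$ and $X_1 \cap X_1^{\tau \perp} = 0$ (so that $X_1 + X_1^{\tau}$ descends to a nondegenerate plane in $V$), as in Case 3 of Theorem \ref{ps} --- a condition you state correctly in your opening paragraph but abbreviate later.
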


\vskip 10pt

\subsection{\bf Unitary principal series}
Using Proposition \ref{P:orbits} and Theorem \ref{ps}, we can  study the restriction of a unitary principal series
\[  \Pi = \Ind_B^{\U(V_K)}  \chi   \]
to $\U(V)$ and  show:

\begin{thm} \label{T:ups2}
  Conjecture   \ref{conj-local-general}(i)-(iii) hold
  for the tempered L-packet 
consisting of the constituents of the unitary principal series representation $\Pi$.
\end{thm}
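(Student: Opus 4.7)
\bigskip

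\noindent\textbf{Proof proposal for Theorem \ref{T:ups2}.}

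The plan is to mirror the alternative proof of Corollary \ref{C:ps} given in \S \ref{SS:red-sc}, replacing the two open orbits in the $E=K$ setting with the richer open-orbit structure of Proposition \ref{P:orbits}, and replacing the single application of the Moen--Rogawski theorem per line with one adapted to the tower $L/E'$ (and, in the odd case, a single factor adapted to $E/F$). First I would write $\Pi = \Ind_B^{\U(V_K)}(\chi_1 \otimes \cdots \otimes \chi_n)$ for unitary characters $\chi_i$, so its L-parameter is $M = \bigoplus_i \chi_i$, an $n$-dimensional conjugate-dual representation of $W_L$ of sign $(-1)^{n-1}$, and note that the inducing data is tempered in each factor so the hypotheses of Theorem \ref{ps} are satisfied at every inductive step.

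The first reduction is to apply Theorem \ref{ps} inductively along a chain of maximal parabolic subgroups refining $B$. At each step, the non-open $\U(V)$-orbits contribute zero to $\Ext^*_{\U(V)}(\pi_X, \omega_{V,\psi,\mu})$, and the open orbits split into products according to the seesaw/base-change-dichotomy of Lemma \ref{Weil-L}. Iterating this and invoking Proposition \ref{P:orbits} gives
\[
\Hom_{\U(V)}(\Pi, \omega_{V,\psi,\mu})
\;\cong\; \bigoplus_{\mathcal{L}} \Hom_{\U(V)}\!\left(\ind_{\U(\mathcal{L})}^{\U(V)} \chi_{\mathcal{L}}, \, \omega_{V,\psi,\mu}\right),
\]
where $\mathcal{L}$ runs over $V$-relevant ordered collections as in Proposition \ref{P:orbits} and $\chi_{\mathcal{L}}$ is the appropriate restriction of $\chi_1 \otimes \cdots \otimes \chi_n$ to $\U(\mathcal{L})$. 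By Frobenius reciprocity together with the factorization $\omega_{V,\psi,\mu}|_{\U(\mathcal{L})} \cong \bigotimes_i \omega_{L_i, \psi, \mu \circ N_{L/E}}$ (and an extra factor $\omega_{V_0,\psi,\mu}$ in the odd case), each summand becomes a product of rank-one Hom spaces, to which Theorem \ref{T:moen} applies directly. This already yields the multiplicity-one statement (i), since each rank-one factor has dimension at most one.

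Next, applying Moen--Rogawski (Theorem \ref{T:moen}) to each rank-one factor pins down $\det(L_i) \in E'^{\times}_0/N_{L/E'}(L^{\times})$ and (in the odd case) $\det(V_0) \in E_0^{\times}/N_{E/F}(E^{\times})$ in terms of local root numbers involving the characters $\chi_i / \chi_i^{\sigma\tau} \cdot (\mu \circ N_{L/E})^{-1}$ (and $\chi_i/\chi_i^\sigma \cdot \mu^{-1}$ for the $V_0$ factor). This shows at most one $V$-relevant $\mathcal{L}$ can give a nonzero contribution, and reduces the remaining problem (conjugate existence in (ii) and the determinant formula in (iii)) to an epsilon factor identity of the form
\[
\mu(\det V) \;\stackrel{?}{=}\; \epsilon\!\left(1/2, \As_{L/E}(M)\cdot\mu^{-1}, \psi_E\right)\cdot \det(M)(e)^n \cdot \omega_{K/F}(-1)^{n(n-1)/2}
\]
(after using remark (a) of \S \ref{SS:conj-local-general}). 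The right-hand side can be expanded via Lemma \ref{L:Asai}(a) as a product of ``diagonal'' terms $\As_{L/E}(\chi_i) \cdot \mu^{-1}$ (which feed directly into the Moen--Rogawski recipe for each $L_i$, respectively for $V_0$), together with the ``cross'' terms $\Ind_L^E(\chi_i \otimes \chi_j^\tau) \cdot \mu^{-1}$ for $i<j$.

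The hard part will be the systematic bookkeeping of these cross terms. Using inductivity of $\epsilon$-factors in dimension zero and the standard identities $\epsilon(W)\epsilon(W^\vee)=\det(W)(-1)$ together with $\epsilon(W)=\epsilon(W^\sigma)$, each cross term contributes a sign involving $\chi_i(-1)$, $\chi_j(-1)$, $\det(M)(e)$, and $\omega_{L/E}(-1)$ or $\omega_{K/F}(-1)$; pairing these against the expected formula and recalling $\omega_{L/E}=\omega_{K/F}\circ N_{E/F}$ and the Hilbert symbol identity $\omega_{K/F}(e^2)=\omega_{E/F}(k^2)$ from the proof of Proposition \ref{coro} should produce the required matching. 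This is essentially the same mechanism that worked in the $E=K$ case, but the arithmetic is heavier because two distinct Galois involutions $\sigma, \tau$ intervene and because the rank-one base field alternates between $L/E'$ and (for $V_0$) $E/F$; the odd case in particular requires keeping careful track of how $\det(V_0)$ interacts with the product $\prod_i \det({\rm Res}_{L/E}(L_i))$ via Lemma \ref{disc}. Once this bookkeeping is done, statements (ii) and (iii) follow simultaneously, completing the proof.
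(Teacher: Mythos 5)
Your overall strategy is exactly the paper's: reduce to the open orbits via Theorem \ref{ps}, parameterize them by $V$-relevant collections $\mathcal{L}$ as in Proposition \ref{P:orbits}, apply Moen--Rogawski for the tower $L/E'$ line by line (using that $\omega_{V,\psi,\mu}$ restricts to $\omega_{L_i,\psi,\mu\circ N_{L/E}}$ on $\U(L_i)$), and then match the product of rank-one root numbers against $\epsilon(1/2,\As_{L/E}(M)\cdot\mu^{-1},\psi_E)$ via Lemma \ref{L:Asai}(a). Parts (i) and the "at most one $\mathcal{L}$" step are fine as you describe them.

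There is, however, a concrete error in your setup that would derail the bookkeeping you correctly identify as the hard part. The Borel subgroup of the quasi-split group $\U(V_K)$ has maximal torus $(L^\times)^d$ (for $n=2d$), so the inducing datum is $\chi_1\otimes\cdots\otimes\chi_d$, not $\chi_1\otimes\cdots\otimes\chi_n$, and the L-parameter is $M=\bigoplus_{i=1}^d M_i$ with $M_i=\chi_i+(\chi_i^{\sigma})^{-1}$ two-dimensional --- not $M=\bigoplus_i\chi_i$. This matters because the correct "diagonal" blocks in Lemma \ref{L:Asai}(a) are the $\As_{L/E}(M_i)$, each of which decomposes as $\chi_i|_{E^\times}+(\chi_i^{\sigma})^{-1}|_{E^\times}+\Ind_L^E(\chi_i/\chi_i^{\tau\sigma})$; it is precisely the induced summand whose epsilon factor, after inductivity in degree zero, produces $\epsilon(1/2,\chi_i/\chi_i^{\tau\sigma}\cdot(\mu\circ N_{L/E})^{-1},\psi_L)$ and thereby meets the Moen--Rogawski condition for the line $L_i$. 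With your decomposition the diagonal terms $\As_{L/E}(\chi_i)=\chi_i|_{E^\times}$ are one-dimensional and carry no such root number, so the two sides cannot match. Relatedly, the genuine cross terms $\Ind_L^E(M_i^\tau\otimes M_j)$ for $i<j$ turn out to have epsilon factor identically $1$ (because each $M_j$ is conjugate-dual), rather than contributing the nontrivial signs you anticipate; all the arithmetic content sits in the diagonal blocks together with the constants $\omega_{E/F}(-1)^d$, $\omega_{K/F}(-e^2)^d$ and the discriminant identity $\det(L_i^E)=N_{L/E}(k\cdot\det(L_i))$ from Lemma \ref{disc}. Once the parameter is set up correctly the rest of your plan goes through as in the paper (which, incidentally, only writes out the even case and leaves the odd case with the extra $V_0$ factor to the reader, as you propose to handle it).
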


\begin{proof}
 The argument is similar to that of Corollary \ref{C:ps} and it will be convenient to treat the cases of even or odd $\dim V$ separately. 
We shall only write down the details for the case of even $\dim V$, leaving the odd case as an exercise for the interested reader.

\vskip 5pt

Assume thus that $\dim V = n = 2d$ is even, so that
\[  \Pi = \Ind_B^{\U(V_K)} (\chi_1 \otimes \cdots \otimes \chi_d) \]
for some unitary characters $\chi_i$ of $L^{\times}$.
By Theorem \ref{ps},    we see that
\begin{equation} \label{E:ups}
  \Hom_{\U(V)}(\Pi, \omega_{\psi,\mu, V}) \cong \bigoplus_{\mathcal{L}}  \bigotimes_i \Hom_{\U(L_i)}(\chi_i, \omega_{\psi, \mu, L_i^E}) \end{equation}
where the sum runs over $V$-relevant $\mathcal{L}$'s and we have written $L_i^E$ for $ {\rm Res}_{L/E}(L_i)$.
\vskip 5pt

We thus need to analyze the nonvanishing of $\Hom_{\U(L_i)}(\chi_i, \omega_{\psi, \mu, L_i^E})$.  
As in the $E = K$ case, this comes down to an application of the theorem of Moen and Rogawski, i.e. Theorem \ref{T:moen}. 
Indeed, by the functorial property of the Weil representation, the restriction of $\omega_{\psi,\mu, L_i^E}$ of $\U(L_i)$ is simply the Weil representation 
$\omega_{\psi_K, \mu \circ N_{L/E}, L_i}$ of $\U(L_i)$. Hence
\[ \Hom_{\U(L_i)}(\chi_i, \omega_{\psi, \mu, L_i^E}) \ne 0  \]
if and only if
\begin{equation} \label{E:moen}
\mu( N_{L/E}(\det(L_i))) = \chi_i(-1) \cdot \epsilon(1/2, \chi_i/ \chi_i^{\tau \sigma} \cdot (\mu \circ N_{L/E})^{-1}, \psi_L),\end{equation}
where the local root number is considered over $L$.  
\vskip 5pt

Hence, we see that at most one $\mathcal{L}$ in (\ref{E:ups}) has nonzero contribution, and this $\mathcal{L} = \{L_i \}$ is characterized by having (\ref{E:moen}) holding for all $i$. 
By Lemma \ref{disc},
 \[  \det(L_i^E) =  N_{L/E}( k \cdot \det(L_i))   \in F^{\times}/ N_{E/F}(E^{\times})  \]
where $k \in K_0^{\times}$. Thus, if $\Hom_{\U(V)}(\Pi, \omega_{\psi,\mu, V}) \ne 0$, then
\begin{align}  \label{A:V}
 \mu(\det(V))  &=\prod_i \mu(\det(L_i^E))  \notag \\
 &= \prod_i \mu( N_{L/E}(k \cdot \det(L_i))   \notag \\
&= \omega_{E/F}(-k^2)^d \cdot \prod_i \chi_i(-1) \cdot \prod_i  \epsilon(1/2, \chi_i/ \chi_i^{\tau \sigma} \cdot (\mu \circ N_{L/E})^{-1}, \psi_L), 
\end{align}
with the last equality following by (\ref{E:moen}).
\vskip 5pt

On the other hand, according to Conjecture \ref{conj-local-general}(iii), one should expect that
\[  \mu(\det(V)) =  \epsilon(1/2, {\rm As}_{L/E}(M) \cdot\mu^{-1}, \psi_E) \cdot \det({\rm As}(M))(e) \cdot \omega_{K/F}(e^2)^{n(n-1)/2}  \] 
where 
\[  M = \bigoplus_i M_i =  \bigoplus_i (\chi_i + (\chi_i^{\sigma})^{-1})   \]
is the L-parameter of $\Pi$.  Let us explicate this and compare it with the expression for $\mu(\det(V))$ in (\ref{A:V}).
\vskip 10pt

By Lemma \ref{L:Asai}(a), 
\[  {\rm As}_{L/E}(M) =  \bigoplus_i {\rm As}_{L/E}(M_i)  \oplus \bigoplus_{i<j} \Ind_L^E (M_i^{\tau} \otimes M_j), \]
 with $M_i = \chi_i + (\chi_i^{\sigma})^{-1}$. Likewise, by Lemma \ref{L:Asai}(a) and (c), 
 \[  {\rm As}_{L/E}(M_i) = \chi_i|_{E^{\times}} + (\chi_i^{\sigma})^{-1}|_{E^{\times}}  + \Ind_L^E \chi_i/ \chi_i^{\tau \sigma}, \]
and it follows that 
\begin{align} 
 &\epsilon(1/2,  {\rm As}_{L/E}(M_i)  \cdot \mu^{-1}, \psi_E)  \notag \\
 = &\chi_i(-1) \cdot \omega_{E/F}(-1) \cdot \epsilon(1/2, \Ind_L^E \chi_i/ \chi_i^{\tau \sigma} \cdot \mu^{-1}, \psi_E) \notag \\
 =&\chi_i(-1) \cdot \omega_{E/F}(-1) \cdot  \omega_{K/F}(-e^2)  \cdot \epsilon(1/2, \chi_i/ \chi_i^{\tau \sigma} \cdot (\mu \circ N_{L/E})^{-1}, \psi_L). \notag 
 \end{align} 
 In the above computation, we have used repeatedly the facts:
\begin{enumerate}
\item $  \epsilon(1/2, N + (N^{\sigma})^{\vee}, \psi_E) =\det(N)(-1).$
\item
 $\epsilon(1/2, \Ind_L^E N, \psi_E) = \epsilon(1/2, N, \psi_L) \cdot \epsilon(1/2, \omega_{L/E},\psi_E)^{\dim N}.$
\item$ \epsilon(1/2, \omega_{L/E},\psi_E) = \omega_{L/E}(e) = \omega_{K/F}(-e^2),$
since $\omega_{L/E}$ is a conjugate-orthogonal character of $E^{\times}$.
\end{enumerate}
\vskip 5pt

\noindent For $i< j$, a similar computation using the above facts shows that
\[ 
\epsilon(1/2, \Ind_L^E (M_i^{\tau} \otimes M_j), \psi_E)   =1 \]
Hence, we have:
\begin{align} 
\epsilon(1/2, {\rm As}(M) \cdot\mu^{-1}, \psi_E) = &\prod_i \chi_i(-1) \cdot \omega_{E/F}(-1)^d \cdot  \omega_{K/F}(-e^2)^d    \cdot  \notag \\
&\prod_i \epsilon(1/2, \chi_i/ \chi_i^{\tau \sigma} \cdot (\mu \circ N_{L/E})^{-1}, \psi_L)  \notag
\end{align}
On the other hand, using Lemma \ref{L:Asai}(d), 
 \[ \det({\rm As}(M))(e) \cdot \omega_{K/F}(e^2)^{n(n-1)/2} = \omega_{K/F}(-1)^d. \]
 Hence, Conjecture \ref{conj-local-general}(iii) predicts that
 \[  \mu(\det(V)) = \prod_i \chi_i(-1) \cdot \omega_{E/F}(-1)^d \cdot \omega_{K/F}(e^2)^d \cdot \prod_i \epsilon(1/2, \chi_i/ \chi_i^{\tau \sigma} \cdot (\mu \circ N_{L/E})^{-1}, \psi_L). \]
 Comparing this with (\ref{A:V}) and noting that
 \[  \omega_{E/F}(k^2) = (e^2, k^2) = \omega_{K/F}(e^2), \]
 we see that Conjecture \ref{conj-local-general}(iii) holds for the L-packet defined by unitary principal series representations of $\U(V_K)$.
\end{proof}

The reader will notice that we have not shown Conjecture  \ref{conj-local-general}(iv). For this, one would need to explicate which irreducible summand of the unitary principal series representation $\Pi$ has nonzero contribution to $\Hom_{\U(V)}(\Pi, \omega_{V, \mu, \psi})$. The different summands of $\Pi$ can be distinguished from each other by the effects on the normalized standard intertwining operators (i.e. the so-called local intertwining relations). We do not know how to exploit this to establish Conjecture  \ref{conj-local-general}(iv). However, in a paper \cite{CG} of Rui Chen and the first author, this remaining issue is taken care of by means of theta correspondence.

 \vskip 15pt

\section{\bf When $E \ne K$; Global case}  \label{S:global-general}
In this final section, we will formulate the global conjecture in the general case where $E \ne K$ are two distinct quadratic extensions of a global field  $F$. 
 We will use the notations of \S \ref{SS:global} in this global setting.

\vskip 10pt

Let $\Pi$ be a cuspidal automorphic representation of $\U(V_K)$ with a generic global L-parameter $M_{\Pi}$, so that 
\[ M_{\Pi} = \bigoplus_{i=1}^d M_i, \]
is a sum of conjugate-dual cuspidal representations $M_i$ of $\GL_{m_i}(L \otimes \A_F)$
of sign $(-1)^{n-1}$ where $L= E \otimes_F K$.
Now  we have the global period integral
\[  \mathcal{P}: \Pi \otimes \omega_{V,\psi,\mu} \longrightarrow \C \]
defined as in \S \ref{SS:global}. The global conjecture is:
\vskip 5pt

\begin{conj} \label{conj-global-general}
The global period integral $\mathcal{P}$ is nonzero if and only if (denoting $V_v=V\otimes_F F_v$)
\vskip 5pt

\begin{itemize}
\item[(a)] For all places $v$ of $F$,  $\Hom_{\U(V_v)}(\Pi_v, \omega_{V_v,\psi, \mu_v})  \ne 0$.
\vskip 5pt

\item[(b)]  the twisted Asai  automorphic L-function \cite{F} satisfies:
\[  L(1/2, \Pi, {\rm As}_{L/E} \times \mu^{-1}) \ne 0. \]
\end{itemize}
Further, if $    L(1/2, \Pi, {\rm As}_{L/E} \times \mu^{-1}) \ne 0$,
then there exists
a skew-Hermitian space  $V$ of dimension $n$ over $E$
such that the global period integral $\mathcal{P}$ is nonzero.

\end{conj}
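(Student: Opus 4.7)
The plan is to attack Conjecture \ref{conj-global-general} by two complementary methods: an Eulerian period computation toward the forward direction, and a comparison of relative trace formulas à la Jacquet-Rallis toward the converse and existence statements.

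For the ``only if'' direction, nonvanishing of each local Hom space in (a) is automatic from the factorization of the global period into local functionals. To deduce (b), I would formulate and establish a refined Ichino-Ikeda-type identity extending Conjecture \ref{conj-refined-global} to the $E \ne K$ setting. The anticipated shape is
\[ \mathcal{P}\otimes\overline{\mathcal{P}} = \frac{L(1/2, \Pi, {\rm As}_{L/E}\times\mu^{-1})}{L(1,M^{\vee}_{\U(V_K)})} \cdot \left.\frac{L(s, M^{\vee}_{\GL(V_K)})}{L(s,\Pi,{\rm Ad})}\right|_{s=1}\cdot\prod_v \mathcal{I}_v^{\#}, \]
where the local canonical periods $\mathcal{I}_v^{\#}$ are normalized (following the recipe in (\ref{E:local-integral2})) so that each is nonvanishing precisely when the corresponding local Hom space is nonzero---a consequence of Conjecture \ref{conj-local-general}. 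The unramified computation would proceed by an extension of Yang's inner product formula \cite{Y} used in \S \ref{SS:global-rank1} and of the doubling zeta integral of Xue \cite{X2}, with the Asai L-factor $L(s, \Pi_v, {\rm As}_{L_v/E_v}\times\mu_v^{-1})$ emerging as the local answer at unramified places by a Macdonald-type computation.

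For the ``if'' direction and the existence statement, I would follow the comparison of relative trace formulas à la Jacquet-Rallis, as developed in the (untwisted) skew-Hermitian case by Liu \cite{L} and Xue \cite{X1, X2}. One sets up an RTF on $\U(V_K)\times\U(V_K)$, with test functions integrated against the $(\U(V),\omega_{V,\psi,\mu})$-period and summed over a coherent family of $V$'s, whose spectral side sees $\sum_V |\mathcal{P}_V|^2$, together with a parallel RTF on $\GL_n(L)\times\GL_n(L)$ whose spectral side sees $L(1/2,\Pi,{\rm As}_{L/E}\times\mu^{-1})$. Smooth transfer and a fundamental lemma would then match the two sides. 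Once such a comparison is in place, the coherence of the family $\{V_v\}$ supporting the nonvanishing is controlled by the product formula for local root numbers: granting Conjecture \ref{conj-local-general}(iii) locally, one checks---exactly as in the paragraph following Conjecture \ref{conj-global}---that $\prod_v \mu_v(\det(V_v)) = 1$ holds iff $\epsilon(1/2,\Pi,{\rm As}_{L/E}\times\mu^{-1}) = 1$, which is itself forced by $L(1/2,\Pi,{\rm As}_{L/E}\times\mu^{-1}) \ne 0$ via the functional equation.

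The principal obstacle is two-fold. First, the local Conjecture \ref{conj-local-general} must itself be established in sufficient generality to make the coherence argument run; part (iii) in particular involves the combination $\det({\rm As}_{L/E}(M))(e)\cdot\omega_{K/F}(e^2)^{n(n-1)/2}$, whose independence of the trace-zero element $e$ rests on delicate compatibilities of the Asai construction with the full biquadratic field $L$. Second, and more seriously, the smooth transfer and fundamental lemma in this twisted Fourier-Jacobi setting will combine the difficulties of the Fourier-Jacobi case of the original GGP (where the Weil representation complicates the orbital integrals, as in \cite{L, X1, X2}) with those introduced by the second quadratic extension $K$ (which forces Asai-type harmonic analysis on $\GL_n(L)$ relative to $\GL_n(E)$ into the linear side of the RTF). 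It is precisely this comparison, as noted in the introduction, that is being pursued in the thesis work of D.\ Wang.
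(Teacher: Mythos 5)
The statement you are addressing is Conjecture \ref{conj-global-general}: it is an open conjecture, the paper offers no proof of it, and what you have written is a research programme rather than a proof. As a programme it coincides with the one the paper itself lays out: the forward direction of (a) from the Eulerian factorization, (b) from a refined Ichino--Ikeda-type identity whose normalized local factors $\mathcal{I}_v^{\#}$ are expected to detect the local Hom spaces, the converse and the existence statement from a Jacquet--Rallis-style comparison of relative trace formulas (which the paper explicitly defers to the thesis work of D.~Wang), and the coherence of the family $\{V_v\}$ from the product formula for root numbers exactly as in the discussion following Conjecture \ref{conj-global}. Since every substantive step --- the local conjecture, the unramified computation, smooth transfer, the fundamental lemma --- is listed by you as an obstacle rather than carried out, nothing is actually established; this is an accurate map of the terrain, not a crossing of it.

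One concrete error is worth flagging. Your proposed refined identity is transplanted verbatim from the $E=K$ case (Conjecture \ref{conj-refined-global}) and is not the correct shape for $E \ne K$. Here $\Pi$ is cuspidal on the unitary group $\U(V_K)$, not on a general linear group, so its adjoint $L$-function is holomorphic and nonzero at $s=1$ for a generic parameter and there is no pole of $L(s, M^{\vee}_{\GL(V_K)})$ to cancel against; the paper's refined formula instead reads
\[ \mathcal{P} \otimes \overline{\mathcal{P}} \; = \; \frac{1}{|S_{\Pi}|} \cdot \frac{L(1, M^{\vee}_{\U(V_K)})}{L(1, M^{\vee}_{\U(V)})} \cdot \frac{L(1/2, \Pi, {\rm As}_{L/E} \times \mu^{-1})}{L(1, \Pi, {\rm Ad})} \cdot \prod_v  \mathcal{I}_v^{\#}, \]
with $|S_{\Pi}| = 2^{d}$ for $M_{\Pi} = \oplus_{i=1}^{d} M_i$. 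The factor $1/|S_{\Pi}|$, absent from your formula, is the Ichino--Ikeda constant reflecting the non-singleton A-packets of unitary groups; it is already visible in the rank-one verification (\ref{E:refinedlast}), where the $1/2$ appears in contrast to (\ref{E:refinedfirst}). Your identity as written would fail the $n=1$ check that the paper performs via the Rallis inner product formula.
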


\vskip 10pt

As in \S \ref{SS:global-refined}, after fixing decompositions of Tamagawa measures and Petersson inner products,  one expects a refined conjecture of the following form:
\[  \mathcal{P} \otimes \overline{\mathcal{P}} =  \frac{1}{|S_{\Pi}|} \cdot \frac{L(1, M^{\vee}_{\U(V_K)})}{L(1, M^{\vee}_{\U(V)})} \cdot \frac{L(1/2, \Pi, {\rm As}_{L/E} \times \mu^{-1})}{L(1, \Pi, Ad)} \cdot \prod_v  \mathcal{I}_v^{\#} \]
where 
\vskip 5pt

\begin{itemize}
\item $\mathcal{I}_v^{\#}$ is a normalized local period integral
\[   \mathcal{I}_v^{\#} = 
 \frac {L(1, M^{\vee}_{\U(V_v)})}{L(1, M^{\vee}_{\U(V_{K,v})})} \cdot \frac {L(1, \Pi_v, Ad)}{L(1/2, \Pi_v, {\rm As}_{L_v/E_v} \times \mu_v^{-1})} \cdot \mathcal{I}_v\]
with
\[  \mathcal{I}_v : \Pi_v \otimes \overline{\Pi_v} \otimes \overline{\omega_{\psi_v, \mu_v, V_v} } \otimes \omega_{\psi_v, \mu_v, V_v} \longrightarrow \C \]
defined by the integral of matrix coefficients as in (\ref{E:local-integral}).  
\item $M_{\U(V)}^{\vee}$ and $M_{\U(V_K)}^{\vee}$ are the dual of the motives of $\U(V)$ and $\U(V_K)$ respectively. 
\item $ |S_{\Pi}| = 2^d $, with $M_{\Pi} = \oplus_{i=1}^d M_i$.
\end{itemize}
\vskip 5pt

Observe that for this global conjecture, all the local  possibilities for $(E_v, K_v)$ will occur. 
It is conceivable that one can develop a relative trace formula, as in the case of GGP, to address the global conjectures here. This is being pursued by Danielle Wang, a PhD student of Wei Zhang at MIT.  
\vskip 10pt

Just as for Conjecture \ref{conj-refined-global}, we can verify the above refined conjecture when $\dim_E V =1$. More precisely, suppose one starts with a Hecke character $\chi$ of $\A_{L^{\times}}$, so that $\chi|_{\A_L^1}$ is an automorphic character of $\U(V_K)$.  Then, as in the verification of  Conjecture \ref{conj-refined-global} in \S \ref{SS:global-rank1}, the period $\mathcal{P}$ is the (conjugate of) a global theta lift of $\chi|_{\A_E^1}$ from $\U(V)$ to $\U(W)$, where $W = \langle 1\rangle$ is a rank 1 Hermitian space for $E/F$.  Hence, for $\phi_1, \phi_2 \in \omega_{V,\psi,\mu}$, one has
\[    \mathcal{P}(\phi_1) \cdot \overline{\mathcal{P}(\phi_2)} \cdot \tau(\U(W)) = \langle \Theta(\phi_2, \chi|_{E_1}) ,  \Theta(\phi_1, \chi|_{E_1}) \rangle_{\U(W), {\rm Pet}}. \]
The RHS is computed by the Rallis inner product formula, which gives
\begin{equation} \label{E:refinedlast}   
\mathcal{P}(\phi_1) \cdot \overline{\mathcal{P}(\phi_2)}  = \frac{1}{2} \cdot \frac{L_E(1/2, (\chi^{\sigma} \chi^{-1})|_E \cdot \mu^{-1})}{L(1, \omega_{E/F})} \cdot \prod_v \mathcal{I}_v^{\#}(\chi,\chi, \phi_1,\phi_2), \end{equation}
taking note that the Tamagawa number $\tau(\U(W))$ is equal to $2$. 
This is precisely what the refined conjecture says in this case, since $|S_{\chi|_{L_1}}| =2$ here and 
\[  {\rm As}_{L/E}(\chi^{\sigma} \chi^{-1}) =( \chi^{\sigma} \cdot \chi^{-1})|_E \]
by Lemma \ref{L:Asai}(d).
\vskip 5pt

 It is interesting to note that, as a sesquilinear form on $\omega_{V,\psi,\mu}$,  the LHS of (\ref{E:refinedlast}) is exactly the same as the LHS of (\ref{E:refinedfirst}) (assuming that 
 the character $\chi$ restricts to the same character on $\A_E^1$ in the two cases). Moreover the two ratio of L-values appearing on the RHS of (\ref{E:refinedfirst}) and (\ref{E:refinedlast}) are exactly the same. Hence the reader may wonder why  there is a factor of $1/2$ on the RHS of (\ref{E:refinedlast}) but there is none on the RHS of (\ref{E:refinedfirst}).  The reason is that the adelic periods $\mathcal{I}^{\#} = \prod_v \mathcal{I}_v^{\#}$ on the RHS of both equations are different. Indeed, in view of (\ref{E:peter}), the adelic period $\mathcal{I}^{\#}$ in (\ref{E:refinedfirst})  is defined relative to the Petersson inner product $\langle -, - \rangle_{\GL(V), {\rm Pet}}$ of $\GL(V)$ whereas that in (\ref{E:refinedlast}) is defined using the Petersson inner product $\langle -, -\rangle_{\U(V_K), {\rm Pet}}$ of $\U(V_K)$.  As inner products on the 1-dimensional vector space defined by $\chi$, the latter is twice the former, so that (\ref{E:refinedfirst}) and (\ref{E:refinedlast}) are consistent with each other.

\vskip 15pt


\begin{thebibliography}{99}
\bibitem[C]{C} K.Y. Chan, {\em Restriction for general linear groups: the local non-tempered Gan-Gross-Prasad conjecture (non-Archimedean case)}. J. Reine Angew. Math. 783 (2022), 49-94.

\bibitem[Ch]{Ch}  R. Chen,  {\em Ext-Bessel model vanishes for tempered representations;} preprint.

  
\bibitem[CG]{CG}  R. Chen and W.T. Gan, {\em Twisted Gan-Gross-Prasad conjecture for certain tempered representations}, preprint. 



  
\bibitem[F]{F} Y. Flicker, {\em Twisted tensors and Euler products},  Bull. Soc. Math. France 116 (1988), no. 3, 295-313. 

\bibitem[GGP1]{GGP1}
W.~T.~Gan, B.~H.~Gross, and D.~Prasad,
{\em Symplectic local root numbers, central critical $L$-values, and restriction problems in the representation theory of classical groups},
Ast\'erisque \textbf{346} (2012), 1--109.

\bibitem[GGP2]{GGP2}
W.~T.~Gan, B.~H.~Gross, and D.~Prasad,
{\em Restrictions of representations of classical groups: examples},
Ast\'erisque \textbf{346} (2012), 111--170.

\bibitem[GGP3]{GGP3}
W.~T.~Gan, B.~H.~Gross, and D.~Prasad,
Branching laws for classical groups: the non-tempered case. Compos. Math. 156 (2020), no. 11, 2298–2367.


\bibitem[GI]{GI} W.T. Gan and A. Ichino, {\em The Gross-Prasad conjecture and local theta correspondence},  Invent. Math. 206 (2016), no. 3, 705-799.

\bibitem[HKS]{HKS} M. Harris, S. Kudla and W. Sweet, {\em Theta dichotomy for unitary groups}, J. Amer. Math. Soc. 9 (1996), no. 4, 941-1004.

\bibitem[I]{I} A. Ichino, {\em Trilinear forms and the central values of triple product L-functions},  Duke Math. J. 145 (2008), no. 2, 281-307.

\bibitem[JPSS]{JPSS} H. Jacquet, I. Piatetski-Shapiro and J. Shalika, {\em Rankin-Selberg convolutions},  Amer. J. Math. 105 (1983), no. 2, 367-464

\bibitem[L]{L} Y.F. Liu, {\em Relative trace formulae toward Bessel and Fourier-Jacobi periods on unitary groups},  Manuscripta Math. 145 (2014), no. 1-2, 1-69.


\bibitem[MVW]{MVW} C. Moeglin,  M-F Vign\'eras, Marie-France and J-L Waldspurger,  {\em Correspondances de Howe sur un corps p-adique.}
  Lecture Notes in Mathematics, 1291. Springer-Verlag, Berlin, 1987. 
  
 \bibitem[Mo]{Mo} C. Moen, {\em The dual pair $(\U(3)$, $\U(1)$) over a $p$-adic field}, Pacific J. Math. 127:1 (1987), 141--154.
 
\bibitem[P1]{P1} D. Prasad, {\em Invariant forms for representations of $\GL_2$ over a local field},  Amer. J. Math. 114 (1992), no. 6, 1317-1363.

\bibitem[P2]{P2} D. Prasad,  {\em A relative local Langlands correspondence and geometry of parameter spaces}, preprint, available at https://drive.google.com/file/d/1UeLzLGwjwYFuRsSd4GFzx40amSw1-BGV/view.

\bibitem[P3]{P3} D. Prasad, {\em  Ext-analogues of branching laws.}
  Proceedings of the International Congress of Mathematicians, Rio de Janeiro 2018. Vol. II. Invited lectures, 1367-1392,
  World Sci. Publ., Hackensack, NJ, 2018.

\bibitem[R]{R}J. Rogawski, {\em The multiplicity formula for $A$-packets}, in {\em The zeta functions of Picard modular surfaces}, edited by R. Langlands, and D. Ramakrishnan, 395--419,   .
Univ. Montreal, Montreal, QC, 1992. 

\bibitem[Se]{Se} J.P. Serre, {\em Galois cohomology}, Translated from the French by Patrick Ion and revised by the author. Corrected reprint of the 1997 English edition. Springer Monographs in Mathematics. Springer-Verlag, Berlin, 2002.


\bibitem[S]{S} B. Y. Sun, {\em Multiplicity one theorems for Fourier-Jacobi models},  Amer. J. Math. 134 (2012), no. 6, 1655- 1678.

 \bibitem[T]{T} J.Tate,``Number theoretic background," in Automorphic forms,
Representations, and $L$-functions, Proc. Symposia Pure Math., AMS, 33, 3-26 
(1979).

\bibitem[X1]{X1} H. Xue, {\em The Gan-Gross-Prasad conjecture for $\U(n) \times \U(n)$},  Adv. Math. 262 (2014), 1130-1191.

\bibitem[X2]{X2} H. Xue, {\em  Fourier-Jacobi periods and the central value of Rankin-Selberg L-functions},  Israel J. Math. 212 (2016), no. 2, 547-633.

\bibitem[Y]{Y} T.H. Yang, {\em Theta liftings and Hecke L-functions}, J. Reine Angew. Math. 485 (1997), 25-53.

\bibitem[Ze]{Ze} A. V. Zelevinsky,  {\em
  Induced representations of reductive $p$-adic groups. II. On irreducible representations of $\GL(n)$}.
  Ann. Sci. \'Ecole Norm. Sup. (4) 13 (1980), no. 2, 165–210.
\end{thebibliography}
\end{document}